\documentclass[12pt,a4paper]{amsart}
\usepackage[latin1]{inputenc}
\usepackage[T1]{fontenc}
\usepackage{amssymb,amsthm,amsmath,amsfonts,mathrsfs}
\usepackage{centernot}
\usepackage{graphicx}
\usepackage{empheq}
\usepackage{comment}
\usepackage{eqnarray}

\usepackage[colorlinks=true]{hyperref}
\usepackage{cleveref}
\usepackage{geometry}
\usepackage{pdfpages}
\DeclareMathOperator\dig{div}

\newcommand{\ee}{\varepsilon}
\newcommand{\ro}{\rho}
\newcommand{\n}{\nabla}
\newcommand{\pt}{\partial_t}
\newcommand{\pal}{\partial_\alpha}
\newcommand{\pbe}{\partial_\beta}
\newcommand{\sumk}{\sum_{k=1}^{n-1} }

\newcommand{\sumab}{\sum_{\alpha,\beta=1}^d}
\newcommand{\suma}{\sum_{\alpha=1}^d}
\newcommand{\Rd}{\mathbb{R}^d}
\newcommand{\cd}{\frac{d}{2}}
\newcommand{\cdp}[1]{\frac{d}{{#1}}}

\newcommand{\tV}{\widetilde{V}}

\newcommand{\tz}{\widetilde{z}}

\newcommand{\Lj}{\mathcal{L}_j}
\newcommand{\Ij}{\mathcal{I}_j}

\newcommand{\esp}[1]{\quad \text{#1} \quad}
\newcommand{\defn}{\overset{\rm def}{=}}
\newcommand{\eq}{(\textbf{E})\;}

\newcommand{\barS}{\overline{S}}

\newcommand{\Ss}{\mathcal{S}}

\newcommand{\U}{\mathcal{U}}

\newcommand{\Dj}{\Dot{\Delta}_j}
\newcommand{\DDj}{{\Delta}_j}

\newcommand{\intd}{\int_{\mathbb{R}^d}}

\newcommand{\normede}[1]{\left\Vert #1\right\Vert_{L^2(\Rd)}}
\newcommand{\normed}[1]{\left\Vert #1\right\Vert_{L^2}}
\newcommand{\normep}[1]{\left\Vert #1\right\Vert_{L^p(\Rd)}}
\newcommand{\normepp}[1]{\left\Vert #1\right\Vert_{L^p}}
\newcommand{\normespi}[1]{\left\Vert #1\right\Vert_{L^{p_i}}}
\newcommand{\normes}[2]{\left\Vert #2\right\Vert_{L^{#1}(\Rd)}}
\newcommand{\normess}[2]{\left\Vert #2\right\Vert_{L^{#1}}}
\newcommand{\normeinf}[1]{\left\Vert #1\right\Vert_{L^\infty}}
\newcommand{\BH}[3]{\dot{B}^{#1}_{#2,#3}}
\newcommand{\NBH}[4]{\left\Vert {#4} \right\Vert_{\dot{B}^{#1}_{#2,#3}}}

\newcommand{\NB}[4]{\left\Vert {#4} \right\Vert_{B^{#1}_{#2,#3}}}

\newcommand{\LpNBH}[5]{\left\Vert {#4} \right\Vert_{L^{#5}_T(\Dot{B}^{#1}_{#2,#3})}}
\newcommand{\ttLpNBH}[5]{\left\Vert {#4} \right\Vert_{L^{#5}_t(\Dot{B}^{#1}_{#2,#3})}}

\newcommand{\TLpNBH}[5]{\left\Vert {#4} \right\Vert_{\widetilde{L}^{#5}_T(\dot{B}^{#1}_{#2,#3})}}
\newcommand{\tLpNBH}[5]{\left\Vert {#4} \right\Vert_{\widetilde{L}^{#5}_t(\dot{B}^{#1}_{#2,#3})}}

\newcommand{\B}[3]{B^{#1}_{#2,#3}}

\newcommand{\N}{\mathbb{N}}

\newtheorem{thm}{Theorem}[section]
\newtheorem{dfn}{Definition}[section]
\newtheorem{propo}{Proposition}

\newtheorem{rmq}{Remark}[section]

\newtheorem{lem}{Lemma}

\begin{document}
\title{Global existence for viscous systems of conservation laws in  \texorpdfstring{$L^p\:$}:  framework}
\author{Jean-Paul Adogbo }

\begin{abstract}
In this paper, which originates from Chapter~4 of the author's PhD thesis \cite[Chap.~4]{Adogbo25}, we investigate the global well-posedness of a class of partially diffusive hyperbolic systems in critical homogeneous Besov spaces. Our main objective is to extend the analysis developed in our previous work \cite{DanADOglob} to a functional framework in which the high-frequency components of the solution are controlled in \(L^p\)-based spaces with \(p>2\). This broader framework allows us to establish global existence under weaker smallness assumptions on the initial data and provides more precise information on the qualitative behavior of the resulting solutions. In particular, our analysis yields refined regularity. As an application, our global existence theorem applies to the multidimensional compressible Fourier-Navier-Stokes system.

\end{abstract}
\maketitle

\label{chap:glo:Lp:SHPD}
\section{Introduction}
We are concerned with multi-dimensional first order $n$-components systems in $\Rd$ for $d\ge 2,$ of the type
\begin{equation}
   \tag{\textbf{E}}
   \label{systemE}
       S^0(U)\pt U+\suma  S^\alpha(U) \pal U-\sumab \pal(Y^{\alpha\beta}(U)\pbe U)=f(U, \n U),
\end{equation}
where the unknown $U = U (t, x) \in \mathbb{R}^n$ depends on the time variable $t\in [0,\infty)$ and on the space variable $x \in \Rd$. We denote by $\pt\defn \frac{\partial}{\partial_t}$ and for all $\alpha=1,\cdots, d$,  $\pal\defn \frac{\partial}{\partial x_\alpha} $. The matrices-valued maps $S^0$, $Y^{\alpha\beta}$, $S^\alpha $ with $ \alpha,\beta =1,\cdots, d$ are assumed to be smooth, defined in some open subset $\mathcal{U}$ in $\mathbb{R}^n$ and satisfy the following assumptions (similar to \cite{DanADOglob})~:
\begin{itemize}
    \item[(\textbf{H1})] The matrices $ S^\alpha(U)$ are real symmetric, for all $U\in \U$ and $   \alpha=1,\cdots, d$.
    \item[(\textbf{H2})] For all $\alpha,\beta\in \{ 1,\cdots, d\}$, the real matrices $ Y^{\alpha\beta} $ have the following form
    \begin{equation}
        \label{form:Y}
        Y^{\alpha\beta}(U)=
        \begin{pmatrix}
0_{n_1} & 0\\
0 &  Z^{\alpha\beta}(U)
\end{pmatrix},
    \end{equation}
    where $1\le n_1\le  n$  and  $Z^{\alpha\beta}(U)\in \mathcal{M}_{n_2\times n_2}(\mathbb{R})$ with $n_2\defn n-n_1$.\\
    Moreover there exists a constant solution $ \overline{U}\in \mathcal{U}$ of \eqref{systemE} (that is $f(\overline{U},0)=0$)  such that the symbol $ Y(\overline{U},\xi)\defn \sumab Y^{\alpha\beta}(\overline{U})\xi_\alpha\xi_\beta  $  is  symmetric, has rank $n_2$ and 
    $Z(\overline{U}) \nabla_x$ is strongly elliptic in the sense
    \begin{equation}
	    \label{strong_elli}
	    \sum_{\alpha,\beta=1}^{d} \sum_{i,j= n_1+1}^n{\xi_\alpha\lambda_i\xi_\beta\lambda_jZ_{ij}^{\alpha\beta}  (\overline{U}) }  \ge c_1 \|\xi\|^2\|\lambda\|^2, \forall \xi \in \mathbb{R}^d, \lambda\in \mathbb{R}^{n-n_1}.
	    \end{equation}
     \item[(\textbf{H3})]  For all $U\in \U$, the matrix $S^0(U)$ is real symmetric  positive definite, and has the form
     \begin{align}
         \label{form:S0}
          S^0= \begin{pmatrix}
        S^0_{11} & 0\\
        0 & S^0_{22}
     \end{pmatrix},
     \esp{with}  S^0_{11} \in \mathcal{M}_{n_1\times n_1} \esp{and} S^0_{22}\in \mathcal{M}_{n_2\times n_2}.
     \end{align}
     \end{itemize}
The source term $f: (U,\eta) \in \mathbb{R}^n\times \mathbb{R}^{nd}\mapsto \mathbb{R}^n $ is assumed to be smooth, with additional conditions on
$f$ specified later on.
 The form \eqref{form:Y} was first introduced
in  \cite{MatsmuraNishida80,Kawashima83,Serre09} and it is, after a change of variables, the general form for the entropy-dissipative viscous system of conservation laws, in the sense of \cite{Serre09}. 

Condition (\textbf{H2}) reveals that the symbol $ Y(\overline{U},\xi) $ is not necessary one to one, that is,  only \textit{partial diffusion} occurs.  This is motivated by the fact that physically relevant models need not have  a stricly dissipative structure~: they may contain first-order conservation laws
as is the case for e.g. the Navier-Stokes equation (see Section \ref{sec:NSF}), the MHD equation (see 
\cite{Kawashima83}) and a lot of models coming from electromagnetism, viscoelasticity, 
and so on, see \cite{Serre97}. 

We supplement System (\textbf{E}) with an initial data $U_0\in \U$ at time $t = 0 $ and are concerned with the existence of global strong solutions in the case where $U_0$ is close to an equilibrium state~$\overline{U}$. 
\smallbreak
In \cite{DanADOlocal}, under assumptions more restrictive than (\textbf{H1})-(\textbf{H3}), we establish the existence of a local-in-time solution to \eqref{systemE} in a critical framework, with initial data $U_0-\overline{U}$ 
  belonging to spaces that do not require embedding in the space of Lipschitz functions. Furthermore, we provide a continuation criterion. Our results significantly improve upon those of D. Serre in \cite{Serr10} and Kawashima in \cite{Kawashima83}, where the initial data are required to have $C^{0,1}$ regularity.
\smallbreak
The search for sufficient conditions that ensure global existence for small perturbations of a constant solution $\overline{U}\in \mathcal{U}$ of \eqref{systemE} dates back to the 1980s with the work of Kawashima.
In his PhD thesis \cite{Kawashima83}, he exhibited a fairly simple sufficient condition that is nowadays called the (SK) (meaning Shizuta-Kawashima) condition for the global existence of strong solutions in the neighborhood of $\overline{U}$. Condition (SK) exactly says that for the linearized system, for all $\xi\in \mathbb{R}^d$, the intersection between the kernel of $Y(\overline{U},\xi)$ and the set of all eigenvectors of the symmetric first order term $ \suma S^\alpha(\overline{U})\xi_\alpha$ is reduced to $\{0\}$. This criterion has been revisited in a number of research works \cite{Yong04,Beau11}, and some works 
focused on systems violating the (SK) condition (see, for instance, \cite{BurCrinTan23,DanMucha24,Beau11} and the papers therein).

In \cite{Beau11}, K. Beauchard and E. Zuazua took advantage of techniques that originate from Kalman control theory for linear ODEs so as to construct explicit Lyapunov functionals for general partially dissipative systems (i.e. the system \eqref{systemE} where instead of second order term, we have zero order term). They also pointed out the connection between Condition (SK) and the Kalman criterion for observability in the theory of linear ODEs. The strength of Beauchard-Zuazua's method is that, at linear level, it supplies a simple way for constructing an explicit Lyapunov functional (that encodes enough information about the behavior of the solution) and a dissipation term altogether without computing the eigenvalues.
It has been noticed by R. Danchin \cite{Danc22} that the method that has been originally proposed by K. Beauchard and E. Zuazua in \cite{Beau11} works exactly the same for partial differential operators of any order (and, more generally, for homogeneous Fourier multipliers) provided one of them is skew-symmetric and the other one, nonnegative.

Although Beauchard-Zuazua's method is quite appropriate for describing the large-time behavior of solutions to the linear system of \eqref{systemE}, it does not address how to achieve maximal regularity for the components affected by diffusion (or dissipation, in the case of partially dissipative systems). To address this, in \cite{DanADOglob}, we introduced the concept of the \textit{parabolic mode}, which has better 
regularity properties than the overall solution in high frequencies. This allowed us to improve the analysis of high frequencies and to establish the existence of global strong solutions for perturbations in critical $L^2$-type spaces. The \textit{parabolic mode} serves a similar purpose as the `effective velocity' in the works of Hoff \cite{hoff2006uniqueness} and Haspot \cite{Haspot11} on compressible Navier-Stokes equations, and more recently, the `damped mode' introduced in \cite{BaratDan22M,BaratDan22D1,BaratDan23,lemarie2023parabolic}.
\smallbreak
Our goal here is to extend these results to a broader functional framework, where the high-frequency components of the solution are bounded in some $L^p$-type space with $p>2,$ rather than in 
a Sobolev space or Besov space built on $L^2.$ This extension allows for initial data with slower decay at infinity and lower regularity, compared to those previously considered.

At low frequencies, the first-order terms dominate, confining us to spaces constructed on $L^2$.
Indeed, the resulting system (i.e. \eqref{systemE}, where the second-order term is treated as a source term) does not meet the criteria established in \cite{Gues06} or those outlined by Brenner in \cite{Bren66}. Even the compressible Navier-Stokes equations fail to satisfy the aforementioned criteria (see Section \ref{sub:sec:low}).
Next, similar to the approach in \cite{DanADOglob}, we will construct a Lyapunov functional in the style of Beauchard-Zuazua. This functional will be equivalent to the norm we aim to control. To do this, we will work out new commutator and composition lemmas to manage the high-frequency components of the nonlinearities that are not contained in $L^2$.
\smallbreak

At high frequencies, the parabolic mode introduced in \cite{DanADOglob}  will allow us to diagonalize and decouple the system into a hyperbolic damped equation and a purely parabolic one. This approach, along with additional conditions ensuring the ellipticity of a specific operator, will be crucial in obtaining a priori estimates in the $L^p$ framework for high frequencies. These tools ultimately lead to the global well-posedness result for \eqref{systemE}, as stated in Theorems \ref{thm:p} and \ref{thm:p:s}.

\section{Hypotheses, methods and mains results.}
\subsection{Reduction of the problem.}
Before presenting our main results, motivating the structure assumptions that will be made on System \eqref{systemE} is in order. To this end,  if  $\overline{U}\in \mathcal{U}$ is an equilibrium state, and if we set $V\overset{def}{=}U-\overline{U}$, we can rewrite  \eq as follows.
\begin{equation}
  \label{eq:base:V}
       S^0(U)\pt V+\suma S^\alpha (U)\pal V-\sumab \pal(Y^{\alpha\beta}(U)\pbe V)=f(U,\n U) .
\end{equation}
Note that, using the decomposition 
 \begin{equation}
    \label{def:decompo:}
    \begin{split}
    V= \begin{pmatrix}
        V^1\\
        V^2
     \end{pmatrix},\quad
     S^\alpha= \begin{pmatrix}
         S^\alpha_{11} & S^\alpha_{12}\\
       S^\alpha_{21} &   S^\alpha_{22}
     \end{pmatrix}\esp{and}
   f=\begin{pmatrix}
f^1\\
f^2
\end{pmatrix},\esp{with} V^1\in \mathbb{R}^{n_1}, V^2\in \mathbb{R}^{n_2}
 \end{split}
\end{equation}
the system \eqref{eq:base:V} can be rewritten 
\begin{equation}
  \label{syste_part_hper_diff}
	 \begin{cases}
	    S^0_{11}(U)\pt V^1 +\suma \left(S^\alpha_{11}(U) \pal V^1 +  S^\alpha_{12}(U) \pal V^2\right)=f^1(U,\n U),  \\[0,5cm]
	     S^0_{22}(U)\pt V^2 +\suma \left(S^\alpha_{21}(U) \pal V^1 + S^\alpha_{22}(U) \pal V^2\right) \\
        \hspace{5cm} - \sumab\pal(Z^{\alpha \beta}(U)\pbe V^2)=f^2(U,\n U),
	 \end{cases}
	 \end{equation}
  and  the corresponding linearized system to \eqref{eq:base:V} is thus
  \begin{equation}
	\label{eq:lindiff}
 \overline{S^0}\pt V+  \sum_{\alpha=1}^d\overline{S}^\alpha \pal V-\sum_{\alpha,\beta=1}^d\overline{Y}^{\alpha\beta} \partial^2_{\alpha\beta}V=F
	\end{equation}
with \begin{align*}
     \overline{S^0}\defn S^{0}(\overline{U}),\;\; \Bar{S}^\alpha\defn S^\alpha(\overline{U}),\;\; \overline{Y}^{\alpha\beta}\defn {Y}^{\alpha\beta}(\overline{U}).
 \end{align*}
Clearly, System \eqref{syste_part_hper_diff} corresponds to 
\eqref{eq:lindiff} with $F=F^t+F^V+F^{V^2}+f$, where
 \begin{align}
      \label{F_t}
      \begin{split}
          &F^t\defn(\overline{S^0}-S(U))\pt V,\esp{} F^V\defn \suma\left(\Bar{S}^\alpha-S^\alpha(U)\right)\pal V,\\
          & F^{V^2}\defn\sumab \pal(r^{\alpha\beta}(U)\pbe V) \esp{and} r^{\alpha\beta}(U)\defn(\overline{Y}^{\alpha\beta}-Y^{\alpha\beta}(U)).
     \end{split}
 \end{align}
  In the Fourier space, System \eqref{eq:lindiff} recasts in
 \begin{align}
     \label{eq:in:Fourier:space}
     \pt \widehat{V} +\sum_{\alpha=1}^d(\overline{S}^0)^{-1}\overline{S}^\alpha \xi_\alpha\widehat{V} + \sum_{\alpha,\beta=1}^d(\overline{S}^0)^{-1}\overline{Y}^{\alpha\beta} \partial^2_{\alpha\beta} \widehat{V}=(\overline{S}^0)^{-1}\left( \widehat{F}\right).
 \end{align}
Setting $\xi=\ro \omega$ with $\omega\in \mathbb{S}^{d-1}$ and $\ro=|\xi|$, the above system rewrites
\begin{align}
     \label{eq:rew:in:Fourier:space}
     \pt \widehat{V} +(i\ro N_\omega + \ro^2 M_\omega )\widehat{V}=(\overline{S}^0)^{-1}\left( \widehat{F}\right),
 \end{align}
with $i$ a complex number satisfying $i^2=-1$ and
\begin{equation}
     \label{A:def}
N_\omega=(\overline{S}^0)^{-1}\sum_{\alpha=1}^d\overline{S}^\alpha \omega_\alpha,
\end{equation}
 \begin{equation}
     \label{B_ome:def}
 M_\omega=(\overline{S}^0)^{-1}\sum_{\alpha,\beta=1}^d\overline{Y}^{\alpha\beta}\omega_\alpha\omega_\beta= (\overline{S}^0)^{-1}\sum_{\alpha,\beta=1}^d
\begin{pmatrix}
0_{n_1} & 0\\
0 &  \overline{Z}^{\alpha\beta}\omega_\alpha\omega_\beta
\end{pmatrix}.
\end{equation}

\subsection{Shizuta-Kawashima condition.}
\label{sub:sec:SK}
For expository purpose, we assume in this paragraph that $F=0$. Taking the scalar product of \eqref{eq:lindiff} with $V$ combining with an integration by parts in the second term on the right-hand side  and using \eqref{strong_elli}, one has
\begin{align}
    \label{:sys:ene:me:1}
    \begin{split}
       \frac{d}{dt} \intd  \overline{S}^0 V\cdot V + \overline{c_1} \normed{\n V^2}^2\le 0.
    \end{split}
\end{align}
Hence, it looks that $V^1$ does not experience dissipation.   To recover it, one can take advantage of Beauchard-Zuazua method (\cite{Beau11}) and (SK) condition in \cite{Kawashima83,SK85}. To this end, fix $n-1$ positive parameters $\ee_1,\cdots,\ee_{n-1}$ (destined to be small) and set:
\begin{equation}
    \label{df:corect:0}
    I \overset{def}{=}  \sumk \ee_k {\rm  Re} (M_\omega N_\omega^{k-1} \widehat{V} \cdot M_\omega N_\omega^{k} \widehat{V} ).
\end{equation}
where $\cdot$ designates the Hermitian scalar product in $\mathbb{C}^n$. We have the following result:
\begin{lem}
\label{lem:derI}
There exists $(\ee_0,\ee_1,\cdots, \ee_{n-1})$ that can be taken arbitrarily small and $C>0$,  such that:
\begin{equation}
\label{lya:lo:in}
    \frac{d}{dt}I +\rho \sumk \ee_k |M_\omega N_\omega^{k} \widehat{V}|^2 \le C\ee_0\max(\rho,\rho^3)| M_\omega  \widehat{V}|^2.
\end{equation}
\end{lem}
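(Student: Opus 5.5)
This is the Beauchard--Zuazua computation, carried out in Fourier variables with $F=0$. The equation is
\[
\pt\widehat V=-(i\ro N_\omega+\ro^2M_\omega)\widehat V .
\]
I treat each summand of $I$ separately. Set $a_k\defn M_\omega N_\omega^{k-1}\widehat V$ and $b_k\defn M_\omega N_\omega^{k}\widehat V$, so that $I=\sumk \ee_k{\rm Re}(a_k\cdot b_k)$.

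\textbf{Step 1: differentiate one summand.} Differentiating in time gives
\[
\frac{d}{dt}{\rm Re}(a_k\cdot b_k)={\rm Re}\bigl(M_\omega N_\omega^{k-1}\pt\widehat V\cdot b_k\bigr)+{\rm Re}\bigl(a_k\cdot M_\omega N_\omega^{k}\pt\widehat V\bigr).
\]
The first-order part of $\pt\widehat V$, namely $-i\ro N_\omega\widehat V$, contributes two terms:
\begin{itemize}
\item from the first slot, ${\rm Re}(-i\ro\, b_k\cdot b_k)=-\ro\,{\rm Re}(i)|b_k|^2$, which with the paper's convention for the Hermitian product and the sign of $i$ yields the good term $-\ro|b_k|^2$ (up to fixing the sign convention of the symbol);
\item from the second slot, a cross term $\ro\,{\rm Re}(i\, a_k\cdot M_\omega N_\omega^{k+1}\widehat V)$.
\end{itemize}
The second-order part $-\ro^2M_\omega\widehat V$ gives terms of the form $\ro^2 M_\omega N_\omega^{j}M_\omega\widehat V$. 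Since these contain a factor $M_\omega\widehat V$, they are bounded by $C\ro^2|M_\omega\widehat V|(|a_k|+|b_k|)$.

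\textbf{Step 2: absorb the cross terms.} Every $a_k$ and $b_k$ is a bounded matrix applied to $\widehat V$, so it is at most $C|\widehat V|$. That is too crude, because only $M_\omega\widehat V$ and the $b_k$ are available on the right-hand side. For $k\ge2$ we have $a_k=b_{k-1}$, so the cross term is bounded by
\[
\ro|b_{k-1}|\,|M_\omega N_\omega^{k+1}\widehat V| .
\]
This is the standard mechanism. For $k=n-1$, the factor $M_\omega N_\omega^{n}\widehat V$ is reduced with Cayley--Hamilton: $N_\omega^n$ is a combination of $N_\omega^j$ with $j<n$, with coefficients bounded uniformly in $\omega\in\mathbb S^{d-1}$, so $M_\omega N_\omega^n\widehat V$ is bounded by $C\sum_{j=0}^{n-1}|b_j|$, where $b_0=M_\omega\widehat V$. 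Young's inequality then gives
\[
\ee_k\ro|b_{k-1}|\,|b_{k+1}|\le \tfrac{\ee_{k+1}}{4}\ro|b_{k+1}|^2+\frac{\ee_k^2}{\ee_{k+1}}\ro|b_{k-1}|^2 .
\]
I choose the parameters hierarchically, $\ee_{k+1}\ll\ee_k$ but with $\ee_k^2/\ee_{k+1}\ll\ee_{k-1}$; for instance $\ee_k=\ee_0^{\gamma_k}$ with suitable increasing exponents. Each bad term is then absorbed by half of the good term with index $k-1$. For $k=1$, the term with $b_0=M_\omega\widehat V$ stays on the right with coefficient $C\ee_0\ro$.

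\textbf{Step 3: handle the $\ro^2$ terms and the main obstacle.} Young's inequality gives
\[
\ee_k\ro^2|M_\omega\widehat V|\,|b_k|\le \tfrac{\ee_k}{4}\ro|b_k|^2+C\ee_k\ro^3|M_\omega\widehat V|^2 .
\]
The $\ro^2$ terms with $a_k$ are treated the same way via $a_k=b_{k-1}$. This produces the factor $\max(\ro,\ro^3)$, and every coefficient is at most $C\ee_0$ once all $\ee_k\le\ee_0$.

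I expect the delicate point to be the bookkeeping of the hierarchy of $\ee_k$. The requirements $\ee_k^2/\ee_{k+1}\ll\ee_{k-1}$ and $\ee_k\le\ee_0$ must hold simultaneously, uniformly in $\omega$. I would use geometric-type exponents, $\ee_k=\ee_0^{2^k}$ adjusted by constants. Compactness of $\mathbb S^{d-1}$ gives the uniform operator bounds needed throughout. Condition (SK) is not needed for this inequality; it enters only afterwards, to show that $\sum_k|b_k|^2+|M_\omega\widehat V|^2\gtrsim|\widehat V|^2$.
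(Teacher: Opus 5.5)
\textbf{Step 1 produces no dissipative term.} The paper states this lemma without proof; it is the Beauchard--Zuazua computation. So your argument has to stand on its own, and its first step fails.

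You claim the first slot gives the good term, but ${\rm Re}(-i\ro\, b_k\cdot b_k)=-\ro|b_k|^2\,{\rm Re}(i)=0$. This holds under every convention for the Hermitian product and for the sign of the symbol, since those only flip the sign of a purely imaginary number. A correction built on ${\rm Re}$, as in the printed definition of $I$, therefore gives no good term at all.

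In fact the inequality is false as printed. Suppose that at some instant $\widehat V=(x,0)$ is real, with $S_{21}(\omega)x\neq0$ (such $x$ exists, e.g.\ under (\textbf{H6})). Then $M_\omega\widehat V=0$ kills every $\ro^2$-term. The remaining terms are real parts of $i$ times real numbers, because all the matrices are real, so $\frac{d}{dt}I=0$. Yet the left-hand side contains $\ro\,\ee_1|M_\omega N_\omega\widehat V|^2>0$, while the right-hand side vanishes.

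The missing idea is that the correction must use the imaginary part: $I=\sumk\ee_k\,{\rm Im}\bigl(M_\omega N_\omega^{k-1}\widehat V\cdot M_\omega N_\omega^{k}\widehat V\bigr)$, for a product linear in its first slot. This is exactly Beauchard--Zuazua's ${\rm Re}\bigl(M_\omega(i\ro N_\omega)^{k-1}\widehat V\cdot M_\omega(i\ro N_\omega)^{k}\widehat V\bigr)$ divided by $\ro^{2k-1}$. With this $I$, the first slot gives $-\ro|b_k|^2$ and the second gives $\ro\,{\rm Re}(b_{k-1}\cdot b_{k+1})$. Your remaining bounds then go through, including the $\ro^2$-terms and the Cayley--Hamilton reduction at $k=n-1$.

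\textbf{Your explicit weights violate your own constraint.} Absorbing $\ee_k\ro|b_{k-1}||b_{k+1}|$ into the good terms of indices $k\pm1$ requires $\ee_k^2\ll\ee_{k-1}\ee_{k+1}$. That is, the ratios $\ee_k/\ee_{k-1}$ must increase while staying small.

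With $\ee_k=\ee_0^{2^k}$ you get $\ee_k^2/\ee_{k+1}=1$, which is never $\ll\ee_{k-1}$, and multiplicative constants cannot repair this. The exponents must be increasing and \emph{concave}, e.g.\ $\ee_k=\ee_0^{2-2^{-k}}$. Then $\ee_k^2/(\ee_{k-1}\ee_{k+1})=\ee_0^{2^{-k-1}}$ and $\ee_{n-1}/\ee_{n-2}=\ee_0^{2^{1-n}}$ both tend to $0$ with $\ee_0$, and every coefficient on the right is $O(\ee_0)$.

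Finally, after the absorptions you obtain $\frac{\ro}{2}\sumk\ee_k|b_k|^2$ on the left rather than the constant $1$ of the statement. This is harmless for the later use, but you should say so.
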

 Now, taking the Hermitian product of \eqref{eq:rew:in:Fourier:space} with $\widehat{V}$ (recall that $F=0$) and using \eqref{strong_elli}  gives:
\begin{align*}
    \frac{d}{dt}\intd \overline{S^0}\widehat{V}\cdot \widehat{V}+c \ro^2 \normede{\widehat{V}^2}^2\le 0.
\end{align*}
As for all $\omega\in \mathbb{S}^{d-1}$,  $M_\omega $ has rank $n_2$ (see assumption ($\mathbf{H2}$)) one can replace $|\widehat{V}^2|$ by $|M_\omega \widehat{V}|$ on the right-hand side, up to a harmless change of C.
In the end, taking $\ee_0$ small enough, adding up \eqref{lya:lo:in}, we get 
\begin{align}
    \label{def:lya:lf}
    \frac{d}{dt} L_{\ro,\omega} (\widehat{V}) +\min(1,\ro^2)  \sum_{k=0}^{n-1} \ee_k |M_\omega N_\omega^{k} \widehat{V}|^2\le 0  \!\esp{with}\!  L_{\ro,\omega}\defn \intd \overline{S^0}\widehat{V}\cdot \widehat{V}+ \min(\ro,\ro^{-1}) I.
\end{align}
 
It is clear that $ L_{\ro,\omega}(\widehat{V})\simeq |\widehat{V}|^2$  for sufficiently small 
$\ee_1,\cdots,\ee_{n-1}.$
The question now is whether the rate of dissipation in \eqref{def:lya:lf} may be compared to $ |\widehat{V}|^2 $. The answer depends on the possible cancellation of the following quantity: 
\begin{align}
    \label{def:N:omega}
    \mathcal{N}\defn \inf \left\{ \sum_{k=0}^{n-1} \ee_k |M_\omega N_\omega^{k} x|^2, x\in \mathbb{S}^{d-1}, \omega \in \mathbb{S}^{d-1}   \right\}\cdotp
\end{align}
At this very point, the (SK)  condition comes into play:
\begin{dfn}
\label{dfn:SK}
Let $n\in \mathbb{N}\backslash\{0\}$, $N, M$  be $n\times n$ matrices with complex coefficients.
The pair $(N, M)$ is said to satisfy  the (\text{SK}) condition  if we have at the same time $M \phi =0 $ and $\lambda \phi+N \phi =0$ for some $\lambda\in \mathbb{R}$, if and only if $\phi=0_{\mathbb{R}^n}$.\\
 We say that the linear system \eqref{eq:lindiff} verifies Condition (SK) if the pair $(N_\omega, M_\omega)$ satisfies this condition for all $\omega \in  \mathbb{S}^{d-1}$.
\end{dfn}

   Roughly speaking,   (SK) condition for System \eqref{eq:lindiff} means that \textit{the intersection between the kernel of the (matrix) product of the $2$-order term and $S^{-1}$, and the set of all eigenvectors of the (matrix) product of the skew-symmetric $1-$order term and $S^{-1}$ is reduced to}  ${0}$. 
  \medbreak
  Below, we recall the connection between Condition (SK) and the Kalman criterion for observability in the theory of linear ODEs (see \cite{Beau11}).
\begin{lem}
    \label{lem:SK_diss}
    Let $N$ and $M$ be two $n\times n$ complex-valued matrices. 
        Then, the following properties are equivalent:
        \begin{enumerate}
            \item For all $\ee_0,\dots \ee_{n-1}$, we have $\sum_{l=0}^{n-1}\ee_l|MN^l\eta|^2>0$ for all $\eta\in  \mathbb{S}^{n-1}$.
            \item $(N, M)$ satisfies the Kalman rank condition: the rank of the $n^2\times n$ matrix $
            \begin{pmatrix}
                &M\\
                &MN\\
                &\dots\\
                &MN^{n-1}
            \end{pmatrix}
            $
            is equal to $n$.
            \item $(N, M)$ satisfies the (SK) condition: the intersection between ${\rm ker} M$ and the linear space of all eigenvectors of $N $ is reduced to $\{0\}$.
        \end{enumerate}
        \end{lem}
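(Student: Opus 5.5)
The plan is to prove the cycle of implications (1)$\Leftrightarrow$(2) and (2)$\Leftrightarrow$(3). Throughout, the weights $\ee_l$ in (1) are read as positive, as in \eqref{df:corect:0}, and $\mathbb{S}^{n-1}$ is understood as the unit sphere of $\mathbb{C}^n$, since $N$ and $M$ are complex.

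\emph{(1)$\Leftrightarrow$(2).} Write $\mathcal{K}$ for the $n^2\times n$ Kalman matrix. Since each $\ee_l>0$, the sum $\sum_{l}\ee_l|MN^l\eta|^2$ vanishes exactly when $MN^l\eta=0$ for every $l\in\{0,\dots,n-1\}$, that is, when $\eta\in\ker\mathcal{K}$. So (1) says that $\ker\mathcal{K}$ contains no unit vector, i.e. $\ker\mathcal{K}=\{0\}$. By the rank--nullity theorem this is equivalent to $\operatorname{rank}\mathcal{K}=n$. The reverse direction uses the same equivalence.

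\emph{(2)$\Rightarrow$(3).} I argue by contraposition. Suppose $\phi\neq0$ satisfies $M\phi=0$ and $N\phi=-\lambda\phi$. Then $N^l\phi=(-\lambda)^l\phi$ for every $l$, so $MN^l\phi=(-\lambda)^lM\phi=0$. Hence $\phi\in\ker\mathcal{K}\setminus\{0\}$, and the rank of $\mathcal{K}$ is strictly less than $n$.

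\emph{(3)$\Rightarrow$(2).} This is the main step. Again by contraposition, assume $W:=\ker\mathcal{K}\neq\{0\}$.
\begin{enumerate}
\item $W$ is $N$-invariant. Take $\eta\in W$. For $0\le l\le n-2$ we have $MN^l(N\eta)=MN^{l+1}\eta=0$. For $l=n-1$ we need $MN^n\eta=0$. By Cayley--Hamilton, $N^n$ is a linear combination of $I,N,\dots,N^{n-1}$, so $MN^n\eta$ is a combination of the vectors $MN^j\eta=0$.
\item $N|_W$ has an eigenvector. Since $W$ is a nonzero complex vector space and $N|_W$ is a linear endomorphism of it, there is $\phi\in W\setminus\{0\}$ with $N\phi=\mu\phi$ for some $\mu\in\mathbb{C}$.
\item Conclusion. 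Because $\phi\in W$, we have $M\phi=0$. So $\phi$ is a nonzero element of $\ker M$ that is an eigenvector of $N$, which contradicts (3) in its linear-algebra formulation, "$\ker M$ contains no eigenvector of $N$".
\end{enumerate}

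The obstacle is the realness of $\lambda$. Definition~\ref{dfn:SK} only requires $\lambda\in\mathbb{R}$, while step 2 gives a possibly complex eigenvalue $\mu$. Item (3) of the lemma is phrased with all eigenvectors of $N$, and for that phrasing the argument above is complete. In the application, $N=N_\omega=(\overline{S}^0)^{-1}\sum_\alpha\overline{S}^\alpha\omega_\alpha$ is a product of a symmetric positive definite matrix and a symmetric matrix. It is therefore self-adjoint for the scalar product $\langle x,y\rangle=\overline{S}^0x\cdot y$. The restriction $N|_W$ is then self-adjoint on the invariant subspace $W$, so its eigenvalues, in particular $\mu$, are real. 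With $\lambda=-\mu$ this recovers Definition~\ref{dfn:SK} exactly.
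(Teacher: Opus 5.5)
Your proof is correct. The paper does not prove this lemma; it only recalls it from \cite{Beau11}. Your argument is the standard one used there: the kernel of the Kalman matrix is $N$-invariant by Cayley--Hamilton, so it contains an eigenvector of $N$ lying in $\ker M$.

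Your caveat about the eigenvalue is also right and worth keeping. For general complex $N$, or even real non-symmetric $N$, item (2) is equivalent to (3) only when complex eigenvalues are allowed. The real-$\lambda$ formulation of Definition~\ref{dfn:SK} is recovered in the paper's setting because $N_\omega$ is self-adjoint for the scalar product $\overline{S}^0x\cdot y$.
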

        Thanks to Lemma \ref{lem:SK_diss} and observing that the unit sphere $\mathbb{S}^{d-1}$ is compact,  one can deduce from  \eqref{def:lya:lf} and  $ L_{\ro,\omega}(\widehat{V})\simeq |\widehat{V}|^2$  that there exists some constant  $C_{\overline{V}} >0$ such that
        such that
        \begin{equation}
    \label{1:dt:L_ro_om}
    \frac{d}{dt}L_{r,\omega}(\widehat{V})+C_{\overline{V}} \min(\ro^{2}, 1) L_{r,\omega}(\widehat{V}) \le 0.
\end{equation}
 Then, integrating on $\Rd$ and using Fourier-Plancherel theorem, we conclude that there exists a Lyapunov functional $ \mathcal{L}_{r,\omega}(V) $ that is equivalent to $ \normede{V}^2 $, and an explicit rate of dissipation. More precisely there exists a positive constant $c>0$ such that 
 \begin{align}
     \label{V(xi)=eV_0}
     \widehat{V}(t,\xi)\le e^{-c\min(1,|\xi|^{2})t }  \widehat{V_0}(\xi).
 \end{align}
 The exponent $\min(1, |\xi|^2 )$ may be seen as a gain of two derivatives in low frequencies after time integration (like for the heat equation) whereas it corresponds to exponential decay for high frequencies.
 Thus, at the linear level, we observe that, provided (SK) is satisfied:
 \begin{itemize}
     \item in high frequencies, the solution decays exponentially to 0;
     \item in low frequencies, the solution behaves as the solutions of the heat equation.
 \end{itemize}
According to the above analysis, in order to get optimal dissipative estimates, it is suitable to split the solution into low and high frequencies parts. This will be achieved by means of a Littlewood-Paley decomposition (introduced in the next section). Then, an important part of our work will consist to bound separately the low and high frequencies of the solution to the nonlinear system \eqref{eq:base:V}.

\subsection{Parabolic mode.}
\label{sub:sec:Para:mode}
In order to explain the supplementary conditions that we need in order to get our main results, let us introduce, as in \cite{DanADOglob}, a special combination of the components of the solution, that is expected
to have  better properties in high frequencies than the overall solution. In fact, although estimate \eqref{V(xi)=eV_0} gives the overall behavior of the solution of the linearized system, it does not fully reflect that a part of the solution gains some regularity through the time evolution. To see this, we can note from \eqref{syste_part_hper_diff}
that $V^2$ satisfies a fully parabolic equation with some lower order source term depending on $V^1.$
 Leveraging  a suitable `parabolic mode' will help us to recover this information. For that purpose, introducing some notation is in order. For $\xi=(\xi_1,\cdots, \xi_d)\in \mathbb{R}^{d}\backslash\{0\}$, we set
\begin{align}
    \label{def:S:21:22:Z}
    \mathcal{Z}(\xi)\defn \frac{1}{|\xi|^2}\sumab\overline{Z}^{\alpha \beta}\xi_\alpha\xi_\beta \esp{and, for}
    1\leq j,k\leq 2,\quad\!\!
     S_{jk}(\xi)\defn \frac{1}{|\xi|} \sum_{\alpha}\overline{S}^\alpha_{jk}\xi_\alpha.
     \end{align}
From now on, we agree that for all tempered distribution $z$,
\begin{align}    \label{def:S:21:22:ZD}
 \mathcal{Z}(D) z \defn \mathcal{F}^{-1}(Z(\xi) \widehat{z}) 
  \esp{and, for}    1\leq j,k\leq 2,\quad\!\!
    \Ss_{jk}(D) z \defn \mathcal{F}^{-1}(S_{jk}(\xi) \widehat{z}).\end{align}
 As $Z(\omega)$, for all $\omega\in \mathbb{S}^{d-1}$ is  an invertible matrix (in fact from \eqref{strong_elli} the Kernel of $ Z(\omega)\in \mathcal{M}_{n_2} $ is $\{0\}$), we define the so-called \textit{parabolic mode}  $W$ 
 on the Fourier side  as follows for all $\xi\neq 0$:
 \begin{equation}
    \label{W:def}
    \begin{split}
    \widehat{W}(\xi)&\overset{def}{=} i|\xi|^{-1}(\mathcal{Z})^{-1}(\xi)\left( \Ss_{21}(\xi)\widehat{V}^1(\xi)+  \Ss_{22}(\xi)\widehat{V}^2\right) +\widehat{V}^2(\xi).
     \end{split}
\end{equation}
Then, System  \eqref{syste_part_hper_diff}$_2$ may be rewritten
\begin{multline}    \label{rel:W:V2}
   \overline{S}^0_{22}\pt V^2 -\Delta\mathcal{Z}(D)W=\suma\left((\barS_{21}-S^\alpha_{21}(U))\pal V^1+(\barS_{22}-S^\alpha_{22}(U)\pal V^2)\right)\\
        + (\overline{S}^0_{22}-S^0_{22}(U))\pt V+ \sumab \pal\left((\overline{Z}^{\alpha\beta}-Z^{\alpha\beta}(U))\pbe V^2\right)+f^2.
\end{multline}
Let us set $g \defn g^t+g^{V^2}+g^{V}+g^{f}+\overline{g}^V$ with
\begin{align}
    \label{def:g}
    \begin{split}
        &g^t=(\overline{S}^0_{22}-S^0_{22}(U))\pt V^2,\;\; g^{V^2} = \sumab \pal\left((\overline{Z}^{\alpha\beta}-Z^{\alpha\beta}(U))\pbe V^2\right),\\
        &g^V= \suma\left((\barS_{21}-S^\alpha_{21}(U))\pal V^1+(\barS_{22}-S^\alpha_{22}(U)\pal V^2)\right),\;\;   g^f= f^2,\\
        & \Bar{g}^{V}=i |D|^{-1}\overline{S}^0_{22}\mathcal{Z}^{-1}(D)\pt\left( \Ss_{21}(D) V^1+  \Ss_{22}(D) V^2 \right).
    \end{split}
\end{align}

Then,  starting from \eqref{rel:W:V2}, we see that $W$ satisfies the following parabolic equation
\begin{equation}
    \label{eq:W}
    \overline{S}^0_{22}\pt W -\Delta\mathcal{Z}(D) W= g.
\end{equation}

From \eqref{W:def},  we have the following equivalent definitions of $W$:
\begin{align}
    \label{W:def:2}
    W &=i |D|^{-1}(\mathcal{Z}^{-1}(D)\left( \Ss_{21}(D) V^1+  \Ss_{22}(D)V^2\right)  +V^2\\
    \label{W:def:22}
    &= (-\Delta)^{-1} \mathcal{Z}^{-1}(D) \overline{S}^0_{22}\pt V^2+(-\Delta)^{-1}\mathcal{Z}^{-1}(D)\left(g^t+g^V+g^{V^2}+f^2\right)\cdotp
\end{align}
The parabolic equation \eqref{eq:W} reveals that $W$ should gain two derivatives in space 
compared to the data, after time integration if working in a suitable space and making  further assumptions on $  \overline{S}^0_{22} $ and $\mathcal{Z}(D)$ (see Assumption (\textbf{H6}) below).

\subsection{Transfert of integrability from the parabolic mode to the hyperbolic mode. }
\label{subsec:hyper:mode}
Taking advantage of the parabolic mode, we can obtain an $L^p$ type control on the high frequencies of $V^1.$  To see that, we set for all $\alpha=1\cdots d$
\begin{align}
\label{def:A:B}
    \begin{split}
      A^\alpha_{11}(U)  \defn (S^0_{11}(U))^{-1}S^\alpha_{11}(U).
    \end{split}
\end{align}
The equation satisfied by $V^1$ reads
\begin{align}
   \label{e:V1:f(A)}
   \pt V^1+\suma A^\alpha_{11}(U)\pal V^1 +\suma (S^0_{11}(U))^{-1}S^\alpha_{12}(U)\pal V^2=(S^0_{11}(U))^{-1}f^1.
\end{align}

Assume now 
\begin{itemize}
\item [(\textbf{H4})] For all $U\in \U$,  the matrices 
$A^{\alpha}_{11}$ with $\alpha=1\cdots,d$ {are commuting diagonalizable with real spectrum}.
\end{itemize}
Assumption $(\mathbf{H4})$ is motivated by the well-posedness of  \eqref{e:V1:f(A)} 
\emph{seen as a system with unknown $V^1$},  in the $L^p$  framework for $p\neq 2$. In fact, as
observed by O. Gu\`es and J. Rauch in \cite{Gues06},  a necessary and sufficient condition for the linear system of \eqref{e:V1:f(A)} to be well-posed in general $L^p$ spaces is that the diagonalizable matrices $ A^{\alpha}_{11}(\overline{U})$ commute and have real spectrum (see also \cite{Bren66}).  
Of course, this commutation property allows us to diagonalize the matrices $ A^{\alpha}_{11}(\overline{U}) $ in the same basis.  Here we assume that there exist smooth functions $P:\U\to {\rm GL}_n(\mathbb R)$
and $H_\alpha:\U\to {\rm diag}_n(\mathbb R)$ for $\alpha=1\cdots,d$  such that for all $U\in \U $
\begin{align}
    \label{chang:basis}
   A^\alpha_{11}(U)=P^{-1}(U)H^\alpha(U)P(U).
\end{align}
Hence, setting $z=P(U)V^1,$  System \eqref{e:V1:f(A)}  becomes 
\begin{multline}    \label{eq:V1=f(H)}
\pt z  + \suma H^\alpha(U)\pal z= P(S^0_{11}(U))^{-1}f^1- \suma P(U) ( S^0_{11}(U))^{-1} S^\alpha_{12}(U)\pal V^2\\
+\pt(P(U))\,V^1+\suma H^\alpha(U)\pal (P(U))V^1.
\end{multline}
The last term will cause a loss of one derivative if $P$ is not a constant with respect to $V^1$. Moreover, even if $ P$ depends only on the component $U^2$, we are in trouble concerning how to control the last two terms on the left-hand side of \eqref{eq:V1=f(H)}.
Hence, in addition, we assume that~:
\smallbreak
\begin{itemize}
\item [(\textbf{H5})]  The function  $P$ is constant, that is,  independent of $U^1$ and $U^2$. 
\end{itemize}
\vspace{0.3cm}
Then, using   \eqref{W:def:2} to express $V^2$  in terms of $W$ in the equation \eqref{eq:V1=f(H)} yields 
\begin{align}
 \label{eq:V1=f(W)}
      \pt z  + \suma H^\alpha(U)\pal z+B(D)z=h
\end{align}
with
\begin{align}
\label{defin:B:(D)}
\begin{split}
    B(D) &\defn -i P\suma (\overline{S}^0_{11})^{-1}\overline{S}^\alpha_{12}|D|^{-1}\pal Z^{-1}(D)\mathcal{S}_{12}(D)P^{-1}\\
    &= P(\Bar{S}^0_{11})^{-1} \Ss_{12}(D)\mathcal{Z}^{-1}(D)\Ss_{21}(D)P^{-1}.
    \end{split}
\end{align}

The nonlinear term $h$ is defined as $h=h^{W}+h^{V^2}_0+ h^{V^2}_1+h^f$ with
\begin{align}
    \label{def:ht:V:V2}
    \begin{split}
   &h^{V^2}_1\overset{def}{=} \suma P \biggl((\overline{S}^0_{11})^{-1}\overline{S}^\alpha_{12} - (S^0_{11}(U))^{-1}S^\alpha_{12}(U)\biggr) \pal V^2 ,\;\;  h^f\defn P (S^0_{11}(U))^{-1} f^1,\\
   & h^{V^2}_0\defn -P (\overline{S}^0_{11})^{-1}  \Ss_{12}(D)\mathcal{Z}^{-1}(D)\Ss_{22}(D)V^2, \;\; h^{{W}}\defn -\suma P(\overline{S}^0_{11})^{-1}\overline{S}^\alpha_{12}\pal W .
   \end{split}
\end{align}

In order to pursue our analysis we make the following assumption~:
\smallbreak
\begin{itemize}
    \item[(\textbf{H6})] 
   The matrix $B(\omega)$ is a (constant) diagonal matrix with positive entries. 
\end{itemize}
\vspace{0.3cm}
Remark that it implies that the rank of the matrix $S_{21}(\xi)$ is independent of $\xi$ and is equal to $n_1$. Unlike in \cite{DanADOglob}, where we only required the pair $$\bigl((\overline{S}^0_{11})^{-1} S_{11}(\omega), ( \overline{S}^0_{11})^{-1} S_{12}(\omega)Z(\omega)^{-1}S_{21}(\omega)\bigr)$$ to satisfy the (SK) condition (equivalently that
$(N_\omega, M_\omega)$ satisfies the (SK) condition), it is unclear how to obtain global estimates on $V^1$ in the $L^p$ framework without  (\textbf{H6}). 

 We note that $z$ verifies a damped hyperbolic system. Therefore  (\textbf{H6}) will help us to derive exponential decay on $z$
 in $L^p$ type spaces. Keep in mind that in the homogeneous
Besov spaces setting (or Lebesgue spaces), bounding $z$ or $V^1$ is equivalent as one can
go from $V^1$ to $z$ or from $z$ to $V^1$ by means of an invertible constant matrix.
\begin{rmq} 
\label{rmq:cnd:Z:B:H}
A few remarks are in order:
\begin{itemize}
   
   \item Assume that $S^0_{11}$ is the identity.  Then, Condition (\textbf{H4})  is reduced to 
   the fact that   the matrices
$ S^\alpha_{11}(U)$  commute, in accordance with the work by  Ph. Brenner in \cite{Bren66}.
 \item 
 As pointed out in \cite{BaratDan23} (see also Lemma \ref{lem:equi:SK:positi:V1}), 
 assuming the matrix $ S_{12}(\omega)Z(\omega)^{-1}S_{21}(\omega) $ to be positive is equivalent to the (SK) condition when $ \overline{S}^0$ is identity, $P$ is an orthogonal matrix and $ \overline{S}^\alpha_{11}=0$, for all $\alpha=1,\cdots, d$. 
 \item 
 Assume that the matrices $\overline{S}^{0}_{11}$, $S^\alpha_{11}(U)$ ($\alpha=1,\cdots, d$) and $ S_{12}(\xi)Z(\xi)^{-1}S_{21}(\xi) $ 
 commute. This implies assumption (\textbf{H4}) and that $P$ is an orthogonal matrix. If we additionally assume (\textbf{H5}), then assumption (\textbf{H6}) is met provided that the rank of the matrix $S_{21}(\xi)$ equals $n_1$  and $\xi\mapsto   S_{12}(\xi)Z(\xi)^{-1}S_{21}(\xi) $ is a constant matrix.

In particular, when $n_1=1$, the
 assumptions (\textbf{H4}),  (\textbf{H5}) and (\textbf{H6}) are satisfied if the matrix $S_{21}(\xi)\neq 0$
and $\xi\mapsto   S_{12}(\xi)Z(\xi)^{-1}S_{21}(\xi) $ is constant. These conditions are, for example, satisfied by the compressible Navier-Stokes system (see Section \ref{sec:NSF}).
   \end{itemize}
\end{rmq}

Finally, as in \cite{DanADOglob}, we assume for simplicity that the external force $f$ has the following structure: 
\begin{itemize}
\item [(\textbf{H7})]   $f$ has components 
      \begin{equation}
\label{globa:f=f(f1,f2)}
    f^1(0,\n U)\defn 0_{n_1} \esp{and} f^2(U,\n U)\defn 
Q(U,\n U,\n U),
  \end{equation}
  with  $Q$ is quadratic with respect to $\n U$ (that is,  $Q$  is a combination of terms of type  $\upsilon(U)\n U\bigotimes\n U$).
\end{itemize}

\subsection{Notations and main results}
Our preliminary analysis of System \eqref{systemE} suggests working with analysis tools that would facilitate the decomposition of the frequencies. Clearly, as shown in \cite{DanADOglob} and explained above, achieving optimal results requires using different types of regularity for the low and high frequency components of the solution.
In this regard, the Littlewood-Paley decomposition and homogeneous Besov spaces emerge as natural tools for analyzing our model.
Here, following \cite[Chapter 2]{HajDanChe11}, 
we define a homogeneous Littlewood-Paley decomposition $ (\Dj)_{j\in \mathbb{Z}}$ setting 
\begin{align}
\label{def:varphi:chap3}
    \Dj \defn \varphi(2^{-j}D) \esp{with} \varphi(\xi)=\chi(\xi/2)-\chi(\xi)
\end{align}
where $\chi$ stands for a smooth function with range in $[0,1]$, supported in the open ball $B(0,4/3)$
and such that $\chi=1$ on the closed ball $\overline{B}(0,3/4)$. From now on, we set 
for all tempered distribution $z$: 
\begin{equation}
    \label{def:z:lf:hf}  z_j\defn \Dj z;\;\;\;
    z^{l,N_0}\overset{def}{=} \sum_{j\le N_0}z_j;\;\;\;  z^{h,N_0} \overset{def}{=} \sum_{j>N_0} z_j \esp{and}  z^{H,N_0} \overset{def}{=} \sum_{j>N_0+2} z_j,
\end{equation}
for some large enough nonnegative integer $N_0$ that will depend   on the matrices of System \eqref{systemE},  to be determined later\footnote{The values of $N_0$ can be explicitly determined in our computations. It is essential to choose it suitably large to ensure that some linear source terms can be absorbed.}. 
\medbreak
$z^{l,N_0}$ and $z^{h,N_0}$ correspond respectively to the low and high frequency parts of $z$. Note also that $z^{H,N_0}$ refers to the high-frequency part of $z$ while $ z-z^{H,N_0}$ corresponds to the  associated low-frequency part.
The reader is referred to the section \ref{appendix:LP:loc} for the definition of homogeneous Besov spaces
$\dot B^s_{p,r}$ and to some of their properties. 
We recall the "truncated" Besov semi-norms:
\begin{equation}
    \label{def:norm:lf:hf}
    \NBH{s}{p}{r}{z}^{l,N_0} = \Vert\left(  2^{js} \Vert z_j \Vert_{L^p}\right)_{j\le N_0 +1} \Vert_{l^r} \esp{and}  \NBH{s}{p}{r}{z}^{h,N_0} = \Vert\left(  2^{js} \Vert z_j \Vert_{L^p}\right)_{j> N_0 +1} \Vert_{l^r} 
\end{equation}
so that for all $s^{\prime}, s\in \mathbb{R}$ and $ p^{\prime}\le p$
\begin{equation}
    \label{link:lf:hf}
     \NB{s}{p}{r}{z^{l,N_0}}\lesssim  \NB{s}{p}{r}{z}^{l,N_0} \esp{and}  \NB{s}{p}{r}{z^{h,N_0}}\lesssim  \NB{s}{p}{r}{z}^{h,N_0} +  \NB{s^{\prime}}{p^{\prime}}{\infty}{z}
\end{equation}
and 
\begin{equation}
\label{zh=f(zh)}
     \NB{s}{p}{r}{z^{H,N_0}}\lesssim  \NB{s}{p}{r}{z}^{h,N_0}.
\end{equation}

Since we must use an $L^2$ type framework for the low frequencies of the solution of the system \eqref{systemE} (see section \ref{sub:sec:low}), we should keep the same Besov space as in \cite{DanADOglob} for this regime: $ V^{l,N_0}\in \BH{\cd-1}{2}{1}$. As regards the space for the high frequencies,  we shall take $ V^{2,H,N_0}_0\in  \BH{\frac{d}{p}}{p}{1} $ (or, equivalently  $W^{H,N_0}_0\in  \BH{\frac{d}{p}}{p}{1} $) and we expect to control the high frequencies of $W^{H,N_0}$ in $L^\infty(\mathbb{R}_+,\BH{\frac{d}{p}}{p}{1})\cap L^1(\mathbb{R}_+,\BH{\frac{d}{p}+2}{p}{1})$,  provided we succeed in bounding $g^{H,N_0}$ in $ L^1(\mathbb{R}_+,\BH{\frac{d}{p}}{p}{1})$. 
In order to bound in $L^1(\mathbb{R}_+,\BH{\frac{d}{p}}{p}{1})$ the nonlinear terms appearing in  \eqref{eq:W} (for instance the first term of $g^V$), it is important that $V^{1,H,N_0}$ has one more degree of regularity than $V^{2,H,N_0}$. Hence, $V^{1,H,N_0}_0$ must be in $\BH{\frac{d}{p}+1}{p}{1}$.
\medbreak
Our first global existence result for System (3) exactly corresponds to this framework.
\begin{thm}
    \label{thm:p}
    Let $d\ge 2$, $ 2\le  p_1\le p_2< 2d $ and $p_2\le \min( 4,\frac{2d}{d-2})$.  Let the (SK) condition for the pairs ($N_\omega,M_\omega$)$_{\omega \in \mathbb{S}^{d-1}}$, defined in \eqref{A:def}-\eqref{B_ome:def} and the hypotheses (\textbf{H1})-(\textbf{H7}) be in force.
        There exists an integer $N_0$ (depending on the matrices involved in system \eqref{systemE}) and a positive constant $c_0$ such that for any data $V_0=(V_0^1,V_0^2)$ satisfying
    $V^1_0\in \BH{\frac{d}{p_1}+1}{p_1}{1}$ and $ V^2_0\in \BH{\frac{d}{p_1}}{p_1}{1},$
    if, in addition,    
    \[   \NBH{\frac{d}{2}-1}{2}{1}{V_0}^{l,N_0} +\NBH{\frac{d}{p_2}+1}{p_2}{1}{V^1_0}^{h,N_0}+ \NBH{\frac{d}{p_2}}{p_2}{1}{V^2_0}^{h,N_0} \le c_0, \]
    then \eqref{eq:base:V} has a unique global-in-time solution in the space 
    $\mathbb{E}_{p_1}\cap \mathbb{E}_{p_2}$ with $\mathbb{E}_{p_i}$ defined by
    \begin{align}
        \label{def:E}
        \begin{split}
            V^{l,N_0} \in & \mathcal{C}_b(\mathbb{R}^+,\BH{\frac{d}{2}-1}{2}{1})\cap L^1(\mathbb{R}^+,\BH{\frac{d}{2}+1}{2}{1}); \;\;  V^{1,H,N_0}\in \mathcal{C}_b(\mathbb{R}^+,\BH{\frac{d}{p_i}+1}{p_i}{1})\cap L^1(\mathbb{R}^+,\BH{\frac{d}{p_i}+1}{p_i}{1}) \\
            & V^{2,H,N_0}, W^{H,N_0}\in \mathcal{C}_b(\mathbb{R}^+,\BH{\frac{d}{p_i}}{p_i}{1})\cap L^1(\mathbb{R}^+,\BH{\frac{d}{p_i}+2}{p_i}{1});\;\; \pt V\in L^1(\mathbb{R}^+,\BH{\frac{d}{p_i}}{p_i}{1}) 
        \end{split}
    \end{align}
with $W$ defined in \eqref{W:def:2}.
\medbreak
Moreover, we have the following bound for $1\le i\le 2$:
\begin{align*}
    \mathbb{V}_{p_i}&\lesssim  \mathbb{V}_{{p_i},0}\defn \NBH{\frac{d}{2}-1}{2}{1}{V_0}^{l,N_0} +\NBH{\frac{d}{p_i}+1}{p_i}{1}{V^1_0}^h+ \NBH{\frac{d}{p_i}}{p_i}{1}{V^2_0}^h
\end{align*}
with
\begin{multline*}
      \mathbb{V}_{p_i}\defn \LpNBH{\frac{d}{2}-1}{2}{1}{V}{\infty}^{l,N_0}+\LpNBH{\frac{d}{2}+1}{2}{1}{V}{1}^{l,N_0} + \Vert V^1\Vert_{L^\infty\cap L^1(\BH{\frac{d}{p_i}+1}{p_i}{1})}^{h,N_0}\\
    + 
    \LpNBH{\frac{d}{p_i}}{p_i}{1}{(W,V^2)}{\infty}^{h,N_0} + \LpNBH{\frac{d}{p_i}+2}{p_i}{1}{(W,V^2)}{1}^{h,N_0}
    +\LpNBH{\frac{d}{p_i}}{p_i}{1}{\pt V}{1}.
\end{multline*}
    
\end{thm}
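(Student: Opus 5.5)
The proof will follow the standard scheme: a priori estimates in $\mathbb{E}_{p_1}\cap\mathbb{E}_{p_2}$, then local existence plus a bootstrap, then uniqueness. For local existence we invoke the critical local well-posedness result of \cite{DanADOlocal}, adapted to the $L^p$ high-frequency setting, or alternatively a Friedrichs regularization with compactness. Global existence then follows from an a priori inequality of the form
\[
\mathbb{V}_{p_i}\lesssim \mathbb{V}_{p_i,0}+\mathbb{V}_{p_2}\,(\mathbb{V}_{p_1}+\mathbb{V}_{p_2})\bigl(1+\mathbb{V}_{p_2}\bigr)^{K},\qquad i=1,2.
\]
The point is that the nonlinear terms are controlled with the \emph{smallness} carried only by the $p_2$-norm (which dominates $L^\infty$ and Lipschitz-type quantities since $p_2<2d$). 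The $p_1$-norm then enters linearly. Hence smallness of $\mathbb{V}_{p_2,0}$ closes the $p_2$ estimate first, and the $p_1$ estimate follows.

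\emph{Low frequencies.} Localize \eqref{eq:lindiff} by $\Dj$ for $j\le N_0+1$ and use the Lyapunov functional $L_{\rho,\omega}$ of \eqref{def:lya:lf}, with Lemma \ref{lem:derI}, Lemma \ref{lem:SK_diss} and (SK), to obtain
\[
\|V\|^{l,N_0}_{L^\infty(\dot B^{d/2-1}_{2,1})}+\|V\|^{l,N_0}_{L^1(\dot B^{d/2+1}_{2,1})}\lesssim \|V_0\|^{l,N_0}_{\dot B^{d/2-1}_{2,1}}+\|F^{l,N_0}\|_{L^1(\dot B^{d/2-1}_{2,1})}.
\]
The nonlinear terms $F^t,F^V,F^{V^2},f$ are products whose factors may be high frequency and lie only in $L^p$. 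We bound their low-frequency parts in $\dot B^{d/2-1}_{2,1}$ by product laws of the type $\dot B^{d/p}_{p,1}\times\dot B^{d/p-1}_{p,1}\to\dot B^{d/2-1}_{2,1}$ restricted to low frequencies, combined with Bernstein. This requires $\frac{2}{p}-\frac{1}{2}\ge 0$, i.e. $p\le4$, and $\frac dp+\frac dp-1-\frac d2>0$-type conditions, whence $p<2d$ and $p\le \frac{2d}{d-2}$. We also use composition estimates for $S^0(U)-\overline{S^0}$, etc., in $\dot B^{d/2}_{2,1}+\dot B^{d/p}_{p,1}$.

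\emph{High frequencies.} We treat $W$ via \eqref{eq:W}. Standard parabolic maximal regularity in $\dot B^{d/p}_{p,1}$ (localized, using strong ellipticity of $\mathcal Z$ together with the structure of $\overline S^0_{22}$) gives the $L^\infty(\dot B^{d/p}_{p,1})\cap L^1(\dot B^{d/p+2}_{p,1})$ bound by $\|W_0\|^{h}+\|g\|^h_{L^1(\dot B^{d/p}_{p,1})}$. We then treat $z=PV^1$ via \eqref{eq:V1=f(W)}. Using (\textbf{H4})--(\textbf{H6}), each component is a scalar transport equation with damping $b_k>0$ plus the lower-order term $h$. An $L^p$ energy estimate on $\Dj z$, with a commutator $[\Dj,H^\alpha(U)]\pal z$ bounded by $\|\n U\|_{\dot B^{d/p_2}_{p_2,1}}\|z\|_{\dot B^{d/p+1}_{p,1}}$, gives exponential damping and therefore the $L^\infty\cap L^1(\dot B^{d/p+1}_{p,1})$ bound. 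The linear terms $h^W$ and $h^{V^2}_0$ are order $\le 1$ in $W$, $V^2$; they are absorbed through the $L^1(\dot B^{d/p+2})$ control of $W$ and by taking $N_0$ large, since high-frequency Bernstein gains a factor $2^{-N_0}$. Next we recover $V^2$ from $W$ by \eqref{W:def:2}: $V^2=W-i|D|^{-1}\mathcal Z^{-1}(\Ss_{21}V^1+\Ss_{22}V^2)$, where the last term is of order $-1$ and is absorbed at high frequencies. Finally $\pt V$ is read from the equations.

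\emph{Main obstacle.} The hardest step is the nonlinear bounds for $g^t$ and $h$ at high frequencies in $L^p$, and the mixed low/high terms. Typical examples are $(\overline S_{21}-S_{21}(U))\pal V^1$, where $U$ has low frequencies only in $L^2$-Besov spaces, and the term $g^t$, which involves $\pt V$. For these we plan new paraproduct and commutator lemmas in which each factor is split into its low part, embedded via $\dot B^{d/2-1}_{2,1}\hookrightarrow\dot B^{d/p-1}_{p,1}$, and its high part. Remainders are handled using $p\le 4$. The $\pt V$ term is treated by substituting the equation. Uniqueness will follow from a stability estimate in a space with one less derivative ($\dot B^{d/2-2}_{2,\infty}$ low, $\dot B^{d/p-1}_{p,1}$ high), again relying on $p<2d$.
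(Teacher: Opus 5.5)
Your existence argument matches the paper's: the Lyapunov/(SK) estimate at low frequencies with mixed $L^2$--$L^p$ product laws, maximal regularity for $W$, damped $L^p$ transport estimates for $z=PV^1$, recovery of $V^2$ from $W$, and a bootstrap with smallness only in the $p_2$-norm.

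The step that fails is uniqueness. You measure $\widetilde{V}=V_1-V_2$ with low frequencies in $\dot{B}^{\frac d2-2}_{2,\infty}$. You must then bound, for $j\le N_0$, the $L^2$ low frequencies of quadratic terms such as $(S^\alpha(U_1)-S^\alpha(U_2))\partial_\alpha V_2$. Both factors there may be high-frequency and only in $L^{p_2}$. Such a product is only in $L^{p_2/2}$, and the best available bound is
\[
2^{j(\frac d2-2)}\|\dot{\Delta}_j(ab)\|_{L^2}\lesssim 2^{j(\frac{2d}{p_2}-2)}\|a\|_{L^{p_2}}\|b\|_{L^{p_2}},\qquad j\le N_0,
\]
which is unbounded as $j\to-\infty$ once $p_2>d$.

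This loss is real. Take $a=b=\phi(x)\cos(2^{N}x_1)$, with $\widehat{\phi}$ compactly supported near $0$, $\phi(x)\sim|x|^{-\gamma/2}$ at infinity, and $\frac{2d}{p_2}<\gamma<2$. The low-frequency part of $ab$ is $\phi^2/2\sim|x|^{-\gamma}/2$, which does not belong to $\dot{B}^{\frac d2-2}_{2,\infty}$ near frequency $0$.

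In terms of Lemma~\ref{lem:produ:lf:hf}, the remainder condition $s>-\frac d{p_2}$ with $s=\frac d{p_2}-2$ reads $p_2<d$, not $p_2<2d$. The theorem allows $2<p_2<4$ when $d=2$ and $3<p_2\le 4$ when $d=3$, so your uniqueness framework does not cover the stated range.

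The paper avoids any $L^2$ low-frequency analysis of the difference. On a short interval $[0,T]$ it works at all frequencies:
\begin{itemize}
\item $\widetilde{V}^1$ is estimated in $L^\infty_T(\dot{B}^{d/p_2}_{p_2,1})$ by a transport estimate, with no damping needed.
\item $\widetilde{V}^2$ is estimated in $L^\infty_T(\dot{B}^{d/p_2-1}_{p_2,1})\cap L^1_T(\dot{B}^{d/p_2+1}_{p_2,1})$ by a parabolic estimate.
\end{itemize}
Solutions in $\mathbb{E}_{p_2}$ lie in this class because $\dot{B}^{\frac d2-1}_{2,1}\hookrightarrow\dot{B}^{\frac d{p_2}-1}_{p_2,1}$. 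Terms that are not small a priori carry factors $\sqrt{T}$ or $\|V_1^2\|_{L^2_T(\dot{B}^{d/p_2+1}_{p_2,1})}$, which vanish as $T\to0$, or the small norm of the constructed solution $V_2$. The only product law needed is $\dot{B}^{d/p_2}_{p_2,1}\times\dot{B}^{d/p_2-1}_{p_2,1}\to\dot{B}^{d/p_2-1}_{p_2,1}$, which is exactly where $p_2<2d$ enters. Uniqueness on $\mathbb{R}_+$ then follows by continuation.

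Two lesser points.
\begin{itemize}
\item The local theory of \cite{DanADOlocal} is an $L^2$ theory; it cannot simply be ``adapted'' to $L^p$ data. The paper regularizes the data as $V_{0,n}=\chi_nV_0^{l}+\chi_n\dot{S}_{2n}V_0^{h}\in B^{\sigma}_{2,1}$ for all $\sigma\ge\frac d2$. It then shows that the critical norms of $V_{0,n}$ are bounded by those of $V_0$; the low-frequency bound on $\chi_n\dot{S}_{2n}V_0^h$ again uses the restrictions on $p_2$. Finally it applies the $L^2$ continuation criterion and compactness.
\item The embedding $\dot{B}^{d/p_2}_{p_2,1}\hookrightarrow L^\infty$ holds for every $p_2$, so $p_2<2d$ plays no role there. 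The conditions on $p_2$ come from the low-frequency product law: $p_2\le4$ and $1\ge\frac d2-\frac d{p_2}$ for the paraproduct, and $\frac{2d}{p_2}-1>0$ for the remainder. They do not come from the condition $\frac{2d}{p}-1-\frac d2>0$ you wrote.
\end{itemize}
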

\begin{rmq}
    \label{rmq:srong:cnd:p}
    This theorem improves our result obtained in \cite{DanADOglob}, as
here one may take the high frequencies of the data belonging to Besov spaces related to $L^p$ (with $p\ge 2$), which are larger. However, our structural assumptions (\textbf{H1})-(\textbf{H7}) are stronger than those in \cite{DanADOglob}. As in \cite{BaratDan23}, we have some restrictions on $p_1$ and $p_2$, that
is, $ 2\le p_1\le p_2\le \max(2d,2d/(d-2))$. This is due to the interactions between low and high frequencies
in the nonlinear terms. Let us also underline that we need smallness only in space related to $L^{p_2}$, so $V_0$ has to be taken so that $\NBH{\frac{d}{p_1}+1}{p_1}{1}{V^1_0}^{h,N_0}+ \NBH{\frac{d}{p_1}}{p_1}{1}{V^2_0}^{h,N_0}$ is large.
\end{rmq}
\begin{rmq}
    \label{uniq:p2}
    Uniqueness holds in a space larger than $\mathbb{E}_{p_i}$: as we will see in Section \ref{sec:uni:p}, uniqueness holds true in the space $\mathbb{F}_T$ of functions $(V^1,V^2)$ such that $V^1\in \mathcal C([0,T];\BH{\cdp{p_2}}{p_2}{1})$ and $ V^2\in \mathcal C([0,T];\BH{\cdp{p_2}-1}{p_2}{1})\cap L^1_T(\BH{\cdp{p_2}+1}{p_2}{1})$ with $\TLpNBH{\cdp{p_2}}{p_2}{1}{V^1}{\infty}\le c$ (for some positive small constant $c$ depending only on $d$, $p_2$ and on the matrices of System \eqref{eq:base:V}) and $2\le p_2<2d$.
\end{rmq}

In the previous results, we had some  restrictions on
$p_i$ (namely, $p_i \le  \max(4, \frac{d}{d-2})$). We cannot choose $p_i$ to be arbitrarily large, because of the behavior in low frequencies. This  comes
from some restrictions in  the use of the
paraproduct laws (see the section \ref{appendix:LP:loc}). To overcome them, one can add some assumptions on
the high frequencies of the data,  which leads to  the following statement:
\begin{thm}
    \label{thm:p:s}
    Let $d\ge 2$ and assume that the Condition (SK) for the pairs ($N_\omega,M_\omega$)$_{\omega \in \mathbb{S}^{d-1}}$, defined in \eqref{A:def}-\eqref{B_ome:def},  and the hypotheses  (\textbf{H1})-(\textbf{H7})  are satisfied. Assume that $V^1_0\in \BH{\frac{d}{p_1}+1}{p_1}{1}$, $ V^2_0\in \BH{\frac{d}{p_1}}{p_1}{1} $ for some $p_1\ge 2$. Let $p_2\ge p_1$. Then there exists an integer $N_0$ such that if we make the following additional  (lower order) assumptions 
    \[  V^2, V^{1,l,N_0}\in \BH{s}{2}{r} ;\;\; V^{1,h,N_0}\in   \BH{s+1}{2}{r} \]
    for some $r\in [1,\infty]$ and $s$ such that
    \begin{align}
        \label{def:s:r:p2}
       -\min(1,\frac{d}{p_2})<  s<\frac{d}{2}-1 \;\; \text{and}\;\; r>1 \esp{or} s=\frac{d}{2}-1 \;\; \text{and}\;\; r=1.
    \end{align}
    then there exist some positive constants $c$ and $C$ such that if 
   \begin{align*}
       \NBH{s}{2}{r}{(V^2_0,V^{1,l,N_0}_0)}+  \NBH{s+1}{2}{r}{V^{1,h,N_0}_0} +\NBH{\frac{d}{p_2}}{p_2}{1}{V^{2,h,N_0}_0}+ \NBH{\frac{d}{p_2}+1}{p_2}{1}{V^{1,h,N_0}_0}\le  c
   \end{align*}
     then \eqref{eq:base:V} has  a global-in-time solution in the space
     $\mathfrak{E}_{p_1}^s\cap \mathfrak{E}_{p_2}^s$ with   $\mathfrak{E}_{p_i}^s$  defined by
    \begin{align}
        \label{def:E:s}
        \begin{split}
          &  V^2, V^{1,l,N_0}\in \widetilde{\mathcal{C}}_b(\mathbb{R}^+,\BH{s}{2}{r})\cap \widetilde{L}^1(\mathbb{R}^+,\BH{s+2}{2}{r}); \;\;  V^{1,h,N_0}\in \widetilde{\mathcal{C}}_b(\mathbb{R}^+,\BH{s+1}{2}{r})\cap \widetilde{L}^1(\mathbb{R}^+,\BH{s+1}{2}{r})\\
            &  V^{1,H,N_0}\in \mathcal{C}_b(\mathbb{R}^+,\BH{\frac{d}{p_i}+1}{p_i}{1})\cap L^1(\mathbb{R}^+,\BH{\frac{d}{p_i}+1}{p_i}{1}) ;\;\; \pt V\in L^1(\mathbb{R}^+,\BH{\frac{d}{p_i}}{p_i}{1})  \\
            & V^{2,H,N_0}, W^{H,N_0}\in \mathcal{C}_b(\mathbb{R}^+,\BH{\frac{d}{p_i}}{p_i}{1})\cap L^1(\mathbb{R}^+,\BH{\frac{d}{p_i}+2}{p_i}{1}), 
        \end{split}
    \end{align}
with $W$ defined in \eqref{W:def:2}.
\medbreak
Moreover, we have the following bound for all $t\ge 0$ and $i=1,2$:
\begin{align*}
   X_{p_i}(t)\lesssim  \NBH{s}{2}{r}{(V^2_0,V^{1,l,N_0}_0)}+  \NBH{s+1}{2}{r}{V^{1,h,N_0}_0} +\NBH{\frac{d}{p_i}}{p_i}{1}{V^{2,h,N_0}_0}+ \NBH{\frac{d}{p_i}+1}{p_i}{1}{V^{1,h,N_0}_0}
\end{align*}
with 
\begin{multline*}
 X_{p_i}(t) \defn \tLpNBH{s}{2}{r}{(V^2,V^{1,l,N_0})}{\infty}+\tLpNBH{s+2}{2}{r}{(V^2,V^{1,l,N_0})}{1}+\tLpNBH{s+1}{2}{r}{V^{1,h,N_0}}{\infty}\\+\tLpNBH{s+1}{2}{r}{V^{1,h,N_0}}{1}
    + \Vert V^1\Vert_{L^\infty_t\cap L^1_t(\BH{\frac{d}{p_i}+1}{p_i}{1})}+ 
    \ttLpNBH{\frac{d}{p_i}}{p_i}{1}{(W,V^2)}{\infty}^{h,N_0} + \ttLpNBH{\frac{d}{p_i}+2}{p_i}{1}{(W,V^2)}{1}^{h,N_0}+\\
    \ttLpNBH{\frac{d}{p_i}}{p_i}{1}{\pt V}{1}.
\end{multline*}

In addition, uniqueness holds for $p_2<2d$.
    \end{thm}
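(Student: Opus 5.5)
The proof runs as the proof of Theorem~\ref{thm:p} does: a priori estimates closed by a bootstrap, then a compactness/Friedrichs-type approximation scheme. The one new ingredient is that the $L^2$-type low-frequency norm $\dot B^{\frac d2-1}_{2,1}$ is replaced by the lower-order norm $\widetilde{L}^\infty\cap\widetilde{L}^1$ in $\dot B^s_{2,r}$. This norm is only used to control the low-frequency interactions in the nonlinear terms, and that is exactly where the restriction $p_2\le\min(4,\frac{2d}{d-2})$ entered before.

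\textbf{Step 1: the $L^2$-type part of $X_{p_i}$.} We localize \eqref{eq:base:V} with $\Dj$. On every block we use the Beauchard--Zuazua functional $L_{\ro,\omega}$ of \eqref{def:lya:lf}, made valid by the (SK) condition through Lemma~\ref{lem:derI} and Lemma~\ref{lem:SK_diss}. This gives, for each $j$,
\[
\|V_j(t)\|_{L^2}+\min(1,2^{2j})\int_0^t\|V_j\|_{L^2}\lesssim \|V_j(0)\|_{L^2}+\int_0^t\|F_j\|_{L^2}.
\]
For $j>N_0$ we add the refined estimate on $V^2_j$ coming from the parabolic mode $W$ in \eqref{eq:W}, together with the damped equation \eqref{eq:V1=f(W)} for $z=PV^1$ under (\textbf{H6}). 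This yields the $\dot B^{s+2}$ gain on $V^2$ and the $\dot B^{s+1}$ damping of $V^{1,h,N_0}$. Multiplying by $2^{js}$ and taking the $\ell^r$ norm then bounds the first four terms of $X_{p_i}$ by the data plus $\|F\|_{\widetilde L^1_t(\dot B^s_{2,r})}$.

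\textbf{Step 2: the $L^{p_i}$ high-frequency part.} This is the same as in Theorem~\ref{thm:p}. The heat-type equation \eqref{eq:W} gives maximal regularity for $W^{H,N_0}$ in $L^\infty\cap L^1$ with values in $\dot B^{\frac d{p_i}}_{p_i,1}$, resp.\ $\dot B^{\frac d{p_i}+2}_{p_i,1}$. Equation \eqref{eq:V1=f(W)} under (\textbf{H4})--(\textbf{H6}) is a diagonal damped transport system, so $L^p$ estimates along characteristics, plus a commutator lemma, bound $V^{1,H,N_0}$. The bound for $\pt V$ then follows from the equation. The low-frequency part of the $L^{p_i}$ norm of $V^1$ is controlled via Bernstein from the $\dot B^s_{2,r}$ bounds, because frequencies $\le N_0$ are involved.

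\textbf{Step 3: nonlinear terms (main obstacle).} The terms $g$, $h$ and $F$ are products of a low-frequency factor, controlled only in $L^2$-type spaces, with a high-frequency factor in $L^{p_2}$. These must be estimated in both $\widetilde L^1(\dot B^s_{2,r})$ and $L^1(\dot B^{d/p_i}_{p_i,1})$. I would first split each product by Bony's decomposition. A paraproduct $T_a b$ with $a$ in $\dot B^{d/p_2}_{p_2,1}\hookrightarrow L^\infty$ and $b$ in $\dot B^{s}_{2,r}$ costs nothing. For the remainder $R(a,b)$ and for $T_b a$ we use the product law $\dot B^{d/p_2}_{p_2,1}\times\dot B^{s}_{2,r}\to\dot B^{s}_{2,r}$. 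That law holds exactly when $s>-\min(1,\frac d{p_2})$, shifted by one derivative for the gradient terms, and when $s<\frac d2-1$ (or $s=\frac d2-1$, $r=1$), which is \eqref{def:s:r:p2}. Since the low-frequency contributions are now measured in $\dot B^s_{2,r}$ with $s\le\frac d2-1$, the condition on $p_2$ from the $L^2$--$L^p$ embedding in Theorem~\ref{thm:p} disappears. Composition estimates for $S^0(U)-\overline{S^0}$, $r^{\alpha\beta}(U)$, and so on, use only the $L^\infty\cap\dot B^{d/p_2}_{p_2,1}$ smallness. The term $g^t$ involving $\pt V$ is handled with the $L^1(\dot B^{d/p_i}_{p_i,1})$ bound on $\pt V$.

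\textbf{Step 4: closing.} Steps 1--3 give $X_{p_2}\lesssim X_{p_2}(0)+X_{p_2}^2$. The smallness hypothesis then closes the bootstrap globally. For $p_1$ the estimate is linear in the $p_1$ norm with coefficients small in the $p_2$ norm, so $X_{p_1}$ can be large, as in Remark~\ref{rmq:srong:cnd:p}. Existence follows by smoothing the data and passing to the limit by compactness. Uniqueness for $p_2<2d$ follows from the stability estimate in the space $\mathbb F_T$ of Remark~\ref{uniq:p2}, since the solution belongs to that space.
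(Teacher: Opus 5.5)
Your overall architecture matches the paper's. You use the Lyapunov functional $\mathcal{L}_j$ at low frequencies in $\dot B^{s}_{2,r}$, and the parabolic mode $W$ together with the damped transport equation for $z=PV^1$ at high frequencies in $\dot B^{s}_{2,r}$ and $\dot B^{s+1}_{2,r}$. You reuse Proposition~\ref{propo:est:hf}, then close by bootstrap, compactness, and uniqueness through Lemma~\ref{est:uni}. However, Step~3, which you rightly call the crux, rests on a product law that is false in part of the range \eqref{def:s:r:p2}.

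The bound $\|ab\|_{\dot B^{s}_{2,r}}\lesssim\|a\|_{\dot B^{d/p_2}_{p_2,1}}\|b\|_{\dot B^{s}_{2,r}}$ requires $s<d/p_2$, and the obstruction is exactly the paraproduct $\dot T_b a$ to which you apply it. To see this, take $\widehat b$ supported in a fixed annulus and $a=e^{i2^k x_1}\psi$ with $\widehat\psi$ supported in a small ball. Then $ab=\dot T_b a$ and $\|ab\|_{\dot B^{s}_{2,r}}\simeq 2^{ks}$, whereas the right-hand side is $\simeq 2^{kd/p_2}$.

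For $d\ge3$ and $p_2>\frac{2d}{d-2}$, the admissible range contains $\frac{d}{p_2}<s\le\frac d2-1$. This is exactly the regime Theorem~\ref{thm:p:s} is meant to add to Theorem~\ref{thm:p}, and there your Step~3 breaks down. The same happens one level up:
\begin{itemize}
\item the commutator $[\Dj,H^\alpha(U)]\pal z$ in $\dot B^{s+1}_{2,r}$ can be bounded without the tame term $\|\nabla z\|_{L^\infty}\|H^\alpha(U)-H^\alpha(\overline{U})\|_{\dot B^{s+1}_{2,r}}$ of \eqref{comm:est:p} only when $s<d/p_2$;
\item bounding $\pal(r^{\alpha\beta}(U)\pbe V^2)$ through your law at level $s+1$ would even require $s+1<d/p_2$.
\end{itemize}
Your treatment of $g^t,F^t$ has the same defect. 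If $\pt V$ is only used in $L^1(\dot B^{d/p_i}_{p_i,1})$, the $L^2$-type norm must sit on the coefficient $S^0(U)-\overline{S^0}$. That needs a composition estimate at level $s$ (so $s>0$) and again $s<d/p_2$. Meanwhile $\pt V$ itself is not controlled in $\widetilde L^1_t(\dot B^{s}_{2,r})$ at low frequencies, since there $\nabla V$ is only in $\widetilde L^2_t(\dot B^{s}_{2,r})$. The only other way to produce an $L^2$ output from two $L^{p_2}$ factors at low frequencies is Lemma~\ref{lem:produ:lf:hf}, which is precisely what forces $p_2\le 4$ in Theorem~\ref{thm:p}.

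The missing ingredient is the tame estimate of Lemma~\ref{lem:prod}. Its proof gives, for $\theta>-d/p_2$ and $\sigma>0$ with $\theta+\sigma>0$,
\[
\|g\,f_2(V)\|_{\dot B^{\theta}_{2,r}}\lesssim\bigl(\|V\|_{L^\infty}+\|V\|_{\dot B^{d/p_2}_{p_2,\infty}}\bigr)\|g\|_{\dot B^{\theta}_{2,r}}+\|V\|_{\dot B^{\theta+\sigma}_{2,r}}\|g\|_{\dot B^{d/p_2-\sigma}_{p_2,\infty}}.
\]
This has no upper restriction on $\theta$, because $\dot T_g f_2(V)$ is estimated by placing the $L^2$-type norm on the coefficient $f_2(V)$. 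Positivity is needed only for the composition estimate at level $\theta+\sigma$. With $\theta=s$, $\sigma=1$, this is the real origin of $s>-1$ (not a shifted product law), while $s>-d/p_2$ comes from the remainder.

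Combined with an $L^2_t\times L^2_t$ split, e.g. $\|V\|_{\widetilde L^2_t(\dot B^{s+1}_{2,r})}\|\nabla V\|_{\widetilde L^2_t(\dot B^{d/p_2-1}_{p_2,\infty})}$, both factors are bounded by $X_{p_2}$. The same tame form is used for the commutator. The $\pt V$ terms are handled by substituting the equation for $\pt V$ (Lemma~\ref{lem:est:pt:V:s}) rather than through an $L^1(\dot B^{d/p_i}_{p_i,1})$ bound.

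Finally, the upper bound $s\le\frac d2-1$ (with $r=1$ at equality) does not come from a product law. It is what gives $\|V\|^{l}_{\dot B^{\frac d2-1+\eta}_{2,1}}\lesssim\|V\|^{l}_{\dot B^{s+\eta}_{2,r}}$, hence $\mathbb{V}_{p_i}\lesssim X_{p_i}$. That inequality is what lets the $L^{p_i}$ high-frequency estimates of Theorem~\ref{thm:p} (your Step~2) be reused without any restriction on $p_2$.
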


\begin{rmq}
    \label{rmq:s}
    Let us notice that in our result, contrary with the results in \cite{DanCha10,Haspot11,ChenMia10}, we don't have any restriction on $p_i$ in the existence part. We can take $p_2$ and then  $p_1$ as large as we want. This is due to the fact
    that here we ask one more degree of regularity for the high frequencies part of $V$. 
    
    \end{rmq}
    \begin{rmq}
        If $r=\infty$, then in the above theorem we must replace the strong continuity in $\BH{s}{2}{r}$ by the weak continuity.
    \end{rmq}

\section{A priori estimates for Theorem \ref{thm:p}.}
\label{sec:priorEstThm1}

We assume that we are given a smooth (and decaying) solution $V$ of \eqref{eq:base:V} on $[0, T ] \times \Rd$ with $V_0$ as initial data, satisfying
\begin{align}
     \label{smaal:cond:V}
     \LpNBH{\frac{d}{2}-1}{2}{1}{V}{\infty}^{l}+ \LpNBH{\frac{d}{p_2}+1}{p_2}{1}{V^1}{\infty}^h+\LpNBH{\frac{d}{p_2}}{p_2}{1}{V^2}{\infty}^h\ll 1.
 \end{align}

 We shall use repeatedly that
for all $\sigma\in \mathbb{R}$, $q,r\in [1,\infty]$ and $\alpha\ge 0$
\begin{align}
 \label{est:lh:hf}
    \NBH{s}{q}{r}{z}^h\lesssim \NBH{s+\alpha}{q}{r}{z}^h \esp{and} \NBH{s-\alpha}{q}{r}{z}^{l}\lesssim \NBH{s}{q}{r}{z}^{l}.
\end{align}
Owing to the embedding $\BH{\frac{d}{p}}{p}{1}\hookrightarrow L^\infty$, Inequalities \eqref{est:lh:hf} and the smallness condition \eqref{smaal:cond:V}, we have also the following inequalities which will be used repeatedly~: 
 \begin{align}
    \label{small:data:inf}
\sup_{t\in [0,T]} \NBH{\frac{d}{p_2}}{p_2}{1}{V(t)}\ll 1 \esp{and} \sup_{t\in [0,T]} \normeinf{V(t)} \ll 1 .
\end{align}

From now on, $C >0$  designates a generic harmless constant, the value of which depends on the context and we denote by $(c_k)_{k\in \mathbb{Z}}$ various nonnegative sequences such that: $\sum_{k\in \mathbb{Z} } c_k \le 1$.
\smallbreak
The following control of the time derivative will be very useful in the sequel.

 \begin{lem}
	 \label{lem_esti_partial_t_V_2}
     Let hypotheses (\textbf{H1})-(\textbf{H7}) be in force, and \eqref{smaal:cond:V} be satisfied.
     \begin{enumerate}
         \item Let $q\in [2,+\infty] $ and $\kappa\in [\frac{d}{q}-1,\frac{d}{q}]$.  There exists a constant $C=C_{\NBH{\frac{d}{q}}{q}{1}{V}}>0$ 
	  such that 
\begin{equation} \label{pt:V:est}
        \NBH{\kappa}{q}{1}{ \pt V } \le C(\NBH{\frac{d}{q}}{q}{1}{\n V} +1)\NBH{\kappa}{q}{1}{\nabla V}+
      C\left(\NBH{\kappa+2}{q}{1}{V^2}+\NBH{\kappa+1}{q}{1}{V} \NBH{\frac{d}{q}+1}{q}{1}{V^2}\right)\cdotp
\end{equation}
    
    \item Let $2\le p_1\le p_2$. There exists a constant $C>0$ depending only on $p_1,p_2, d$ and the matrices involved in the system \eqref{systemE}, such that 

\begin{multline}
     \label{pt:V:est:p1}
    \NBH{\cdp{p_1}}{p_1}{1}{\pt V}\le C\NBH{\cdp{p_1}}{p_1}{1}{\n V}+ C\NBH{\cdp{p_1}}{p_1}{1}{\n V}\NBH{\cdp{p_2}}{p_2}{1}{\n V}+C\NBH{\cdp{p_1}}{p_1}{1}{\n^2 V^2}\\
   + C(1+\NBH{\cdp{p_1}}{p_1}{1}{V})\biggl(\NBH{\cdp{p_2}}{p_2}{1}{\n V}^2+\NBH{\cdp{p_2}}{p_2}{1}{\n V}+\NBH{\cdp{p_2}}{p_2}{1}{\n^2 V^2}\biggr)\cdotp
\end{multline}
 \end{enumerate}

	 \end{lem}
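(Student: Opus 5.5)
We express $\pt V$ through the equation itself and then estimate each term with product and composition laws in homogeneous Besov spaces. Since $S^0(U)$ is symmetric positive definite and block diagonal by (\textbf{H3}), we invert it and write
\begin{equation*}
\pt V = -(S^0(U))^{-1}\suma S^\alpha(U)\pal V + (S^0(U))^{-1}\sumab \pal\bigl(Y^{\alpha\beta}(U)\pbe V\bigr) + (S^0(U))^{-1} f(U,\n U).
\end{equation*}
Expanding the second-order term gives
\begin{equation*}
Y^{\alpha\beta}(U)\partial^2_{\alpha\beta}V + \pal\bigl(Y^{\alpha\beta}(U)\bigr)\pbe V.
\end{equation*}
By (\textbf{H2}), only $V^2$ is differentiated twice. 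The term $\pal(Y^{\alpha\beta}(U))$ is of the form $DY(U)\pal V$, and it again multiplies only $\pbe V^2$. Each coefficient $G(U)$ (namely $(S^0)^{-1}S^\alpha$, $(S^0)^{-1}Y^{\alpha\beta}$, $(S^0)^{-1}DY$) is split as $G(\overline U)+(G(U)-G(\overline U))$. The constant part gives the linear contributions $\|\n V\|_{\BH{\kappa}{q}{1}}$ and $\|V^2\|_{\BH{\kappa+2}{q}{1}}$. For the source term, (\textbf{H7}) gives $f^1(U,\n U)$ vanishing at $U=\overline U$ (i.e. $V=0$) and $f^2$ quadratic in $\n U$.

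For part (1), we use the product law
\begin{equation*}
\|ab\|_{\BH{\kappa}{q}{1}}\lesssim \|a\|_{\BH{d/q}{q}{1}}\|b\|_{\BH{\kappa}{q}{1}},
\end{equation*}
valid for $-\min(d/q,d/q')<\kappa\le d/q$. The range $\kappa\in[\frac dq-1,\frac dq]$ with $q\ge2$, $d\ge2$ fits this, apart from endpoint cases to be checked. We combine it with the composition estimate
\begin{equation*}
\|G(U)-G(\overline U)\|_{\BH{d/q}{q}{1}}\le C(\|V\|_{L^\infty})\|V\|_{\BH{d/q}{q}{1}},
\end{equation*}
which holds since $\|V\|_{L^\infty}\ll1$ by \eqref{small:data:inf}. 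These give, respectively:
\begin{itemize}
\item the first-order term: $C(1+\|V\|_{\BH{d/q}{q}{1}})\|\n V\|_{\BH{\kappa}{q}{1}}$;
\item the $\n^2V^2$ term: $C\|V^2\|_{\BH{\kappa+2}{q}{1}}$;
\item the term $DY(U)\n V\n V^2$: we put $\n V^2$ in $\BH{d/q}{q}{1}$, i.e. $\|V^2\|_{\BH{d/q+1}{q}{1}}$, and $\n V$ in $\BH{\kappa}{q}{1}$, i.e. $\|V\|_{\BH{\kappa+1}{q}{1}}$, giving $\|V\|_{\BH{\kappa+1}{q}{1}}\|V^2\|_{\BH{d/q+1}{q}{1}}$;
\item the source $f^2$: $\upsilon(U)\n U\otimes\n U$ bounded by $\|\n V\|_{\BH{d/q}{q}{1}}\|\n V\|_{\BH{\kappa}{q}{1}}$;
\item the source $f^1$: bounded like $\n V$.
\end{itemize}
The dependence of $C$ on $\|V\|_{\BH{d/q}{q}{1}}$ enters only through the composition lemma, which matches \eqref{pt:V:est}.

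For part (2), with two exponents $p_1\le p_2$, we use the mixed product law
\begin{equation*}
\|ab\|_{\BH{d/p_1}{p_1}{1}}\lesssim \|a\|_{\BH{d/p_2}{p_2}{1}\cap L^\infty}\|b\|_{\BH{d/p_1}{p_1}{1}}+\|a\|_{\BH{d/p_1}{p_1}{1}}\|b\|_{\BH{d/p_2}{p_2}{1}\cap L^\infty}.
\end{equation*}
This follows from Bony's decomposition: for paraproducts we put the low-frequency factor in $L^\infty$ (via $\BH{d/p_2}{p_2}{1}\hookrightarrow L^\infty$). For the remainder we use H\"older with exponents $p_1,p_2$ and then the embedding $\BH{d/p_1+d/p_2}{p_1p_2/(p_1+p_2)}{1}\hookrightarrow\BH{d/p_1}{p_1}{1}$, which needs $d/p_1+d/p_2>0$ and is fine. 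Applying this to each product above, with the small factor estimated in $L^{p_2}$ norms and the coefficients $G(U)-G(\overline U)$ controlled in $\BH{d/p_1}{p_1}{1}$ by composition, yields the terms
\begin{itemize}
\item $\|\n V\|_{p_1}\|\n V\|_{p_2}$, from the first-order and quadratic terms when the coefficient is frozen;
\item $\|V\|_{p_1}\cdot(\text{quantities in } p_2)$, when the coefficient carries the $p_1$ norm.
\end{itemize}
The factor $(1+\|V\|_{\BH{d/p_1}{p_1}{1}})$ comes from the composition estimate in $\BH{d/p_1}{p_1}{1}$, using $\|V\|_{L^\infty}\ll1$.

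The main obstacle is the remainder term in the product laws, especially at the lower endpoint $\kappa=\frac dq-1$ when $q$ is large, where $\kappa+d/q>0$ must still hold. We would handle this by checking $2d/q-1>0$ for $q<2d$, and for $q\ge 2d$ by treating $\kappa\ge0$ separately through an $L^\infty$ bound. We also have to make sure that in part (2) no product requires a norm of $V$ in $\BH{d/p_1}{p_1}{1}$ at second-derivative level, other than $\|\n^2V^2\|_{p_1}$.
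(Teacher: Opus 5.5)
Your proposal follows the paper's proof. Both solve the equation for $\pt V$, split each coefficient as $G(\overline U)+(G(U)-G(\overline U))$, and combine the product law $\dot B^{d/q}_{q,1}\times\dot B^{\kappa}_{q,1}\to\dot B^{\kappa}_{q,1}$ (the paper's \eqref{product_propo2}) with Proposition~\ref{propo:comp}. For part (2), your mixed $p_1$/$p_2$ law is what the paper invokes as \eqref{product_propo:p1:p2}, and your Bony-decomposition proof of it is correct.

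The only difference is how you treat the second-order term. You expand $\pal(Y^{\alpha\beta}(U)\pbe V)$ and stay at regularity $\kappa$. The paper instead bounds $\|Y^{\alpha\beta}(U)\pbe V\|_{\dot B^{\kappa+1}_{q,1}}$ with the product law for regularity $\kappa+1\ge d/q$. Both give $\|V^2\|_{\dot B^{\kappa+2}_{q,1}}+\|V\|_{\dot B^{\kappa+1}_{q,1}}\|V^2\|_{\dot B^{d/q+1}_{q,1}}$.

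Two points need correcting.

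\textbf{The source term $f^1$.} The paper's proof uses (\textbf{H7}) in the form $f^1\equiv 0$ with $f$ purely quadratic in $\n U$, as in the Navier--Stokes--Fourier application. Under your reading, $f^1$ merely vanishes at $V=0$, so it is a zeroth-order term. Such a term is controlled only by $\|V\|_{\dot B^{\kappa}_{q,1}}$, not by $\|\n V\|_{\dot B^{\kappa}_{q,1}}$, so the claim that it is ``bounded like $\n V$'' is unjustified. Under the intended hypothesis the term simply does not appear.

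\textbf{The lower endpoint in $\kappa$.} The product law requires $\kappa>-d/q$. This follows from $\kappa\ge d/q-1$ only when $q<2d$. The paper's proof simply asserts ``$-d/q<\kappa\le d/q$''. Your proposed patch does not close the gap:
\begin{itemize}
\item for $2d\le q<\infty$ and $\kappa\ge 0$, the standard law already applies, so no $L^\infty$ argument is needed there;
\item the genuinely bad range $\kappa\in[d/q-1,-d/q]$ is not reached by any argument of this type;
\item neither is $q=\infty$, where the condition $\kappa>0$ is incompatible with $\kappa\le d/q=0$.
\end{itemize}
The honest fix is to state part (1) for $\kappa>-d/q$. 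That covers every later use, which has either $\kappa=d/q$ or $q=p_2<2d$.
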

	 \begin{proof}
	    The proof relies on the product estimates of Proposition \ref{propo_produc_BH} and on the composition estimate of Proposition \ref{propo:comp}. We recall  from\eqref{eq:base:V} that
\begin{equation}
	\label{sysdif::modif}
	 \pt V = (S^0(U))^{-1}f-\sum_{\alpha}{(S^0(U))^{-1}S^\alpha (U) \pal U}  +\sum_{\alpha,\beta}{ (S^0(U))^{-1}\pal (Y(U)^{\alpha\beta} \pbe V)  }.
	\end{equation}
 So, as $-d/p <\kappa \le d/p $, using product estimates \eqref{product_propo2} combined with Proposition  \ref{propo:comp}  yields (keep in mind the structure of $f$ defined in (\textbf{H7})): 
 \begin{equation*}
  \NBH{\kappa}{q}{1}{  (S^0(U))^{-1}f }\lesssim \Bigl( 1+\NBH{\frac{d}{q}}{q}{1}{V}\Bigr) \NBH{\frac{d}{q}}{q}{1}{\n V} \NBH{\kappa}{q}{1}{\nabla V},\end{equation*}
  and thus 
  \begin{multline*}
     \NBH{\kappa}{q}{1}{ \pt V } \lesssim \Bigl( 1+\NBH{\frac{d}{q}}{q}{1}{V}\Bigr) \NBH{\frac{d}{q}}{q}{1}{\n V} \NBH{\kappa}{q}{1}{\nabla V}+ \Bigl( 1+\NBH{\frac{d}{q}}{q}{1}{V}\Bigr) \NBH{\kappa}{q}{1}{\nabla V}\\
    +     \Bigl( 1+\NBH{\frac{d}{q}}{q}{1}{V}\Bigr) \sumab \NBH{\kappa}{q}{1}{\pal((Y(u)^{\alpha\beta} \pbe V)}.
 \end{multline*}
 Hence, using \eqref{product_propo3:chap2} for the last term, and remembering the form of each $ Y^{\alpha\beta} $ yields 
 \begin{align*}
     \NBH{\kappa}{q}{1}{ \pt V } \lesssim  \Bigl( 1+\NBH{\frac{d}{q}}{q}{1}{V}\Bigr) \NBH{\frac{d}{q}}{q}{1}{\n V} \NBH{\kappa}{q}{1}{\nabla V}+\Bigl( 1+\NBH{\frac{d}{q}}{q}{1}{V}\Bigr) \NBH{\kappa}{q}{1}{\nabla V}\\
     +
     ( 1+\NBH{\frac{d}{q}}{q}{1}{V})  \Bigl(( 1+\NBH{\frac{d}{q}}{q}{1}{V}) \NBH{\kappa+2}{q}{1}{V^2}+\NBH{\kappa+1}{q}{1}{V} \NBH{\frac{d}{q}+1}{q}{1}{V^2}\Bigr)\cdotp
 \end{align*}

 Regarding the proof of Inequality \eqref{pt:V:est:p1}, it suffices to adapt the previous proof by utilizing the smallness condition \eqref{smaal:cond:V} and the product estimate \eqref{product_propo:p1:p2} instead of \eqref{product_propo3:chap2}.
	 \end{proof}
     
\subsection{High frequencies analysis in \texorpdfstring{$L^{p_i}\:$}:}  We aim at proving the following result: 

\begin{propo}
    \label{propo:est:hf}
    Assume that $2\le p_1\le p_2$ and let $V$ be a smooth solution of \eqref{eq:base:V} under Assumptions (\textbf{H1})-(\textbf{H7}). Then,

     \begin{equation}
        \label{est:V:W:hf}
         \LpNBH{\frac{d}{p_i}}{p_i}{1}{(W,V^2)}{\infty}^h+ \LpNBH{\frac{d}{p_i}+2}{p_i}{1}{(W,V^2)}{1}^h+\Vert V^1\Vert_{(L^\infty\cap L^1)(\BH{\frac{d}{p_i}+1}{p_i}{1})}^h\lesssim  \mathbb{V}_{{p_i},0}+(2^{-N_{0}}+ \mathbb{V}_{p_2}+\mathbb{V}_{p_2}^2)\mathbb{V}_{p_i},
    \end{equation}
    provided $N_{0}$ is sufficiently  large.

\end{propo}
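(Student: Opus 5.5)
The estimate will come from the two decoupled high-frequency equations: the parabolic equation \eqref{eq:W} for the parabolic mode $W$, and the damped hyperbolic system \eqref{eq:V1=f(W)} for $z=PV^1$. Afterwards we return to $V^2$ and $V^1$ through \eqref{W:def:2} and the constant invertible matrix $P$ (Assumption (\textbf{H5})).

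\emph{Step 1: parabolic estimate for $W$.} Apply $\Dj$ to \eqref{eq:W} with $j>N_0+1$. Since $\overline{S}^0_{22}$ is constant and positive definite, and $\mathcal{Z}(D)$ is strongly elliptic by \eqref{strong_elli}, standard $L^p$ localized parabolic estimates give
\[\|W_j(t)\|_{L^{p_i}}+c2^{2j}\int_0^t\|W_j\|_{L^{p_i}}\le\|W_j(0)\|_{L^{p_i}}+C\int_0^t\|g_j\|_{L^{p_i}}.\]
The $L^p$ heat-type smoothing for an elliptic Fourier multiplier of order $2$ on dyadic annuli can be obtained either by the Bernstein-type lemma for elliptic systems or by an $L^p$ energy method on $\overline{S}^0_{22}$-weighted quantities. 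Multiplying by $2^{j d/p_i}$ and summing bounds $W^h$ in $L^\infty(\dot B^{d/p_i})\cap L^1(\dot B^{d/p_i+2})$ by $\|W_0\|^h+\|g\|^h_{L^1(\dot B^{d/p_i})}$.

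\emph{Step 2: damped hyperbolic estimate for $z$.} Localize \eqref{eq:V1=f(W)}: $\pt z_j+\sum_\alpha H^\alpha(U)\pal z_j+B(D)z_j=h_j+\sum_\alpha[H^\alpha(U),\Dj]\pal z$. Since $H^\alpha$ is diagonal, each component obeys a scalar transport equation. Multiplying by $|z_j^k|^{p-2}z_j^k$ and integrating by parts costs only $\|\dig H(U)\|_{L^\infty}$. By (\textbf{H6}), $B(D)$ acts on dyadic blocks as a constant diagonal positive matrix, so it gives the damping $c\|z_j\|_{L^p}$ up to $O(2^{-j})$ errors from localization. We obtain
\[\|z\|^h_{L^\infty\cap L^1(\dot B^{d/p_i+1})}\lesssim\|z_0\|^h+\|h\|^h_{L^1(\dot B^{d/p_i+1})}+\int\|\n V\|_{L^\infty}\|z\|_{\dot B^{d/p_i+1}},\]
with the commutator handled by the commutator lemma of the appendix.

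\emph{Step 3: coupling and linear terms.} Several source terms are linear. In $h$, the term $h^W$ is bounded by $\|W\|^h_{L^1(\dot B^{d/p_i+2})}$, and after cutoffs (using $z^{H}$ versus $z^h$, \eqref{zh=f(zh)}) its low-frequency part is controlled by $\mathbb{V}_{p_i}$. The term $h_0^{V^2}$ is of order zero in $V^2$, so $\|V^2\|^h_{L^1(\dot B^{d/p_i+1})}\lesssim 2^{-N_0}\|V^2\|^h_{L^1(\dot B^{d/p_i+2})}$. In $g$, the term $\bar g^V$ involves $|D|^{-1}\pt V$; I would bound it with \eqref{pt:V:est:p1} and gain $2^{-N_0}$ in high frequencies.

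The $W$-equation feeds $V^1$ through $h^W$, which carries one derivative of $W$, while the $W$-equation sees $V^1$ only at order $-1$ through $\bar g^V$. I would therefore add Step 1 to a small multiple $\eta$ of Step 2 (or the reverse weighting) so that the linear cross terms are absorbed, taking $N_0$ large to produce the $2^{-N_0}$ factor. Finally, $V^2=W-i|D|^{-1}\mathcal{Z}^{-1}(\Ss_{21}V^1+\Ss_{22}V^2)$ gives
\[\|V^2\|^h_{\dot B^{\sigma}}\lesssim\|W\|^h_{\dot B^\sigma}+2^{-N_0}\|V^2\|^h_{\dot B^\sigma}+\|V^1\|^h_{\dot B^{\sigma-1}},\]
which transfers the $W$ bounds to $V^2$ at both regularity levels.

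\emph{Step 4: nonlinear terms (main obstacle).} The terms $g^t,g^{V},g^{V^2},g^f$ and $h_1^{V^2},h^f$ must be bounded in $L^1(\dot B^{d/p_i})$ and $L^1(\dot B^{d/p_i+1})$ respectively by $(\mathbb{V}_{p_2}+\mathbb{V}_{p_2}^2)\mathbb{V}_{p_i}$. I would use the mixed product law \eqref{product_propo:p1:p2} together with the composition Proposition \ref{propo:comp}, putting the $L^{p_2}$ smallness \eqref{smaal:cond:V} on coefficients like $\overline{S}-S(U)$ and the $L^{p_i}$ norm on the differentiated factor.

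The delicate part is the low–high interaction. Low frequencies are only in $\dot B^{d/2-1}_{2,1}$, and they must be embedded into $L^{p_2}$-based spaces via $\dot B^{d/2\pm1}_{2,1}\hookrightarrow\dot B^{d/p_2\pm1}_{p_2,1}$. This is where the constraints $p_2<2d$ and $p_2\le\min(4,2d/(d-2))$ enter, through the paraproduct remainder conditions. For $h_1^{V^2}$ at regularity $d/p_i+1$ one also needs $\n V^2\in L^1(\dot B^{d/p_i+1})$, which follows from the $L^1(\dot B^{d/p_i+2})$ bound of $V^2$ at high frequencies and from the $\dot B^{d/2+1}_{2,1}$ bound at low frequencies. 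Similarly $g^t$ uses $\pt V\in L^1(\dot B^{d/p_i})$, which is part of $\mathbb{V}_{p_i}$.
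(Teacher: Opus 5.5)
Your proposal is correct and is essentially the paper's proof: Lemma~\ref{lem:maxi:regula} for the localized equation \eqref{eq:W}, the $L^{p_i}$ energy method on the diagonal damped system \eqref{eq:zj} (where in fact $B(D)\equiv B$ exactly, so there is no localization error), Bernstein bounds on $h^W$, $h^{V^2}_0$, $\bar g^V$ giving the $2^{-N_0}$ gains, absorption of the $W$-term of \eqref{est:z:1} into the left-hand side of \eqref{2:est:p:W} by weighting, and \eqref{est:V2=f(v1,W)} to recover $V^2$. The one thing to correct is your Step~4 diagnosis. The restrictions $p_2<2d$ and $p_2\le\min(4,\frac{2d}{d-2})$ do not enter here: the proposition holds for all $2\le p_1\le p_2$, and these restrictions are only needed for the $L^2$ low-frequency bounds of Proposition~\ref{propo:est:lf:V} via Lemma~\ref{lem:produ:lf:hf}. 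All high-frequency nonlinearities sit at positive regularity and are handled by the tame law \eqref{product_propo1}, together with $\dot B^{d/p_2}_{p_2,1}\hookrightarrow L^\infty$ and $\dot B^{d/2}_{2,1}\hookrightarrow\dot B^{d/p}_{p,1}$ for $p\ge 2$. That same tame law is what you must use for $h^{V^2}_1$ and $g^{V^2}$ in $\dot B^{d/p_i+1}_{p_i,1}$: there \eqref{product_propo:p1:p2} is unavailable, since it requires $s\le d/p_1$.
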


The proof of Proposition \ref{propo:est:hf} will be divided into three steps.
\paragraph{\textbf{Step 1: Estimate for $W$.}} We have the following statement.
\begin{lem}
    \label{est:M:lem}
It holds that
  \begin{multline}\label{2:est:p:W}
  \LpNBH{\frac{d}{p_i}}{p_i}{1}{W}{\infty}^h+ \LpNBH{\frac{d}{p_i}+2}{p_i}{1}{W}{1}^h \lesssim  \NBH{\frac{d}{p_i}}{p_1}{1}{W_0}^h+\int^T_0\biggl( 2^{-N_{0}}\NBH{\frac{d}{p_i}}{p_i}{1}{\pt V}^h+\NBH{\frac{d}{p_2}+1}{p_2}{1}{V}^2\NBH{\frac{d}{p_i}}{p_i}{1}{ V}\\
+\NBH{\frac{d}{p_2}}{p_2}{1}{V}\NBH{\frac{d}{p_i}}{p_i}{1}{\pt V}+\NBH{\frac{d}{p_i}}{p_i}{1}{V}\NBH{\frac{d}{p_2}}{p_2}{1}{\pt V}+ \NBH{\frac{d}{p_i},\frac{d}{p_i}+1}{p_i}{1}{V}\NBH{\frac{d}{p_2},\frac{d}{p_2}+1}{p_2}{1}{ V}
    + \NBH{\frac{d}{p_2}}{p_2}{1}{V}\NBH{\frac{d}{p_i}+2}{p_i}{1}{ V^2}\biggr)
\end{multline}   
where hereafter $ \BH{s,s^{\prime}}{p}{r}\defn \BH{s}{p}{r}\cap \BH{s^{\prime}}{p}{r}$ and $\lVert \cdot \rVert_{\BH{s,s^{\prime}}{p}{r}}$ designates the  norm associated to $\BH{s,s^{\prime}}{p}{r}$.
 \end{lem}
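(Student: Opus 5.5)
The estimate will come from maximal regularity for the parabolic equation \eqref{eq:W} satisfied by $W$, localized in frequency, followed by a term-by-term bound of the high frequencies of $g=g^t+g^{V^2}+g^V+g^f+\overline{g}^V$.

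\textbf{Step A: localized parabolic estimate.} Apply $\Dj$ with $j>N_0$ to \eqref{eq:W}. Since $\overline{S}^0_{22}$ is symmetric positive definite and $\mathcal{Z}$ is strongly elliptic by \eqref{strong_elli}, one gets the $L^p$ parabolic smoothing estimate
\[
\|W_j(t)\|_{L^{p_i}} + c2^{2j}\int_0^t\|W_j\|_{L^{p_i}} \lesssim \|W_j(0)\|_{L^{p_i}}+\int_0^t\|g_j\|_{L^{p_i}},
\]
where $c>0$ does not depend on $j$. I would obtain it either from Fourier-multiplier bounds on the semigroup $e^{t(\overline{S}^0_{22})^{-1}\Delta\mathcal{Z}(D)}$ restricted to dyadic annuli, or from an $L^p$ energy method after symmetrization, using (\textbf{H6}) and the structure of $\mathcal{Z}$ if needed. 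Multiplying by $2^{j\frac{d}{p_i}}$ and summing over $j>N_0$ reduces the lemma to bounding $\int_0^T\|g\|^h_{\dot B^{d/p_i}_{p_i,1}}$.

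\textbf{Step B: the linear term $\overline{g}^V$.} This term is not small, and it is where the factor $2^{-N_0}$ appears. Since $|D|^{-1}\mathcal{Z}^{-1}(D)\Ss_{2k}(D)$ is a homogeneous multiplier of degree $-1$ that is smooth away from $0$, it acts on dyadic blocks with norm $\lesssim 2^{-j}$. Hence
\[
\|\overline{g}^V\|^h_{\dot B^{d/p_i}_{p_i,1}}\lesssim 2^{-N_0}\|\pt V\|^h_{\dot B^{d/p_i}_{p_i,1}}.
\]

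\textbf{Step C: the nonlinear terms.} I would handle them with the product laws of Proposition \ref{propo_produc_BH}, in the mixed $p_1$/$p_2$ versions (\eqref{product_propo:p1:p2}), together with the composition estimate of Proposition \ref{propo:comp}.

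For $g^t=(\overline{S}^0_{22}-S^0_{22}(U))\pt V^2$, the product law in $\dot B^{d/p}_{p,1}$ gives
\[
\|V\|_{\dot B^{d/p_2}_{p_2,1}}\|\pt V\|_{\dot B^{d/p_i}_{p_i,1}}+\|V\|_{\dot B^{d/p_i}_{p_i,1}}\|\pt V\|_{\dot B^{d/p_2}_{p_2,1}}.
\]

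For $g^{V^2}=\sum\pal(r^{\alpha\beta}(U)\pbe V^2)$, we need $\|r(U)\nabla V^2\|_{\dot B^{d/p_i+1}_{p_i,1}}$. This produces
\[
\|V\|_{\dot B^{d/p_2}_{p_2,1}}\|V^2\|_{\dot B^{d/p_i+2}_{p_i,1}}+\|V\|_{\dot B^{d/p_i+1}_{p_i,1}}\|V^2\|_{\dot B^{d/p_2+1}_{p_2,1}}.
\]
The second piece is absorbed into the term $\|V\|_{\dot B^{d/p_i,d/p_i+1}_{p_i,1}}\|V\|_{\dot B^{d/p_2,d/p_2+1}_{p_2,1}}$.

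The term $g^V$, which is linear in $\nabla V$ with a coefficient vanishing at $\overline{U}$, is bounded by the same mixed product.

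For $g^f=Q(U,\nabla U,\nabla U)$, which is quadratic in $\nabla U$ by (\textbf{H7}), I would estimate $\|\nabla V\|_{\dot B^{d/p_2}_{p_2,1}}^2\|V\|_{\dot B^{d/p_i}_{p_i,1}}$, together with a similar term with $p_i$ and $p_2$ swapped that is controlled in the same way. The smallness assumption \eqref{small:data:inf} removes the $(1+\|V\|)$ prefactors coming from composition.

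\textbf{Main obstacle.} The delicate step is Step A in the genuine $L^p$ setting, $p\neq 2$. For a system, the operator $(\overline{S}^0_{22})^{-1}\Delta\mathcal{Z}(D)$ need not give the localized $L^p$ decay $e^{-c2^{2j}t}$ from a direct energy argument. I would instead treat it as a homogeneous degree-2 matrix symbol with spectrum in $\{\mathrm{Re}\,\lambda\ge c|\xi|^2\}$ and check that $e^{-t(\overline{S}^0_{22})^{-1}|\xi|^2\mathcal{Z}(\xi)}\varphi(2^{-j}\xi)$ has kernel $L^1$ norm $\lesssim e^{-ct2^{2j}}$. This follows by scaling and integrating by parts in $\xi$, since the spectral gap is uniform on the unit annulus. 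The remaining difficulty is bookkeeping in the mixed products, in particular making sure every low-regularity factor lands in an $L^{p_2}$ norm so that the constraint $p_1\le p_2$ makes the product laws applicable.
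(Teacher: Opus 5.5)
Your proposal is correct and follows essentially the same route as the paper: the frequency-localized maximal-regularity estimate (the paper's Lemma~\ref{lem:maxi:regula}, proved by the same kernel/integration-by-parts argument you sketch, using that $(\overline{S}^0_{22})^{-1}\mathcal{Z}(\xi)$ is similar to a symmetric positive definite matrix), Bernstein's inequality for the degree $-1$ multiplier in $\overline{g}^V$ to produce the factor $2^{-N_0}$, and product/composition estimates for $g^t$, $g^{V^2}$, $g^V$, $g^f$. The only minor difference is that the paper bounds the nonlinear terms with the $L^\infty$-type product law \eqref{product_propo1} together with the embedding $\BH{\frac{d}{p_2}}{p_2}{1}\hookrightarrow L^\infty$, rather than the mixed law \eqref{product_propo:p1:p2}, so no relation between $p_1$ and $p_2$ is actually used at this step.
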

\begin{proof}
Applying  Lemma \ref{lem:maxi:regula}  to the parabolic equation \eqref{eq:W} 
yields: for all $t\in [0,T]$ and $j\in \mathbb{Z},$
\begin{align*}
    \normepp{W_j(t)}\le C\biggl(e^{-c_02^{2j }t} \normepp{W_{0,j}}+\int^t_0 e^{-c_02^{2 j}(t-\tau)} \normepp{\Dj g(\tau)}d\tau \biggr),
\end{align*}
where $c_0$ and $C$ are positive constant and we have used the notation  $W_j=\Dj W$. Integrating the previous inequality with respect to time on $ [0,T] $ implies
\begin{align}
\label{est:p:W:1}
    \normepp{W_j(t)}+ 2^{2j}c\int^t_0  \normepp{W_j}\le C\biggl( \normepp{W_{0,j}}+\int^t_0 \normepp{\Dj g(\tau)}d\tau \biggr),
\end{align}
where $c$ is a positive constant.
\smallbreak
In order to bound the right-hand side, we decompose $g$ into  $g=g^t+g^{V^2}+f^f+g^V+\Bar{g}^V$
as in \eqref{def:g}.
The first term can be bounded by means of \eqref{product_propo1} and of Proposition \ref{propo:comp}. We get
\begin{align*}
    \NBH{\frac{d}{p_i}}{p_i}{1}{g^t}\lesssim  \NBH{\frac{d}{p_i}}{p_i}{1}{V}\normeinf{\pt V}+ \normeinf{V}\NBH{\frac{d}{p_i}}{p_i}{1}{\pt V}.
\end{align*}
Using the embedding critical embedding
\begin{equation}\label{embed:crit}
\BH{\frac{d}{p_2}}{p_2}{1}\hookrightarrow L^\infty,\end{equation} we arrive at
\begin{equation}    \label{est:gt}
    \NBH{\frac{d}{p_i}}{p_i}{1}{g^t}\lesssim  \NBH{\frac{d}{p_i}}{p_i}{1}{V}\NBH{\frac{d}{p_2}}{p_2}{1}{\pt V}+  \NBH{\frac{d}{p_2}}{p_2}{1}{V}\NBH{\frac{d}{p_i}}{p_i}{1}{\pt V}.
    \end{equation}
To handle $ g^{V^2}$ we use \eqref{product_propo1} combined with Proposition \ref{propo:comp} and Bernstein inequality \eqref{bern:ine}. It holds
\begin{align*}
    \NBH{\frac{d}{p_i}}{p_i}{1}{g^{V^2}}&\lesssim \NBH{\frac{d}{p_i}+1}{p_i}{1}{\sumab (\overline{Z}^{\alpha\beta}-Z^{\alpha\beta}(U))\pbe V^2 }\\
    &\lesssim \NBH{\frac{d}{p_i}+1}{p_i}{1}{V}\normeinf{\n V^2}+ \normeinf{V}\NBH{\frac{d}{p_i}+1}{p_i}{1}{\n V^2},
\end{align*}
which becomes, using again \eqref{embed:crit}, 
\begin{equation}     \label{est:gV2}
     \NBH{\frac{d}{p_i}}{p_i}{1}{g^{V^2}}\lesssim 
     \NBH{\frac{d}{p_i}+1}{p_i}{1}{V}\NBH{\frac{d}{p_2}+1}{p_2}{1}{ V^2}+ \NBH{\frac{d}{p_2}}{p_2}{1}{V}\NBH{\frac{d}{p_i}+2}{p_i}{1}{ V^2}.
    \end{equation}
Similarly taking into account the form of $f$ and \eqref{small:data:inf}, we have 
\begin{align}
    \label{est:gf}
    \begin{split}
     \NBH{\frac{d}{p_i}}{p_i}{1}{g^{f}}&\lesssim  \NBH{\frac{d}{p_i}}{p_i}{1}{\Upsilon(U)\n U\otimes \n U}\\
     &\lesssim  (1+\normeinf{V})\normeinf{\n V}\NBH{\frac{d}{p_i}}{p_i}{1}{\n V}+ \normeinf{\n V}^2\NBH{\frac{d}{p_i}}{p_i}{1}{ V} )\\
     &\lesssim \NBH{\frac{d}{p_2}+1}{p_2}{1}{ V}\NBH{\frac{d}{p_i}+1}{p_i}{1}{V} +\NBH{\frac{d}{p_2}+1}{p_2}{1}{V}^2\NBH{\frac{d}{p_i}}{p_i}{1}{ V} .
      \end{split}
\end{align}

The term $g^V$ can be bounded directly using \eqref{product_propo1} and \eqref{embed:crit}. We have 
\begin{align}
    \label{est:gV}\nonumber
     \NBH{\frac{d}{p_i}}{p_i}{1}{g^{V}} &\lesssim \normeinf{V}\NBH{\frac{d}{p_i}}{p_i}{1}{\n V} + \normeinf{\n V}\NBH{\frac{d}{p_i}}{p_i}{1}{ V}\\
      &\lesssim \NBH{\frac{d}{p_2}}{p_2}{1}{V}\NBH{\frac{d}{p_i}+1}{p_i}{1}{ V} +  \NBH{\frac{d}{p_2}+1}{p_2}{1}{V}\NBH{\frac{d}{p_i}}{p_i}{1}{ V}.
      \end{align}
As the operators  $\Ss_{12}(D)$ , $\Ss_{21}(D)$ and  $\mathcal{Z}(D)$ are homogeneous of degree $0$, applying Bernstein inequality \eqref{bern:ine:M} allows us to bound the last term $\Bar{g}^{V}$. It holds
\begin{align}
    \label{est:bar{g}V}
    \begin{split}
  \normespi{\Dj(\Bar{g}^{V})}\lesssim 2^{-j}  \normespi{\pt V_j}.
      \end{split}
\end{align}
Inserting \eqref{est:gV}, \eqref{est:bar{g}V},  \eqref{est:gt} and \eqref{est:gV2} into \eqref{est:p:W:1}, then multiplying the obtained inequality by $2^{j\frac{d}{p_i}}$
and summing up on $j\ge N_0+2$ completes the proof of \eqref{2:est:p:W}. \end{proof}

\paragraph{\textbf{Step 2: Estimates for $V^1$}} 
\begin{lem}
\label{est:V1:hf:lem}
Under assumptions (\textbf{H1})-(\textbf{H7}), It holds:
\begin{multline}\label{est:z:1}
\LpNBH{\frac{d}{p_i}+1}{p_i}{1}{V^1}{\infty}^{h}+  \LpNBH{\frac{d}{p_i}+1}{p_i}{1}{V^1}{1}^{h}\lesssim \NBH{\frac{d}{p_i}+1}{p_i}{1}{V^1_0}^{h}+\int^T_0 \NBH{\frac{d}{p_2}+1}{p_2}{1}{V}\NBH{\frac{d}{p_i}+1}{p_i}{1}{V}\\
     + \int^T_0 \left(\NBH{\frac{d}{p_i}+2}{p_i}{1}{{W}}^{h}+\NBH{\frac{d}{p_i}+1}{p_i}{1}{{V^2}}^{h}+\NBH{\frac{d}{p_2}}{p_2}{1}{V}\NBH{\frac{d}{p_i}+2}{p_i}{1}{ V^2}  \right)\cdotp
     \end{multline}  
\end{lem}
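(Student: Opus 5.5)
We work with $z=PV^1$, which is equivalent to $V^1$ in every homogeneous Besov norm because $P$ is a constant invertible matrix by (\textbf{H5}). By \eqref{eq:V1=f(W)}, $z$ solves the damped symmetrizable hyperbolic system
\[
\pt z+\suma H^\alpha(U)\pal z+B(D)z=h .
\]
By (\textbf{H6}), $B(D)$ acts on each component as multiplication by a positive constant $b_k$. The matrices $H^\alpha(U)$ are diagonal, so the system decouples into $n_1$ scalar transport equations with damping,
\[
\pt z_k+\suma H^\alpha_{kk}(U)\pal z_k+b_kz_k=h_k ,
\]
coupled only through the right-hand side.

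\textbf{Localized estimate.} Apply $\Dj$ with $j\ge N_0+2$ and write
\[
\pt \Dj z_k+\suma H^\alpha_{kk}(U)\pal \Dj z_k+b_k\Dj z_k=\Dj h_k+R^k_j,
\qquad
R^k_j\defn \suma[H^\alpha_{kk}(U),\Dj]\pal z_k .
\]
Multiply by $|\Dj z_k|^{p_i-2}\Dj z_k$ and integrate. The transport term contributes at most $\frac1{p_i}\normeinf{\dig H_{kk}(U)}\normespi{\Dj z_k}^{p_i}$ after integration by parts. We obtain
\[
\frac{d}{dt}\normespi{\Dj z_k}+b_k\normespi{\Dj z_k}\lesssim \normeinf{\n V}\normespi{\Dj z_k}+\normespi{\Dj h_k}+\normespi{R^k_j}.
\]
Multiply by $2^{j(\frac d{p_i}+1)}$, integrate in time, and sum over the high frequencies. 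The damping yields the $L^1_T(\BH{\frac d{p_i}+1}{p_i}{1})$ bound with no gain of derivatives; this is why the statement carries the same index in $L^\infty$ and $L^1$. The term $\normeinf{\n V}\le\NBH{\frac d{p_2}+1}{p_2}{1}{V}$ multiplied by the norm of $z$ gives part of the quadratic term. For the commutator we use the standard commutator estimate
\[
\sum_j 2^{j(\frac d{p_i}+1)}\normespi{R^k_j}\lesssim \NBH{\frac d{p_2}+1}{p_2}{1}{V}\,\NBH{\frac d{p_i}+1}{p_i}{1}{V^1},
\]
which holds since $\n H(U)\in \BH{\frac d{p_2}}{p_2}{1}$ by Proposition \ref{propo:comp}. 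This is the new $L^{p_1}$/$L^{p_2}$ commutator lemma announced in the introduction, applied at regularity $\frac d{p_i}+1$ and requiring $p_1\le p_2$. I expect it to be the main obstacle. The mixed-integrability paraproduct pieces $T_{\n z}H(U)$ and $R(\cdot,\cdot)$ must be placed in $L^{p_i}$ with the $V$-factor in $L^{p_2}$, and the low-frequency part of $V$, which lives in an $L^2$-based space, must be handled through the embedding $\BH{\frac d2-1}{2}{1}\hookrightarrow\BH{\frac d{p_2}-1}{p_2}{1}$ together with \eqref{est:lh:hf}.

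\textbf{Source term $h$.} Each piece is controlled at regularity $\frac d{p_i}+1$ in $L^1_T$.
\begin{itemize}
\item For $h^W$: since $h^W$ is one derivative of $W$, restricting to high frequencies gives the bound by $\NBH{\frac d{p_i}+2}{p_i}{1}{W}^h$.
\item For $h^{V^2}_0$: it is a degree-$0$ Fourier multiplier applied to $V^2$, so Bernstein's inequality \eqref{bern:ine:M} bounds it by $\NBH{\frac d{p_i}+1}{p_i}{1}{V^2}^h$.
\item For $h^{V^2}_1$: it is a product of $G(U)$ with $G(\overline U)=0$ and $\n V^2$. Using \eqref{product_propo1}, Proposition \ref{propo:comp} and \eqref{embed:crit}, it is bounded by
\[
\NBH{\frac d{p_2}}{p_2}{1}{V}\NBH{\frac d{p_i}+2}{p_i}{1}{V^2}+\NBH{\frac d{p_i}+1}{p_i}{1}{V}\NBH{\frac d{p_2}+1}{p_2}{1}{V^2}.
\]
\item For $h^f$: by (\textbf{H7}) it is at least quadratic, since $f^1(0,\cdot)=0$ and $f^1$ is smooth. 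It is therefore handled the same way and contributes $\NBH{\frac d{p_2}+1}{p_2}{1}{V}\NBH{\frac d{p_i}+1}{p_i}{1}{V}$, the lower-order pieces being absorbed via \eqref{est:lh:hf}.
\end{itemize}

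Finally, to pass from the sum over $j\ge N_0+2$ to the $h$-seminorm starting at $N_0+1$, we use \eqref{zh=f(zh)} and the fact that only finitely many blocks separate the two thresholds; those blocks are controlled by the same localized estimate. Collecting all terms gives \eqref{est:z:1}.
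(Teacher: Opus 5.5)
Your outline is the paper's proof: componentwise $L^{p_i}$ energy estimates for $\Dj z$ (using that $H^\alpha(U)$ is diagonal and $B$ is a constant positive diagonal matrix), the commutator estimate \eqref{comm:est:p}, Bernstein for $h^W$ and $h^{V^2}_0$, the product law \eqref{product_propo1} for $h^{V^2}_1$, then Lemma \ref{lem_der_int} and summation over $j\ge N_0+2$. The one step that fails as written is the commutator bound. You put $\NBH{\frac d{p_2}+1}{p_2}{1}{V}\,\NBH{\frac d{p_i}+1}{p_i}{1}{V^1}$ on the right and plan to place the $H(U)$-factor in $L^{p_2}$. For $i=1$ with $p_1<p_2$ this bound is false.

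At regularity $\sigma=\frac d{p_1}+1>\frac d{p_2}+1$, the piece $\Dj\dot T_{\pal z}\bigl(H^\alpha(U)-H^\alpha(\overline U)\bigr)$ of the commutator can only be bounded by $\normeinf{\n z}\,\NBH{\frac d{p_1}+1}{p_1}{1}{H(U)-H(\overline U)}$. If you measure $\dot\Delta_k H(U)$ in $L^{p_2}$ at regularity $\frac d{p_2}+1$, you would need $2^{k(\frac d{p_1}-\frac d{p_2})}\|\dot S_{k-1}\n z\|_{L^r}$, with $\frac1r=\frac1{p_1}-\frac1{p_2}$, to stay bounded as $k\to\infty$. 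That fails because $\dot S_{k-1}\n z\to\n z$.

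This obstruction is exactly the second term of \eqref{comm:est:p} with $p=p_1$, $\widetilde p=p_2$. That term can be dropped only when $\sigma\le\frac d{p_2}+1$, i.e. for $i=2$. Since $H$ depends on all of $U$, the term produces $\NBH{\frac d{p_2}+1}{p_2}{1}{V}\,\NBH{\frac d{p_1}+1}{p_1}{1}{V}$ with the full $V$ in the $L^{p_1}$ factor, as in \eqref{est:Rj}. In the Navier--Stokes--Fourier application, $H^\alpha(U)=u^\alpha$ is purely a $V^2$-component, so $V^1$ alone cannot suffice. That product is precisely the quadratic term allowed on the right of \eqref{est:z:1}. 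So the lemma still follows once you use the two-term commutator bound.

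No low-frequency $L^2$ issue arises in this lemma. Its right-hand side is written with full homogeneous $L^{p_i}$ and $L^{p_2}$ Besov norms, and the reduction to $\mathbb{V}_{p_i}$ happens only later.

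Two smaller points. First, your treatment of $h^f$ rests on a non sequitur: vanishing of $f^1$ when its first argument is $0$ does not make it quadratic in $(V,\n V)$ near $\overline U$. Hypothesis (\textbf{H7}) is meant as $f^1\equiv 0$, as in the Navier--Stokes--Fourier example, so $h^f=0$, and the paper's proof simply omits it. Second, the high-frequency seminorm in \eqref{def:norm:lf:hf} already runs over $j>N_0+1$, i.e. $j\ge N_0+2$, so there are no extra blocks to glue in.
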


\begin{proof}
Throughout this proof, we will use repeatedly the fact that the matrix $B$ is a constant. 
Applying $\Dj$ to the equation \eqref{eq:V1=f(W)}  and denoting $ z_j= \Dj z$ leads to 
\begin{align}
    \label{eq:zj}
     \pt z_j+\suma H^\alpha\pal  z_j+ B(D) z_j=R_j+\Dj h
\end{align}
 with $h$ given by \eqref{def:ht:V:V2} and
 \begin{align}
     \label{def:Rj}
    R_j\defn  -\suma \left[\Dj,H^\alpha(U)- H^\alpha(\overline{U})\right]\pal z.
 \end{align}
  Let us denote $(z^i_j)_{i}$, $(B(D)z_j)_i$ (with $i=1,\cdots,n_1$) respectively the components of the vector $z_j$ and $ B(D)z_j $ and $(H^\alpha_i)_i$ the diagonal components of the matrix $ H^\alpha$. \\
Multiplying \eqref{eq:zj} by the vector $(\overline{z}^i_j|z^i_j|^{p_i-2})_i$, using Leibniz formula on the convective term, taking the real part and remembering that the matrices $H^\alpha$ are diagonal yields for $i=1,2$
\begin{multline*}
     \frac{1}{p_i}\frac{d}{dt}|z_j|^{p_i}+\frac{1}{p_i}\suma \sum_{k=1}^{n_1}\pal\left(H^\alpha_k|z_j^k|^{p_i}_{}\right) -\frac{1}{p_i}\sum_{k=1}^{n_1}\suma\pal (H^\alpha_i(U))|z^k_j|^{p_i}+\sum_{k=1}^{n_1} (B(D)z_j)_k z_j^k |z_j^k|^{p_i-2}\\
     \le |z_j|^{p_i-1}(|R_j|+|\Dj h|).
 \end{multline*}
 Then, integrating on $\Rd$ and remembering that $B$ is a constant diagonal matrix with positive entries yields for some  positive constants $c_0$ and $C,$ 
 \begin{multline}     \label{est:V1:ini}
         \frac{1}{p_i}\frac{d}{dt}\normespi{z_j}^{p_i}+ c_0\normespi{z_j}^{p_i} \le C\normeinf{\n H(U)}\normespi{z_j}^{p_i}+\normespi{R_j} \normespi{z_j}^{p_i-1}\\
     +C \normespi{\Dj h} \normespi{z_j}^{p_i-1},
      \end{multline}

Combining the results of Proposition \ref{propo:comp} with the embedding \eqref{embed:crit} yields
\begin{align}
    \label{est: H}
    \normeinf{\n H(U)}\lesssim \NBH{\frac{d}{p_2}+1}{p_2}{1}{V}.
\end{align}
 Applying directly \eqref{comm:est:p} (with $\sigma=\frac{d}{p_i}+1$, $p=p_i$, $q=\widetilde{p}=p_2$)  and using the embeddings $ \BH{\frac{d}{p_2}}{p_2}{1}\hookrightarrow L^\infty  \cap \BH{\frac{d}{p_2}}{p_2}{\infty} $ and \eqref{embed:crit} leads to
 \begin{align}     \label{est:Rj}
     \normespi{R_j} &\le Cc_j 2^{-j(\frac{d}{p_i}+1)}\left( \lVert \n V \rVert_{\BH{\cdp{p_2}}{p_2}{1}} \NBH{\frac{d}{p_i}+1}{p_i}{1}{z}+ \normeinf{\n z}\NBH{\frac{d}{p_i}+1}{p_i}{1}{H(U)-H(\overline{U})}\right)\nonumber\\
     &\le Cc_j 2^{-j(\frac{d}{p_i}+1)}\NBH{\frac{d}{p_i}+1}{p_i}{1}{V}\NBH{\frac{d}{p_2}+1}{p_2}{1}{V}.
 \end{align}
Owing to the product property \eqref{product_propo1} and the embedding  $ \BH{\frac{d}{p_2}}{p_2}{1}\hookrightarrow L^\infty$ we get the following bound on $h^{V^2}_1$: for all $j\ge N_{0}+2$
\begin{align}
    \label{est: h^v^2}
    \NBH{\frac{d}{p_i}+1}{p_i}{1}{h^{V^2}_1}^h\lesssim \NBH{\frac{d}{p_2}}{p_2}{1}{V}\NBH{\frac{d}{p_i}+1}{p_i}{1}{\n V^2}+ \NBH{\frac{d}{p_i}+1}{p_i}{1}{V}\NBH{\frac{d}{p_2}}{p_i}{1}{\n V^2}.
\end{align}
 Since the homogeneous operators $P(\overline{S}^0_{11})^{-1}\overline{S}^\alpha_{12}\pal$ and $ P (\overline{S}^0_{11})^{-1}  \Ss_{12}(D)\mathcal{Z}^{-1}(D)\Ss_{22}(D) $ are respectively of degree 1 and $0$,  we have from Bernstein inequality
 \begin{align}
   \label{est: h^W}
    \normespi{\Dj h^{W}}\lesssim   \normespi{\Dj \n W}\esp{and}  \normespi{\Dj h^{V^2}_0}\lesssim \normespi{\Dj V^2} .
 \end{align}

Hence, plugging \eqref{est: H}, \eqref{est:Rj}, \eqref{est: h^v^2} and \eqref{est: h^W}  into \eqref{est:V1:ini} yields for all $j\in\mathbb Z,$
\begin{multline}
\label{est:V1:pi:0}
    \frac{1}{p_i}\frac{d}{dt}\normespi{z_j}^{p_i}+ \normespi{z_j}^{p_i} \lesssim \normespi{z_j}^{p_i}\NBH{\frac{d}{p_2}+1}{p_2}{1}{V}+c_j2^{-j(\frac{d}{p_i}+1)}\left(\NBH{\frac{d}{p_2}+1}{p_2}{1}{V}\NBH{\frac{d}{p_i}+1}{p_i}{1}{V}\right.\\
       \left.   + \NBH{\frac{d}{p_2}}{p_2}{1}{V}\NBH{\frac{d}{p_i}+2}{p_i}{1}{ V^2}+ \NBH{\frac{d}{p_i}+1}{p_i}{1}{V^2}^{h}+\NBH{\frac{d}{p_i}+2}{p_i}{1}{W}^{h} \right) \normespi{z_j}^{p_i-1}.
\end{multline}
In the sequel we will choose $N$ large enough  so that the last term in the previous inequality  be absorbed by $ \mathbb{V}_{p_i} $.\\ 
Finally, using lemma \ref{lem_der_int} (with $p=p_i$, $X=\normespi{z_j}^{p_i}$), then multiplying the obtained  inequality by $2^{j\frac{d}{p_i}+1}$, summing up $j\ge N_{0}+2$, gives us
\begin{multline*}
   \LpNBH{\frac{d}{p_i}+1}{p_i}{1}{z}{\infty}^{h}+  \LpNBH{\frac{d}{p_i}+1}{p_i}{1}{z}{1}^{h}\lesssim \NBH{\frac{d}{p_i}+1}{p_i}{1}{z(0)}^{h}+\int^T_0 \NBH{\frac{d}{p_2}+1}{p_2}{1}{V}\NBH{\frac{d}{p_i}+1}{p_i}{1}{z}
   +\int^T_0 \NBH{\frac{d}{p_2}+1}{p_2}{1}{V}\NBH{\frac{d}{p_i}+1}{p_i}{1}{V}\\
     + \int^T_0 \left(\NBH{\frac{d}{p_i}+2}{p_i}{1}{{W}}^{h}+\NBH{\frac{d}{p_i}+1}{p_i}{1}{{V^2}}^{h}+\NBH{\frac{d}{p_2}}{p_2}{1}{V}\NBH{\frac{d}{p_i}+2}{p_i}{1}{ V^2}  \right)
     \cdotp\end{multline*}
Remembering that $V^1=P^{-1}z$ with $P$ a constant matrix (independent of $U$) completes the proof. 

\end{proof}

\paragraph{\textbf{Step 3: Closing the estimates}} The goal of this step is to conclude the proof of estimate \eqref{est:V:W:hf}.

Putting together Inequalities \eqref{2:est:p:W} and \eqref{est:z:1} (note that the 
term with $W$ on the right-hand side of  \eqref{est:z:1} may be absorbed by the left-hand side of \eqref{2:est:p:W})
and using the fact that 
$$ \NBH{\frac{d}{p_i}+1}{p_i}{1}{V^2}^{h}\lesssim 2^{-N_{0}} \NBH{\frac{d}{p_i}+2}{p_i}{1}{V^2}^{h},$$ we get 
\begin{multline*}
\LpNBH{\frac{d}{p_i}+1}{p_i}{1}{V^1}{\infty}^{h}+  \LpNBH{\frac{d}{p_i}+1}{p_i}{1}{V^1}{1}^{h} +  \LpNBH{\frac{d}{p_i}}{p_i}{1}{W}{\infty}^{h}+  \LpNBH{\frac{d}{p_i}+2}{p_i}{1}{W}{1}^{h}\lesssim \mathbb{V}_0 +\int^T_0 \NBH{\frac{d}{p_2}+1}{p_2}{1}{V}^2\NBH{\frac{d}{p_i}}{p_i}{1}{ V}\\
+  \int^T_0(\NBH{\frac{d}{p_2}}{p_2}{1}{\pt V}\NBH{\frac{d}{p_i}}{p_i}{1}{V}+ (2^{-N_{0}}+ \NBH{\frac{d}{p_2}}{p_2}{1}{V} )\NBH{\frac{d}{p_i}}{p_i}{1}{\pt V})\\
+ \int^T_0\NBH{\frac{d}{p_2},\frac{d}{p_2}+1}{p_2}{1}{V}\NBH{\frac{d}{p_i},\frac{d}{p_i}+1}{p_i}{1}{V}+ \int^T_0\Bigl( \NBH{\frac{d}{p_i}+2}{p_i}{1}{V^2}\NBH{\frac{d}{p_2}}{p_2}{1}{V}+ 2^{-N_{0}} \NBH{\frac{d}{p_i}+2}{p_i}{1}{V^2}^{h}\Bigr)\cdotp
\end{multline*}
Keeping \eqref{W:def:2} in mind, 
we have for all $s\in \mathbb{R}$ and $1\le p,r\le \infty$
\begin{align}
    \label{est:w=f(v1,V2)}
    \NBH{s}{p}{r}{W}^{l}\lesssim  \NBH{s-1}{p}{r}{(V^1,V^2)}^{l}\esp{and}  \NBH{s}{p}{r}{W}^{h}\lesssim  \NBH{s-1}{p}{r}{V^1}^{h}+\NBH{s}{p}{r}{V^2}^{h}
\end{align}
and
\begin{align*}
  \NBH{s}{p}{r}{V^2}^{h}\lesssim  \NBH{s-1}{p}{r}{V^1}^{h}+\NBH{s}{p}{r}{W}^{h}+ 2^{-N_{0}} \NBH{s}{p}{r}{V^2}^{h}
\end{align*}
from which  choosing $N_0$ large enough, we get
\begin{align}
\label{est:V2=f(v1,W)}
  \NBH{s}{p}{r}{V^2}^{h}\lesssim  \NBH{s-1}{p}{r}{V^1}^{h}+\NBH{s}{p}{r}{W}^{h}.
\end{align}
Taking \eqref{est:V2=f(v1,W)} into account, we finally get
\begin{multline}\label{est:W:V^1}
\LpNBH{\frac{d}{p_i}+1}{p_i}{1}{V^1}{\infty}^{h}+  \LpNBH{\frac{d}{p_i}+1}{p_i}{1}{V^1}{1}^{h} +  \LpNBH{\frac{d}{p_i}}{p_i}{1}{(W,V^2)}{\infty}^{h}+  \LpNBH{\frac{d}{p_i}+2}{p_i}{1}{(W,V^2)}{1}^{h}\lesssim \mathbb{V}_{{p_i},0}\\ +\int^T_0 \NBH{\frac{d}{p_2}+1}{p_2}{1}{V}^2\NBH{\frac{d}{p_i}}{p_i}{1}{ V}
+  \int^T_0(\NBH{\frac{d}{p_2}}{p_2}{1}{\pt V}\NBH{\frac{d}{p_i}}{p_i}{1}{V}+ (2^{-N_{0}}+ \NBH{\frac{d}{p_2}}{p_2}{1}{V} )\NBH{\frac{d}{p_i}}{p_i}{1}{\pt V})\\
+ \int^T_0\NBH{\frac{d}{p_2},\frac{d}{p_2}+1}{p_2}{1}{V}\NBH{\frac{d}{p_i},\frac{d}{p_i}+1}{p_i}{1}{V}+ \int^T_0\NBH{\frac{d}{p_i}+2}{p_i}{1}{V^2}\NBH{\frac{d}{p_2}}{p_2}{1}{V}.
\end{multline}
Let us now estimate each term of the right-hand side of the previous inequality in terms of $ \mathbb{V}_{p_i}$ and $\mathbb{V}_{{p_i},0}$. The embeddings \eqref{est:lf:u:hf} and 
\begin{align}
    \label{emmb:p:2}
    \BH{s+\frac{d}{p}}{2}{1}\hookrightarrow \BH{s}{p}{1} \esp{ for all }s\in \mathbb{R}, \quad p\geq2
\end{align}
will come into play. Since we have \eqref{smaal:cond:V}, using Estimate \eqref{pt:V:est} with $\kappa=\frac{d}{p_2}$ and $q=p_2$  yields
\begin{equation}
    \label{est:ptV:w:v}
    \LpNBH{\frac{d}{p_2}}{p_2}{1}{\pt V}{1}\lesssim \LpNBH{\frac{d}{p_2}+1}{p_2}{1}{ V}{1} + \LpNBH{\frac{d}{p_2}+2}{p_2}{1}{ V^2}{1} +\int^T_0 \NBH{\frac{d}{p_2}+1}{p_2}{1}{V}^2.    
\end{equation}
We claim that, each term on the right-hand side of \eqref{est:ptV:w:v} is less than $C (\mathbb{V}_{p_i}+\mathbb{V}_{p_2}+ \mathbb{V}_{p_i}\mathbb{V}_{p_2})$. In fact, from \eqref{est:lh:hf} it holds
\begin{align}
\label{est:V:Bs:p:Vi}\nonumber
        \LpNBH{\frac{d}{p_i}+1}{p_i}{1}{ V}{1} + \LpNBH{\frac{d}{p_i}+2}{p_i}{1}{ V^2}{1} &\lesssim \LpNBH{\frac{d}{2}+1}{2}{1}{ V}{1}^{l} +\LpNBH{\frac{d}{p_i}+1}{p_i}{1}{ V^1}{1}^h+\LpNBH{\frac{d}{p_i}+2}{p_i}{1}{ V^2}{1}^h\\\
        & \lesssim  \mathbb{V}_{p_i}.
\end{align}
Therefore,  using  interpolation inequality in Besov spaces and H\"older inequality, we obtain that
\begin{align*}
        \int^T_0 \NBH{\frac{d}{p_2}+1}{p_2}{1}{V}^2 &\lesssim
        \LpNBH{\frac{d}{p_2}+1}{p_2}{1}{V^1}{1}  \LpNBH{\frac{d}{p_2}+1}{p_2}{1}{V^1}{\infty}+  \LpNBH{\frac{d}{p_2}}{p_2}{1}{V^2}{\infty}\LpNBH{\frac{d}{p_2}+2}{p_2}{1}{V^2}{1}\lesssim 
         \mathbb{V}_{p_2}^2.
\end{align*}
 In the end, we get
 \begin{align}
     \label{est:pt:V:W:v:1}
      \LpNBH{\frac{d}{p_2}}{p_2}{1}{\pt V}{1} \le C (\mathbb{V}_{p_2}+ \mathbb{V}_{p_2}^2).
 \end{align}

 Next, from \eqref{est:lh:hf} and embedding \eqref{emmb:p:2}, we have
\begin{align*}
    \begin{split}
\int^T_0 \NBH{\frac{d}{p_i},\frac{d}{p_i}+1}{p_i}{1}{V}\NBH{\frac{d}{p_2},\frac{d}{p_2}+1}{p_i}{1}{V} &\lesssim \LpNBH{\frac{d}{2}}{2}{1}{V}{2}^{l}\LpNBH{\frac{d}{2}}{2}{1}{V}{2}^{l}+\LpNBH{\frac{d}{p_2}+1}{2}{1}{V}{1}^h\LpNBH{\frac{d}{2}-1}{2}{1}{V}{\infty}^{l}\\
&+  \LpNBH{\frac{d}{p_i}+1}{p_i}{1}{V}{1}^h\LpNBH{\frac{d}{p_2}+1}{p_2}{1}{V^1}{\infty}^h\\
&+ \LpNBH{\frac{d}{p_i}+1}{p_i}{1}{V^1}{\infty}^h\LpNBH{\frac{d}{p_2}+2}{p_2}{1}{V^2}{1}^h+ \LpNBH{\frac{d}{p_i}+1}{p_i}{1}{V^2}{2}^h\LpNBH{\frac{d}{p_2}+1}{p_2}{1}{V^2}{2}^h.
    \end{split}
\end{align*}
Then, interpolation inequality in Besov space allow us to get
\begin{align}
    \label{pr:int:V:V:}
    \int^T_0 \NBH{\frac{d}{p_i},\frac{d}{p_i}+1}{p_i}{1}{V}\NBH{\frac{d}{p_2},\frac{d}{p_2}+1}{p_i}{1}{V} &\lesssim \mathbb{V}_{p_i} \mathbb{V}_{p_2}.
\end{align}
 Similarly one can prove that
    \begin{align*}
   \LpNBH{\frac{d}{p_2}}{p_2}{1}{V}{\infty}\LpNBH{\frac{d}{p_i}+2}{p_i}{1}{V^2}{1} &\lesssim \mathbb{V}_{p_i} \mathbb{V}_{p_2},\\
   \int^T_0 \NBH{\frac{d}{p_2}+1}{p_2}{1}{V}^2\NBH{\frac{d}{p_i}}{p_i}{1}{ V}&\lesssim \LpNBH{\cdp{p_2}+1}{p_2}{1}{V}{2}^2\LpNBH{\cdp{p_i}}{p_i}{1}{V}{\infty}\lesssim \mathbb{V}_{p_2}^2 \mathbb{V}_{p_i} .
  \end{align*}
Also, taking advantage of   \eqref{est:lh:hf},  \eqref{pt:V:est:p1} and \eqref{emmb:p:2},  we have
  \begin{align}
     \label{est:pt:V:W:v:p:1}
      \LpNBH{\frac{d}{p_1}}{p_1}{1}{\pt V}{1} \le C \Bigl(\mathbb{V}_{p_1}+ \mathbb{V}_{p_1}\mathbb{V}_{p_2} 
      \Bigr)\cdotp
 \end{align}

Plugging all these information into \eqref{est:W:V^1} and using \eqref{smaal:cond:V} once more yields 
\begin{multline}\label{est:W:V^1:1}
   \LpNBH{\frac{d}{p_i}+1}{p_i}{1}{V^1}{\infty}^h+  \LpNBH{\frac{d}{p_i}+1}{p_i}{1}{V^1}{1}^h +  \LpNBH{\frac{d}{p_i}}{p_i}{1}{(W,V^2)}{\infty}^h+  \LpNBH{\frac{d}{p_i}+2}{p_i}{1}{(W,V^2)}{1}^h\\\lesssim \mathbb{V}_{{p_i},0} 
   +(2^{-N_{0}}+ \mathbb{V}_{p_2}+\mathbb{V}_{p_2}^2)\mathbb{V}_{p_i}+2^{-N_{0}}\LpNBH{\frac{d}{p_i}+2}{p_i}{1}{V^2}{1}^h.
\end{multline}
The last term may be absorbed by the left-hand side if we choose $N$ large enough, 
which  completes the proof of \eqref{est:V:W:hf}. 


\subsection{Low frequencies analysis.}
\label{sub:sec:low}
In this part we aim at proving some priori estimates for the low frequencies of the solution in a $L^2$ framework. In fact, as mentioned above, we are
stuck to the $L^2$ framework for the low frequencies of the solution, since looking at the linear system \eqref{eq:lindiff},  it turns out that the first order terms predominate, so that the system \eqref{eq:base:V} has to be treated by means of hyperbolic energy methods. 
 For instance, consider the following barotropic Navier-Stokes equations:
\begin{align}
    \label{NS:chap44}
    \begin{cases}
        &\pt \rho+ {\rm div} (\rho v)=0\\
        &\rho\pt v+ \rho v\cdot \n v -\Delta v +\nabla \ro=0,
    \end{cases}
\end{align}
with $\rho$, $v$ are respectively the density and the velocity of the fluid. The linear system around the state $(1,0)$ of \eqref{NS:chap44} reads
\begin{align}
    \label{LNS}
    \pt U +\suma A^\alpha \pal U=\Delta \left(
    \begin{pmatrix}
       & 0\\
        & v
    \end{pmatrix}
    \right)
\end{align}
with $U=
 \begin{pmatrix}
       & a\defn\rho-1\\
        & v
    \end{pmatrix}
$
and $A^\alpha\in \mathcal{M}_{d+1}(\Rd)$ are symmetric  with   components $ (A^\alpha)_{1,\alpha+1}=(A^\alpha)_{\alpha+1,1}=1$ and $0$ in the other cases. It is clear that the  matrices $A^\alpha$ ($\alpha=1,\cdots, d$) don't commute between themselves. Consequently in light of \cite{Bren66}, we can't expect to get estimates in $L^p$ for $p\neq 2$, since the system \eqref{LNS} can be considered as symmetric hyperbolic: note that in low frequencies,  $a$ and the potential part of the velocity $v$ tend 
to satisfy a wave equation. 

At the nonlinear level, the fact that the high frequencies of the solution belong to some space of type $\BH{s}{p}{1} $ and not to $\BH{s^{\prime}}{2}{1}$ constitutes a supplementary difficulty: 
the low frequencies of the nonlinear terms contain some amount of high frequencies.
To bypass this difficulty, we will have to restrict the range of $p_1 $ and $p_2$ and resort to more
elaborate product laws (see Lemma \ref{lem:produ:lf:hf}).
The goal of this part is to prove the following proposition. 
\begin{propo}
\label{propo:est:lf:V}
    Let $d\ge 2$ and $p_2$ be such that 
    \begin{align}
        \label{cnd:p2:lf}
         p_2<2d \esp{and} 2\le p_2\le \min\left(4,\frac{2d}{d-2}\right)\cdotp
    \end{align}
Consider a smooth (decreasing) solution of \eqref{eq:base:V} under assumptions
 (\textbf{H1})-(\textbf{H7}). Assume that the solution $V$ verifies the smallness condition \eqref{smaal:cond:V}. Then, the following inequality holds true:
\begin{equation}
    \label{est:lf:V}
     \LpNBH{\frac{d}{2}-1}{2}{1}{V}{\infty}^{l}+ \LpNBH{\frac{d}{2}+1}{2}{1}{V}{1}^{l}
    \lesssim \NBH{\frac{d}{2}-1}{2}{1}{V_0}^{l}+ \mathbb{V}_{p_2}^2(1+\mathbb{V}_{p_2}).
\end{equation}
\end{propo}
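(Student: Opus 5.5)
We localize the linearized system \eqref{eq:lindiff} with $F=F^t+F^V+F^{V^2}+f$ in frequency by $\Dj$ for $j\le N_0+1$, and run the Beauchard--Zuazua Lyapunov argument of Section \ref{sub:sec:SK} blockwise. Concretely, for each such $j$ we build a frequency-localized functional $\mathcal L_j \simeq \normed{V_j}^2$. It is obtained by integrating $L_{\rho,\omega}(\widehat{V_j})$ over $\xi$, i.e. by adding to $\intd \overline{S^0}V_j\cdot V_j$ the corrector $\min(\rho,\rho^{-1})I$ written in Fourier variables. In low frequencies $\min(1,\rho^2)=\rho^2\simeq 2^{2j}$ (the finitely many $j$ near $N_0$ only cost a constant depending on $N_0$). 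Inequality \eqref{1:dt:L_ro_om}, with the source kept, then gives
\[
\frac{d}{dt}\mathcal L_j + c\,2^{2j}\mathcal L_j \lesssim \normed{\Dj F}\sqrt{\mathcal L_j}.
\]
Dividing by $\sqrt{\mathcal L_j}$ (via the standard Lemma \ref{lem_der_int}), multiplying by $2^{j(\frac d2-1)}$ and summing over $j\le N_0+1$ yields
\[
\LpNBH{\frac d2-1}{2}{1}{V}{\infty}^{l}+\LpNBH{\frac d2+1}{2}{1}{V}{1}^{l}\lesssim \NBH{\frac d2-1}{2}{1}{V_0}^{l}+\int_0^T \NBH{\frac d2-1}{2}{1}{F}^{l}.
\]
Since the low-frequency $L^2$ framework forbids the $L^p$ structure, no commutator is needed here: all of $F$ is treated as a source. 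The term $F^{V^2}$ is in divergence form, so it suffices to bound $r^{\alpha\beta}(U)\pbe V^2$ in $\BH{\frac d2}{2}{1}$ at low frequencies.

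It remains to show $\int_0^T\NBH{\frac d2-1}{2}{1}{F}^{l}\lesssim \mathbb V_{p_2}^2(1+\mathbb V_{p_2})$. Each term of $F$ is (after Proposition \ref{propo:comp}) a product $G(V)\,\partial V$ with $G(0)=0$, or a quadratic term $\upsilon(U)\n U\otimes\n U$ from (\textbf{H7}). We split each factor as $V=V^{l}+V^{h}$. Products of two low-frequency pieces are handled by the classical $L^2$ laws in $\BH{\frac d2}{2}{1}\times\BH{\frac d2-1}{2}{1}\to\BH{\frac d2-1}{2}{1}$, combined with interpolation: $V^l\in L^2(\BH{\frac d2}{2}{1})$ and $\n V^l\in L^2(\BH{\frac d2}{2}{1})$. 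The cross and high--high products are estimated through the low-frequency product law Lemma \ref{lem:produ:lf:hf}. This lemma bounds $\NBH{\frac d2-1}{2}{1}{ab}^l$ by $\NBH{\frac d{p_2}}{p_2}{1}{a}\NBH{\frac d{p_2}-1}{p_2}{1}{b}$-type quantities, and this is exactly where $p_2<2d$ and $p_2\le\min(4,\frac{2d}{d-2})$ enter (summability of the remainder and Hölder pairing $\frac1{p_2}+\frac1{p_2}\ge\frac12$). For $F^t$ we use the time-derivative bound \eqref{est:pt:V:W:v:1}, $\LpNBH{\frac d{p_2}}{p_2}{1}{\pt V}{1}\lesssim \mathbb V_{p_2}+\mathbb V_{p_2}^2$, paired with $V\in L^\infty(\BH{\frac d{p_2}}{p_2}{1})$ together with the embedding \eqref{emmb:p:2}. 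This is the origin of the cubic term $\mathbb V_{p_2}^3$. Time integrability is obtained as in Step 3 of the high-frequency analysis: one factor is taken in $L^\infty_T$ and the other in $L^1_T$, or both in $L^2_T$ by interpolation between the $L^\infty$ and $L^1$ parts of $\mathbb V_{p_2}$.

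The main obstacle is the high--high interaction inside $F$ measured in the $L^2$-based low-frequency norm, for instance $(V^{1,h})\,\n V^{1,h}$ with both factors only in $L^{p_2}$-type spaces. I would first try to place the product in $L^{p_2/2}$ and use the embedding $\BH{\frac{2d}{p_2}-\frac d2}{\frac{p_2}{2}}{\infty}$ at low frequencies, which needs $p_2\le4$. The regularity bookkeeping $\frac{2d}{p_2}-1>0$ uses $p_2<2d$. The worst case is the quasilinear term $F^{V^2}$ with two derivatives; here the divergence form removes one derivative, so only $\n V^{2,h}\in L^1(\BH{\frac d{p_2}+1}{p_2}{1})$ against $V\in L^\infty(\BH{\frac d{p_2}}{p_2}{1})$ is needed.
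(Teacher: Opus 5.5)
Your architecture matches the paper's. You use the block-wise functional $\mathcal L_j\simeq\normed{V_j}^2$ for $j\le N_0+1$; your weight $\min(\rho,\rho^{-1})$ on the corrector, instead of the paper's $2^j$, only changes $N_0$-dependent constants. You then apply Lemma~\ref{lem_der_int}, sum over $j\le N_0+1$, and treat all of $F$ as a source through Lemma~\ref{lem:produ:lf:hf} with $s=\frac d{p_2}-1$, $\sigma=1$, $p=p_2$.

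Splitting the factors into low and high frequencies is unnecessary. The paper applies \eqref{lem:pro:lf:hf} to the full products, because $\mathbb V_{p_2}$ already controls $V$ in $L^\infty_T(\BH{\frac d{p_2}-1}{p_2}{1})$, $L^2_T(\BH{\frac d{p_2}}{p_2}{1})$ and $L^1_T(\BH{\frac d{p_2}+1}{p_2}{1})$ (low frequencies via \eqref{emmb:p:2}). Also, $p_2\le\frac{2d}{d-2}$ comes from the paraproduct condition $\sigma=1\ge\frac d2-\frac d{p_2}$, not from the remainder.

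\textbf{The step that does not close as written is $F^t=(\overline{S^0}-S^0(U))\pt V$.} You correctly recall that \eqref{lem:pro:lf:hf} needs one factor at regularity $\frac d{p_2}-1$. It also produces both pairings, $\NBH{\frac d{p_2}-1}{p_2}{1}{a}\NBH{\frac d{p_2}}{p_2}{1}{b}$ and $\NBH{\frac d{p_2}-1}{p_2}{1}{b}\NBH{\frac d{p_2}}{p_2}{1}{a}$.

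Pairing $\pt V\in L^1_T(\BH{\frac d{p_2}}{p_2}{1})$ with $V\in L^\infty_T(\BH{\frac d{p_2}}{p_2}{1})$ cannot work. Replacing $(a,b)$ by $(a(2^{-m}\cdot),b(2^{-m}\cdot))$ with $m\to\infty$ pushes everything into low frequencies. It multiplies $\NBH{\frac d2-1}{2}{1}{ab}^{l}$ by $2^m$ and leaves $\NBH{\frac d{p_2}}{p_2}{1}{a}\NBH{\frac d{p_2}}{p_2}{1}{b}$ unchanged.

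Putting $\overline{S^0}-S^0(U)$ in $L^\infty_T(\BH{\frac d{p_2}-1}{p_2}{1})$ handles one half. The other half is the paraproduct $\dot T_{\pt V}(\overline{S^0}-S^0(U))$. The proof of Lemma~\ref{lem:produ:lf:hf} puts $\dot S_{k-1}\pt V$ in $L^{p^*}$ with $\frac1{p^*}=\frac12-\frac1{p_2}$. That requires $\pt V\in\BH{\frac d{p_2}-\sigma}{p_2}{1}$ with $\sigma\ge\frac d2-\frac d{p_2}>0$ when $p_2>2$, and \eqref{est:pt:V:W:v:1} gives no such bound.

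The missing ingredient is the pointwise-in-time bound \eqref{pt:V:est} of Lemma~\ref{lem_esti_partial_t_V_2} with $q=p_2$ and $\kappa=\frac d{p_2}-1$. Under \eqref{smaal:cond:V} it reads
\[
\NBH{\frac d{p_2}-1}{p_2}{1}{\pt V}\lesssim \NBH{\frac d{p_2}}{p_2}{1}{V}+\NBH{\frac d{p_2}+1}{p_2}{1}{V^2}+\NBH{\frac d{p_2}}{p_2}{1}{V}\NBH{\frac d{p_2}+1}{p_2}{1}{V}.
\]
Hence $\pt V\in L^2_T(\BH{\frac d{p_2}-1}{p_2}{1})$ with norm $\lesssim\mathbb V_{p_2}+\mathbb V_{p_2}^2$, to be paired with $\overline{S^0}-S^0(U)\in L^2_T(\BH{\frac d{p_2}}{p_2}{1})$. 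This is exactly what the paper does in \eqref{est:Ft}.

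The same bookkeeping applies to your pairing for $F^{V^2}$. The coefficient $r^{\alpha\beta}(U)$ must be measured in $\BH{\frac d{p_2}-1}{p_2}{1}$, not only in $\BH{\frac d{p_2}}{p_2}{1}$, against $V^2\in L^1_T(\BH{\frac d{p_2}+2}{p_2}{1})$, and the symmetric pairing must be included. This costs nothing, since $\mathbb V_{p_2}$ controls those norms.
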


\begin{proof}
   We need to differentiate the following functional that contains enough information to get inequality \eqref{est:lf:V}:
   \begin{equation}
    \label{def:Ij}
   \forall j\le N_0+1 \quad \mathcal{L}_j\overset{def}{=} \frac12\intd \overline{S}^0 V_j\cdot V_j+ 2^j \mathcal{I}_j\simeq  \normed{V_j}^2 ,
\end{equation}
    with 
    \begin{equation}
    \label{df:corect}
    \mathcal{I}_j \overset{def}{=}  \sumk \intd\ee_k {\rm  Re} (M_\omega N_\omega^{k-1} \widehat{V_j}\cdot M_\omega N_\omega^{k} \widehat{V_j} )
\end{equation}
and where $\ee_1,\cdots,\ee_{n-1}>0$ are chosen suitably small enough. The matrices $M_\omega$, $N_\omega$ have been defined in \eqref{A:def}, \eqref{B_ome:def} respectively.
\medbreak
We note that
\begin{equation}
\label{dt:lyp}
  \frac12  \frac{d}{dt} \mathcal{L}_j= \frac{d}{dt} \intd \overline{S}^0\pt V_j\cdot V_j+2^j\frac{d}{dt} \mathcal{I}_j.
\end{equation}

\paragraph{\textbf{First Step: Energy estimates for the first term of \eqref{dt:lyp}.}} 
We localize System \eqref{eq:lindiff} by means of the operator $\Dj$ as follows
\begin{align}
\label{eq:lh:V}
     \overline{S}^0\pt V_j+\suma \overline{S}^\alpha\pal V_j-\sum_{\alpha,\beta=1}^d\overline{Y}^{\alpha\beta} \partial_{\alpha\beta}V_j = \Dj F\esp{with} V_j \defn \Dj V. 
 \end{align}
For $j\le N_{0}+1 $, taking the scalar product in $L^2(\Rd;\mathbb{R}^n)$ with $V_j$, then performing an integration by parts in the second term and using the symmetry of the matrices $S^\alpha(U)$ yields,
\begin{align}
 \label{1':es:hf:}
 \begin{split}
    \frac12 \frac{d}{dt} \intd  \overline{S}^0 V_j\cdot  V_j +\sumab\intd \overline{Z}^{\alpha\beta} \partial_{\alpha\beta}V_j^2 \cdot V_j^2  &\le \normed{V_j}\normed{\Dj F}.
      \end{split}
 \end{align}

Next, thanks to \eqref{small:data:inf},  \eqref{strong_elli} and remarking that the smooth function $V_j$ is spectrally supported in dyadic
annuli of size $2^j$ leads to:
\begin{align}
 \label{1:es:hf:}
 \begin{split}
    \frac12 \frac{d}{dt} \intd  \overline{S}^0 V_j\cdot  V_j +\overline{c_1} 2^{2j} \normed{V^2_j}^2 &\lesssim \normed{V_j}\normed{ \Dj F}
      \end{split}
 \end{align}
 where $c_1$ is the constant appearing in \eqref{strong_elli}.
\paragraph{\textbf{Second step: Differentiation of $ \mathcal{I}_j$.}} As we can see in \eqref{1:es:hf:}, we miss the dissipation of the component $V^1$. To recover the dissipation of all the components of $V$, we have to look at the time derivative of $\Ij$ defined in \eqref{df:corect}. Applying  Lemma \ref{lem:derI} and using the fact that the support of $\widehat{V}_j$ is contained in an annulus of magnitude $2^j$ (this implies in particular that the term $\ro$ appearing in Inequality \eqref{lya:lo:in} is equivalent to $2^j$, that is, $\ro \simeq 2^j$ ) we get 
\begin{equation}
\label{low:lya:eqj}
    \frac{d}{dt}\mathcal{I}_j +2^{j} \sumk \ee_k |M_\omega N_\omega^{k} \widehat{V}_j|^2 \le C\ee_0\max(2^{j},2^{3j})| M_\omega  \widehat{V}_j|^2+C \normed{\Dj F}\normed{V_j}.
\end{equation}

\paragraph{\textbf{Third step: Estimating $\Lj$}}
Multiplying inequality \eqref{low:lya:eqj} by $2^j$ and putting it together with \eqref{1:es:hf:} provides a positive constant $c>0$ such that for all $j\le N_{0}+1$
\begin{align*}
    \frac{d}{dt}\Lj+  2^{2j}\sum_{k=1}^{n-1} \ee_k\intd |M_\omega N_\omega^{k}\widehat{V}_j|^2 +  c2^{2j} \normed{V^2_j}^2\lesssim   
    \normed{V_j}\normed{\Dj F}+ \ee_0 2^{2j}\intd| M_\omega  \widehat{V}_j|^2.
\end{align*}
  Recall that the matrix $M_\omega$ has the diagonal form \eqref{B_ome:def}. Choosing $\ee_0$ small enough, the last term of the right-hand side term can be absorbed by the left-hand side. 
 For all $j\le N_{0}+1$, it therefore holds (up to a slight modification of $\ee_0$)
that 

 \begin{align}
    \label{01:est:ly:l}
    \frac{d}{dt}\Lj+  2^{2j}\sum_{k=0}^{n-1} \ee_k\intd |M_\omega N_\omega^{k}\widehat{V}_j|^2
    \lesssim  
    & \normed{V_j}\normed{\Dj F}.
\end{align}

\paragraph{\textbf{Fourth step: Nonlinear estimates}}
In order to bound the nonlinear term $F,$ we will use Lemma \ref{lem:produ:lf:hf}, that is, the substitute of Proposition \ref{propo_produc_BH} since the high frequencies and low frequencies of $V$ belong to Besov spaces with different Lebesgue indices. 
Here, $p_2<2d$ and $2\le p_2\le \min\left(4,\frac{2d}{d-2}\right)$ will be required.
 
Recall that $F=F^V+F^t+F^{V^2}+f$  where $F^V, F^t$ and $F^{V^2}$ have been defined in \eqref{F_t} while $f$ is defined in \eqref{globa:f=f(f1,f2)}.
Now,   taking $s=\frac{d}{p_2}-1$, $p=p_2$ and $\sigma=1$ in \eqref{lem:pro:lf:hf} yields
\begin{multline*}
     \normed{\Dj F^V}\lesssim 2^{-j(\frac{d}{2}-1)}c_j
        \suma\biggl( \NBH{\frac{d}{p_2}}{p_2}{1}{\Bar{S}^\alpha-S^\alpha(U)}  \NBH{\frac{d}{p_2}-1}{p_2}{1}{\pal V}\\+ \NBH{\frac{d}{p_2}-1}{p_2}{1}{\Bar{S}^\alpha-S^\alpha(U)}  \NBH{\frac{d}{p_2}}{p_2}{1}{\pal V}\biggr)\cdotp
\end{multline*}
Bearing in mind \eqref{smaal:cond:V}, to control the first term (resp. the second term) in the r.h.s.,  we leverage \eqref{comp:u:propo:p} (resp. \eqref{comp:uv:propo:1}) and we get:
\begin{align}
     \label{est:Fv:til:V}
     \normed{\Dj F^V}
       \lesssim 2^{-j(\frac{d}{2}-1)} c_j\biggl(\NBH{\frac{d}{p_2}}{p_2}{1}{V}\NBH{\frac{d}{p_2}-1}{p_2}{1}{\n V}+ \NBH{\frac{d}{p_2}-1}{p_2}{1}{V}\NBH{\frac{d}{p_2}}{p_2}{1}{\n V}\biggr).
\end{align}

Similarly, we obtain  
\begin{align}
    \label{est:Ft}\nonumber
        \normed{\Dj F^t}&\lesssim 2^{-j(\frac{d}{2}-1)}c_j \biggl(\NBH{\frac{d}{p_2}}{p_2}{1}{(\overline{S}^0-S^0(U))}\NBH{\frac{d}{p_2}-1}{p_2}{1}{\pt V}+ \NBH{\frac{d}{p_2}-1}{p_2}{1}{(\overline{S}^0-S^0(U))}\NBH{\frac{d}{p_2}}{p_2}{1}{\pt V}\biggr)\\\nonumber
        &\lesssim 2^{-j(\frac{d}{2}-1)}c_j\biggl( \NBH{\frac{d}{p_2}}{p_2}{1}{V}\NBH{\frac{d}{p_2}-1}{p_2}{1}{\pt V}+\NBH{\frac{d}{p_2}-1}{p_2}{1}{V}\NBH{\frac{d}{p_2}}{p_2}{1}{\pt V}\biggr)\\\nonumber
        &\lesssim 2^{-j(\frac{d}{2}-1)}  c_j
        \NBH{\frac{d}{p_2}}{p_2}{1}{V}\biggl( \NBH{\cdp{p_2}+1}{p_2}{1}{V}+\NBH{\cdp{p_2}}{p_2}{1}{V} \biggr) \\
        &\hspace{1cm}+2^{-j(\frac{d}{2}-1)}  c_j
        \NBH{\frac{d}{p_2}-1}{p_2}{1}{V}\biggl(\NBH{\frac{d}{p_2}+1}{p_2}{1}{V}+ \NBH{\frac{d}{p_2}+2}{p_2}{1}{V^2}+\NBH{\cdp{p_2}+1}{p_2}{1}{V}\NBH{\cdp{p_2}+1}{p_2}{1}{V^2}\biggr),
\end{align}
where in the last inequality  we used Inequality \eqref{est:ptV:w:v} both in the two terms, with $\kappa=\cdp{p_2}-1$, $q=p_2$  and $\kappa=\cdp{p_2}$, $q=p_2$ respectively.

Next, combining  \eqref{lem:pro:lf:hf} (with $s=\frac{d}{p_2}-1$, $p=p_2$ and $\sigma=1$), \eqref{comp:uv:propo:1} and \eqref{comp:u:propo:p} and the smallness condition \eqref{smaal:cond:V}, we obtain  
$$\NBH{\frac{d}{2}-1}{2}{1}{f} \lesssim \NBH{\frac{d}{2}-1}{2}{1}{\n V\otimes \n V}
+\NBH{\frac{d}{p_2}}{p_2}{1}{V}\NBH{\frac{d}{p_2}-1}{p_2}{1}{\n V\otimes \n V}+
\NBH{\frac{d}{p_2}-1}{p_2}{1}{V}\NBH{\frac{d}{p_2}}{p_2}{1}{\n V\otimes \n V}$$
whence, using once more the same inequalities,
\begin{align}
    \label{est:f:lf}
    \normed{\Dj f}\lesssim  2^{-j(\frac{d}{2}-1)}c_j\biggl(
(1+ \NBH{\frac{d}{p_2}}{p_2}{1}{V})\NBH{\frac{d}{p_2}-1}{p_2}{1}{\n V}\NBH{\frac{d}{p_2}}{p_2}{1}{\n V}
    + \NBH{\frac{d}{p_2}-1}{p_2}{1}{V}\NBH{\frac{d}{p_2}}{p_2}{1}{\n V}^2\biggr)\cdotp
\end{align}
 Estimating  the last term $ F^{V^2}$ is based on the following decomposition:
\begin{equation}
    \label{dec:F:V2}
    F^{V^2}=\sumab \pal (r^{\alpha\beta}(U))\,\pbe V+ \sumab r^{\alpha\beta}(U)\partial^2_{\alpha\beta} V ,
\end{equation}
where $r^{\alpha\beta}(U)$ is the function defined in \eqref{F_t}. In particular, we have $ r^{\alpha\beta}(\overline{U})=0$
As $p_2<2d$ and $2\le p_2\le \min\left(4,\frac{2d}{d-2}\right)$, applying again \eqref{lem:pro:lf:hf} with $s=\frac{d}{p_2}-1$, $p=p_2$ and $\sigma=1$, using Proposition \ref{propo:comp} and observing that the matrices $r^{\alpha\beta}$ don't have any effect on the component  $V^1$   provides
\begin{align*}
\normed{ (\pal r^{\alpha\beta}(U))\,\pbe V}&\lesssim 2^{-j(\frac{d}{2}-1)} c_j \biggl( \NBH{\frac{d}{p_2}}{p_2}{1}{ V}\NBH{\frac{d}{p_2}}{p_2}{1}{\n V^2}+ \NBH{\frac{d}{p_2}}{p_2}{1}{\n V}\NBH{\frac{d}{p_2}}{p_2}{1}{ V^2} \biggr),\\
 \normed{  r^{\alpha\beta}(U)\partial^2_{\alpha\beta} V} &\lesssim 2^{-j(\frac{d}{2}-1)} c_j \biggl(  \NBH{\frac{d}{p_2}-1}{p_2}{1}{V}\NBH{\frac{d}{p_2}}{p_2}{1}{\n^2 V^2} + \NBH{\frac{d}{p_2}}{p_2}{1}{V}\NBH{\frac{d}{p_2}+1}{p_2}{1}{ V^2} \biggr),\end{align*}
whence
\begin{equation}\label{est:FV2}
        \normed{\Dj F^{V^2}}\lesssim 2^{-j(\frac{d}{2}-1)} c_j \biggl(\NBH{\frac{d}{p_2}}{p_2}{1}{V}\NBH{\frac{d}{p_2}+1}{p_2}{1}{ V}+ \NBH{\frac{d}{p_2}-1}{p_2}{1}{V}\NBH{\frac{d}{p_2}}{p_2}{1}{\n^2 V^2} \biggr)\cdotp
\end{equation}  

\paragraph{\textbf{Last step : Final estimate of $\Lj$}}
Inserting \eqref{est:Fv:til:V}, \eqref{est:Ft},  \eqref{est:f:lf}  and 
\eqref{est:FV2} into \eqref{01:est:ly:l}  and observing that 
$$\NBH{\frac{d}{p_2}}{p_2}{1}{ V}^2\lesssim  
\NBH{\frac{d}{p_2}-1}{p_2}{1}{V}\NBH{\frac{d}{p_2}+1}{p_2}{1}{ V}$$
yields
 \begin{multline}\label{2:est:ly:l}
    \frac{d}{dt}\Lj+  2^{2j}\sum_{k=0}^{n-1}\intd \ee_k |B_\omega A_\omega^{k}\widehat{V}_j|^2 \lesssim    
     2^{-j(\frac{d}{2}-1)}c_j \normed{V_j}  \biggl((1+ \NBH{\frac{d}{p_2}-1}{p_2}{1}{V})\NBH{\frac{d}{p_2}}{p_2}{1}{\n V}^2\\
    + (\NBH{\frac{d}{p_2}}{p_2}{1}{V}+ \NBH{\frac{d}{p_2}-1}{p_2}{1}{V})  \NBH{\frac{d}{p_2}+1}{p_2}{1}{ V}
    + \NBH{\frac{d}{p_2}-1}{p_2}{1}{V}\NBH{\frac{d}{p_2}+2}{p_2}{1}{ V^2}
    \biggr)\cdotp
       \end{multline}

       Thanks to  Shizuta-Kawashima (SK) condition (see Lemma \ref{lem:SK_diss}) and the fact that
\begin{align}
    \label{eq:Lj:Vj}
    \Lj\simeq \normed{V_j}^2
\end{align}
we thus get some $\bar c_0$ such that for all $j\le N_{0}+1$
\begin{multline}
    \label{3:est:ly:l}
\frac{d}{dt}\Lj+  2^{2j}\bar c_0\Lj \lesssim  2^{-j(\frac{d}{2}-1)}c_j \sqrt{\Lj }  \biggl( 
(1+ \NBH{\frac{d}{p_2}-1}{p_2}{1}{V})\NBH{\frac{d}{p_2}}{p_2}{1}{\n V}^2\\
    + (\NBH{\frac{d}{p_2}}{p_2}{1}{V}+ \NBH{\frac{d}{p_2}-1}{p_2}{1}{V})  \NBH{\frac{d}{p_2}+1}{p_2}{1}{ V}
    + \NBH{\frac{d}{p_2}-1}{p_2}{1}{V}\NBH{\frac{d}{p_2}+2}{p_2}{1}{ V^2}\biggr)\cdotp
\end{multline}
Therefore taking $p=2$ and $X= \Lj$ in Lemma  \eqref{lem_der_int}
 then multiplying by $2^{j(\frac{d}{2}-1)} $  delivers for all $j\le N_{0}+1$ and $t\in [0,T]$
\begin{multline}
    \label{4:est:ly:l}
    2^{j(\frac{d}{2}-1)}\sqrt{\Lj(t)}+  2^{j(\frac{d}{2}+1)}\bar c_0\int^t_0\sqrt{\Lj} \lesssim 2^{j(\frac{d}{2}-1)}\sqrt{\Lj(0)} \\
    + \int^t_0 c_j\biggl((1+ \NBH{\frac{d}{p_2}-1}{p_2}{1}{V})\NBH{\frac{d}{p_2}+1}{p_2}{1}{V}^2
    + (\NBH{\frac{d}{p_2}}{p_2}{1}{V}+ \NBH{\frac{d}{p_2}-1}{p_2}{1}{V})  \NBH{\frac{d}{p_2}+1}{p_2}{1}{ V}
    + \NBH{\frac{d}{p_2}-1}{p_2}{1}{V}\NBH{\frac{d}{p_2}+2}{p_2}{1}{ V^2}\biggr)\cdotp\end{multline}

Let us now simplify the right-hand side of the previous inequality. Thanks to \eqref{est:V:Bs:p:Vi} and \eqref{pr:int:V:V:} we have
\begin{align*}
    \int^t_0 (\NBH{\frac{d}{p_2}-1}{p_2}{1}{V}+ \NBH{\frac{d}{p_2}}{p_2}{1}{V}) \NBH{\frac{d}{p_2}+1}{p_2}{1}{ V}
    &\le (\LpNBH{\cdp{p_2}-1}{p_2}{1}{V}{\infty}+\LpNBH{\cdp{p_2}}{p_2}{1}{V}{\infty})\LpNBH{\cdp{p_2}+1}{p_2}{1}{V}{1}\\
    &\lesssim \mathbb{V}_{p_2}^2,\end{align*}
    \begin{equation*}
    \int^t_0 \NBH{\frac{d}{p_2}-1}{p_2}{1}{V}\NBH{\frac{d}{p_2}+2}{p_2}{1}{ V^2}\lesssim \LpNBH{\cdp{p_2}-1}{p_2}{1}{V}{\infty}\LpNBH{\cdp{p_2}+2}{p_2}{1}{V^2}{1}\lesssim  \mathbb{V}_{p_2}^2
\end{equation*}
and 
\begin{gather*}
    \int^t_0  (1+\NBH{\frac{d}{p_2}-1}{p_2}{1}{V})     \NBH{\frac{d}{p_2}+1}{p_2}{1}{ V}^2 \lesssim (1+\LpNBH{\cdp{p_2}-1}{p_2}{1}{V}{\infty}) \int^t_0 \NBH{\frac{d}{p_2}+1}{p_2}{1}{ V}^2\lesssim (1+\mathbb{V}_{p_2})\mathbb{V}_{p_2}^2\cdotp 
\end{gather*}
 Inequality \eqref{2:est:ly:l} becomes

\begin{equation}
    \label{5:est:ly:l}
    2^{j(\frac{d}{2}-1)}\sqrt{\Lj(t)}+  2^{j(\frac{d}{2}+1)}\bar c_0\int^t_0\sqrt{\Lj} \lesssim 2^{j(\frac{d}{2}-1)}\sqrt{\Lj(0)} +c_j (1+ \mathbb{V}_{p_2})\mathbb{V}_{p_2}^2.
\end{equation}
       Summing over $j\le N_{0}+1$ and remembering \eqref{eq:Lj:Vj} yields \eqref{est:lf:V}.
 \end{proof}


\section{Proof of Theorem \ref{thm:p}}\label{proof:Thms:p:and:cri:sec}

Granted with the a priori estimates of the previous section, we are now ready to prove Theorem \ref{thm:p}.

\subsection{Existence}

Proving the existence of a global solution under the assumptions of Theorem \ref{thm:p} will be carried out in three steps:
\begin{enumerate}
    \item We smooth out the data and get a sequence of maximal smooth solutions $(V^1_n,V^2_n)_{n\ge 1}$ on $[0,T^*_n)$ to \eqref{eq:base:V} on $\mathbb{R}^d$ by using our recent result in \cite{DanADOlocal}.
    \item We prove uniform estimates on $(V^1_n,V^2_n) $ on $[0,T_n^*)$, by taking advantage of Propositions \ref{propo:est:hf} and \ref{propo:est:lf:V}. This will enable us to conclude that $T_n^*=\infty$.
    \item We use compactness to prove that the sequence $(V^1_n, V^2_n)$ converges, up to extraction,
to a solution of System \eqref{eq:base:V}.
\end{enumerate}

\paragraph{\textbf{$\blacklozenge$ First step: Construction of approximate solutions.}} In order to apply the local-in-time results of \cite{DanADOlocal}, we have to work with data belonging to space $\B{\cd+1}{2}{1}\times \B{\cd}{2}{1}$. For that purpose, we smooth out the data as follows: For all  $n\in \mathbb{N}$, we set

\begin{align}
    \label{def:V_0:n}
    V_{0,n}\defn \chi_n V_0^{l} +\chi_n \Dot{S}_{2n}V_0^h
\end{align}
with $\chi_n\defn \chi( n^{-1}\cdot)$, where $\chi$ stands  (for instance) for a smooth function with range in $[0,1]$, supported in $]-4/3, 4/3[$
and such that $\chi\equiv 1$ on $[-3/4, 3/4]$.
It is obvious that the sequence $(V_{0,n})_{n\in\N} $  tends to $V_0=(V_0^1,V_0^2)$ in the sense of
distributions, when $n$ tends to infinity. We claim that $V_{0,n} \in \B{\sigma}{2}{1}$, for all $\sigma\ge d/2.$ 
First, since $\chi_n$ is in $L^2$ and 
both $V_0^l$ and $V_0^h$ are bounded, we have
$V_0^n\in L^2.$ Hence it suffices to establish 
that $V_{0,n} \in \dot B^{\sigma}_{2,1}$ for all $\sigma\ge d/2.$ 
\smallbreak
For the first part of $V_{0,n},$  the product law  \eqref{product_propo1}
 guarantees that
\begin{align*}
      \NBH{\sigma}{2}{1}{ \chi_n V_0^{l} } \le \normeinf{\chi_n} \NBH{\sigma}{2}{1}{ V_0^{l} }+ \normeinf{V_0^{l}}  \NBH{\sigma}{2}{1}{  \chi_n} 
\end{align*}
which becomes, using \eqref{link:lf:hf}, the embedding $L^\infty \hookrightarrow \BH{\cd}{2}{1}$ and the the scaling properties $ \NBH{\sigma}{2}{1}{  \chi_n} \simeq n^{-(\sigma-\cd)} \NBH{\sigma}{2}{1}{  \chi} $ (see Remark 2.19 of \cite{HajDanChe11})
\begin{align}
    \label{est:lf:data:n:data}
     \NBH{\sigma}{2}{1}{ \chi_n V_0^{l} } \le 2^{N_0(\sigma-\cd+1)}\NBH{\cd-1}{2}{1}{ V_0 }^{l} + 2^{N_{0}}n^{-(\sigma-\cd)}\NBH{\cd-1}{2}{1}{ V_0 }^{l} \NBH{\sigma}{2}{1}{  \chi}.
\end{align}
To show that the second part of $V_{0,n}$ is also in $\dot B^\sigma_{2,1},$
 use the decomposition
 \begin{align}
 \label{decom::chin:}
     \chi_n \Dot{S}_{2n}V_0^h= T_{ \Dot{S}_{2n}V_0^h}\chi_n + T_{\chi_n} \Dot{S}_{2n}V_0^h+ R({\chi_n} ,\Dot{S}_{2n}V_0^h).
 \end{align}
Paraproduct properties give for all $\sigma\ge \cd$
\begin{align*}
    \NB{\sigma}{2}{1}{T_{ \Dot{S}_{2n}V_0^h}\chi_n}&\lesssim \NB{-1}{\infty}{\infty}{ \Dot{S}_{2n}V_0^h} \NB{\sigma+1}{2}{1}{\chi_n}
\end{align*}
which becomes using  the scaling properties of the space $ \B{\sigma+1}{2}{1} $ and Besov embedding
\begin{align*}
    \NB{\sigma}{2}{1}{T_{ \Dot{S}_{2n}V_0^h}\chi_n}\lesssim n^{-(\sigma+1-d/2)} \NB{\sigma+1}{2}{1}{\chi} \NB{\cdp{p_2}-1}{p_2}{1}{ V_0}^h.
\end{align*}
Similarly, from  paraproduct properties we have also for all $\sigma_0\ge\sigma\ge \cd$, 
\begin{align*}
    \NB{\sigma}{2}{1}{ T_{\chi_n} \Dot{S}_{2n}V_0^h}& \lesssim \NB{\sigma-\sigma_0}{q}{1}{\chi_n }\NB{\sigma_0}{p_2}{1}{ \Dot{S}_{2n}V_0^h }\;\; \esp{with} \frac{1}{2}=\frac{1}{p_2}+\frac{1}{q}\\
    &\lesssim n^{-(\sigma-\sigma_0-d/q)} \NB{\sigma-\sigma_0}{q}{1}{\chi }\NB{\sigma_0}{p_2}{1}{ \Dot{S}_{2n}V_0^h }\\
    &\lesssim 2^{(n+N)(\sigma_0-d/p_2+1)}n^{-(\sigma-\sigma_0-d/q)} \NB{\sigma-\sigma_0}{q}{1}{\chi }\NBH{\cdp{p_2}-1}{p_2}{1}{V_0}^h.
\end{align*}
Finally, using again  paraproduct properties we have thus for all $\sigma\ge \cd$ 
\begin{align*}
    \NB{\sigma}{2}{1}{ R({\chi_n}, \Dot{S}_{2n}V_0^h)}& \lesssim \NB{\sigma-\cdp{p_2}+1}{q}{1}{\chi_n }\NB{\cdp{p_2}-1}{p_2}{1}{ \Dot{S}_{2n}V_0^h }\\
    &\lesssim   n^{-(\sigma-\cdp{p_2}+1-d/q)} \NB{\sigma-\cdp{p_2}+1}{q}{1}{\chi }\NBH{\cdp{p_2}-1}{p_2}{1}{V_0}^h,
\end{align*}
which completes the proof of $V_{0,n}\in \B{\sigma}{2}{1} $, for all 
$\sigma\ge \cd$.
\medbreak
Next, we claim that we have for $i=1,2$ and $\sigma_i=\cdp{p_i}$
\begin{align}
    \label{comp:data:n:data}
    \NBH{\cdp{2}-1}{2}{1}{V_{0,n}}^{l}+ \NBH{\sigma_i+1}{p_i}{1}{V_{0,n}^1}^h+ \NBH{\sigma_i}{p_i}{1}{V_{0,n}^2}^h \lesssim   \NBH{\cdp{2}-1}{2}{1}{V_{0}}^{l} +\NBH{\sigma_i+1}{p_i}{1}{V_{0}^1}^h+ \NBH{\sigma_i}{p_i}{1}{V_{0}^2}^h .
\end{align}
Indeed,  using \eqref{product_propo2} and  the scaling properties  and Besov embedding, one gets
\begin{align}
\label{2:comp:data:n:data}
\begin{split}
      \NBH{\cdp{p_i}}{p_i}{1}{V_{0,n}^2}^h&\lesssim \NBH{\cdp{p_i}}{p_i}{1}{\chi} \NBH{\cdp{2}}{2}{1}{V_{0}^2}^{l}+  \NBH{\cdp{p_i}}{p_i}{1}{\chi}  \NBH{\cdp{p_i}}{p_i}{1}{\Dot{S}_{2n}V_{0}^{2,h}}
      \lesssim \NBH{\cdp{2}-1}{2}{1}{V_{0}^2}^{l}+ \NBH{\cdp{p_i}}{p_i}{1}{V_{0}^{2,h}}
      \cdotp\end{split}
\end{align}

From \eqref{product_propo3:chap2}, it holds
\begin{multline}
\label{3:comp:data:n:data}
     \NBH{\cdp{p_i}+1}{p_i}{1}{V_{0,n}^1}^h \lesssim \NBH{\cdp{p_i}}{p_i}{1}{\chi} \NBH{\cdp{2}+1}{2}{1}{V_{0}^1}^{l}+ n^{-1}\NBH{\cdp{p_i}}{p_i}{1}{V_{0}^1} \NBH{\cdp{p_i}+1}{p_i}{1}{\chi}\\
   +  \NBH{\cdp{p_i}}{p_i}{1}{\chi} \NBH{\cdp{p_i}+1}{p_i}{1}{\Dot{S}_{2n}V_{0}^{1,h}}+n^{-1} \NBH{\cdp{p_i}}{p_i}{1}{\Dot{S}_{2n}V_{0}^{1,h}} \NBH{\cdp{p_i}+1}{p_i}{1}{\chi} \lesssim \NBH{\cdp{2}}{2}{1}{V_{0}^1}^{l}+ \NBH{\cdp{p_i}+1}{p_i}{1}{V_{0}^{1,h}}.
\end{multline}
To bound the term $  \NBH{\cdp{2}-1}{2}{1}{V_{0,n}}^{l}$, we use the decomposition \eqref{def:V_0:n} and (the homogeneous version of) Bony's decomposition \eqref{decom::chin:}. We get:
\begin{equation*}
     \NBH{\cd-1}{2}{1}{V_{0,n}}^{l}\lesssim  \NBH{\cdp{2}}{2}{1}{\chi_n}\NBH{\cdp{2}-1}{2}{1}{ V_0}^{l}
     + \NBH{\cd-1}{2}{1}{ \Dot{T}_{\Dot{S}_{2n}V_0^h}\chi_n + \Dot{T}_{\chi_n} \Dot{S}_{2n}V_0^h+ \Dot{R}({\chi_n} ,\Dot{S}_{2n}V_0^h)}^{l}\cdotp
\end{equation*}
The first term can clearly be bounded as we want in \eqref{comp:data:n:data}.
For the other terms, since $2\le p_2<2d$ and $p_2\leq \min(4,\frac{2d}{d-2}),$
taking $s=\cdp{p_2}-1$, $\sigma=1$ in \eqref{lem:T:lf:hf} and \eqref{lem:R:lf:hf} yields
    \begin{equation*}
        \NBH{\cd-1}{2}{1}{\bigl(\Dot{T}_{\Dot{S}_{2n}V_0^h}\chi_n, \Dot{R}({\chi_n} ,\Dot{S}_{2n}V_0^h)\bigr) }\lesssim \NBH{\cdp{p_2}-1}{p_2}{1}{\Dot{S}_{2n}V_0^h}\NBH{\cdp{p_2}}{p_2}{1}{\chi_n}.
    \end{equation*}
    Bounding (the low frequencies of) the term $ \Dot{T}_{\chi_n} \Dot{S}_{2n}V_0^h $  is more delicate.
    For all $j\le N_0+1$, one has
    \begin{align*}
        \dot{\Delta}_j \Dot{T}_{\chi_n} \Dot{S}_{2n}V_0^h= \sum_{|k-j|\le 4} \dot{\Delta}_j(\dot{S}_{k-1} \chi_n\,\dot{\Delta}_k\dot{S}_{2n}V_0^h).
    \end{align*}
    Assuming that $n$ is large enough, we have
    \begin{align}
        \label{S2n:V_0:h}
      V_0^h=\sum_{m\ge N_0+1} \Dot{\Delta}_m V_0 \esp{and}  \Dot{S}_{2n}V_0^h= \sum_{N_0+1\le l\le 2n} \Dot{S}_{2n}\Dot{\Delta}_l V_0,
    \end{align}
    from which we deduce that for all $j\le N_0+1,$
    \begin{align*}
         \dot{\Delta}_j \Dot{T}_{\chi_n} \Dot{S}_{2n}V_0^h= \sum_{\substack{|k-j|\le 1 \\ N_0\le j\le N_0+1 }} \Dot{S}_{k-1}\chi_n \Dot{\Delta}_j\Dot{\Delta}_k\Dot{S}_{2n}V_0^h +
         \sum_{|k-j|\le 4}  \bigl[ \Dot{\Delta}_j, \dot{S}_{k-1} \chi_n \bigr](\dot{\Delta}_k\dot{S}_{2n}V_0^h)\defn  A_j+B_j.
    \end{align*}
   Owing to H\"{o}lder inequality the first term in the previous inequality is bounded as follows:
   \begin{align*}
       \sum_{j\le N_0+1}2^{j(\cd-1)} \normede{A_j}\le C_{N_0}\|\chi\|_{L^q}\|(\Delta_{N_0}V_0,\Delta_{N_0+1}V_0)\|_{L^{p_2}}\lesssim \NBH{\cd-1}{2}{1}{V_0}^l.
   \end{align*}
   Next, from \eqref{S2n:V_0:h} it holds that
   \begin{align*}
        \normede{B_j}\le \sum_{\substack{N_0\le k\le 2n+1\\ |k-j|\le 4 }}  \bigl[ \Dot{\Delta}_j, \dot{S}_{k-1} \chi_n \bigr](\dot{\Delta}_k\dot{S}_{2n}V_0^h)
   \end{align*}
   from which and Lemma \cite[Lemma 2.97]{HajDanChe11} we deduce 
   since $\frac{d}{\widetilde{p}_2}-1\le 0$ that
   that 
   \begin{align*}
       \normede{B_j}&\lesssim \sum_{\substack{N_0\le k\le 2n+1\\ |k-j|\le 4 }}  2^{-j} \normes{\n \Dot{S}_{k-1}\chi_n}{\widetilde{p}_2} \normes{\Dot{\Delta}_k\dot{S}_{2n}V_0^h}{p_2} \esp{with} \frac{1}{\widetilde{p}_2}=\frac{1}{2}-\frac{1}{p_2}\\
        &\lesssim  \sum_{\substack{N_0\le k\le 2n+1\\ |k-j|\le 4 }} 2^{-k(\frac d2-1)}\,2^{k(\frac{d}{\widetilde{p}_2}-1)} \normes{\n \Dot{S}_{k-1}\chi_n}{\widetilde{p}_2} 2^{k(\cdp{{p}_2}-1)} \normes{\Dot{\Delta}_k V_0^h}{p_2}\\
      &\lesssim 2^{-j(\frac d2-1)}\sum_{|\nu|\le 4}  2^{(j+\nu)(\frac{d}{p_2}-1)}  \normes{\Dot{\Delta}_{j+\nu}V_0^h}{p_2}  \sum_{\substack{N_0\le k\le 2n+1 }} 2^{(k-1)(\frac{d}{\widetilde{p}_2}-1)} \normes{\n \Dot{S}_{k-1}\chi_n}{\widetilde{p}_2}     \\
       &\lesssim c_j 2^{-(\cd-1)j}  \NBH{\cdp{\widetilde{p}_2}}{\widetilde{p}_2}{1}{ \chi} \NBH{\cdp{p_2}-1}{p_2}{1}{V_0^h}  \esp{with} \sum_{l}c_l\le 1.
   \end{align*}

We conclude that 
\begin{align}
  \label{1:comp:data:n:data}
  \begin{split}
    \NBH{\cd-1}{2}{1}{V_{0,n}}^{l}
    &\lesssim (1 +\NBH{\cdp{2}}{2}{1}{\chi})\NBH{\cdp{2}-1}{2}{1}{V_0}^{l}+ \NBH{\cdp{p_2}-1}{p_2}{1}{\Dot{S}_{2n}V_0^h}\NBH{\cdp{p_2}}{p_2}{1}{\chi}+\NBH{\cdp{\widetilde{p}_2}}{\widetilde{p}_2}{1}{ \chi} \NBH{\cdp{p_2}-1}{p_2}{1}{V_0^h}.
     \end{split}
\end{align}

Finally, 
from \eqref{3:comp:data:n:data}, \eqref{2:comp:data:n:data}, \eqref{1:comp:data:n:data} and the fact that for all $\sigma^{\prime}\ge d/p_i-1$ and large $n$,
\begin{align}
\label{comp:data:n:hf}\nonumber
     \NBH{\sigma^{\prime}}{p_i}{1}{\Dot{S}_{2n}z^h} &\lesssim  2^{N\sigma^{\prime}}\lVert \Dot{\Delta}_{N_0} z \rVert_{L^{p_i}}+ 2^{(N_0+1)\sigma^{\prime}}\lVert \Dot{\Delta}_{N_0+1} z \rVert_{L^{p_i}}+\NBH{\sigma^{\prime}}{p_i}{1}{ \Dot{S}_{2n}z^h}^h\\
     & \lesssim  \NBH{\cdp{2}-1}{2}{1}{ z}^{l}+  \NBH{\sigma^{\prime}}{p_i}{1}{ z}^h,
\end{align}
one concludes the proof of \eqref{comp:data:n:data}.
\medbreak
Now, from the local existence theory (see \cite{DanADOlocal} ), for all $n\ge 3$; we get a maximal solution
$(V^1_n,V^2_n) $ over the time interval $[0, T^*_n)$ such that for all $ T< T^*_n$ we have 
\begin{align*}
     V^1_n\in  {C}([0,T];\B{\sigma+1}{2}{1}) \esp{and} V^2_n\in  {C}([0,T];\B{\sigma}{2}{1})\cap  L^1_{T}(\B{\sigma+2}{2}{1}), \text{    for all    } \sigma\ge \frac{d}{2}\cdotp
\end{align*}
Since $\B{\sigma}{2}{1}$ is embedded in $\BH{\sigma}{2}{1}$, for $\sigma>0$, it is easy to prove from \eqref{syste_part_hper_diff} and the composition and 
product laws \eqref{comp:u:propo:p} and \eqref{product_propo2} that $\pt V$ is in $L^\infty (0,T; \BH{\frac{d}{2}-1}{2}{1} )$, and as $V_{0,n}$ belongs to $\BH{\frac{d}{2}-1}{2}{1}$, we deduce that $V_n$ is actually in $C([0,T_n^*); \BH{\frac{d}{2}-1}{2}{1}\cap \B{\sigma}{2}{1}  )$ for all $\sigma\ge \frac{d}{2}\cdotp$
Note that,
because $p_i\ge 2$; the embedding $ \BH{s}{2}{1} \hookrightarrow \BH{s-\frac{d}{2}+\frac{d}{p_i}}{p_i}{1} $ ensures that
\begin{align*}
\begin{split}
    &   V^{l}_n \in  \widetilde{C}([0,T];\BH{\cd-1}{2}{1}),\;\;\; V^{1,h}_n\in  \widetilde{C}([0,T];\BH{\sigma+1}{p_i}{1}),\;\\
       &V^{2,h}_n\in  \widetilde{C}([0,T];\BH{\sigma}{p_i}{1})\cap  L^1_{T}(\BH{\sigma+2}{p_i}{1}), \text{    for all    } \sigma\ge \frac{d}{p_i}\cdotp
\end{split}
\end{align*}

For the sequel, we define 
\begin{align}
    \label{def:W:n}
   W_n\defn   i|D|^{-1}(\mathcal{Z}^{-1}(D)\left( \Ss_{21}(D) V^1_n+  \Ss_{22}(D)V^2_n\right)  +V^2_n.
\end{align}
\paragraph{\textbf{ $\blacklozenge$ Second step:  Uniform estimates.}} Since the  solutions $(V^1_n,V^2_n)$ constructed in the first step are regular and decreasing, we can apply the results of Proposition \ref{propo:est:hf} and \ref{propo:est:lf:V} to bound our sequence. Let us denote 
\begin{multline*}
    \mathbb{V}_{p_i}^n(T)\defn  \LpNBH{\frac{d}{2}-1}{2}{1}{V_n}{\infty}^{l}+\LpNBH{\frac{d}{2}+1}{2}{1}{V_n}{1}^{l} + \Vert V^1_n\Vert_{L^\infty_T\cap L^1_T(\BH{\frac{d}{p_i}+1}{p_i}{1})}^h\\
    + 
    \LpNBH{\frac{d}{p_i}}{p_i}{1}{(W_n,V^2_n)}{\infty}^h + \LpNBH{\frac{d}{p_i}+2}{p_i}{1}{(W_n,V^2_n)}{1}^h  + 
    \LpNBH{\frac{d}{p_i}}{p_i}{1}{\partial_t V_n}{1}^h,
\end{multline*}
and then
\begin{align*}
    \mathbb{V}_{p_i}^n(0)=  \NBH{\frac{d}{2}-1}{2}{1}{V_{0,n})}^l + \NBH{\frac{d}{p_i}+1}{p_i}{1}{V^1_{0,n}}+\NBH{\frac{d}{p_i}}{p_i}{1}{(W_n,V^2_n)}.
\end{align*}
Thanks to \eqref{pt:V:est:p1}, \eqref{est:V:W:hf}, \eqref{est:ptV:w:v} and \eqref{est:lf:V} we get for all $t\in [0,T_n]$
\begin{align}
    \label{1:est:VV:n}
 \mathbb{V}_{p_i}^n(t)\le  \mathbb{V}_{p_i}^n(0)+ C\mathbb{V}_{p_2}^n(t) (1+ \mathbb{V}_{p_2}^n(t))(\mathbb{V}_{p_i}^n(t)+\mathbb{V}_{p_2}^n(t))+C2^{-N_{0}} \mathbb{V}_{p_i}^n(t),
\end{align}
where $C$ depending only $p_1,p_2,d$ and the matrices involved in the system \eqref{systemE}. Choosing $N_0$ sufficiently large, the last term of inequality \eqref{1:est:VV:n} may be absorbed by the l.h.s. Furthermore,
from a standard bootstrap argument, it is now easy to conclude that there exists a positive
constant $c_0$ such that if the data has been chosen so small as to satisfy
\begin{align}
    \label{VV0<c0}
    \mathbb{V}_{p_2,0}\le c_0,
\end{align}
then from \eqref{comp:data:n:data} and for all $i=1,2$ and $t\in [0,T_n]$
\begin{align}
\label{est:mathbb:V:n:final}
  \mathbb{V}_{p_i}^n(t)\le 2C \mathbb{V}_{p_i}^n(0)\le 2C \mathbb{V}_{{p_i},0}.
\end{align}
We are now going to prove that the uniform estimate \eqref{est:mathbb:V:n:final} is global (i.e $T_n^*=\infty$). Assume by contradiction that $ T^*_n $ is finite. Since $  \sup_{0\le t<T^*_n} \mathbb{V}_{p_2}^n (t) $ and $\LpNBH{\frac{d}{p_2}}{p_2}{1}{\pt V_n}{1}$ are dominated by $ 2C \mathbb{V}_{p_2} (0) $, then Besov embeddings implies that
\begin{align}
    \label{blow:Vn:BH}
    \begin{split}
   & \bullet U([0,T^*_n)\times \Rd) \text{  is contained  in a compact subset of         } \mathcal{U}, \text{   if   } \mathbb{V}_{{2}}(0) \text{  is sufficiently small, } \\
&\bullet  \int^{T_n^*}_0\Bigl(\normeinf{\n V_n}^2+ \normeinf{\pt V_n}\Bigr)<\infty\,.
   \end{split}
\end{align}
 Now according to the blow up criterion in \cite{DanADOlocal}  $ (V^1_n, V^2_n)$ may be continued beyond $T^*_n$ into a
solution $ (\widetilde{V}^1_n, \widetilde{V}^2_n) $ of \eqref{eq:base:V} which coincides with $ (V^1_n, V^2_n)$ on $[0, T^*)$ and such that, for some $T>T^*_n$;
\begin{align*}
    \widetilde{V}^1_n\in \mathcal{C}([0, T], \B{\cd+1}{2}{1}) \esp{and}  \widetilde{V}^2_n\in \mathcal{C}([0, T], \B{\cd}{2}{1})\cap L^1_T(\B{\cd+2}{2}{1}).
\end{align*}
Then the previous step ensures that $(\widetilde{V}^1_n, \widetilde{V}^2_n) \in \mathbb{E}_{p_i}(T)$ : This stands in contradiction with the
definition of $T^*_n.$ Hence $T^*_n=\infty$ and \eqref{est:mathbb:V:n:final} holds true globally.

\paragraph{\textbf{ $\blacklozenge$ Third step: Convergence.}}
In order to show that $(V)_n$ tends, up to subsequence, to some $V\in \mathbb{E}_{p_i}$, in the sense of distributions, that satisfies \eqref{eq:base:V}, under assumptions (\textbf{H1})-(\textbf{H7}) one can use Ascoli Theorem and suitable compact embeddings in a similar fashion as in \cite{BaratDan22D1}.

\subsection{Uniqueness}
\label{sec:uni:p}
 Here, for all $T > 0$, we shall prove the uniqueness  in the space $  \mathbb{F} $  defined by: for all $2\le p_2<2d$
\begin{align}
    \label{F_p}
    \mathbb{F}\defn \{ (V^1,V^2):\;\;  V^2\in L^\infty(0,T;\BH{\frac{d}{p_2}-1}{p_2}{1})\cap L^1(0,T;\BH{\frac{d}{p_2}+1}{p_2}{1}) \text{    and   }  V^1\in L^\infty_T(\BH{\frac{d}{p_2}}{p_2}{1})  \}.
\end{align}
Observe that, compared to the space $\mathbb{E}_{p_2}$, in high frequencies, the exponent is smaller. The reason for that is the usual loss of one derivative when proving stability estimates for quasilinear hyperbolic systems. The exponent for low frequencies looks to be the best one for controlling the nonlinearities. Note that Condition $p_2<2d$ appears
when bounding the difference of the component $V^2$ of two solutions in the space $L^\infty_T(\BH{\cdp{p_2}-1}{p_2}{1})\cap L^1_T(\BH{\cdp{p_2}+1}{p_2}{1})$. In fact, to control the non-linear terms, we use the product law $  \BH{\cdp{p_2}-1}{p_2}{1}\times \BH{\cdp{p_2}-1}{p_2}{1}  \longrightarrow \BH{\cdp{p_2}-1}{p_2}{1}    $, which is true if $p_2<2d$ (see the section \ref{tool:box:glo:Lp}).  To simplify, we assume that $f\equiv0$.

The key point to prove the uniqueness is the following lemma.

\begin{lem}
    \label{est:uni}
 Assume that $2\le p_2<2d$. Let $V_1$ and $V_1$ be two solutions of \eqref{eq:base:V} under assumptions (\textbf{H1})-(\textbf{H7})  on $[0,T]$ supplemented by initial data $V_{1,0}$ and $V_{2,0},$ respectively. Then, there exists a constant  $C>0$ depending on $ \LpNBH{\cdp{p_2}}{p_2}{1}{(V_2,V_1)}{\infty} $such that $ \tV\defn V_1-V_2  $ satisfies the following a priori estimate 
  \begin{multline}
      \label{est:lem:tV}
            \Vert\tV\Vert_{\mathbb{F}(T)} \le   C\biggl(\NBH{\frac{d}{p_2}-1}{p_2}{1}{\tV^2_0}+ \NBH{\frac{d}{p_2}}{p_2}{1}{\tV^1_0}+  \LpNBH{\cdp{p_2}}{p_2}{1}{V_2}{\infty}\LpNBH{\cdp{p_2}+1}{p_2}{1}{\tV^2}{1}\\
    + \sqrt{T}\LpNBH{\cdp{p_2}}{p_2}{1}{\tV}{2}\LpNBH{\cdp{p_2}}{p_2}{1}{ (V_1,V_2)}{\infty}+ \LpNBH{\cdp{p_2}}{p_2}{1}{\tV}{2}\LpNBH{\cdp{p_2}}{p_2}{1}{\n V_1^2}{2}\biggr)\cdotp
        \end{multline}
\end{lem}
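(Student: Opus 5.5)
We write the system satisfied by $\tV=V_1-V_2$ (with $f\equiv0$), splitting it as in \eqref{syste_part_hper_diff} and freezing coefficients at $U_1=\overline{U}+V_1$. This gives
\begin{align*}
&S^0_{11}(U_1)\pt\tV^1+\suma S^\alpha_{11}(U_1)\pal\tV^1=-\suma S^\alpha_{12}(U_1)\pal\tV^2+\delta F^1,\\
&S^0_{22}(U_1)\pt\tV^2-\sumab\pal(Z^{\alpha\beta}(U_1)\pbe\tV^2)=-\suma\bigl(S^\alpha_{21}(U_1)\pal\tV^1+S^\alpha_{22}(U_1)\pal\tV^2\bigr)+\delta F^2.
\end{align*}
Here $\delta F$ collects the differences of coefficients, e.g. $(S^\alpha(U_1)-S^\alpha(U_2))\pal V_2$, $(S^0(U_1)-S^0(U_2))\pt V_2$ and $\pal((Z^{\alpha\beta}(U_1)-Z^{\alpha\beta}(U_2))\pbe V_2^2)$. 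The plan is to estimate the $\tV^1$ component in $L^\infty_T(\BH{\cdp{p_2}}{p_2}{1})$ by a transport-type argument and the $\tV^2$ component in $L^\infty_T(\BH{\cdp{p_2}-1}{p_2}{1})\cap L^1_T(\BH{\cdp{p_2}+1}{p_2}{1})$ by maximal regularity. We then add the two estimates.

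\textbf{Step 1 ($\tV^1$).} Using (\textbf{H4})--(\textbf{H5}), we set $\tz=P\tV^1$ and obtain a diagonal transport system $\pt\tz+\suma H^\alpha(U_1)\pal\tz=\cdots$. Localizing with $\Dj$, we multiply componentwise by $\overline{\tz^k_j}|\tz^k_j|^{p_2-2}$ exactly as in the proof of Lemma \ref{est:V1:hf:lem}, but without the damping term $B(D)$, which is not needed on a finite interval. The commutator $[\Dj,H^\alpha(U_1)]\pal\tz$ is handled by \eqref{comm:est:p} at regularity $\cdp{p_2}$. This produces a term $\int_0^T\NBH{\cdp{p_2}+1}{p_2}{1}{V_1}\NBH{\cdp{p_2}}{p_2}{1}{\tV^1}$, which Gronwall absorbs into the constant $C$ (depending on the norms of $V_1$). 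The coupling term $S_{12}(U_1)\nabla\tV^2$ is bounded in $\BH{\cdp{p_2}}{p_2}{1}$ by $C\NBH{\cdp{p_2}+1}{p_2}{1}{\tV^2}$, and its $L^1_T$ norm is controlled by the $\tV^2$ part of the $\mathbb{F}$ norm. The terms $(S(U_1)-S(U_2))\nabla V_2$ are estimated via \eqref{product_propo1} and the composition estimate (Proposition \ref{propo:comp}) by $\NBH{\cdp{p_2}}{p_2}{1}{\tV}\NBH{\cdp{p_2}+1}{p_2}{1}{V_2}$. By Cauchy--Schwarz in time, these yield the $\LpNBH{\cdp{p_2}}{p_2}{1}{\tV}{2}$-type terms in \eqref{est:lem:tV}.

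\textbf{Step 2 ($\tV^2$).} We write the $\tV^2$ equation as $\overline{S}^0_{22}\pt\tV^2-\Delta\mathcal{Z}(D)\tV^2=G$ and apply Lemma \ref{lem:maxi:regula} at regularity $\cdp{p_2}-1$. The quasilinear remainder $\pal((Z(U_1)-\overline Z)\pbe\tV^2)$ is bounded by $\NBH{\cdp{p_2}}{p_2}{1}{V_1}\NBH{\cdp{p_2}+1}{p_2}{1}{\tV^2}$ and is absorbed thanks to the smallness of $V_1$. The similar term $(S^0_{22}(U_1)-\overline{S}^0_{22})\pt\tV^2$ is treated by substituting the equation. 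The difference term $\pal((Z(U_1)-Z(U_2))\pbe V_2^2)$ requires the product law $\BH{\cdp{p_2}}{p_2}{1}\times\BH{\cdp{p_2}-1}{p_2}{1}\to\BH{\cdp{p_2}-1}{p_2}{1}$; this is where $p_2<2d$ is needed. It produces $\NBH{\cdp{p_2}}{p_2}{1}{\tV}\NBH{\cdp{p_2}}{p_2}{1}{\nabla V^2}$, which after Hölder in time gives the last term of \eqref{est:lem:tV}. The first-order terms $S_{21}(U_1)\nabla\tV^1$ and $S_{22}\nabla\tV^2$ are bounded at regularity $\cdp{p_2}-1$ by $\NBH{\cdp{p_2}}{p_2}{1}{\tV}$. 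Their $L^1_T$ norm is at most $\sqrt T\,\LpNBH{\cdp{p_2}}{p_2}{1}{\tV}{2}$, which gives the $\sqrt T$ term. For $\tV^2$, the $L^2_T(\BH{\cdp{p_2}}{p_2}{1})$ norm is itself interpolated between the two components of the $\mathbb{F}$ norm. The term $(S^0(U_1)-S^0(U_2))\pt V_2$ is handled via Lemma \ref{lem_esti_partial_t_V_2}, which reduces it to terms of the same types.

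\textbf{Main obstacle.} The delicate point is closing the coupling between the two steps: $\tV^1$ loses one derivative relative to $\tV^2$, so the term $\NBH{\cdp{p_2}+1}{p_2}{1}{\tV^2}$ appears on the right-hand side of the $\tV^1$ estimate. We would handle it by keeping it as the $L^1_T$ norm of $\tV^2$, which is directly part of $\Vert\tV\Vert_{\mathbb{F}(T)}$. In the final combined inequality, the smallness of $V_2$ or a small-$T$ choice then lets us absorb it: the statement keeps the corresponding terms, such as $\LpNBH{\cdp{p_2}}{p_2}{1}{V_2}{\infty}\LpNBH{\cdp{p_2}+1}{p_2}{1}{\tV^2}{1}$, explicitly on the right-hand side rather than absorbing them. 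Checking that every product law stays within the admissible range when $p_2<2d$ is the remaining routine verification.
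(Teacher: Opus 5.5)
Your architecture matches the paper's: an $L^{p_2}$ transport estimate for $\widetilde z=P\tV^1$ in $L^\infty_T(\dot B^{d/p_2}_{p_2,1})$, maximal regularity for $\tV^2$ at regularity $\frac{d}{p_2}-1$, and the product law that needs $p_2<2d$. The gap is the closing step you flag as the main obstacle, and your proposed fix does not work. The source term for $\widetilde z$ contains the \emph{linear} coupling $-\sum_\alpha P(\overline{S}^0_{11})^{-1}\overline{S}^\alpha_{12}\pal\tV^2$. It contributes $C_0\LpNBH{\cdp{p_2}+1}{p_2}{1}{\tV^2}{1}$, where $C_0$ depends only on the matrices at $\overline U$ (and on your Gronwall factor). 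Smallness of $V_2$ does not reduce $C_0$. Small $T$ does not help either: this is the top-order component of $\|\tV\|_{\mathbb F(T)}$ and gains no power of $T$. You also cannot trade it for $\sqrt T\,\|\tV^2\|_{L^2_T(\dot B^{d/p_2+1}_{p_2,1})}$, since that norm is not controlled by $\|\tV\|_{\mathbb F(T)}$. Nor does \eqref{est:lem:tV} keep this term on the right. There, $\LpNBH{\cdp{p_2}+1}{p_2}{1}{\tV^2}{1}$ appears only multiplied by $\LpNBH{\cdp{p_2}}{p_2}{1}{V_2}{\infty}$, which comes from the quasilinear part, not the linear coupling. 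Adding your two estimates as they stand puts $\LpNBH{\cdp{p_2}+1}{p_2}{1}{\tV^2}{1}$ on both sides, with a right-hand coefficient that need not be below $1$, so nothing closes.

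The missing idea is to weight the hyperbolic estimate before adding. The paper divides the $\tV^1$ inequality by $K\bigl(1+\LpNBH{\cdp{p_2}}{p_2}{1}{V_2}{\infty}\bigr)$ with $K$ large, so the coupling term carries coefficient $\frac12$ (see \eqref{est:tz:}). It then absorbs that term into the left-hand side of \eqref{est:tW:}. This works because the reverse coupling is weak. At regularity $\frac{d}{p_2}-1$, every $\tV^1$-contribution to the $\tV^2$ equation is of lower order. Each enters only through $\LpNBH{\cdp{p_2}}{p_2}{1}{\tV}{2}$, multiplied by $\sqrt T$ or by $\LpNBH{\cdp{p_2}}{p_2}{1}{\n V_1^2}{2}$, and the statement keeps these terms on the right. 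The only cost is that $C$ depends on $K$.

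Two smaller points. First, the lemma assumes no smallness; in the uniqueness argument only $V_2$ is small. So you cannot absorb $\pal\bigl((Z(U_1)-\overline Z)\pbe\tV^2\bigr)$ by smallness of $V_1$; it must stay on the right. To get the statement's form, freeze the quasilinear coefficients at $U_2$ as the paper does; freezing at $U_1$ produces the mirror-image terms. Second, Gronwall in Step 1 makes $C$ depend on $\|V_1\|_{L^1_T(\dot B^{d/p_2+1}_{p_2,1})}$, not only on the $L^\infty_T(\dot B^{d/p_2}_{p_2,1})$ norms. You avoid this by keeping $\int_0^T\|\tV\|_{\dot B^{d/p_2}_{p_2,1}}\|\nabla V_1\|_{\dot B^{d/p_2}_{p_2,1}}$ on the right and applying Cauchy--Schwarz in time.
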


\begin{rmq}
Let us emphasize that the stability inequality \eqref{est:lem:tV} is derived without relying on any smallness condition.
\end{rmq}

\begin{proof} Throughout the proof, we agree that $A\lesssim B$ means that there exists a positive constant $C$ depending only on $p,d$, the matrices involved in system \eqref{systemE} and $  \LpNBH{\cdp{p_2}}{p_2}{1}{(V_2,V_1)}{\infty} $ such that $A\le CB$.  We divide the proof of Lemma \ref{est:uni} into several steps. Let us start with the following estimate for $\pt \tV$.
\paragraph{\textbf{Step 1: Estimates for $\pt \tV$.}}
\begin{lem}
    \label{lem:pt:tV:k}
 For all  $p_2\in [1,2d[ $ and $\kappa\in [\frac{d}{q}-1,\frac{d}{q}]$, we have for some 
 constant $C$ depending only on $d,$ $p_2,$ the parameters of the system and $  \LpNBH{\cdp{p_2}}{p_2}{1}{(V_2,V_1)}{\infty} $ :
\begin{multline}
    \label{pt:tV:k}
    \NBH{\kappa}{p_2}{1}{ \pt \tV } \leq C\biggl( \NBH{\kappa}{p_2}{1}{\nabla \tV}+ \NBH{\frac{d}{p_2}}{p_2}{1}{\tV}\NBH{\kappa}{p_2}{1}{\nabla V_2}\\
     + \NBH{\kappa+2}{p_2}{1}{ \tV^2}+ \NBH{\kappa+1}{p_2}{1}{V_1}\NBH{\frac{d}{p_2}+1}{p_2}{1}{ \tV^2} 
    + \NBH{\frac{d}{p_2}}{p_2}{1}{ \tV}  \NBH{\kappa+2}{p_2}{1}{ V^2_2}+  \NBH{\kappa+1}{p_2}{1}{\tV}\NBH{\frac{d}{p_2}+1}{p_2}{1}{ V^2_2}\\
     +  \NBH{\frac{d}{p_2}}{p_2}{1}{ \tV}\bigl(  \NBH{\kappa+2}{p_2}{1}{ V_2^2}   +\NBH{\kappa+1}{p_2}{1}{V_1}\NBH{\frac{d}{p_2}+1}{p_2}{1}{ V^2_2}\bigr)\biggr)\cdotp
    \end{multline}
   \end{lem}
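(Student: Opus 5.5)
The plan is to subtract the two equations in the form \eqref{sysdif::modif}, with $f\equiv 0$ as agreed for this uniqueness section:
\[
\pt V_k=-\suma (S^0(U_k))^{-1}S^\alpha(U_k)\pal V_k+\sumab (S^0(U_k))^{-1}\pal\bigl(Y^{\alpha\beta}(U_k)\pbe V_k\bigr),\qquad k=1,2 .
\]
Set $\mathcal{A}^\alpha(U)\defn (S^0(U))^{-1}S^\alpha(U)$. We then write $\pt\tV$ as a sum of terms, each linear in $\tV$:
\begin{multline*}
\pt \tV=-\suma \mathcal{A}^\alpha(U_1)\pal\tV-\suma\bigl(\mathcal{A}^\alpha(U_1)-\mathcal{A}^\alpha(U_2)\bigr)\pal V_2
+\sumab (S^0(U_1))^{-1}\pal\bigl(Y^{\alpha\beta}(U_1)\pbe\tV\bigr)\\
+\sumab (S^0(U_1))^{-1}\pal\bigl((Y^{\alpha\beta}(U_1)-Y^{\alpha\beta}(U_2))\pbe V_2\bigr)
+\sumab\bigl((S^0(U_1))^{-1}-(S^0(U_2))^{-1}\bigr)\pal\bigl(Y^{\alpha\beta}(U_2)\pbe V_2\bigr).
\end{multline*}
Because of the block structure \eqref{form:Y}, the terms containing $Y$ only involve $\tV^2$ and $V_2^2$. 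Each term is then bounded in $\BH{\kappa}{p_2}{1}$, with $\kappa\in[\frac{d}{p_2}-1,\frac{d}{p_2}]$. This range lies inside $(-\frac{d}{p_2},\frac{d}{p_2}]$ exactly when $p_2<2d$.

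For the two first-order terms we use the product law \eqref{product_propo2}, $\NBH{\kappa}{p_2}{1}{ab}\lesssim\NBH{\frac{d}{p_2}}{p_2}{1}{a}\NBH{\kappa}{p_2}{1}{b}$. In the first term we write $\mathcal{A}^\alpha(U_1)=\mathcal{A}^\alpha(\overline U)+(\mathcal{A}^\alpha(U_1)-\mathcal{A}^\alpha(\overline U))$ and bound the second piece with Proposition \ref{propo:comp}. This gives $C\NBH{\kappa}{p_2}{1}{\n\tV}$, with $C$ depending on $\LpNBH{\cdp{p_2}}{p_2}{1}{V_1}{\infty}$. For the difference of the coefficients we apply the composition estimate for differences (\eqref{comp:uv:propo:1}), $\NBH{\frac{d}{p_2}}{p_2}{1}{G(U_1)-G(U_2)}\le C\NBH{\frac{d}{p_2}}{p_2}{1}{\tV}$, which yields $\NBH{\frac{d}{p_2}}{p_2}{1}{\tV}\NBH{\kappa}{p_2}{1}{\n V_2}$.

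For the second-order terms we expand $\pal(Y(U_1)\pbe\tV^2)=Y(U_1)\partial^2_{\alpha\beta}\tV^2+\pal(Y(U_1))\pbe\tV^2$, as was done for \eqref{product_propo3:chap2} in Lemma \ref{lem_esti_partial_t_V_2}. The first piece gives $\NBH{\kappa+2}{p_2}{1}{\tV^2}$. For the second piece, $\pal(Y(U_1))=Y'(U_1)\pal V_1$ lies in $\BH{\kappa}{p_2}{1}$ with norm at most $C\NBH{\kappa+1}{p_2}{1}{V_1}$, and $\n\tV^2\in \BH{\frac{d}{p_2}}{p_2}{1}$. This produces $\NBH{\kappa+1}{p_2}{1}{V_1}\NBH{\frac{d}{p_2}+1}{p_2}{1}{\tV^2}$.

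The term with $Y(U_1)-Y(U_2)$ is expanded in the same way into $(Y(U_1)-Y(U_2))\partial^2 V_2^2$ and $\pal(Y(U_1)-Y(U_2))\pbe V_2^2$. The first piece gives $\NBH{\frac{d}{p_2}}{p_2}{1}{\tV}\NBH{\kappa+2}{p_2}{1}{V_2^2}$. For the second piece we write
\[
\pal\bigl(Y(U_1)-Y(U_2)\bigr)=Y'(U_1)\pal\tV+\bigl(Y'(U_1)-Y'(U_2)\bigr)\pal V_2 .
\]
The part with $Y'(U_1)\pal\tV$ contributes $\NBH{\kappa+1}{p_2}{1}{\tV}\NBH{\frac{d}{p_2}+1}{p_2}{1}{V_2^2}$. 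The part with the difference of $Y'$ contributes a term of size $\NBH{\frac{d}{p_2}}{p_2}{1}{\tV}\NBH{\kappa+1}{p_2}{1}{V_2}\NBH{\frac{d}{p_2}+1}{p_2}{1}{V_2^2}$; up to relabelling $V_1$ against $V_2$ in the admissible bound, this matches the last group in \eqref{pt:tV:k}. The final term, with the difference of $(S^0)^{-1}$, is handled as before: $\NBH{\frac{d}{p_2}}{p_2}{1}{\tV}$ times the bound \eqref{pt:V:est}-type estimate of $\pal(Y(U_2)\pbe V_2^2)$. It falls into the same group.

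The main obstacle is keeping every product inside the admissible range of \eqref{product_propo2}. Whenever $\kappa$ sits at the lower end $\frac{d}{p_2}-1$, the low-regularity factor must always carry the $\kappa$-index and the other factor must sit in $\BH{\frac{d}{p_2}}{p_2}{1}$. This is where $p_2<2d$ is used, so that $\kappa+\frac{d}{p_2}>0$. All composition constants are absorbed into $C$, which depends on $\LpNBH{\cdp{p_2}}{p_2}{1}{(V_1,V_2)}{\infty}$; no smallness is needed.
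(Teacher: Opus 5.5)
Your proposal is correct and follows essentially the paper's route. You use the same splitting of $\pt\tV$, except that you freeze $(S^0(U_1))^{-1}$ and $Y(U_2)$ where the paper freezes $(S^0(U_2))^{-1}$ and $Y(U_1)$, and you then apply the product law \eqref{product_propo2} and the composition estimates of Proposition \ref{propo:comp}, which the paper invokes in a single line.

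The only imprecise point is your ``relabelling'' step, which produces $\NBH{\kappa+1}{p_2}{1}{V_2}$ where the statement has $\NBH{\kappa+1}{p_2}{1}{V_1}$. You can justify it in either of two ways:
\begin{itemize}
\item split $\pal\bigl(Y(U_1)-Y(U_2)\bigr)=Y'(U_2)\pal\tV+\bigl(Y'(U_1)-Y'(U_2)\bigr)\pal V_1$, and use $Y(U_1)$ in the last term as the paper does;
\item or use $\NBH{\kappa+1}{p_2}{1}{V_2}\le\NBH{\kappa+1}{p_2}{1}{V_1}+\NBH{\kappa+1}{p_2}{1}{\tV}$ and absorb the extra contribution into the existing term $\NBH{\kappa+1}{p_2}{1}{\tV}\NBH{\frac{d}{p_2}+1}{p_2}{1}{V_2^2}$, since $\NBH{\frac{d}{p_2}}{p_2}{1}{\tV}$ is controlled by the quantity on which $C$ depends.
\end{itemize}
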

\begin{proof}
From \eqref{sysdif::modif},  $\pt \tV$ satisfies
   \begin{align*}
       \pt \tV &=-\sum_{\alpha}{(S^0(U_1))^{-1}S^\alpha (U_1) \pal \tV}- \sum_{\alpha}{((S^0(U_1))^{-1}S^\alpha (U_1)-(S^0(U_2))^{-1}S^\alpha (U_2)) \pal V_2}  \\
       &+\sum_{\alpha,\beta}{ (S^0(U_1))^{-1}\pal (Y(U_1)^{\alpha\beta} \pbe \tV)  }+ \sum_{\alpha,\beta}{ (S^0(U_2))^{-1}\pal \left((Y(U_1)^{\alpha\beta}-Y(U_2)^{\alpha\beta}) \pbe V_2\right)  }\\
       &+ \sum_{\alpha,\beta}{ \left( (S^0(U_1))^{-1}-  (S^0(U_2))^{-1} \right)\pal (Y(U_1)^{\alpha\beta} \pbe V_2)  }
.
   \end{align*}
 As $1\le p_2<2d$ and $\kappa\in [\frac{d}{p_2}-1,\frac{d}{p_2}]$, taking advantage of  Proposition \ref{propo_produc_BH}  yields \eqref{pt:tV:k}.
 \end{proof}

\paragraph{\textbf{Step 2: Estimates for $ \tV^1$.}}
Instead of $\tV^1$, we shall obtain estimates for $$\tz\defn z_1-z_2= P(V_1^1-V^1_2)=P\tV^1.$$ In fact, since $P$ is an invertible and constant matrix, any Besov norm of $\tV^1$ and $\tz$ are equivalent.
\smallbreak
Remark that, from \eqref{eq:V1=f(H)},$ \tz  $  is solution of:
\begin{align}
    \label{eq:tz}
     &\pt  \tz  + \suma H^\alpha(U_1)\pal \tz=\widetilde{h}^{V^2}(U_1)-\widetilde{h}^{V^2}(U_2)- \suma \left( H^\alpha(U_2)-H^\alpha(U_1)\right)\pal z_2
\end{align}
with 
\begin{align*}
    \widetilde{h}^{V^2}(U)\defn - \suma P ( S^0_{11}(U))^{-1} S^\alpha_{12}(U)\pal V^2.
\end{align*}

    Following the proof of Lemma \ref{est:V1:hf:lem}, we deduce the following estimate 
    \begin{multline}
    \label{est:tv1=f(source:terms)}
         \LpNBH{\frac{d}{p_2}}{p_2}{1}{\tV^1}{\infty}\lesssim \NBH{\frac{d}{p_2}}{p_2}{1}{\tV^1_0}+\int^T_0 \NBH{\frac{d}{p_2}}{p_2}{1}{\tV}\NBH{\frac{d}{p_2}}{p_2}{1}{\n V_1}\\
        + \int^T_0 \biggl( \NBH{\cdp{p_1}}{p_1}{1}{\widetilde{h}^{V^2}(U_1)-\widetilde{h}^{V^2}(U_2)}+ \NBH{\cdp{p_1}}{p_1}{1}{ \left( H^\alpha(U_2)-H^\alpha(U_1)\right) \partial_\alpha z_2 }\biggr)\cdotp
    \end{multline}
       
   In order to estimate the right-hand side, we have to use many times the fact that the product is an algebra on the space $\BH{\frac{d}{p_i}}{p_i}{1}$. Taking advantage of Proposition \ref{propo:comp}, we get
   \begin{align*}
       \int^T_0 \NBH{\cdp{p_2}}{p_2}{1}{ \left( H^\alpha(U_2)-H^\alpha(U_1)\right)\pal  z_2 }\lesssim   \sqrt{T}\LpNBH{\cdp{p_2}}{p_2}{1}{ \tV }{2}\LpNBH{\cdp{p_2}}{p_2}{1}{ \n V^1_2 }{\infty}.
   \end{align*}
Using the decomposition 
\begin{multline}
\label{decomp:tidle:h:V2}
    \widetilde{h}^{V^2}(U_1)-\widetilde{h}^{V^2}(U_2)= \suma P(S^0_{11}(\overline{U}))^{-1}S^\alpha_{12}(\overline{U})\pal  \tV^2  \\
      -\suma P(S^0_{11}(\overline{U}))^{-1}S^\alpha_{12}(\overline{U}) -(S^0_{11}(U_2))^{-1}S^\alpha_{12}(U_{2})) \pal  \tV^2\\
      +\suma P\left( (S^0_{11}(U_1))^{-1}S^\alpha_{12}(U_1)- (S^0_{11}(U_2))^{-1}S^\alpha_{12}(U_{2})\right)\pal V^2_1,
\end{multline}
combined with Proposition \ref{propo:comp}, we obtain
\begin{multline*}
     \int^T_0 \NBH{\cdp{p_2}}{p_2}{1}{  \widetilde{h}^{V^2}(U_1)-\widetilde{h}^{V^2}(U_2) }^h\\\lesssim 
      \LpNBH{\cdp{p_2}}{p_2}{1}{\n \tV^2}{1}+ \LpNBH{\cdp{p_2}}{p_2}{1}{V_2}{\infty}\LpNBH{\cdp{p_2}+1}{p_2}{1}{\tV^2}{1}+ \sqrt{T}\LpNBH{\cdp{p_2}}{p_2}{1}{\tV}{2}\LpNBH{\cdp{p_2}+1}{p_2}{1}{V^2_1}{\infty}.
\end{multline*}
Gathering the above estimates and using once more the embedding $ \BH{\cdp{p_2}}{p_2}{1} \hookrightarrow L^\infty $, we obtain for large enough $K>0$
    \begin{multline}
\label{est:tz:}
    \frac{1}{K( 1+\LpNBH{\cdp{p_2}}{p_2}{1}{V_2}{\infty}) }\LpNBH{\frac{d}{p_2}}{p_2}{1}{\tV^1}{\infty}  \le \NBH{\frac{d}{p_2}}{p_2}{1}{\tV^1_0}\\
    +\sqrt{T} \LpNBH{\frac{d}{p_2}}{p_2}{1}{\tV}{2}\LpNBH{\frac{d}{p_2}+1}{p_2}{1}{(V_2, V_1)}{2}
     +\frac{1}{2}\LpNBH{\cdp{p_2}+1}{p_2}{1}{\tV^2}{1} .
    \end{multline}
\paragraph{\textbf{Step 3: Estimates for $ \tV^2$ .}}
From \eqref{rel:W:V2}, we discover that $ \tV^2 $ is solution of 

\begin{equation}
    \label{eq:tW}
    \overline{S}^0_{22}\pt \tV^2 -\Delta\mathcal{Z}(D) \tV^2= (g^t(U_1)-g^t(U_2))+ (g^V(U_1)-g^V(U_2))+(g^{V^2}(U_1)-g^{V^2}(U_2)).
\end{equation}
   Following the lines of the proof of the Lemma \ref{est:M:lem} we arrive at
   \begin{multline*}
    \LpNBH{\frac{d}{p_2}-1}{p_2}{1}{\tV^2}{\infty} +  \LpNBH{\frac{d}{p_2}+1}{p_i}{1}{\tV^2}{1}\lesssim \NBH{\frac{d}{p_2}-1}{p_i}{1}{\tV^2_0} \\
    + \int^T_0  \NBH{\frac{d}{p_2}-1}{p_2}{1}{ (g^t(U_1)-g^t(U_2), g^V(U_1)-g^V(U_2),g^{V^2}(U_1)-g^{V^2}(U_2))}.
   \end{multline*}
   In the spirit of calculations leading to \eqref{est:tz:}, we are going to estimate the non linear terms on the right-hand side of the previous inequality. The general idea is similar to what we did for the term $ \widetilde{h}^{V^2}(U_1)-\widetilde{h}^{V^2}(U_2) $ in \eqref{est:tv1=f(source:terms)}. In order to bound the nonlinear terms, we have to decompose  each term of right-hand side of the above inequality (that is, $ g^t(U_1)-g^t(U_2), g^V(U_1)-g^V(U_2)$ and $g^{V^2}(U_1)-g^{V^2}(U_2)$) as follows:

   \begin{multline}
    \label{decomp:gt:V:V2}
       g^t(U_1)-g^t(U_2)=  (S^0_{22}(U_2)- S^0_{22}(U_1))\pt V_1^2+  (\overline{S}^0_{22}-S^0_{22}(U_2))\pt \tV^2,\\
        g^V(U_1)-g^V(U_2)= \suma\left((S^\alpha_{21}(U_2)-S^\alpha_{21}(U_1))\pal V^1_1+(S^\alpha_{22}(U_2)-S^\alpha_{22}(U_1))\pal V^2_1)\right)\\
        + \suma\left((\barS_{21}-S^\alpha_{21}(U_2))\pal \tV^1+(\barS_{22}-S^\alpha_{22}(U_2)\pal \tV^2)\right),\\
        g^{V^2}(U_1)-g^{V^2}(U_2)= \sumab \pal\left((Z^{\alpha\beta}(U_2)-Z^{\alpha\beta}(U_1))\pbe V^2_1\right)\\
        + \sumab \pal\left((\overline{Z}^{\alpha\beta}-Z^{\alpha\beta}(U_2))\pbe \tV^2\right).
   \end{multline}
Now, using Proposition \ref{propo:comp} and the fact that
the product maps $\BH{\frac{d}{p_2}}{p_2}{1}\times \BH{\frac{d}{p_2}-1}{p_2}{1}\mapsto \BH{\frac{d}{p_2}-1}{p_2}{1} $ (let us remember that $p_2<2d$), we easily get
\begin{multline}
\label{est:tg:t:V}
    \int^T_0 \NBH{\cdp{p_2}-1}{p_2}{1}{g^t(U_1)-g^t(U_2)}\lesssim \int^T_0\left( \NBH{\cdp{p_2}}{p_2}{1}{\tV}\NBH{\cdp{p_2}-1}{p_2}{1}{\pt V^2_1}+ \NBH{\cdp{p_2}}{p_2}{1}{V_2}\NBH{\cdp{p_2}-1}{p_2}{1}{\pt \tV^2}\right),\\
    \int^T_0 \NBH{\cdp{p_2}-1}{p_2}{1}{g^V(U_1)-g^V(U_2)}\lesssim \sqrt{T}\left( \LpNBH{\cdp{p_2}}{p_2}{1}{\tV}{2}\LpNBH{\cdp{p_2}-1}{p_2}{1}{\n V_1}{\infty}\right.\\
    \left.+ \LpNBH{\cdp{p_2}}{p_2}{1}{V_2}{\infty}\LpNBH{\cdp{p_2}-1}{p_2}{1}{\n \tV}{2}\right),
\end{multline}
while using the product law  $\BH{\frac{d}{p_2}}{p_2}{1}\times \BH{\frac{d}{p_2}}{p_2}{1}\mapsto \BH{\frac{d}{p_2}}{p_2}{1} $  one may handle the term $g^{V^2}(U_1)-g^{V^2}(U_2) $ as follows 
\begin{multline}
    \label{est:tg:t:V2}
      \int^T_0 \NBH{\cdp{p_2}-1}{p_2}{1}{g^{V^2}(U_1)-g^{V^2}(U_2)}\lesssim \LpNBH{\cdp{p_2}}{p_2}{1}{\tV}{2}\LpNBH{\cdp{p_2}}{p_2}{1}{\n V_1^2}{2}\\+ \LpNBH{\cdp{p_2}}{p_2}{1}{V_2}{\infty}\LpNBH{\cdp{p_2}}{p_2}{1}{\n \tV^2}{1}.
\end{multline}
   
Let us now give more information about the first inequality of \eqref{est:tg:t:V}. Taking advantage of Lemma \ref{lem:pt:tV:k} (we take $\kappa=\cdp{p_2}-1$) and remembering that $p_2<2d$, one has
\begin{multline}\label{eq:dttildeV}
    \NBH{\cdp{p_2}-1}{p_2}{1}{\pt \tV} \lesssim \NBH{\cdp{p_2}}{p_2}{1}{ \tV}\bigl(1+\NBH{\cdp{p_2}}{p_2}{1}{V_2}+(1+\NBH{\cdp{p_2}}{p_2}{1}{V_1})\NBH{\cdp{p_2}+1}{p_2}{1}{V_2^2}\bigr)\\ + 
    \NBH{\cdp{p_2}+1}{p_2}{1}{ \tV^2}(1+\NBH{\cdp{p_2}}{p_2}{1}{V_1}\bigr)\cdotp
\end{multline}
 Then one may write
\begin{multline}
    \label{1:est:tg:t:}
    \int^T_0 \NBH{\cdp{p_2}}{p_2}{1}{V_2}\NBH{\cdp{p_2}-1}{p_2}{1}{\pt \tV^2}\le \sqrt{T}C\LpNBH{\cdp{p_2}}{p_2}{1}{V_2}{\infty}\LpNBH{\cdp{p_2}}{p_2}{1}{\tV}{2}\\
  +C \LpNBH{\cdp{p_2}}{p_2}{1}{V_2}{\infty}\LpNBH{\cdp{p_2}+1}{p_2}{1}{\tV^2}{1}
   + C\LpNBH{\cdp{p_2}}{p_2}{1}{V_2}{\infty}\LpNBH{\cdp{p_2}+1}{p_2}{1}{V^2_2}{2}\LpNBH{\cdp{p_2}}{p_2}{1}{\tV}{2},
\end{multline}
where $C$ depends on $  \LpNBH{\cdp{p_2}}{p_2}{1}{(V_2,V_1)}{\infty} $.\\
Meanwhile taking $\kappa=\cdp{p_2}-1$ in \eqref{pt:V:est} leads to
\begin{align*}
     \NBH{\cdp{p_2}-1}{p_2}{1}{\pt V^2_1} &\le C \NBH{\cdp{p_2}}{p_2}{1}{ V_1}+ \NBH{\cdp{p_2}+1}{p_2}{1}{ V_1},\;\; C=C(\LpNBH{\cdp{p_2}}{p_2}{1}{V_1})\cdotp
\end{align*}
One may deduce that
\begin{multline}
    \label{2:est:tg:t:}
  \int^T_0 \NBH{\cdp{p_2}}{p_2}{1}{\tV}\NBH{\cdp{p_2}-1}{p_2}{1}{\pt V^2_1}\le C \sqrt{T} \LpNBH{\cdp{p_2}}{p_2}{1}{V_1}{\infty}\LpNBH{\cdp{p_2}}{p_2}{1}{\tV}{2}\\
  + C\LpNBH{\cdp{p_2}+1}{p_2}{1}{V^2_1}{2}\LpNBH{\cdp{p_2}}{p_2}{1}{\tV}{2}.
\end{multline}

Finally, combining \eqref{2:est:tg:t:}, \eqref{1:est:tg:t:}, \eqref{est:tg:t:V2} and  \eqref{est:tg:t:V}, we end with 
    \begin{multline}
\label{est:tW:}
\LpNBH{\frac{d}{p_2}-1}{p_2}{1}{\tV^2}{\infty}+  \LpNBH{\frac{d}{p_2}+1}{p_i}{1}{\tV^2}{1}\le   \NBH{\frac{d}{p_2}-1}{p_i}{1}{\tV^2_0}\\
     +C\sqrt{T}\LpNBH{\cdp{p_2}}{p_2}{1}{\tV}{2}\LpNBH{\cdp{p_2}}{p_2}{1}{ (V_1,V_2)}{\infty}+C  \LpNBH{\cdp{p_2}}{p_2}{1}{\tV}{2}\LpNBH{\cdp{p_2}}{p_2}{1}{\n V_1^2}{2}\\
  + C\LpNBH{\cdp{p_2}}{p_2}{1}{V_2}{\infty}\LpNBH{\cdp{p_2}+1}{p_2}{1}{\tV^2}{1}.
    \end{multline}
with  $C>0$ depending on $ \LpNBH{\cdp{p_2}}{p_2}{1}{(V_2,V_1)}{\infty} $.
\paragraph{\textbf{Step 3: Proof of Lemma \ref{est:uni}}.}
Summing \eqref{est:tW:}, \eqref{est:tz:} together and remarking that the last term of \eqref{est:tz:} may be absorbed by the left-hand side of \eqref{est:tW:}, we get for all $t\ge 0$,
\begin{multline*}
     \LpNBH{\frac{d}{p_2}-1}{p_2}{1}{\tV^2}{\infty}+ \frac{1}{2} \LpNBH{\frac{d}{p_2}+1}{p_i}{1}{\tV^2}{1}+\frac{1}{ 2(1+\LpNBH{\cdp{p_2}}{p_2}{1}{V_2}{\infty})}\LpNBH{\frac{d}{p_2}}{p_2}{1}{\tV^1}{\infty}\\
     \le  C\left(\NBH{\frac{d}{p_2}-1}{p_i}{1}{\tV^2_0}+ \NBH{\frac{d}{p_2}}{p_2}{1}{\tV^1_0}\right)+  C\LpNBH{\cdp{p_2}}{p_2}{1}{V_2}{\infty}\LpNBH{\cdp{p_2}+1}{p_2}{1}{\tV^2}{1}\\
     C \sqrt{T}\LpNBH{\cdp{p_2}}{p_2}{1}{\tV}{2}\LpNBH{\cdp{p_2}}{p_2}{1}{ (V_1,V_2,)}{\infty}+ C \LpNBH{\cdp{p_2}}{p_2}{1}{\tV}{2}\LpNBH{\cdp{p_2}}{p_2}{1}{\n V_1^2}{2},
\end{multline*}
where $C>0$ depends on $  \LpNBH{\cdp{p_2}}{p_2}{1}{(V_2,V_1)}{\infty} $. By interpolation inequality in Besov spaces, one obtains the desired estimate of Lemma \ref{est:uni}. 
\end{proof}

In order to prove the uniqueness part of Theorem \ref{thm:p}, consider two solutions $V_1$ and $V_2$ of \eqref{eq:base:V} in the space $ \mathbb{E}_{p_i},$ corresponding to the same (small)
initial data. We can assume without loss of generality that $V_2$
is the solution constructed before, hence it is small, but $V_1$ need not be small. 
 Before applying Lemma \ref{est:uni}, we have to check that $\tV$ belongs to $\mathbb{F}_T.$
This may be obtained from \eqref{eq:dttildeV}, which implies that $\partial_t\widetilde{V}$ is in
$L^1(0,T;\dot B^{\frac d{p_2}-1}_{p_2,1}),$ and thus $\widetilde{V}$ is in $\mathcal C([0,T];\dot B^{\frac d{p_2}-1}_{p_2,1}).$

Then, Inequality \eqref{est:lem:tV} combined with interpolation in Besov spaces 
allows us to write
 \begin{align*}
      \Vert\tV\Vert_{\mathbb{F}(T)} \le   C\biggl(\LpNBH{\cdp{p_2}}{p_2}{1}{V_2}{\infty}+ \sqrt{T}\LpNBH{\cdp{p_2}}{p_2}{1}{ (V_1,V_2,)}{\infty}+ \LpNBH{\cdp{p_2}+1}{p_2}{1}{ V_1^2}{2}\biggr) \Vert\tV\Vert_{\mathbb{F}(T)}.
  \end{align*}
Next, we note that the term $ \sqrt{T}\LpNBH{\cdp{p_2}}{p_2}{1}{ (V_1,V_2,)}{\infty}  $ tends to $0$ when t goes to $0$ . At the same time by virtue of the Lebesgue dominated convergence theorem, the term $ \LpNBH{\cdp{p_2}+1}{p_2}{1}{ V_1^2}{2} $ also tends to $0$ when t goes to $0.$
Finally, remember that $\LpNBH{\cdp{p_2}}{p_2}{1}{V_2}{\infty}$ is small.  In the end, there exists a positive $T_1$
such that the term on the right-hand side may  be absorbed for all $T\in [0,T_1]$. We conclude that $ \tV $ is identically equal to $0$ in $[0,T_1]$. A bootstrap argument then yields uniqueness on the whole half-line $\mathbb{R}_+$. \qed

\section{Proof of Theorem \ref{thm:p:s} }
\label{sec:proof:thm:p:s}
Here we wish to avoid the condition $p_i \le  \max(4, \frac{d}{d-2})$
which was needed when bounding the low frequencies of some nonlinear terms. To overcome this problem, as in the papers of F. Charve and R. Danchin \cite{DanCha10} or by B. Haspot \cite{Haspot11} where the authors addressed the question of global well-posedness results
for the compressible Navier-Stokes equations, with low and high frequencies of the data belonging to different types of Besov spaces, we need an additional condition in high frequencies on the data $V_0^1$ and $V_0^2$.
We want, then, to follow the same strategy as in the proof of Theorem \ref{thm:p}. This
means that we use the same standard scheme which consists of the construction of approximate solutions satisfying uniform bounds, and results of compactness and uniqueness. Here we want to treat the non-linear term where the condition $ p_i \le  \max(4, \frac{d}{d-2}) $ appears in another
way by using the additional hypothesis  $  V^2_0, V^{1,l}_0\in \BH{s}{2}{r}$ and $V^{1,h}_0\in   \BH{s+1}{2}{r} $
    for some $r\in [1,\infty] $ and $s$ satisfying \eqref{def:s:r:p2}.
We will work with the same space as in the proof of the Theorem  \ref{thm:p}, except that we expect additional regularity on $(V^1,V^2)$ in the space $E$ defined by:
\begin{align*}
    E\defn \left\{ (V^1,V^2): V^2, V^{1,l}\in \widetilde{\mathcal{C}}_b(\mathbb{R}^+\!,\BH{s}{2}{r})\cap L^1(\mathbb{R}^+\!,\BH{s+2}{2}{r}); \;  V^{1,s}\in \widetilde{\mathcal{C}}_b(\mathbb{R}^+\!,\BH{s+1}{2}{r})\cap L^1(\mathbb{R}^+\!,\BH{s+1}{2}{r})\right\}\cdotp
\end{align*}
This will be achieved by using the following key Lemma.
\begin{lem}
\label{lem:prod}
Let $p_2\ge 2,\; 1\le r\le \infty$ and $\theta>-\frac{d}{p_2}>0$. We consider $f,g:\Rd\mapsto \mathbb{R}$ some functions. Then, there exists   $C>0$ such that
\begin{align}
    \label{prod:p2:gen}
    \NBH{\theta}{2}{r}{gf }\le C\left( (\normeinf{f}+\NBH{\frac{d}{p_2}}{p_2}{\infty}{f})\NBH{\theta}{2}{r}{g}+ \NBH{\theta}{2}{r}{f} \normeinf{g}\right)\cdotp
\end{align}
In particular,  
\begin{align}
    \label{prod:p2:gen:s}
    \NBH{\theta}{2}{r}{gf }\le C\bigl(\normeinf{f}+\NBH{\frac{d}{p_2}}{p_2}{\infty}{f}\bigr)\NBH{\theta}{2}{r}{g} \esp{if} \theta<\frac{d}{p_2}\cdotp
\end{align}
Moreover, for any functions $u:\Rd\mapsto \mathbb{R}$ and $f_2:\mathbb R\to\mathbb R$
with $f_2(0)=0,$ there exists a constant $C=C(f_2,\normeinf{u},\theta)$ such that
\begin{align}
    \label{prod:2:p2}
    \NBH{\theta}{2}{r}{gf_2(u)}\le C\left( (\normeinf{u}+\NBH{\frac{d}{p_2}}{p_2}{\infty}{u})\NBH{\theta}{2}{r}{g}+ \NBH{\theta}{2}{r}{u} \normeinf{g}\right),\text{  if  } \theta>0
\end{align}
and 
\begin{align}
    \label{prod:2:p2:0}
    \NBH{\theta}{2}{r}{gf_2(u)}\le C\bigl( \normeinf{u}+\NBH{\frac{d}{p_2}}{p_2}{\infty}{u}\bigr) \NBH{\theta}{2}{r}{g} \esp{if }\theta<\frac{d}{p_2}\cdotp
\end{align}
Finally, if $\theta<0$, then there still exists a constant $C=C(f_2,\normeinf{u},\theta)>0$ such that  the following inequality holds
\begin{align}
    \label{prod:2':p2}
    \NBH{\theta}{2}{r}{gf_2(u)}\le C\left( (\normeinf{u}+\NBH{\frac{d}{p_2}}{p_2}{\infty}{u})\NBH{\theta}{2}{r}{g}+ \NBH{\theta+\sigma}{2}{r}{u} \NBH{\cdp{p_2}-\sigma}{p_2}{\infty}{g}\right)
\end{align}
for any $\sigma>0$ such that $\theta+\sigma>0$.
    
\end{lem}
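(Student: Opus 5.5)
We use Bony's decomposition $gf=\dot T_g f+\dot T_f g+\dot R(f,g)$ and estimate each piece in $\BH{\theta}{2}{r}$. We read the hypothesis $\theta>-\frac{d}{p_2}$ as the standing condition; positivity of $\theta$ is imposed only where stated.

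\textbf{The three pieces.}

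\emph{The term $\dot T_g f$.} Since $\|\dot S_{k-1}g\|_{L^\infty}\lesssim\normeinf{g}$ and the $k$-th block is spectrally localized in an annulus, the standard paraproduct bound gives $\NBH{\theta}{2}{r}{\dot T_g f}\lesssim \normeinf{g}\NBH{\theta}{2}{r}{f}$ for every real $\theta$.

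\emph{The term $\dot T_f g$.} Likewise $\NBH{\theta}{2}{r}{\dot T_f g}\lesssim\normeinf{f}\NBH{\theta}{2}{r}{g}$ for every real $\theta$.

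\emph{The remainder $\dot R(f,g)$.} This is the only place where the $L^{p_2}$ norm of $f$ enters. Write $\dot R(f,g)=\sum_k \dot\Delta_k f\,\widetilde{\dot\Delta}_k g$. Hölder with $\frac1{p^*}=\frac12+\frac1{p_2}$ gives
\[
\|\dot\Delta_k f\,\widetilde{\dot\Delta}_k g\|_{L^{p^*}}\lesssim 2^{-k\frac d{p_2}}\NBH{\frac{d}{p_2}}{p_2}{\infty}{f}\, c_k 2^{-k\theta}\NBH{\theta}{2}{r}{g},\qquad (c_k)\in \ell^r .
\]
Hence $\dot R(f,g)\in\BH{\theta+\frac d{p_2}}{p^*}{r}$, with $\theta+\frac d{p_2}>0$ ensuring convergence of the remainder series. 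Applying the embedding $\BH{\theta+\frac d{p_2}}{p^*}{r}\hookrightarrow\BH{\theta}{2}{r}$ (a Bernstein loss of exactly $d(\frac1{p^*}-\frac12)=\frac d{p_2}$) yields $\NBH{\theta}{2}{r}{\dot R(f,g)}\lesssim \NBH{\frac{d}{p_2}}{p_2}{\infty}{f}\NBH{\theta}{2}{r}{g}$.

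Summing the three bounds gives \eqref{prod:p2:gen}. I expect this remainder estimate to be the main point: one must make sure the Hölder/Bernstein exponents match and that positivity of $\theta+\frac d{p_2}$ suffices even when $r=\infty$.

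\textbf{Proof of \eqref{prod:p2:gen:s}.} When $\theta<\frac d{p_2}$, we re-estimate $\dot T_g f$ instead of putting $g$ in $L^\infty$. Bound $\|\dot S_{k-1}g\|_{L^{p^\sharp}}$ with $\frac1{p^\sharp}=\frac12-\frac1{p_2}$ (as $p_2\ge2$) via Bernstein: for $\theta<\frac d{p_2}$ the low-frequency sum gives $\|\dot S_{k-1}g\|_{L^{p^\sharp}}\lesssim c_k2^{k(\frac d{p_2}-\theta)}\NBH{\theta}{2}{r}{g}$. Then pair it with $\|\dot\Delta_k f\|_{L^{p_2}}\lesssim 2^{-k\frac d{p_2}}\NBH{\frac{d}{p_2}}{p_2}{\infty}{f}$. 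This yields $\NBH{\theta}{2}{r}{\dot T_gf}\lesssim\NBH{\frac{d}{p_2}}{p_2}{\infty}{f}\NBH{\theta}{2}{r}{g}$, and \eqref{prod:p2:gen:s} follows.

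\textbf{Composition estimates.} For \eqref{prod:2:p2} and \eqref{prod:2:p2:0}, apply the above with $f=f_2(u)$. The needed facts are:
\[
\normeinf{f_2(u)}\le C\normeinf{u},\qquad \NBH{\frac{d}{p_2}}{p_2}{\infty}{f_2(u)}\le C\bigl(\normeinf{u}+\NBH{\frac{d}{p_2}}{p_2}{\infty}{u}\bigr),\qquad \NBH{\theta}{2}{r}{f_2(u)}\le C\NBH{\theta}{2}{r}{u}\ \text{for }\theta>0.
\]
These are the composition estimates of Proposition \ref{propo:comp} (Meyer's paralinearization, valid for positive regularity and any $p,r$). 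For $\frac d{p_2}$ with third index $\infty$, rewrite $f_2(u)=u\,\tilde f_2(u)$ if necessary.

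\textbf{The case $\theta<0$, estimate \eqref{prod:2':p2}.} Here $\NBH{\theta}{2}{r}{f_2(u)}$ is not available, so only the term $\dot T_g f_2(u)$ changes. Estimate it by
\[
\|\dot S_{k-1}g\|_{L^{p_2}}\,\|\dot\Delta_k f_2(u)\|_{L^{2p_2/(p_2-2)}}.
\]
Alternatively, put $\dot S_{k-1}g$ in $L^\infty$ via $\dot B^{\frac{d}{p_2}-\sigma}_{p_2,\infty}$: for $\sigma>0$, $\|\dot S_{k-1}g\|_{L^\infty}\lesssim 2^{k\sigma}\NBH{\frac{d}{p_2}-\sigma}{p_2}{\infty}{g}$. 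Combining this with $\|\dot\Delta_kf_2(u)\|_{L^2}\lesssim c_k2^{-k(\theta+\sigma)}\NBH{\theta+\sigma}{2}{r}{f_2(u)}$ gives the bound by $\NBH{\theta+\sigma}{2}{r}{u}\NBH{\frac{d}{p_2}-\sigma}{p_2}{\infty}{g}$. Since $\theta+\sigma>0$, Proposition \ref{propo:comp} applies to $f_2(u)$ at that regularity. The other two pieces are treated exactly as before.
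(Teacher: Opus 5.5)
Your proposal is correct and follows the paper's proof essentially step for step. Both use Bony's decomposition with the two paraproducts bounded through $L^\infty$, and both put the remainder in $\dot B^{\theta+\frac d{p_2}}_{\lambda,r}$ with $\frac1\lambda=\frac12+\frac1{p_2}$ before embedding it in $\dot B^{\theta}_{2,r}$. When $\theta<\frac d{p_2}$, both re-estimate $\dot T_gf$ using $\dot S_{k-1}g\in L^{\bar p}$ with $\frac1{\bar p}=\frac12-\frac1{p_2}$. For $\theta<0$, both bound $\dot T_gf_2(u)$ by $\dot B^{-\sigma}_{\infty,\infty}\times\dot B^{\theta+\sigma}_{2,r}\to\dot B^{\theta}_{2,r}$ and then apply composition at the positive index $\theta+\sigma$.

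The one loose point, which the paper shares, is the composition bound in $\dot B^{\frac d{p_2}}_{p_2,\infty}$. It lies outside the stated range of Proposition~\ref{propo:comp}, but it holds by the standard Meyer-type argument for any $s>0$ and $r\in[1,\infty]$. Your suggested rewriting $f_2(u)=u\,\tilde f_2(u)$ would not settle it on its own, and your first option for $\dot T_gf_2(u)$, with $\dot S_{k-1}g\in L^{p_2}$, is not needed.
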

\begin{proof}
We recall the Bony decomposition
\[ f_1g=\Dot{T}_{f_1} g+\Dot{T}_g f_1+\Dot{R}(f_1,g).\]
    From the continuity of the paraproduct 
    we get
    \begin{align}
    \label{eq:Tf:g:Tg:f:s}
    \NBH{\theta}{2}{r}{\Dot{T}_g f}\le C\normeinf{g} \NBH{\theta}{2}{r}{f}\esp{and} 
         \NBH{\theta}{2}{r}{\Dot{T}_{f} g}\le C\normeinf{f}\NBH{\theta}{2}{r}{g}. 
    \end{align}
    As $\theta+\frac{d}{p_2}>0$ and $p_2\ge 2$ we easily obtain (with $\frac{1}{\lambda}\defn \frac{1}{2}+\frac{1}{p_2}$)
    \begin{align}
    \label{R:f:g:s:est}
        \NBH{\theta}{2}{r}{\Dot{R}(g, f)}\le C \NBH{\theta+\frac{d}{p_2}}{\lambda}{r}{\Dot{R}(g, f)} \le C \NBH{\frac{d}{p_2}}{p_2}{\infty}{f} \NBH{\theta}{2}{r}{g}.
    \end{align}
   Putting together all these estimates gives \eqref{prod:p2:gen}. In order to prove \eqref{prod:p2:gen:s}, we have to bound $ \Dot{T}_g f$ differently. In fact, 
 in the case $ \theta<\frac{d}{p_2}$,  using the continuity of the paraproduct  and Besov embedding  \[  \Dot{T}:\BH{\theta-\frac{d}{p_2}}{\Bar{p}}{r}\times \BH{\frac{d}{p_2}}{p_2}{\infty} \longrightarrow \BH{\theta}{2}{r} \esp{and} \BH{\theta}{2}{r} \hookrightarrow  \BH{\theta-\frac{d}{p_2}}{\Bar{p}}{r} \]
    with $\frac{1}{2}=\frac{1}{p_2}+\frac{1}{\Bar{p}}$ we have 
    \begin{align*}
      \NBH{\theta}{2}{r}{\Dot{T}_g f}\le C  \NBH{\theta}{2}{r}{g}\NBH{\frac{d}{p_2}}{p_2}{\infty}{f}.
    \end{align*}
    Let us now prove \eqref{prod:2:p2}, \eqref{prod:2':p2} and \eqref{prod:2:p2:0}.\\  On the one hand, combining \eqref{prod:p2:gen} and Proposition \ref{propo:comp} leads to  \eqref{prod:2:p2}.\\
    On the other hand, if $\theta< \frac{d}{p_2}$, using \eqref{prod:p2:gen:s} and Proposition \ref{propo:comp} leads to \eqref{prod:2:p2:0}.\\ 
     Finally, if $\theta+\sigma>0$, combining \eqref{R:f:g:s:est}, the second inequality of \eqref{eq:Tf:g:Tg:f:s} together with the Proposition \ref{propo:comp} and the fact 
    \begin{align*}
          \NBH{\theta}{2}{r}{\Dot{T}_g f_2(u)}\le C  \NBH{-\sigma}{\infty}{\infty}{g}\NBH{\theta+\sigma}{2}{r}{f_2(u)}\le C\NBH{-\sigma+\cdp{p_2}}{p_2}{\infty}{g}\NBH{\theta+\sigma}{2}{r}{u},
    \end{align*}
     gives \eqref{prod:2':p2}.    
\end{proof}
 We assume that we are given a smooth (and decaying) solution $V$ of \eqref{eq:base:V}, under assumptions  (\textbf{H1})-(\textbf{H7}),   on $ [0, T ]\times \Rd$ with initial data $V_0$, satisfying
 \begin{align}
     \label{smal:cnd:Vs}
     \TLpNBH{s}{2}{r}{(V^2,V^{1,l})}{\infty}+ \Vert V^{1,h} \Vert_{\widetilde{L}^\infty_T\cap \widetilde{L}^1_T(\BH{s+1}{2}{r})}+ \LpNBH{\frac{d}{p_2}+1}{p_2}{1}{V^1}{\infty}^h+\LpNBH{\frac{d}{p_2}}{p_2}{1}{V^2}{\infty}^h\ll 1.
 \end{align}
Owing to the embedding $\BH{\frac{d}{p}}{p}{1} \hookrightarrow L^\infty$, to \eqref{est:lh:hf} and to \eqref{smal:cnd:Vs}, Inequality \eqref{small:data:inf} also holds.
Let us recall that 
\begin{multline}
    \label{recal:math:X:s}
        X_{p_i}(T)=\TLpNBH{s}{2}{r}{V}{\infty}^{l}+ \TLpNBH{s+2}{2}{r}{V}{1}^{l}+ \Vert(W,V^2)\Vert_{\Tilde{L}^\infty_T(\BH{s}{2}{r}\cap\BH{\frac{d}{p_i}}{p_i}{1})}^h+ \Vert(W,V^2)\Vert_{\Tilde{L}^1_T(\BH{s+2}{2}{r}\cap\BH{\frac{d}{p_i}+2}{p_i}{1})}^h\\
        + \Vert V^1 \Vert_{\Tilde{L}^\infty_T(\BH{s+1}{2}{r}\cap\BH{\frac{d}{p_i}+1}{p_i}{1})}^h+  \Vert V^1 \Vert_{\Tilde{L}^1_T(\BH{s+1}{2}{r}\cap\BH{\frac{d}{p_i}+1}{p_i}{1})}^h+  \Vert V^1 \Vert_{{L}^1_T(\BH{s+1}{2}{r}\cap\BH{\frac{d}{p_i}+1}{p_i}{1})}^h+ \Vert \pt V \Vert_{{L}^1_T(\BH{\frac{d}{p_i}}{p_i}{1})}  .
    \end{multline}

We apply exactly the same proof as for Theorem \ref{thm:p}. However, we must complete the uniform bounds by showing that $(V^1,V^2)$ is uniformly bounded in $ E $. To do that, Lemma \ref{lem:prod} will be very useful. Moreover,
we must treat the term in low frequencies differently where the condition $  p_i \le  \max(4, \frac{d}{d-2}) $ 
appears by using the fact that $(V^1,V^2)$ has in addition some regularities in spaces based on energy space.

We recall that $(c_k)_{k\in \mathbb{Z}}$ designates various nonnegative sequences such that $\lVert(c_k)\rVert_{l^1(\mathbb{Z})}\le 1 $.
Throughout this section, $(k_j)_{j\in \mathbb{Z}}$ denotes nonnegative sequences that verify $\lVert(k_j)\rVert_{l^r(\mathbb{Z})}\le 1 $. For simplicity we assume in this section that $f\equiv 0$.
\medbreak

We have the following result.
\begin{propo}
    \label{est:X:s:pro}
   For all $i=1,2$, the following inequality holds
    \begin{equation}
    \label{pro:est:X:s}
     X_{p_i}\le  X_{{p_i},0}     +C(2^{-N_{0}}+ X_{p_2}+X_{p_2}X_{p_2})(X_{p_i}+X_{p_2}+X_{p_i}X_{p_2}).
    \end{equation}
\end{propo}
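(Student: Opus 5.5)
The proof splits $X_{p_i}$ into two blocks: the $L^{p_i}$-type part, which coincides with the quantities of Theorem \ref{thm:p}, and the new $L^2$-type part at regularity $s$ (with $\widetilde L$ norms in $\ell^r$). Each block is estimated separately, and all right-hand sides are bounded by products of $X_{p_2}$ and $X_{p_i}$.

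\textbf{Step 1: the $L^{p_i}$ high-frequency block.} The estimates of Proposition \ref{propo:est:hf} were derived without using the restriction $p_2\le\min(4,\frac{2d}{d-2})$. I will reuse Lemmas \ref{est:M:lem} and \ref{est:V1:hf:lem} together with \eqref{pt:V:est}--\eqref{pt:V:est:p1} verbatim. The only change comes when controlling the low-frequency contributions in \eqref{est:V:Bs:p:Vi}, \eqref{pr:int:V:V:}, \eqref{est:pt:V:W:v:1}. There I bound $\|V^{l}\|_{\dot B^{d/p_i+\sigma}_{p_i,1}}$ through the embedding \eqref{emmb:p:2}, plus interpolation between the $\widetilde L^\infty_T(\dot B^{s}_{2,r})$ and $\widetilde L^1_T(\dot B^{s+2}_{2,r})$ low-frequency norms. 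Since only finitely many dyadic blocks below $N_0$ are involved near the top, and $s\le \frac d2-1$, summability in $\ell^1$ at the top is harmless, and near $j\to-\infty$ one has $s+2>\frac d2-1$. If the $\ell^1$ sum at very low frequencies is problematic for $s<\frac d2-1$, I fall back on the $\dot B^{d/2-1}_{2,1}$ low-frequency norm being controlled by interpolation of $\dot B^{s}_{2,r}$ and $\dot B^{s+2}_{2,r}$ in $L^2_T$. This gives the $L^{p_i}$ part $\lesssim X_{p_i,0}+(2^{-N_0}+X_{p_2}+X_{p_2}^2)X_{p_i}$.

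\textbf{Step 2: the $L^2$ block at regularity $s$.} For the low frequencies of $V$, I repeat the Lyapunov argument of Proposition \ref{propo:est:lf:V}: build $\mathcal L_j$, use \eqref{01:est:ly:l} and the (SK) condition to get $\frac{d}{dt}\mathcal L_j+\bar c_02^{2j}\mathcal L_j\lesssim \|V_j\|_{L^2}\|\dot\Delta_jF\|_{L^2}$, and then take $\widetilde L$ norms with weight $2^{js}$ in $\ell^r$. For the high frequencies at $L^2$ level, I apply the parabolic-mode strategy exactly as in Lemmas \ref{est:M:lem} and \ref{est:V1:hf:lem} with $p=2$ and regularities $s$ (for $W,V^2$) and $s+1$ (for $V^1$). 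The damping $B(D)$ and the maximal regularity of \eqref{eq:W} work identically in $L^2$. So everything reduces to bounding $\|F\|_{\widetilde L^1_T(\dot B^{s}_{2,r})}$, $\|g\|_{\widetilde L^1_T(\dot B^{s}_{2,r})}$ and $\|h\|_{\widetilde L^1_T(\dot B^{s+1}_{2,r})}$, plus a commutator $R_j$ at $L^2$ level.

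\textbf{Step 3: nonlinear terms via Lemma \ref{lem:prod}.} Every nonlinearity has the form $f_2(V)\,\partial V$, $f_2(V)\,\partial_tV$, or $\partial(f_2(V)\partial V^2)$. For $\theta=s<\frac{d}{p_2}$ I use \eqref{prod:2:p2:0} when $s\ge0$, and \eqref{prod:2':p2} when $s<0$. The latter requires $s>-\min(1,\frac d{p_2})$ so that a $\sigma\in(-s,1]$ exists with $\theta+\sigma>0$. This gives bounds such as $(\|V\|_{L^\infty}+\|V\|_{\dot B^{d/p_2}_{p_2,\infty}})\|\nabla V\|_{\dot B^{s}_{2,r}}$ plus a term $\|V\|_{\dot B^{s+\sigma}_{2,r}}\|\nabla V\|_{\dot B^{d/p_2-\sigma}_{p_2,\infty}}$. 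Both are products of an $L^\infty_T$ norm controlled by $X_{p_2}$ and an $L^1_T$ norm controlled by $X_{p_i}$, after interpolation in $\widetilde L^2_T$. The $\partial_tV$ terms use \eqref{pt:V:est}, and $h$ at regularity $s+1$ uses \eqref{prod:2:p2} with $\theta=s+1>0$. No paraproduct between low-frequency $L^2$ and high-frequency $L^{p_2}$ with the former restriction is needed, because the factor measured in $L^2$ always carries the $\dot B^{s}_{2,r}$ information directly. Summing both blocks and absorbing the $2^{-N_0}$ terms gives \eqref{pro:est:X:s}.

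\textbf{Main obstacle.} The hard part is the commutator $R_j=[\dot\Delta_j,H^\alpha(U)-H^\alpha(\overline U)]\partial_\alpha z$ in $L^2$ at regularity $s+1$ when $s$ may be negative. No commutator estimate with mixed $L^2/L^{p_2}$ indices has been stated. I would first try to decompose $R_j$ through Bony's decomposition and bound each piece by the paraproduct arguments of the proof of Lemma \ref{lem:prod}, with $\nabla(H(U))\in\dot B^{d/p_2}_{p_2,1}$ and $z\in\dot B^{s+1}_{2,r}$.
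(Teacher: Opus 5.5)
Your plan follows the paper's proof step for step. Proposition~\ref{propo:est:hf} is reused for the $L^{p_i}$ block through $\mathbb{V}_{p_i}\lesssim X_{p_i}$; this is just H\"older in $j\le N_0+1$, since the weight $2^{j(\frac d2-1-s)}$ is $\ell^{r'}$-summable as $j\to-\infty$ because $s<\frac d2-1$, so no interpolation fallback is needed. The $L^2$ block is handled by the Lyapunov functional and by the parabolic and damped modes with $p=2$. The nonlinearities are handled by Lemma~\ref{lem:prod}: \eqref{prod:2':p2} with $\theta=s$, $\sigma=1$ works for every admissible $s$, including $s\ge d/p_2$, which \eqref{prod:2:p2:0} does not cover. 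Also, $\partial_tV$ in $\widetilde L^1_T(\dot B^{s}_{2,r})$ has to be obtained by substituting the equation as in Lemma~\ref{lem:est:pt:V:s}, since \eqref{pt:V:est} with $q=2$ only covers $\kappa\ge\frac d2-1$.

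The commutator you single out as the main obstacle is already covered. \eqref{comm:est:p} with $p=2$, $\widetilde p=p_2$, $\sigma=s+1$ is a mixed $L^2/L^{p_2}$ commutator estimate, and its hypothesis $s+1>-d/p_2$ holds since $s>-1$. It gives exactly \eqref{est:Rj:s:hf}, and your Bony-decomposition fallback is precisely its proof.
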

The proof of Proposition \ref{est:X:s:pro} will be separated into several steps.
\smallbreak\
\paragraph{\textbf{$\bullet$ Step 1: Estimate in L$^2$-type Besov spaces in low frequencies:}}

\begin{lem}
For all $p_2\ge 2$ and $ s>-\min(1,\frac{d}{p_2})$. There exist two positive constants $c$ and $C$ such that, for all $t\in [0,T]$, the following inequality holds
    \begin{multline}
    \label{s:est:ly:l}
     \TLpNBH{s}{2}{r}{V}{\infty}^{l}+ c\TLpNBH{s+2}{2}{r}{V}{1}^{l} \le 2\NBH{s}{2}{r}{V_0}^{l}+ C\Bigl(\Vert \n V \Vert_{L^1_T(L^\infty)} \TLpNBH{s}{2}{r}{V}{\infty}^{l}+ \TLpNBH{s}{2}{r}{F^t}{1}\\
    +\Vert V \Vert_{\widetilde{L}^{2}_T(\BH{\frac{d}{p_2}}{p_2}{\infty})\cap L^{2}_T(L^\infty)}\TLpNBH{s}{2}{r}{\n V}{2}+ \TLpNBH{s+1}{2}{r}{V}{2}\Vert \n V\Vert_{\widetilde{L}^{2}_T(\BH{\cdp{p_2}-1}{p_2}{\infty})}\\
   + \Vert V \Vert_{\widetilde{L}^{\infty}_T(\BH{\frac{d}{p_2}}{p_2}{\infty})\cap L^{\infty}_T(L^\infty)} \TLpNBH{s}{2}{r}{ \n^2 V^2}{1}+\TLpNBH{s+1}{2}{r}{V}{2} \Vert \n V^2\Vert_{L^{2}_T(L^\infty)}\Bigr),       
    \end{multline}
with $F^t$ defined in \eqref{F_t}.
\end{lem}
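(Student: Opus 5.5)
We redo the low-frequency Lyapunov argument of Proposition \ref{propo:est:lf:V}, now in $\widetilde{L}^\infty_T(\BH{s}{2}{r})\cap\widetilde{L}^1_T(\BH{s+2}{2}{r})$. The nonlinear terms $F^V$ and $F^{V^2}$ are bounded with Lemma \ref{lem:prod} instead of Lemma \ref{lem:produ:lf:hf}; this is what removes the restriction $p_2\le\min(4,\frac{2d}{d-2})$. The term $F^t$ is left as it is on the right-hand side.

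\textbf{Linear part.} For $j\le N_0+1$ we use the functional $\mathcal{L}_j$ of \eqref{def:Ij} and rerun the first three steps of the proof of Proposition \ref{propo:est:lf:V}. The transport part must not lose a derivative, so we keep the variable-coefficient operator $\suma S^\alpha(U)\pal$ on the left and do not move it into $F^V$. After localization this produces a commutator $[\Dj,S^\alpha(U)]\pal V$. Integrating $\frac12\pal(S^\alpha(U))V_j\cdot V_j$ by parts gives a contribution bounded by $\normeinf{\n V}\normed{V_j}^2$. This is the origin of the term $\Vert\n V\Vert_{L^1_T(L^\infty)}\TLpNBH{s}{2}{r}{V}{\infty}^l$. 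The (SK) condition and Lemma \ref{lem:SK_diss} then give
\[\frac{d}{dt}\Lj+\bar c_0 2^{2j}\Lj\lesssim\sqrt{\Lj}\,\bigl(\normeinf{\n V}\sqrt{\Lj}+\normed{\Dj F}+\normed{\text{commutator}_j}\bigr).\]
We apply Lemma \ref{lem_der_int} with $p=2$, multiply by $2^{js}$ and take the $\ell^r$ norm over $j\le N_0+1$ after the time integral, which gives Chemin--Lerner norms. This yields the factor $2$ in front of $\NBH{s}{2}{r}{V_0}^l$ and the constant $c$.

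\textbf{Nonlinear terms.} The commutator $[\Dj,S^\alpha(U)-\overline{S}^\alpha]\pal V$ is handled through the Bony decomposition, as in the proof of Lemma \ref{lem:prod}.
\begin{itemize}
\item The paraproduct $\Dot{T}_{S(U)-\bar S}\,\n V$ contributes $\normeinf{\n V}$ times the $\BH{s}{2}{r}$ norm, via the standard commutator lemma.
\item $\Dot{T}_{\n V}(S(U)-\bar S)$ gives $\TLpNBH{s+1}{2}{r}{V}{2}\Vert\n V\Vert_{\widetilde L^2_T(\BH{\cdp{p_2}-1}{p_2}{\infty})}$, using the low-regularity variant \eqref{prod:2':p2} with $\sigma=1$. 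This needs $s+1>0$.
\item The remainder gives $\Vert V\Vert_{\widetilde L^2_T(\BH{\cdp{p_2}}{p_2}{\infty})}\TLpNBH{s}{2}{r}{\n V}{2}$ by \eqref{R:f:g:s:est}. This needs $s+\frac{d}{p_2}>0$.
\end{itemize}
Together these two conditions are exactly $s>-\min(1,\frac d{p_2})$. Hölder in time then splits each product as $L^2_T\times L^2_T$.

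For $F^{V^2}$ we use the decomposition \eqref{dec:F:V2}. The piece $r^{\alpha\beta}(U)\partial^2_{\alpha\beta}V^2$ is bounded by \eqref{prod:2:p2}, or \eqref{prod:2':p2} when $s<0$, with $g=\n^2V^2$ in $\widetilde L^1_T(\BH{s}{2}{r})$ and $u=V$ in $\widetilde L^\infty_T(\BH{\frac d{p_2}}{p_2}{\infty})\cap L^\infty_T(L^\infty)$. The piece $\pal(r(U))\pbe V^2$ gives $\TLpNBH{s+1}{2}{r}{V}{2}\Vert\n V^2\Vert_{L^2_T(L^\infty)}$ plus terms of the type already listed. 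The Chemin--Lerner norms commute with the time integration because each product estimate holds pointwise in $j$, so Minkowski's inequality gives $\widetilde L$ versions.

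\textbf{Main obstacle.} The hardest step is the commutator/paraproduct term $\Dot{T}_{\n V}(S(U)-\bar S)$ when $s\le 0$. In that range one cannot put $\n V$ in $L^\infty$ against a negative-regularity norm of $S(U)-\bar S$. The plan is to use \eqref{prod:2':p2}, shifting one derivative ($\sigma=1$) onto the composition term, and to check that the condition $s+1>0$ is exactly what that estimate requires.
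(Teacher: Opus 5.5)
Your skeleton is the paper's: the functional $\Lj$ for $j\le N_0+1$, (SK), Lemma~\ref{lem_der_int} with $p=2$, then multiplication by $2^{js}$ and the $\ell^r$ norm taken after the time integration, and Lemma~\ref{lem:prod} with $\theta=s$, $\sigma=1$. That last step is exactly where $s+1>0$ and $s+\frac{d}{p_2}>0$ enter.

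The difference is the transport term. The paper does not keep $S^\alpha(U)\pal$ on the left. It starts from \eqref{01:est:ly:l}, i.e. the constant-coefficient system with all of $F^V=\sum_\alpha(\overline{S}^\alpha-S^\alpha(U))\pal V$ as a source. It then bounds $\Dj F^V$ in one stroke by \eqref{prod:2':p2} with $\theta=s$, $\sigma=1$, which gives exactly the two $F^V$-terms of the statement.

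There is no derivative to save here. Your own remainder piece already needs $\TLpNBH{s}{2}{r}{\n V}{2}$ over all frequencies. That norm is finite because $V^{1,h}$ is measured in $\BH{s+1}{2}{r}$, and it appears on the right-hand side of the statement anyway. So the commutator is a detour. What it buys is an actual origin for the term $\Vert\n V\Vert_{L^1_T(L^\infty)}\TLpNBH{s}{2}{r}{V}{\infty}^{l}$, which the paper merely carries along in \eqref{1:est:ly:l:s} without needing it.

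If you keep your route, two steps must be repaired.

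First, the corrector $\Ij$ is built on the constant symbol $i\rho N_\omega+\rho^2M_\omega$, so Lemma~\ref{lem:derI} cannot be run with $S^\alpha(U)\pal$ on the left. In that step you have to put $\sum_\alpha(S^\alpha(U)-\overline{S}^\alpha)\pal V_j$ and the commutator back into the source. The former contributes at most $C\,2^{2j}\normeinf{V}\normed{V_j}^2$ to $\frac{d}{dt}(2^j\Ij)$. This is absorbed by $\bar c_0 2^{2j}\Lj$ because $\normeinf{V}$ is small, but ``rerun the first three steps'' does not cover it, so it has to be said.

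Second, your treatment of $F^{V^2}$ does not land on the stated right-hand side. Splitting via \eqref{dec:F:V2} and applying \eqref{prod:2:p2} with $\theta=s$ to $r^{\alpha\beta}(U)\partial^2_{\alpha\beta}V^2$ produces $\TLpNBH{s}{2}{r}{V}{\infty}\Vert\n^2V^2\Vert_{L^1_T(L^\infty)}$. Using \eqref{prod:2':p2} instead produces $\Vert\n V^2\Vert_{\widetilde{L}^2_T(\BH{\frac{d}{p_2}}{p_2}{\infty})}$ in place of $\Vert\n V^2\Vert_{L^2_T(L^\infty)}$. Neither term is on the right-hand side. The paper does not split: it removes $\pal$ by Bernstein and applies \eqref{prod:2:p2} with $\theta=s+1>0$ to $r^{\alpha\beta}(U)\pbe V^2$. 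This yields exactly the two stated $F^{V^2}$-terms.
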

\begin{proof}
Starting from \eqref{01:est:ly:l}, using Condition (SK) combined with \eqref{eq:Lj:Vj} provides the existence of a positive constant $c_0$ such that

 \begin{equation}
    \label{1:est:ly:l:s}
    \frac{d}{dt}\Lj+  2^{2j}c_0\Lj \lesssim  \normeinf{\n V}\sqrt{\Lj}\normed{V_j}
+ \sqrt{\Lj}\normed{\Dj F}.
\end{equation}
Then, taking $ X=\Lj$, $p=2$ in the Lemma \ref{lem_der_int} guarantees that for all $t\in [0,T]$
\begin{equation}
    \label{2:est:ly:l:s}
    \sup_{0\le t\le T}\sqrt{\Lj(t)}+  2^{2j}c_0\int^t_0\sqrt{\Lj} \lesssim \sqrt{\Lj(0)}+ \Vert \n V \Vert_{L^1_T(L^\infty)} \Vert V_j \Vert_{L^\infty_T(L^2)}\\
    + \Vert \Dj F \Vert_{L^1_T(L^2)}.
    \end{equation}

Now, we need to deal with the nonlinear terms $ F^V$ and $F^{V^2}$ defined in \eqref{F_t}. 
From the estimate \eqref{prod:2':p2} of Lemma \ref{lem:prod} (we choose  $\theta=s$, $\sigma=1$) and H\"{o}lder inequality in time, we get 
\begin{multline}
    \label{est:lh:F}
   \Vert \Dj F^V \Vert_{L^1_T(L^2)}\\ \le
     Ck_j2^{-js}\left(\Vert V \Vert_{\widetilde{L}^{2}_T(\BH{\frac{d}{p_2}}{p_2}{\infty})\cap L^{2}_T(L^\infty)} \TLpNBH{s}{2}{r}{\n V}{2}+ \TLpNBH{s+1}{2}{r}{V}{2} \Vert \n V \Vert_{\widetilde{L}^{2}_T(\BH{\cdp{p_2}-1}{p_2}{\infty})}\right),
\end{multline}
while taking  $\theta=s+1$ in \eqref{prod:2:p2} produces
\begin{multline}
    \label{est:lh:F2}
    \Vert \Dj F^{V^2} \Vert_{L^1_T(L^2)} \le Ck_j2^{-js}\Bigl(\Vert V \Vert_{\widetilde{L}^{\infty}_T(\BH{\frac{d}{p_2}}{p_2}{\infty})\cap L^{\infty}(L^\infty)} \TLpNBH{s}{2}{r}{\n^2 V^2 }{1}\\
    + \TLpNBH{s+1}{2}{r}{V}{2} \Vert \n V^2\Vert_{L^{2}_T(L^\infty)}\Bigr)\cdotp
    \end{multline}
Plugging  this estimate into \eqref{2:est:ly:l:s}, and using \eqref{eq:Lj:Vj} combined with interpolation inequality in time  Lebesgue space yields for all $j\le N_{0}+1$
and $\gamma\in [1,\infty]$:
\begin{multline}
    \label{:3:s:est:ly:l}
    (c_0 2^{2j})^\frac{1}{\gamma}\Vert V_j \Vert_{L^\gamma_T(L^2)} \lesssim \normed{V_{0,j}}+ \Vert \n V \Vert_{L^1_T(L^\infty)} \Vert V_j \Vert_{L^\infty_T(L^2)}+  \Vert \Dj F^{t} \Vert_{L^1_T(L^2)} \\
    + k_j2^{-js}C\biggl( \Vert V \Vert_{\widetilde{L}^{2}_T(\BH{\frac{d}{p_2}}{p_2}{\infty})\cap L^{2}_T(L^\infty)}\TLpNBH{s}{2}{r}{\n V}{2}+ \TLpNBH{s+1}{2}{r}{V}{2}\Vert\n V\Vert_{\widetilde{L}^{2}_T(\BH{\cdp{p_2}-1}{p_2}{\infty})}\\
   +\Vert V \Vert_{\widetilde{L}^{\infty}_T(\BH{\frac{d}{p_2}}{p_2}{\infty})\cap L^{\infty}(L^\infty)} \TLpNBH{s}{2}{r}{ \n^2 V^2)}{1}+ \TLpNBH{s+1}{2}{r}{V}{2} \Vert \n V^2\Vert_{L^{2}_T(L^\infty)}\biggr)\cdotp
\end{multline}

We get \eqref{s:est:ly:l} by multiplying \eqref{:3:s:est:ly:l} by $ 2^{js} $ and taking the $l^r$ norm ($j\le N_{0}+1$) of the previous inequality.
\end{proof}
\medbreak
The next step is to estimate the parabolic mode $W,$ and $V^1$ in Besov spaces based on $ L^2 $. 
\smallbreak
\paragraph{\textbf{ $\bullet$ Step 2: Estimates in L$^2$-type Besov space in high frequencies:}}
\begin{lem}[Estimate for $W$]
    \label{lem:est:W:s}
   Let $p_2\ge 2$ and $ s>-\min(1,{d}/{p_2})$. There exist two positive constants $c$ and $C$ such that, for all $t\in [0,T]$, the following inequality holds
\begin{multline}
    \label{est:W:s}
    \TLpNBH{s}{2}{r}{W}{\infty}^h+c   \TLpNBH{s+2}{2}{r}{W}{1}^h \le  2 \NBH{s}{2}{r}{W_0}^h+ 2^{-N_{0}}C   \TLpNBH{s}{2}{r}{\pt V}{1}+  \TLpNBH{s+2}{2}{r}{g^t}{1} \\
        +  C\biggl(\Vert V \Vert_{\widetilde{L}^{2}_T(\BH{\frac{d}{p_2}}{p_2}{\infty})\cap L^{2}_T(L^\infty)}\TLpNBH{s}{2}{r}{\n V}{2}+ \TLpNBH{s+1}{2}{r}{V}{2}\Vert \n V \Vert_{\widetilde{L}^{2}_T(\BH{\cdp{p_2}-1}{p_2}{\infty})}\\
   + \Vert V \Vert_{\widetilde{L}^{\infty}_T(\BH{\frac{d}{p_2}}{p_2}{\infty})\cap L^{\infty}(L^\infty)} \TLpNBH{s}{2}{r}{\n^2 V^2}{1}+ \TLpNBH{s+1}{2}{r}{V}{2} \Vert \n V^2\Vert_{L^{2}_T(L^\infty)}\biggr),
\end{multline}
   with $g^t$ defined in \eqref{def:g}.
\end{lem}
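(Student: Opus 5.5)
We follow the scheme of Lemma \ref{est:M:lem}, now in an $L^2$-based framework with summation in $\ell^r$ and Chemin--Lerner norms. The starting point is the parabolic equation \eqref{eq:W} for $W$.

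Applying Lemma \ref{lem:maxi:regula} with $p=2$ gives, for all $j\ge N_0+2$ and $\gamma\in[1,\infty]$,
\[
(c2^{2j})^{\frac1\gamma}\Vert W_j\Vert_{L^\gamma_T(L^2)}\lesssim \normed{W_{0,j}}+\Vert \Dj g\Vert_{L^1_T(L^2)}.
\]
We then multiply by $2^{js}$, take the $\ell^r$ norm over $j\ge N_0+2$, and use $\gamma=\infty$ and $\gamma=1$. This yields the left-hand side of \eqref{est:W:s}, bounded by $2\NBH{s}{2}{r}{W_0}^h$ plus $\TLpNBH{s}{2}{r}{g}{1}^h$. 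It remains to split $g=g^t+g^V+g^{V^2}+\bar g^V$ (recall $f\equiv0$ here) and bound each piece at regularity $s$.

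We treat the four pieces in turn.

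\emph{The term $\bar g^V$.} It is a zero-order homogeneous multiplier applied to $|D|^{-1}\pt V$. By Bernstein \eqref{bern:ine:M}, $\normed{\Dj\bar g^V}\lesssim 2^{-j}\normed{\Dj \pt V}$. Hence, in high frequencies, it is controlled by $2^{-N_0}\TLpNBH{s}{2}{r}{\pt V}{1}$.

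\emph{The term $g^t$.} We keep it as it stands; it is estimated later through Lemma \ref{lem_esti_partial_t_V_2}. Its high-frequency $s$-norm is trivially dominated by the $\BH{s+2}{2}{r}$ quantity in the statement, by \eqref{est:lh:hf}.

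\emph{The term $g^V$.} It is a sum of products $f_2(U)\,\pal V$ with $f_2(\overline U)=0$. This is exactly $F^V$ in \eqref{est:lh:F}. Since $s<0$ is allowed, we apply \eqref{prod:2':p2} of Lemma \ref{lem:prod} with $\theta=s$, $\sigma=1$ (legitimate because $s+1>0$ and $s>-d/p_2$), with $g=\n V$ and $u=V$. Hölder in time ($L^2\times L^2$) then gives the first two products in the bracket. When $s\ge0$ we use \eqref{prod:2:p2} instead, which is even simpler.

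\emph{The term $g^{V^2}$.} It equals $\pal\bigl((\overline Z^{\alpha\beta}-Z^{\alpha\beta}(U))\pbe V^2\bigr)$. We use Bernstein to lose one derivative and put it in $\BH{s+1}{2}{r}$. We then apply \eqref{prod:2:p2} with $\theta=s+1>0$ to the product $f_2(U)\n V^2$. This gives
\[
(\normeinf{V}+\NBH{\frac d{p_2}}{p_2}{\infty}{V})\NBH{s+1}{2}{r}{\n V^2}+\NBH{s+1}{2}{r}{V}\normeinf{\n V^2}.
\]
Integrating in time ($L^\infty\times L^1$ for the first product, $L^2\times L^2$ for the second) produces the last two terms, as for $F^{V^2}$ in \eqref{est:lh:F2}.

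The main delicate point is the $g^V$ estimate for negative $s$. There the remainder and the paraproduct $\Dot T_{\n V}f_2(V)$ cannot be handled with $L^\infty$ bounds alone. One must use the $L^{p_2}$-type information $\n V\in\BH{\frac d{p_2}-1}{p_2}{\infty}$ together with $V\in\BH{s+1}{2}{r}$. This is precisely why the restriction $s>-\min(1,d/p_2)$ appears, and why Lemma \ref{lem:prod} (in the form \eqref{prod:2':p2}) is needed rather than the standard product laws. One must also make sure the Chemin--Lerner time norms commute correctly with the $\ell^r$ summation; this follows from Minkowski since $r\ge1$ and the time exponents sum to one.
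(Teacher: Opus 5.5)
Your proof follows the paper's. You apply the localized $L^2$ parabolic estimate to \eqref{eq:W} on $j\ge N_0+2$, in the Duhamel form of Lemma \ref{lem:maxi:regula}, where the paper writes the equivalent energy inequality and uses Lemma \ref{lem_der_int}. You then split $g$ the same way: Bernstein for $\bar g^V$ (gain $2^{-N_0}$), \eqref{prod:2':p2} with $\theta=s$, $\sigma=1$ for $g^V$, and Bernstein plus \eqref{prod:2:p2} with $\theta=s+1$ for $g^{V^2}$, exactly as in \eqref{est:lh:F}--\eqref{est:lh:F2}. Two points need correcting, though neither changes the strategy.

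First, do not switch to \eqref{prod:2:p2} for $g^V$ when $s\ge 0$.
\begin{itemize}
\item Its second term is $\Vert V\Vert_{\dot{B}^{s}_{2,r}}\Vert\nabla V\Vert_{L^\infty}$. This puts $\nabla V^1$ in $L^\infty$ and does not appear on the right-hand side of \eqref{est:W:s}.
\item The value $s=0$ is formally covered by neither \eqref{prod:2:p2} (which needs $\theta>0$) nor \eqref{prod:2':p2} (which needs $\theta<0$).
\item The proof of \eqref{prod:2':p2} never uses $\theta<0$. The remainder only needs $\theta>-d/p_2$, and $\dot{T}_{\nabla V}f_2(V)$ only needs $\sigma>0$ and $\theta+\sigma>0$.
\end{itemize}
So apply \eqref{prod:2':p2} on the whole range $s>-\min(1,d/p_2)$, as the paper implicitly does.

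Second, your Minkowski argument goes the wrong way.
\begin{itemize}
\item Applying Lemma \ref{lem:prod} at each time and then H\"older in time gives Bochner norms such as $L^2_T(\dot{B}^{d/p_2}_{p_2,\infty})$, $L^2_T(\dot{B}^{s}_{2,r})$ or $L^1_T(\dot{B}^{s}_{2,r})$.
\item By \eqref{link:CL:Besov:space:chap2}, these dominate the Chemin--Lerner norms in \eqref{est:W:s} whenever the third Besov index exceeds the time exponent. That is always the case for $\dot{B}^{d/p_2}_{p_2,\infty}$, and for $r>2$, resp.\ $r>1$, in the other two.
\item To get the stated norms, run Bony's decomposition block by block and apply H\"older in time to each dyadic piece, e.g.\ $\Vert \dot{S}_{k-1}a\,\dot{\Delta}_k b\Vert_{L^1_T(L^2)}\le \Vert \dot{S}_{k-1}a\Vert_{L^2_T(L^\infty)}\Vert\dot{\Delta}_k b\Vert_{L^2_T(L^2)}$.
\end{itemize}
In other words, use the Chemin--Lerner versions of Lemma \ref{lem:prod} and Proposition \ref{propo:comp}. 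This is exactly what the mixed norms $\widetilde{L}^2_T(\dot{B}^{d/p_2}_{p_2,\infty})\cap L^2_T(L^\infty)$ in the statement encode. The paper is just as terse at this point, invoking only ``H\"older inequality in time''.
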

\begin{proof}
Taking $p_i=2$  in  \eqref{est:p:W:1} leads to 
    \begin{align}
        \label{est:W:L2}
          \frac{d}{dt}\intd \overline{S}^0_{22} W_j\cdot W_j+ c_0 2^{2j} \normed{W_j}^2\le C \normed{W_j}\normed{ \Dj g},
    \end{align}
    where $g=g^V+g^{V^2}+g^t+\Bar{g}^V$ has been defined in \eqref{def:g}.
    
    Using  Lemma \ref{lem_der_int} with $p=2 $ and $X=\overline{S}^0_{22} W_j\cdot W_j,$ and remembering that the matrix $ \overline{S}^0_{22} $ is symmetric positive definite yields
\begin{align}
    \label{1:s:est:W} \Vert  W_j\Vert_{L^\infty_T(L^2)}+2^{2j}c_0   \Vert  W_j\Vert_{L^1_T(L^2)}\le C\normed{W_j(0)}+  C\Vert  \Dj g \Vert_{L^\infty_T(L^2)}.
\end{align}
The terms $ \Dj g^V$ and $\Dj  g^{V^2}$ can be bounded in $L^1_T(L^2)$ exactly as 
for proving \eqref{s:est:ly:l}. Hence, from \eqref{est:lh:F} and \eqref{est:lh:F2} it holds:
\begin{align}
    \label{s:est:lh:g}
    \begin{split}
      \blacktriangleright&\Vert \Dj g^V \Vert_{L^1_T(L^2)} \\&\le
     Ck_j2^{-js}\left(\Vert V \Vert_{\widetilde{L}^{2}_T(\BH{\frac{d}{p_2}}{p_2}{\infty})\cap L^{2}_T(L^\infty)} \TLpNBH{s}{2}{r}{\n V}{2}+ \TLpNBH{s+1}{2}{r}{V}{2} \Vert \n V \Vert_{\widetilde{L}^{2}_T(\BH{\cdp{p_2}-1}{p_2}{\infty})}\right),\\
      \blacktriangleright &\Vert \Dj g^{V^2} \Vert_{L^1_T(L^2)} \\&\le 
      Ck_j2^{-js}\left(\Vert V \Vert_{\widetilde{L}^{\infty}_T(\BH{\frac{d}{p_2}}{p_2}{\infty})\cap L^{\infty}(L^\infty)} \TLpNBH{s}{2}{r}{\n^2 V^2 }{1}+ \TLpNBH{s+1}{2}{r}{V}{2} \Vert \n V^2\Vert_{L^{2}_T(L^\infty})\right)\cdotp
     \end{split}
\end{align}
 Remembering the definition of $\Bar{g}^V$ (see \eqref{def:g}), using \eqref{bern:ine:M}  and bearing in mind that the homogeneous operators $ |D|^{-1}\mathcal{Z}^{-1}(D)\Ss_{21}(D)$ and $|D|^{-1}\mathcal{Z}^{-1}(D)\Ss_{22}(D)$ are of order $-1$, we get for all $j\ge N_{0}+2$.
 \begin{align*}
     &\Vert\Dj(\Bar{g}^V) \Vert_{L^1_T(L^2)}   \le C 2^{-j} \Vert\pt V \Vert_{L^1_T(L^2)}  .
 \end{align*}

We insert all this information into  \eqref{est:W:L2}, we then use interpolation inequalities in time  Lebesgue spaces (up to slight modification of $c_0$) and  we get for 
all $j\in \mathbb{Z}$ and $\gamma\in [1,\infty]$:
\begin{multline}
   \label{2:s:est:W}
  ( c_0 2^{2j})^\frac{1}{\gamma}\Vert  W_j\Vert_{L^\gamma_T(L^2)}\le   \normed{W_j(0)}+ C2^{-(s+1)j}   k_j\TLpNBH{s}{2}{r}{\pt V}{1}^h+\Vert  \Dj g^t\Vert_{L^\gamma_T(L^2)} \\
         + Ck_j2^{-js}\biggl( \Vert V \Vert_{\widetilde{L}^{2}_T(\BH{\frac{d}{p_2}}{p_2}{\infty})\cap L^{2}_T(L^\infty)}\TLpNBH{s}{2}{r}{\n V}{2}+ \TLpNBH{s+1}{2}{r}{V}{2}\Vert \n V \Vert_{\widetilde{L}^{2}_T(\BH{\cdp{p_2}-1}{p_2}{\infty})}\\
   + \Vert V \Vert_{\widetilde{L}^{\infty}_T(\BH{\frac{d}{p_2}}{p_2}{\infty})\cap L^{\infty}(L^\infty)} \TLpNBH{s}{2}{r}{ \n^2 V^2}{1}+\TLpNBH{s+1}{2}{r}{V}{2} \Vert \n V^2\Vert_{L^{2}_T(L^\infty)} \biggr)\cdotp
\end{multline}

To get \eqref{est:W:s}, we multiply \eqref{2:s:est:W} by $2^{js}$, then take its $l^r$ norm
on $j\geq N_0+2.$
\end{proof}
The following result concerns the estimates on $V^1$ in high frequencies in Besov spaces based on $L^2$.

\begin{lem}[Estimate for $V^1$]
    \label{lem:est:V1:s}
   Let $p_2\ge 2$ and $ s>-\min(1,\frac{d}{p_2})$. We have for all $t\in [0,T]$:
\begin{multline}
    \label{est:V1:s}
     \TLpNBH{s+1}{2}{r}{V^1}{\infty}^h+c_0   \TLpNBH{s+1}{2}{r}{V^1}{1}^h \le   \NBH{s+1}{2}{r}{V^1_0}^h+ 2^{-N_{0}}   \TLpNBH{s+2}{2}{r}{ V^2}{1}^h+ \TLpNBH{s+2}{2}{r}{ W}{1}^h\\
    Ck_j2^{-j(s+1)}\biggl(  \TLpNBH{s+1}{2}{r}{V}{2} \Vert  \n V^1\Vert_{L^{2}_T(L^\infty)} +\lVert  \n V\rVert_{\Tilde{L}^{2}_T(\BH{\cdp{p_2}}{p_2}{\infty})\cap L^{2}_T(L^\infty)} \TLpNBH{s+1}{2}{r}{V^1}{2}\\
     +  \lVert  V\rVert_{\Tilde{L}^{\infty}_T(\BH{\cdp{p_2}}{p_2}{\infty})\cap L^{\infty}_T(L^\infty)}\TLpNBH{s+1}{2}{r}{ \n V^2}{1}  + \TLpNBH{s+1}{2}{r}{V}{\infty}\Vert   \n V^2\Vert_{L^{1}_T(\BH{\cdp{p_2}}{p_2}{\infty})}\biggr)\cdotp
\end{multline}
\end{lem}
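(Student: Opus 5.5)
We mimic the proof of Lemma \ref{est:V1:hf:lem}, now in the $L^2$ setting, working with $z=PV^1$ (equivalent since $P$ is a constant invertible matrix by (\textbf{H5})). Localizing \eqref{eq:V1=f(W)} with $\Dj$ gives \eqref{eq:zj}, i.e.
\[
\pt z_j+\suma H^\alpha(U)\pal z_j+B(D)z_j=R_j+\Dj h ,
\]
with $R_j$ the commutator \eqref{def:Rj} and $h=h^W+h^{V^2}_0+h^{V^2}_1$ (recall $f\equiv0$ here). First, take the $L^2$ scalar product with $z_j$. Since $H^\alpha$ is diagonal (hence symmetric), integrating by parts the convection term leaves only $\frac12\intd \pal(H^\alpha(U))|z_j|^2$, bounded by $C\normeinf{\n V}\normed{z_j}^2$. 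By (\textbf{H6}), $B(D)$ acts on each dyadic block as a constant positive diagonal matrix, giving $c_0\normed{z_j}^2$. Applying Lemma \ref{lem_der_int} with $p=2$, then integrating in time, we get for $j\ge N_0+2$
\[
\Vert z_j\Vert_{L^\infty_T(L^2)}+c_0\Vert z_j\Vert_{L^1_T(L^2)}\lesssim \normed{z_j(0)}+\int_0^T\normeinf{\n V}\normed{z_j}+\Vert R_j\Vert_{L^1_T(L^2)}+\Vert \Dj h\Vert_{L^1_T(L^2)}.
\]

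Next we bound the source terms. The linear terms are handled by Bernstein's inequality as in \eqref{est: h^W}: $\normed{\Dj h^W}\lesssim 2^j\normed{W_j}$, which after multiplying by $2^{j(s+1)}$ gives $\TLpNBH{s+2}{2}{r}{W}{1}^h$. Likewise $\normed{\Dj h^{V^2}_0}\lesssim\normed{V^2_j}$, and since $j\ge N_0+2$ this is bounded by $2^{-N_0}\TLpNBH{s+2}{2}{r}{V^2}{1}^h$.

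For $h^{V^2}_1$, which has the form (function of $U$ vanishing at $\overline U$)$\times\n V^2$, we use \eqref{prod:2:p2} of Lemma \ref{lem:prod} with $\theta=s+1>0$ (valid since $s>-1$). With Hölder in time ($L^\infty\times L^1$) this produces
\[
\lVert V\rVert_{\widetilde{L}^\infty_T(\BH{\cdp{p_2}}{p_2}{\infty})\cap L^\infty_T(L^\infty)}\TLpNBH{s+1}{2}{r}{\n V^2}{1}+\TLpNBH{s+1}{2}{r}{V}{\infty}\Vert\n V^2\Vert_{L^1_T(L^\infty)}.
\]
The last factor is then dominated by $\Vert\n V^2\Vert_{L^1_T(\BH{\cdp{p_2}}{p_2}{\infty})}$ (up to the $L^\infty$ part), as written in the statement.

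The main obstacle is the commutator $R_j=-\suma[\Dj,H^\alpha(U)-H^\alpha(\overline U)]\pal z$ in $L^2$-based spaces, with a coefficient controlled only in $L^p$-type norms. The plan is to use Bony's decomposition, as in the proof of Lemma \ref{lem:prod}:
\begin{itemize}
\item The paraproduct part $[\Dj,\dot T_{a}]\pal z$, with $a=H(U)-H(\overline U)$, is handled by the standard commutator lemma \cite[Lemma 2.97]{HajDanChe11}. It gives $\normeinf{\n a}\,2^{-j(s+1)}k_j\NBH{s+1}{2}{r}{z}$, i.e. $\lVert\n V\rVert_{L^2_T(L^\infty)}\TLpNBH{s+1}{2}{r}{V^1}{2}$.
\item The terms $\dot T_{\pal z}a$ and $\dot R(a,\pal z)$ (and the corresponding terms with $\Dj$ inside) are bounded by the argument of \eqref{R:f:g:s:est}, using that $s+1+\cdp{p_2}>0$. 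Here a derivative is moved onto $a$, which is where $\TLpNBH{s+1}{2}{r}{V}{2}\Vert\n V^1\Vert_{L^2_T(L^\infty)}$ and $\lVert\n V\rVert_{\widetilde L^2_T(\BH{\cdp{p_2}}{p_2}{\infty})}$ appear; Proposition \ref{propo:comp} controls $a$ in terms of $V$.
\end{itemize}
Finally, the term $\int_0^T\normeinf{\n V}\normed{z_j}$ is bounded by the same quantity $\lVert\n V\rVert_{L^2_T(L^\infty)}\TLpNBH{s+1}{2}{r}{V^1}{2}$ (after weighting). We then multiply by $2^{j(s+1)}$, take the $\ell^r$ norm over $j\ge N_0+2$ (which yields the $\widetilde L$ norms), and return from $z$ to $V^1$ via $P^{-1}$.
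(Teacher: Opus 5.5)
Your route is the paper's. You use the $L^2$ energy estimate on $z_j$ closed with Lemma~\ref{lem_der_int} ($p=2$), Bernstein for $h^W$ and $h^{V^2}_0$, and Lemma~\ref{lem:prod} for $h^{V^2}_1$. For $R_j$ your Bony-decomposition argument is exactly the proof of \eqref{comm:est:p}; the paper just quotes that estimate with $p=2$, $\widetilde{p}=p_2$, $\sigma=s+1$, and its hypothesis $\sigma>-\frac{d}{p_2}$ is your $s+1+\frac{d}{p_2}>0$.

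The one step that fails is the last sentence of your treatment of $h^{V^2}_1$. Using \eqref{prod:2:p2} with $\theta=s+1$ you correctly get the term $\TLpNBH{s+1}{2}{r}{V}{\infty}\Vert\n V^2\Vert_{L^1_T(L^\infty)}$. But this is not dominated by $\Vert\n V^2\Vert_{L^1_T(\BH{\frac{d}{p_2}}{p_2}{\infty})}$. Only $\BH{\frac{d}{p_2}}{p_2}{1}$ embeds in $L^\infty$, not its $\ell^\infty$ counterpart: in $\Vert\Dot{S}_k g\Vert_{L^\infty}\le\sum_{i\le k-1}2^{i\frac{d}{p_2}}\Vert\Dot{\Delta}_i g\Vert_{L^{p_2}}$ the summands are merely bounded, not summable. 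So, as written, you prove \eqref{est:V1:s} with $L^1_T(L^\infty)$ in place of $L^1_T(\BH{\frac{d}{p_2}}{p_2}{\infty})$ in the last term, not the statement itself.

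The paper reaches the stated form by invoking \eqref{prod:2':p2} instead. That estimate bounds $\Dot{T}_{\n V^2}(\cdot)$ through $\Vert\n V^2\Vert_{\BH{-\sigma}{\infty}{\infty}}\lesssim\Vert\n V^2\Vert_{\BH{\frac{d}{p_2}-\sigma}{p_2}{\infty}}$, which needs $\sigma>0$. Landing exactly on $\TLpNBH{s+1}{2}{r}{V}{\infty}\Vert\n V^2\Vert_{L^1_T(\BH{\frac{d}{p_2}}{p_2}{\infty})}$ forces $\sigma=0$, where the same obstruction appears. Your $L^\infty$ version is therefore the one that is actually justified. It is also all that is needed downstream, since $\Vert\n V^2\Vert_{L^1_T(L^\infty)}$ is controlled by $X_{p_2}$ exactly as in \eqref{es:inf:X}. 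Drop the claimed embedding and state the bound with that norm.
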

\begin{proof}
    The proof of Lemma \ref{lem:est:V1:s} follows the lines of Lemma \ref{est:V1:hf:lem}. The difference lies on estimating the source term in  $L^1(\BH{s+1}{2}{r})$ instead  of $ L^1(\BH{\cdp{p_i}+1}{p_i}{1})$, with $i=1,2$. 
    Starting from \eqref{est:V1:ini} (where we take $p_i=2$) and using Lemma \ref{lem_der_int} with $p=2$ yields for all $j\in \mathbb{Z}$:
     \begin{equation}
     \label{est:z:s:hf}
    \Vert  z_j\Vert_{L^\infty_T(L^2)}+c_0   \Vert  z_j\Vert_{L^1_T(L^2)}\le \normed{z_j(0)}+ C \Vert  ( \Dj h ,R_j)\Vert_{L^1_T(L^2)}\\
       +C\int^T_0 \normeinf{\n H(U)}\normess{z_j}{{2}},
       \end{equation}
where we recall $h= h^W+h^{V^2}_0+h^{V^2}_1  $ (see \eqref{def:ht:V:V2}) and 
\begin{align}
     \label{def:Rj:recal}
    R_j=\suma \left[\Dj,H^\alpha(U)- H^\alpha(\overline{U})\right]\pal z.
 \end{align}
 Choosing $p=2, \Tilde{p}=p_2$ in \eqref{comm:est:1} and applying the outcome result to $R_j$, since $s+1>-\frac{d}{p_2}$ and $s>-1$, we have
 \begin{multline}
 \label{est:Rj:s:hf}
       \Vert  R_j \Vert_{L^1_T(L^2)}\\
       \le C2^{-(s+1)j}k_j\left( \lVert \n V\rVert_{\Tilde{L}^{2}_T(\BH{\cdp{p_2}}{p_2}{\infty})\cap L^{2}(L^\infty)} \TLpNBH{s+1}{2}{r}{z}{2}+  \lVert\n z \rVert_{L^{2}(L^\infty)}  \TLpNBH{s+1}{2}{r}{ V}{2}\right)\cdotp
\end{multline}
From Bernstein's inequality we have
\begin{align}
\label{est:hz:s:hf}
      \Vert  (\Dj h^W, \Dj h^{V^2}_0) \Vert_{L^1_T(L^2)}\le C  \Vert  (2^j \Dj W, \Dj V^2) \Vert_{L^1_T(L^2)}.
\end{align}
The term $ h^{V^2}_{1}$ can be treated using Lemma \ref{lem:prod}  combined with Proposition \ref{propo:comp} . As we are working under the assumption that $s>-\frac{d}{p_2}$, applying \eqref{prod:2':p2} yields
\begin{multline}
\label{est:hW:s:hf}
       \Vert  \Dj h^{V^2}_{1} \Vert_{L^1_T(L^2)}\le Ck_j2^{-j(s+1)}\biggl(\lVert  V\rVert_{\Tilde{L}^{\infty}_T(\BH{\cdp{p_2}}{p_2}{\infty})\cap L^{\infty}(L^\infty)} \TLpNBH{s+1}{2}{r}{\n V^2}{1}\\
    + \TLpNBH{s+1}{2}{r}{V}{\infty}\lVert \n V^2\rVert_{\Tilde{L}^{1}_T(\BH{\cdp{p_2}}{p_2}{\infty})}\biggr)\cdotp
\end{multline}
Now, inserting \eqref{est:hz:s:hf}, \eqref{est:hW:s:hf} and \eqref{est:Rj:s:hf}   into \eqref{est:z:s:hf} and using interpolation inequalities in time Lebesgue spaces yields for all $j\ge N_{0}+2$ and $\gamma\in [1,\infty]$:
  \begin{multline}
   \label{2:s:est:v1}
   ( c_0 2^{2j})^\frac{1}{\gamma}\Vert  z\Vert_{L^\gamma_T(L^2)}\le   \normed{z_j(0)}+C\int^T_0 \normeinf{\n V}\normess{z_j}{{2}}+ C  \Vert  (\Dj V^2,2^j\Dj W) \Vert_{L^1_T(L^2)}\\
    + Ck_j2^{-j(s+1)}\biggl( \TLpNBH{s+1}{2}{r}{V}{2} \Vert   \n z\Vert_{L^{2}_T(L^\infty)}+\lVert  \n V\rVert_{\Tilde{L}^{2}_T(\BH{\cdp{p_2}}{p_2}{\infty})\cap L^{2}_T(L^\infty)} \TLpNBH{s+1}{2}{r}{z}{2}\\
     + \lVert  V\rVert_{\Tilde{L}^{\infty}_T(\BH{\cdp{p_2}}{p_2}{\infty})\cap L^{\infty}_T(L^\infty)}\TLpNBH{s+1}{2}{r}{ \n V^2}{1}  + \TLpNBH{s+1}{2}{r}{V}{\infty}\Vert   \n V^2 \Vert_{L^{1}_T(\BH{\cdp{p_2}}{p_2}{\infty})}\biggr)\cdotp
  \end{multline}

We get the desired estimate after summing \eqref{2:s:est:v1} over $j\ge N_{0}+2$ and remarking that any Lebesgue norm of $\Dj z$ is equivalent to $ \Dj V^1$ (hence $\NBH{\alpha}{p}{r}{z}\simeq \NBH{\alpha}{p}{r}{V^1}$).
\end{proof}

\paragraph{\textbf{Step 3: Final estimates and proof of Proposition \ref{est:X:s:pro}.}}
The goal of this step is to prove Estimate \eqref{pro:est:X:s}. We recall that
\begin{multline}
    \label{def:math:X:s}
        X_{p_i}\defn\TLpNBH{s}{2}{r}{V}{\infty}^{l}+ \TLpNBH{s+2}{2}{r}{V}{1}^{l}+ \Vert(W,V^2)\Vert_{\Tilde{L}^\infty(\BH{s}{2}{r}\cap\BH{\frac{d}{p_i}}{p_i}{1})}^h+ \Vert(W,V^2)\Vert_{\Tilde{L}^1(\BH{s+2}{2}{r}\cap\BH{\frac{d}{p_i}+2}{p_i}{1})}^h\\
        + \Vert V^1 \Vert_{\Tilde{L}^\infty(\BH{s+1}{2}{r}\cap\BH{\frac{d}{p_i}+1}{p_i}{1})}^h+  \Vert V^1 \Vert_{\Tilde{L}^1(\BH{s+1}{2}{r}\cap\BH{\frac{d}{p_i}+1}{p_i}{1})}^h.
    \end{multline}

Also, let us recall that from \eqref{est:W:V^1:1}  we have proved that
\begin{multline}
\label{W:V^1=f(VV)}
  \LpNBH{\frac{d}{p_i}+1}{p_i}{1}{V^1}{\infty}^h+  \LpNBH{\frac{d}{p_i}+1}{p_i}{1}{V^1}{1}^h +  \LpNBH{\frac{d}{p_i}}{p_i}{1}{(W,V^2)}{\infty}^h+  \LpNBH{\frac{d}{p_i}+2}{p_i}{1}{(W,V^2)}{1}^h\\\lesssim \mathbb{V}_{{p_i},0} 
   +(2^{-N_{0}}+ \mathbb{V}_{p_2}+\mathbb{V}_{p_2}^2)\mathbb{V}_{p_i}+2^{-N_{0}}\LpNBH{\frac{d}{p_i}+2}{p_i}{1}{V^2}{1}^h.
\end{multline}
Choosing $N$ large enough, the last term of \eqref{W:V^1=f(VV)} may be absorbed by the left-hand side. Moreover, from the fact that the high frequencies of $  \mathbb{V}_{p_i}$ are bounded by $ X_{p_i}$ and that  
\begin{align}
    \label{cond:s=f(r)}
    -\min(1,\frac{d}{p_2})<s<\frac{d}{2}-1\; \text{and}\; r>1  \esp{or} s=\frac{d}{2}-1 \;\; \text{and}\;\; r=1
\end{align}
which provides, for all $\ro\in [1,+\infty]$ and $\eta\in \mathbb{R}$
\begin{align}
    \TLpNBH{\frac{d}{2}-1+\eta}{2}{1}{V}{\ro}^{l}\lesssim \TLpNBH{s+\eta}{2}{r}{V}{\ro}^{l},
\end{align}
we get 
\begin{align}
    \label{VV<X}
   \mathbb{V}_{p_i}\lesssim  X_{p_i}.
\end{align}
From this, we deduce that 
\begin{multline}
    \label{hf<X_i}
    \LpNBH{\frac{d}{p_i}+1}{p_i}{1}{V^1}{\infty}^h+  \LpNBH{\frac{d}{p_i}+1}{p_i}{1}{V^1}{1}^h +  \LpNBH{\frac{d}{p_i}}{p_i}{1}{(W,V^2)}{\infty}^h+  \LpNBH{\frac{d}{p_i}+2}{p_i}{1}{(W,V^2)}{1}^h\\\lesssim 
    X_{{p_i},0}    +(2^{-N_{0}}+ X_{p_2}+X_{p_2}^2)X_{p_i}.
\end{multline}
The rest of proof consists in proving that the norms that
appear on the right-hand side of Inequalities \eqref{s:est:ly:l}, \eqref{est:W:s} and  \eqref{est:V1:s}  are dominated by $X_{p_1}$ or by $X_{p_2}$, which is possible using Bernstein inequality, interpolation inequality and the hypothesis
 \begin{align}
     \label{pi:s:r:d}
    p_1\le p_2;\;\;  s<\frac{d}{2}-1 \;\; \text{and}\;\; r>1 \esp{or} s=\frac{d}{2}-1 \;\; \text{and}\;\; r=1.
 \end{align}
 Let us start by proving the result for the term $ \TLpNBH{s}{2}{r}{(g^t,F^t)}{1} $.
\begin{lem}
    \label{lem:est:pt:V:s}
Let $k:\mathbb{R}^n\mapsto \mathbb{R}$ be  a regular function satisfying $k(\overline{U})=0$.
It holds
    \begin{align}
        \label{est:ptV:s:h}
        \TLpNBH{s}{2}{r}{k(U)\pt V}{1}^h\lesssim X_{p_2}+(X_{p_2})^2.
    \end{align}
\end{lem}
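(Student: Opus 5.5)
The plan is to treat $k(U)\pt V$ as a product of the small function $k(U)$ with $\pt V$, using the $L^2$-type product estimates of Lemma \ref{lem:prod}. The term $\pt V$ is then bounded in $\widetilde L^1_T(\BH{s}{2}{r})$ by rewriting it with the equation \eqref{sysdif::modif}. We restrict to high frequencies ($j\ge N_0+2$) and always keep $\theta=s$. Recall that $-\min(1,\frac d{p_2})<s\le\frac d2-1$, so $s<\frac d{p_2}$ may fail when $p_2$ is small. We therefore do not rely on \eqref{prod:2:p2:0}. Instead we use the general inequality \eqref{prod:2:p2} when $s>0$ and \eqref{prod:2':p2} with $\sigma=1$ when $s\le 0$. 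Both are applied with $f_2=k$ and $u=V$, after a time Hölder step.

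For $s>0$ this gives
\[
\TLpNBH{s}{2}{r}{k(U)\pt V}{1}\lesssim \Vert V\Vert_{\widetilde L^\infty_T(\BH{\frac d{p_2}}{p_2}{\infty})\cap L^\infty_T(L^\infty)}\TLpNBH{s}{2}{r}{\pt V}{1}+\TLpNBH{s}{2}{r}{V}{\infty}\Vert\pt V\Vert_{L^1_T(L^\infty)}.
\]
The factor in front of the first term is bounded by $X_{p_2}$ via \eqref{embed:crit} and the embedding \eqref{emmb:p:2} for the low frequencies. In the second term, $\Vert\pt V\Vert_{L^1_T(L^\infty)}\lesssim\LpNBH{\frac d{p_2}}{p_2}{1}{\pt V}{1}\lesssim X_{p_2}+X_{p_2}^2$ by \eqref{est:pt:V:W:v:1} and \eqref{VV<X}. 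The norm $\TLpNBH{s}{2}{r}{V}{\infty}$ is bounded by splitting into low and high frequencies. For $V^1$ at high frequencies one pays one derivative downward, using \eqref{est:lh:hf}. Everything else appears directly in $X_{p_2}$.

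For $s\le0$, \eqref{prod:2':p2} gives the same first term. The second term becomes $\TLpNBH{s+1}{2}{r}{V}{\infty}\LpNBH{\frac d{p_2}-1}{p_2}{\infty}{\pt V}{1}$. Its second factor is controlled by Lemma \ref{lem_esti_partial_t_V_2} with $\kappa=\frac d{p_2}-1$, splitting $V$ into low and high frequencies as in \eqref{est:ptV:w:v}. This again yields a bound by $X_{p_2}+X_{p_2}^2$.

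It then remains to bound $\TLpNBH{s}{2}{r}{\pt V}{1}$ by $X_{p_2}+X_{p_2}^2$; this is the expected main obstacle. Using \eqref{sysdif::modif} with $f\equiv0$, we write $\pt V$ as a linear part $-\sum_\alpha(\overline S^0)^{-1}\overline S^\alpha\pal V+\sum_{\alpha\beta}(\overline S^0)^{-1}\overline Y^{\alpha\beta}\partial^2_{\alpha\beta}V$ plus nonlinear remainders. The remainders are of the form $\tilde k(U)\pal V$, $\tilde k(U)\partial^2V^2$ and $\tilde k(U)\n V\,\n V^2$, with $\tilde k(\overline U)=0$.

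The linear part is handled frequency by frequency:
\[
\TLpNBH{s}{2}{r}{\n V}{1}\lesssim\TLpNBH{s+2}{2}{r}{V}{1}^l+\TLpNBH{s+1}{2}{r}{V^1}{1}^h+\TLpNBH{s+2}{2}{r}{V^2}{1}^h,
\]
and $\TLpNBH{s}{2}{r}{\n^2V^2}{1}$ is bounded by the same low and high frequency quantities. All of these lie in $X_{p_2}$, using \eqref{est:V2=f(v1,W)} to pass from $W$ to $V^2$ at high frequencies.

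The nonlinear remainders are estimated with Lemma \ref{lem:prod} (via \eqref{prod:2:p2} or \eqref{prod:2':p2}), exactly as $F^V$ and $F^{V^2}$ were treated in \eqref{est:lh:F} and \eqref{est:lh:F2}. This produces products of the small norms $\Vert V\Vert_{L^\infty_T(L^\infty\cap \BH{\frac d{p_2}}{p_2}{\infty})}\lesssim X_{p_2}$ with the linear quantities above. It also produces terms such as $\TLpNBH{s+1}{2}{r}{V}{2}\Vert\n V\Vert_{L^2_T(L^\infty)}$, which are controlled by interpolating between the $\widetilde L^\infty$ and $\widetilde L^1$ parts of $X_{p_2}$.

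The delicate point is the cubic term $\tilde k(U)\n V\n V^2$. For it we first put $\n V^2$ in $L^1_T(L^\infty)$ and then use $\TLpNBH{s}{2}{r}{\tilde k(U)\n V}{\infty}\lesssim X_{p_2}$. Collecting all the bounds gives $X_{p_2}+X_{p_2}^2$ (with harmless higher powers absorbed by the smallness \eqref{smal:cnd:Vs}), which proves \eqref{est:ptV:s:h}.
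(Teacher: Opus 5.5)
Your proposal has a genuine gap at the step you flag as the main obstacle. The bound $\TLpNBH{s}{2}{r}{\pt V}{1}\lesssim X_{p_2}+X_{p_2}^2$ is false, because of the low frequencies of the first-order part of $\pt V$. Your inequality
\[
\TLpNBH{s}{2}{r}{\n V}{1}^l\simeq\TLpNBH{s+1}{2}{r}{V}{1}^l\lesssim\TLpNBH{s+2}{2}{r}{V}{1}^l
\]
goes the wrong way. For $j\le N_0+1$ one has $2^{j(s+1)}=2^{-j}\,2^{j(s+2)}$, and $2^{-j}\to\infty$ as $j\to-\infty$. So at low frequencies a lower regularity index controls a higher one, not conversely; the low-frequency half of \eqref{est:lh:hf} is reversed as printed.

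At low frequencies, $X_{p_2}$ only provides $\widetilde L^\infty_T(\BH{s}{2}{r})\cap\widetilde L^1_T(\BH{s+2}{2}{r})$. By interpolation this gives $\n V^l\in\widetilde L^2_T(\BH{s}{2}{r})$, but not $\n V^l\in\widetilde L^1_T(\BH{s}{2}{r})$ uniformly in $T$. Indeed, the low modes decay like a heat flow, and $\int_0^\infty 2^{j}e^{-c2^{2j}t}\,dt\simeq 2^{-j}$ is unbounded as $j\to-\infty$. Hence $\pt V\notin L^1(\mathbb{R}_+;\BH{s}{2}{r})$ in general. The term $\Vert V\Vert_{\widetilde L^\infty_T(\BH{\frac d{p_2}}{p_2}{\infty})\cap L^\infty_T(L^\infty)}\TLpNBH{s}{2}{r}{\pt V}{1}$, which appears in both of your branches, therefore cannot be closed.

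The same loss affects your use of Lemma \ref{lem_esti_partial_t_V_2} with $\kappa=\frac d{p_2}-1$ whenever $s+2>\frac d2$ (e.g.\ always when $d=2$). There the low frequencies amount to controlling $V^l$ in $L^1_T(\BH{\frac d2}{2}{\infty})$, and \eqref{est:ptV:w:v} is only available for $\kappa=\frac d{p_2}$.

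The underlying obstruction is that no single time-H\"older split works for $\pt V$ as a whole. Its first-order part is in $L^2_T$ but not in $L^1_T$ at low frequencies. Its part $\n^2V^2$ is in $\widetilde L^1_T(\BH{s}{2}{r})$ but not in $\widetilde L^2_T(\BH{s}{2}{r})$ at high frequencies.

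The paper's proof avoids this by substituting the equation \eqref{sysdif::modif} into $k(U)\pt V$ before using any product law. Since $k(\overline U)=0$, this writes $k(U)\pt V$ as a sum of terms $\tilde k(U)\pal V$, $\tilde k(U)\partial^2_{\alpha\beta}V^2$ and $k(U)(S^0(U))^{-1}\pal(Y^{\alpha\beta}(U))\pbe V$, all with coefficients vanishing at $\overline U$. No linear term appears, and each piece gets its own time split. The first-order pieces are bounded through \eqref{prod:2':p2} with $\theta=s$, $\sigma=1$ and an $L^2_T\times L^2_T$ split:
\[
\Vert V\Vert_{\widetilde L^2_T(\BH{\frac d{p_2}}{p_2}{\infty})\cap L^2_T(L^\infty)}\TLpNBH{s}{2}{r}{\n V}{2}+\TLpNBH{s+1}{2}{r}{V}{2}\Vert\n V\Vert_{\widetilde L^2_T(\BH{\frac d{p_2}-1}{p_2}{\infty})}.
\]
Every factor here is controlled by interpolation. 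The $L^\infty_T\times L^1_T$ split is used only for $\tilde k(U)\n^2V^2$, whose $\widetilde L^1_T(\BH{s}{2}{r})$ norm is harmless at all frequencies; at low frequencies it is $\simeq\TLpNBH{s+2}{2}{r}{V^2}{1}^l$.

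Your treatment of the cubic term has a flaw of the same kind. The quantity $\TLpNBH{s}{2}{r}{\tilde k(U)\n V}{\infty}$ requires $\n V^2$ at high frequencies in $\widetilde L^\infty_T(\BH{s}{2}{r})$, i.e.\ $V^{2,h}\in\widetilde L^\infty_T(\BH{s+1}{2}{r})$, which $X_{p_2}$ does not contain. There too, as in the paper, you should place $\n V^2$ in $\widetilde L^2_T(\BH{s}{2}{r})$ and $\n V$ in $L^2_T(L^\infty)\cap\widetilde L^2_T(\BH{\frac d{p_2}}{p_2}{\infty})$.
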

\begin{proof}
  The proof of Lemma \ref{lem:est:pt:V:s} will be based on Lemma \ref{lem:prod}.  Let us remark that, since $k(\overline{U}) =0$, then  $k(U)\pt V$ satisfies the following equation
  \begin{multline}
      \label{eq:pt V:s}
  k(U) \pt V = \suma \left( k(U)(S^0(U))^{-1} S^\alpha(U)- k(\overline{U})(\overline{S^0})^{-1}\overline{S^\alpha}\right) \pal V \\
       + \sumab \left( k(U)(S^0(U))^{-1} Y^{\alpha\beta}(U)-k(\overline{U})(\overline{S^0})^{-1} \overline{Y}^{\alpha\beta}\right)\partial^2_{\alpha\beta} V\\+  k(U)(S^0(U))^{-1} \sumab \pal (Y^{\alpha\beta}(V))\pbe V.
         \end{multline}
 First, using \eqref{prod:2:p2} and \eqref{smal:cnd:Vs} combined with 
 Proposition \ref{propo:comp} and the form of each matrix $Y^{\alpha\beta}(U)$ we get for  $s>-\min(1,\cdp{p_2})$
  \begin{multline*}
       \TLpNBH{s}{2}{r}{  k(U)(S^0(U))^{-1}\sumab \pal (Y^{\alpha\beta}(V))\pbe V} {1} \\
      \lesssim \lVert \n V\rVert_{\widetilde{L}^{2}_T(\BH{\cdp{p_2}}{p_2}{\infty})\cap L^{2}(L^\infty)}\TLpNBH{s}{2}{r}{\n V^2}{2}
      + \TLpNBH{s+1}{2}{r}{V}{2}\Vert \n V^2 \Vert_{L^{2}_T(L^\infty)}
  \end{multline*}
 while using \eqref{prod:2':p2} with $\theta=s, \sigma=1$ we have for all $\alpha,\beta=1,\cdots, d$
  \begin{multline*}
      \TLpNBH{s}{2}{r}{(k(U)(S^0(U))^{-1} Y^{\alpha\beta}(U)-k(\overline{U})(\overline{S^0})^{-1} \overline{Y}^{\alpha\beta})\partial^2_{\alpha\beta}} {1}\\\lesssim 
     \TLpNBH{s}{2}{r}{ \n^2 V^2} {1}^h+  \lVert  V\rVert_{\Tilde{L}^{\infty}_T(\BH{\cdp{p_2}}{p_2}{\infty})\cap L_T^{\infty}(L^\infty)}\TLpNBH{s}{2}{r}{\n^2 V^2}{1}+ \TLpNBH{s+1}{2}{r}{V}{\infty}\Vert \n^2 V^2 \Vert_{L^{1}_T(\BH{\cdp{p_2}-1}{p_2}{\infty})}.
  \end{multline*}
  Finally, taking again $\theta=s, \sigma=1$  in \eqref{prod:2':p2} of Lemma \ref{lem:prod} for the first   middle term on the right-hand side of \eqref{eq:pt V:s},  we get for all $s>-\min(\frac{d}{p_2},1)$,
  \begin{multline}
  \label{est:ptV:hf:s:fin}
    \TLpNBH{s}{2}{r}{k(U)\pt V}{1}^h\lesssim 
      \lVert \n V\rVert_{\widetilde{L}^{2}_T(\BH{\cdp{p_2}}{p_2}{\infty})\cap L^{2}_T(L^\infty)}\TLpNBH{s}{2}{r}{\n V^2}{2}+ \TLpNBH{s+1}{2}{r}{V}{2}\Vert \n V^2 \Vert_{L^{2}_T(L^\infty)}\\
       +\lVert  V\rVert_{\Tilde{L}^{2}_T(\BH{\cdp{p_2}}{p_2}{\infty})\cap L^{\ro_1}(L^\infty)}\TLpNBH{s}{2}{r}{\n V}{2}  + \TLpNBH{s+1}{2}{r}{V}{2}\Vert \n V \Vert_{\widetilde{L}^{2}_T(\BH{\cdp{p_2}-1}{p_2}{\infty})}\\
       +\lVert  V\rVert_{\Tilde{L}^{\infty}_T(\BH{\cdp{p_2}}{p_2}{\infty})\cap L^{\infty}(L^\infty)}\TLpNBH{s}{2}{r}{\n^2 V^2}{1}+ \TLpNBH{s+1}{2}{r}{V}{\infty}\Vert \n^2 V^2 \Vert_{L^{1}_T(\BH{\cdp{p_2}-1}{p_2}{\infty})}.
  \end{multline}
   The first two terms on the right-hand side of the above are bounded by $(X_{p_2})^2$.\\
  Note that, the use of basic Besov embeddings \eqref{link:lf:hf} and \eqref{pi:s:r:d} provides
\begin{align}
    \label{es:inf:X}
    \normeinf{f}\lesssim \NBH{\frac{d}{2}}{2}{1}{f^{l}}+\NBH{\frac{d}{p_2}}{p_2}{1}{f^h}\lesssim \NBH{s+1}{2}{r}{f}^{l}+\NBH{\frac{d}{p_2}}{p_2}{1}{f}. 
\end{align}
From \eqref{est:lh:hf}, \eqref{es:inf:X} and \eqref{link:CL:Besov:space:chap2} that shows the relationship between the Besov space $L^\ro_T(\BH{s}{p}{r})$ and Chemin-Lerner-Besov space $ \widetilde{L}^\ro(\BH{s}{p}{r})$,
we have
\begin{align}\label{est:st:s:1}
   \lVert  V\rVert_{\Tilde{L}^{2}_T(\BH{\cdp{p_2}}{p_2}{\infty})\cap L^{2}(L^\infty)}\TLpNBH{s}{2}{r}{\n V}{2} &\lesssim   \left(\TLpNBH{s+1}{2}{r}{V}{2}^{l}+\lVert V\rVert^h_{L^2_T(\BH{\cdp{p_2}}{p_2}{\infty}\cap L^\infty)}\right) \TLpNBH{s}{2}{r}{\n V}{2}^{l}  \nonumber\\\nonumber
   &+  \left(\TLpNBH{s}{2}{r}{V}{\infty}^{l}+ \lVert V\rVert^h_{L^\infty_T(\BH{\cdp{p_2}}{p_2}{\infty}\cap L^\infty)}\right) \LpNBH{s}{2}{r}{\n V}{1}^h\\
   &\lesssim  (X_{p_2})^2,
\end{align}
\begin{align}\label{est:st:s:2}
   \Vert V \Vert_{\widetilde{L}^{2}_T(\BH{s+1}{2}{r})}\Vert \n V\Vert_{\widetilde{L}^{2}_T(\BH{\cdp{p_2}-1}{p_2}{\infty})} &\lesssim   \TLpNBH{s+1}{2}{r}{V}{2}(\TLpNBH{\cdp{p_2}}{p_2}{\infty}{V}{2}^{l}+ \LpNBH{\cdp{p_2}}{p_2}{1}{V}{2}^h)
   \nonumber\\\nonumber
     &\lesssim   \TLpNBH{s+1}{2}{r}{V}{2}(\TLpNBH{s}{2}{r}{V}{2}^{l}+ \LpNBH{\cdp{p_2}}{p_2}{1}{V}{2}^h)\\
   &\lesssim  (X_{p_2})^2.
   \end{align}

Similarly, we can prove that the remaining terms of \eqref{est:ptV:hf:s:fin} are bounded by $(X_{p_2})^2$, which finishes the proof of Lemma \ref{lem:est:pt:V:s}.
\end{proof}
Lemma \ref{lem:est:pt:V:s} shows that the terms $F^t$ and $g^t$ are dominated by $X_{p_2}$ in the space $\widetilde{L}^1_T(\BH{s}{2}{r})$. Arguing as in Lemma \ref{lem:est:pt:V:s} and based on inequality \eqref{est:lh:hf}, estimates \eqref{es:inf:X} and \eqref{link:CL:Besov:space:chap2} one can prove that the right-hand side of inequalities \eqref{est:V1:s}, \eqref{est:W:s} and \eqref{s:est:ly:l} are dominated by $X_{p_1}$ or by $X_{p_2}$. The proof is similar to what we did to get \eqref{est:st:s:1} and \eqref{est:st:s:2}. 
This completes the proof of Proposition \ref{est:X:s:pro}.\qed
\medbreak
The rest of the proof of existence of Theorem \ref{thm:p:s} just consists in following the steps from $1$ to $3$ of section \ref{proof:Thms:p:and:cri:sec}. It is worthwhile to emphasize that for mollifying the initial data, one may proceed differently. In fact, as the high frequencies of the data belong to Besov spaces constructed on $L^2$, it suffices to smooth out 
the data as follows: 
\begin{align*}
     V_{0,n}= \Dot{S}_n V_0 \esp{for all } n\ge 1.
\end{align*}
It is easy to show that the sequence $(V_{0,n})_n$ converges to $V_0$ in the space 
of the initial data. Moreover, one can prove without difficulty that $X_{i,n}\le CX_{{p_i},0}$ for all $i=1,2$ and $n\ge 1$, where $X_{i,n}$ stands for  the function $X_{p_i}$ pertaining to $V_n$.

The uniqueness follows from Lemma \ref{est:uni}.


\section{ Application to compressible viscous and heat-conductive gases.}
\label{sec:NSF}
The motion of a general Newtonian fluid is described by the following system
 \begin{align}
     \label{CFNS}
     \begin{cases}
         &  \pt \ro+ \dig(\ro u )=0,\\
          &   \ro \pt u + \ro u\cdot \n u- \dig( 2\mu D u+ \lambda I_d\dig(u))+\n P=0,\\
          &\ro e_\theta(\pt \theta +u\cdot \n \theta)+\theta p_\theta \dig(u)-\dig(k \n \theta)=\Psi,
     \end{cases}
 \end{align}
with 
\begin{align}
  \Psi \defn \label{def:vaphi:Dx}
    \frac{\mu}{2}\sum_{i,j=1}^d(\partial_{x_j} u^i+ \partial_{x_i} u^j)^2+ \lambda (\dig(u))^2.
\end{align}
where $\ro=\ro(t,x)$ denotes the density, $u=u(t,x)$, the velocity field and $e=e(t,x)$
, the internal energy per unit mass and $\theta$ the temperature. 
\smallbreak
In \cite{DanADOlocal}, we addressed the question of local existence and uniqueness of solutions for System \eqref{CFNS} with large data in spaces with critical  regularity. 
Subsequently, the global existence, for small initial data was
 proved in \cite{DanADOglob}). All these works have been 
performed in Besov spaces constructed on $L^2$,  under
 the following (physically relevant) assumption:
\paragraph{\textbf{\textbf{Assumption G}}}
\begin{description}
    \item[(G1)] There exists an entropy function $s=s(\rho,e)$
       such that the following first law of thermodynamics is satisfied: 
    \begin{align}
    \label{Gibs}
    Tds=de+pd(\frac{1}{\ro})\cdotp
\end{align}
    Furthermore we assume that: for all $\ro>0$ and $\theta>0$,
    \begin{align}
        \label{positive:Pr:eth}
        p_\ro(\ro,\theta) =\frac{\partial p}{\partial\ro}>0 \esp{and} e_\theta(\ro,\theta)= \frac{\partial e}{\partial\theta}>0.
    \end{align}
    \item[(G2)] The viscosities $\mu$, $\lambda$ and the heat conductivity $k$ are smooth functions of $\ro>0$ and $\theta,$ and satisfy 
    \begin{align}
        \label{LAme:cnd}
       \mu>0,\esp{ } \nu\defn 2\mu+\lambda>0 \esp{and} k>0.
    \end{align}
\end{description}
It is important to emphasize that the condition $  2\mu + \lambda > 0 $ is stronger than the one usually used, namely $  2\mu + d\lambda > 0 $, which we actually adopt in \cite{DanADOglob}. Indeed, since $\mu > 0$, it is clear that $2\mu > -d\lambda$ implies $2\mu > -\lambda$.
\smallbreak
In order to apply the above results to the system \eqref{CFNS}, we need to rewrite it as partially symmetric hyperbolic diffuse system. For that purpose, we set
\begin{align*}
    \mathcal{U}\defn \{(\ro,u,\theta)\in 
    \mathbb R\times\mathbb{R}^{d}\times\mathbb R/ \ro>0,\; \theta>0\}.
\end{align*}
For some positive constants $\overline{\ro}>0$ and $\overline{\theta}>0$, define 
\begin{align*}
    U\defn(\ro,u,\theta) ,\;\; \overline{U}\defn (\overline{\ro},0_{\Rd},\overline{\theta}) \esp{and} V\defn U-\overline{U}.
\end{align*}
Then, \eqref{CFNS} is reduced to
\begin{align}
    \label{SHPDS:NS}
    S^0(U)\frac{d}{dt} U+\suma S^\alpha(U)\pal U-\sumab \pal\left( Y^{\alpha\beta}\pbe V\right)=H(U)
\end{align}
where
\begin{align}
    \label{def:S0:H:xi}
    S^0(U)\defn 
    \begin{pmatrix}
\frac{p_\ro}{\ro} & 0 & 0\\
0 & \ro Id & 0\\
0&0 & \frac{\ro e_\theta}{\theta}
\end{pmatrix},\;\;\;
 H(U)\defn  \begin{pmatrix}
0\\
0 \\
\Psi-k\n \theta\cdot \n (\frac{1}{\theta})
\end{pmatrix},
\end{align}

\begin{align}
    \label{def:Salpha:}
\suma S^\alpha(U)\xi_\alpha \defn
 \begin{pmatrix}
\frac{p_\ro}{\ro}u\cdot \xi & p_\ro \xi & 0\\
p_\ro ^T\xi & \ro(u\cdot \xi) Id & p_\theta ^T\xi\\
0& p_\theta \xi & \frac{\ro e_\theta}{\theta}u\cdot \xi
\end{pmatrix},
\end{align}
and
\begin{align}
    \label{Yalbe:xi}
    Y^{\alpha\beta} \defn
    \begin{pmatrix}
0& 0 \\
0 & Z^{\alpha\beta}
\end{pmatrix} 
\end{align}
\text{  with  }
\begin{align}
\label{def:Zal:be:xi}
\sumab Z^{\alpha\beta}\xi_\alpha\zeta_\beta\defn 
\begin{pmatrix}
 \mu \xi\cdot \zeta{\rm Id}+(\mu+\lambda)\xi\otimes \zeta   & 0\\
0 & \frac{k}{\theta} \xi\cdot \zeta
\end{pmatrix},\quad \xi,\zeta\in \Rd\cdotp
\end{align}

We recall that
\begin{align*}
    \xi\cdot \zeta=\sum_{\alpha=1}^d \xi_\alpha\zeta_\alpha \cdotp
\end{align*}

We observe that $S^0(U)$ is a (real) diagonal positive definite matrix for all $U\in \mathcal{U}$, and that the matrices $S^\alpha(U)$ are real symmetric. Furthermore a simple calculation shows that: for all $X,\xi\in \Rd$,  $Y\in \mathbb{R}$, 
\begin{align}
    \label{sca:Yal:be:V}
     \sumab \left\langle Z^{\alpha\beta}\xi_\alpha\xi_\beta A, A\right\rangle\ge \min(\mu,\nu)|X|^2+\frac{k}{\theta}Y^2
\end{align}
where $A\defn (X,Y)$ and $\left\langle \cdot ,\cdot \right\rangle$ denotes the standard inner product in $\mathbb{R}^{d+1}$. As for the right-hand side~$H,$ it is regarded as a lower order quadratic term. 

It is obvious that System \eqref{CFNS} under assumption  \textbf{G} satisfies hypotheses  (\textbf{H1})-(\textbf{H3}). As $n_1=1$, Assumption  (\textbf{H4}) is clearly satisfied and we can choose $P$ equal to the constant $1$. Therefore   (\textbf{H5}) is verified. Moreover, if we  set
$$B:\omega\in \mathbb{S}^{d-1}\mapsto B(\omega)=\overline{\ro}P_{\ro}(\overline{\ro},\overline{\theta})^2 \omega N^{-1}(\omega) {}^T\omega \esp{with} N(\omega)\defn \overline{\mu}{\rm Id} +(\overline{\mu}+\overline{\lambda})\omega\otimes\omega, $$
and $ \overline{\mu}\defn \mu(\overline{\ro},\overline{\theta}),  \overline{\lambda}\defn \lambda(\overline{\ro},\overline{\theta})  $ then, denoting $\overline{\nu}\defn \nu(\overline{\ro},\overline{\theta})$, we can verify that:
$$ N^{-1}(\omega)= \overline{\mu}^{-1}{\rm Id}+ (\overline{\nu}^{-1}-\overline{\mu}^{-1})\omega\otimes\omega.$$

Thus, for all $\omega\in \mathbb{S}^{d-1}$,
\begin{align}
    B(\omega)= \overline{\ro}P_{\ro}(\overline{\ro},\overline{\theta})^2 \overline{\nu}^{-1}.
\end{align}
Hence $B$ is independent of $\omega$ and  under assumption \textbf{G}, $B>0$. So assumption (\textbf{H6}) is fulfilled. The vector $H $ defined in \eqref{def:S0:H:xi} verifies the assumption (\textbf{H7}). Here the (SK) condition is equivalent to $P_\ro(\overline{\ro},\overline{\theta})\neq 0$, which is satisfied, under Assumption \textbf{G}.
\smallbreak
We can thus apply Theorem \ref{thm:p} to System \eqref{CFNS}, getting:
\begin{thm}    \label{thm:glo:CFNS:p}
   Let $d\ge 2$, $ 2\le  p_1\le p_2< 2d $ and $p_2\le \min( 4,\frac{2d}{d-2})$.  Let assumption \textbf{G} be in force.  Assume that the initial data $(\ro_0,u_0,\theta_0)$ are in $\mathcal{U}$ and set $V^1_0=\ro_0-\overline{\ro}$, $V^2_0=(u_0,\theta_0-\overline{\theta})$ and $V_0=(V^1_0,V^2_0)$. 

     Then, there exists an integer $N_0$ depending only on the coefficients of the system
     and on $\bar\rho,$      
     and a positive constant $c_0$ such that if   $ (\ro_0-\overline{\ro},u_0,\theta_0-\overline{\theta})^{l,N_0}\in \BH{\frac{d}{2}-1}{2}{1} $, $ (\ro_0-\overline{\ro})^{H,N_0}\in \BH{\frac{d}{p_1}}{p_1}{1}$ and $ (u_0,\theta_0-\overline{\theta})^{H,N_0}\in \BH{\frac{d}{p_1}-1}{p_1}{1}$ 
    and
    \[   \NBH{\frac{d}{2}-1}{2}{1}{(\ro_0-\overline{\ro},u_0,\theta_0-\overline{\theta})}^{l,N_0} +\NBH{\frac{d}{p_2}+1}{p_2}{1}{\ro_0-\overline{\ro}}^{h,N_0}+ \NBH{\frac{d}{p_2}}{p_2}{1}{(u_0,\theta_0-\overline{\theta})}^{h,N_0} \le c_0, \]
    then \eqref{CFNS} has a unique global-in-time solution $(\ro,u,\theta)\in \U $. Moreover  $V\defn(\ro-\overline{\ro},u,\theta-\overline{\theta})$ in the space  $\mathbb{E}_{p_1}\cap \mathbb{E}_{p_2}$ with $\mathbb{E}_{p_i}$ defined by
    \begin{multline*}
           (\ro-\overline{\ro},u,\theta-\overline{\theta})^{l,N_0}\in \mathcal{C}_b(\mathbb{R}^+\!,\BH{\frac{d}{2}-1}{2}{1})\cap L^1(\mathbb{R}^+\!,\BH{\frac{d}{2}+1}{2}{1}); \;\;  (\ro-\overline{\ro})^{H,N_0}\in \mathcal{C}_b(\mathbb{R}^+\!,\BH{\frac{d}{p_i}+1}{p_i}{1})\cap L^1(\mathbb{R}^+\!,\BH{\frac{d}{p_i}+1}{p_i}{1}) \\
             (u,\theta-\overline{\theta})^{H,N_0}, W^{H,N_0}\in \mathcal{C}_b(\mathbb{R}^+,\BH{\frac{d}{p_i}}{p_i}{1})\cap L^1(\mathbb{R}^+,\BH{\frac{d}{p_i}+2}{p_i}{1});\;\; (\pt\ro, \pt u, \pt \theta)\in L^1(\mathbb{R}^+,\BH{\frac{d}{p_i}}{p_i}{1}) 
    \end{multline*}
with 
\[ W= i|D|^{-1}\mathcal{Z}^{-1}(D)\left( \Ss_{21}(D) (\ro-\overline{\ro})+  \Ss_{22}(D)\begin{pmatrix}
    u\\
    \theta-\overline{\theta}
\end{pmatrix}\right)  +\begin{pmatrix}
    u\\
    \theta-\overline{\theta}
\end{pmatrix}\]
and
$$\displaylines{
 \Ss_{21}(D)=
    \begin{pmatrix}
        & \frac{P(\overline{\ro},\overline{\theta})^T\n}{|D|}\\
        &0
    \end{pmatrix},\;
    \Delta\mathcal{Z}(D)=\begin{pmatrix}
       & -\overline{\mu}\Delta {\rm Id} + (\overline{\lambda}+\overline{\mu}\n{\rm div}) & 0\\
        & 0 & -\frac{k(\overline{\ro},\overline{\theta})}{\overline{\theta}}
    \end{pmatrix},\;\cr
    \Ss_{22}(D)=\begin{pmatrix}
        &0 & P_\theta(\overline{\ro},\overline{\theta})\frac{^T \n}{|D|}\\
       & P_\theta(\overline{\ro},\overline{\theta})\frac{ \n}{|D|} & 0
    \end{pmatrix}\cdotp}$$

Furthermore we have the following bounds: 
\begin{align*}
    \mathbb{V}_{p_i}&\lesssim  \mathbb{V}_{{p_i},0}\defn  \NBH{\frac{d}{2}-1}{2}{1}{(\ro_0-\overline{\ro},u_0,\theta_0-\overline{\theta})}^{l,N_0} +\NBH{\frac{d}{p_2}+1}{p_2}{1}{\ro_0-\overline{\ro}}^{h,N_0}+ \NBH{\frac{d}{p_2}}{p_2}{1}{(u_0,\theta_0-\overline{\theta})}^{h,N_0}
\end{align*}
with  for $1\le i\le 2$
\begin{multline*}
      \mathbb{V}_{p_i} = \LpNBH{\frac{d}{2}-1}{2}{1}{(\ro-\overline{\ro},u,\theta-\overline{\theta})}{\infty}^{l,N_0}+\LpNBH{\frac{d}{2}+1}{2}{1}{(\ro-\overline{\ro},u,\theta-\overline{\theta})}{1}^{l,N_0} + \Vert \ro-\overline{\ro}\Vert_{L^\infty\cap L^1(\BH{\frac{d}{p_i}+1}{p_i}{1})}^{h,N_0}\\
    + 
    \LpNBH{\frac{d}{p_i}}{p_i}{1}{(W,u,\theta-\overline{\theta})}{\infty}^{h,N_0} + \LpNBH{\frac{d}{p_i}+2}{p_i}{1}{(W,u,\theta-\overline{\theta})}{1}^{h,N_0}+  \LpNBH{\frac{d}{p_i}}{p_i}{1}{\pt V}{1}.
\end{multline*}

\end{thm}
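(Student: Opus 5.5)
The proof reduces Theorem \ref{thm:glo:CFNS:p} to Theorem \ref{thm:p}. All hypotheses of Theorem \ref{thm:p} have to be checked for System \eqref{SHPDS:NS}, and the abstract conclusion then has to be translated back into the variables $(\ro,u,\theta)$.

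\textbf{Step 1: recasting the system.} With $n_1=1$, $V^1=\ro-\overline{\ro}$ and $V^2=(u,\theta-\overline{\theta})$, Gibbs' relation \eqref{Gibs} turns \eqref{CFNS} into \eqref{SHPDS:NS}.
\begin{itemize}
\item (\textbf{H1}) and (\textbf{H3}) hold because $S^\alpha$ in \eqref{def:Salpha:} is symmetric and $S^0$ in \eqref{def:S0:H:xi} is diagonal and positive definite. Positivity uses $p_\ro>0$, $e_\theta>0$, $\ro>0$ and $\theta>0$, which hold on a compact neighbourhood of $\overline{U}$ in $\U$.
\item (\textbf{H2}) follows from the block form \eqref{Yalbe:xi} together with the ellipticity bound \eqref{sca:Yal:be:V}. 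This bound relies on $\mu>0$ and $\nu=2\mu+\lambda>0$, so the constant is $c_1=\min(\overline{\mu},\overline{\nu},\bar k/\overline{\theta})$.
\item (\textbf{H4}) and (\textbf{H5}) are trivial since $n_1=1$: each $A^\alpha_{11}=u_\alpha$ is a scalar, so we may take $P\equiv1$.
\end{itemize}

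\textbf{Step 2: the key structural checks.} For (\textbf{H6}) I compute $B(\omega)=(\overline{S}^0_{11})^{-1}S_{12}(\omega)\mathcal{Z}^{-1}(\omega)S_{21}(\omega)$. Only the velocity block of $\mathcal{Z}(\omega)$ is hit, and that block is $N(\omega)=\overline{\mu}\,{\rm Id}+(\overline{\mu}+\overline{\lambda})\omega\otimes\omega$. Its inverse is $N^{-1}(\omega)=\overline{\mu}^{-1}{\rm Id}+(\overline{\nu}^{-1}-\overline{\mu}^{-1})\omega\otimes\omega$, so ${}^T\omega N^{-1}\omega=\overline{\nu}^{-1}$. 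Hence $B=\overline{\ro}\,p_\ro(\overline{\ro},\overline{\theta})^2\overline{\nu}^{-1}$ (up to the normalising factor $\overline{\ro}/p_\ro$ from $S^0_{11}$), which is a positive constant independent of $\omega$.

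For (\textbf{H7}), the first component of $H$ vanishes. The third component $\Psi-k\n\theta\cdot\n(1/\theta)$ is a smooth function of $U$ times $\n U\otimes\n U$, so it has the required quadratic form.

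For (SK) I use Lemma \ref{lem:SK_diss}. An eigenvector $\phi$ of $N_\omega$ lying in $\ker M_\omega$ must have vanishing velocity and temperature components. The velocity row of the eigen-equation then forces $p_\ro\,\omega\,\phi^1=0$, hence $\phi=0$ because $p_\ro(\overline{\ro},\overline{\theta})>0$.

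\textbf{Step 3: translating data and conclusion.} The norm $\|V^1_0\|^{h}_{\dot B^{d/p+1}_{p,1}}$ is exactly the density norm. The symbols $\Ss_{21}$, $\Ss_{22}$ and $\mathcal{Z}$ are read off directly from \eqref{def:Salpha:} and \eqref{def:Zal:be:xi}, which gives the stated formula for $W$.

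One point needs care. The statement requires $V\in\U$, i.e.\ $\ro>0$ and $\theta>0$, for all times. This comes from the smallness of $\|V\|_{L^\infty}$, which is controlled by $\mathbb{V}_{p_2}$ through the embedding $\dot B^{d/p_2}_{p_2,1}\hookrightarrow L^\infty$, exactly as in \eqref{blow:Vn:BH}.

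I expect the main obstacle to be the (\textbf{H6}) computation. Several normalisations by $\overline{S}^0$ enter it (the factors $p_\ro/\ro$ and $\ro$), so the precise constant must be tracked. What matters, however, is only that the result is positive and $\omega$-independent, and that follows from the rotational structure of $N(\omega)$.
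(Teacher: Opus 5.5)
Your proposal is correct and follows the paper's own route. Like the paper, you verify (\textbf{H1})--(\textbf{H7}) and (SK) for \eqref{SHPDS:NS} with $n_1=1$ and $P\equiv1$, obtain $B(\omega)$ from the explicit inverse of $N(\omega)$, and then invoke Theorem \ref{thm:p}. One small correction to your constant in (\textbf{H6}): $(\overline{S}^0_{11})^{-1}=\overline{\ro}/p_\ro(\overline{\ro},\overline{\theta})$ and $S_{12}(\omega)\mathcal{Z}^{-1}(\omega)S_{21}(\omega)=p_\ro(\overline{\ro},\overline{\theta})^2/\overline{\nu}$, so $B=\overline{\ro}\,p_\ro(\overline{\ro},\overline{\theta})/\overline{\nu}$; this is still a positive, $\omega$-independent constant, so the argument is unaffected.
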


Applying Theorem \ref{thm:p:s} gives us

\begin{thm}
    \label{thm:glo:CFNS:p:s}
    Let $d\ge 2$. Let Assumption \textbf{G} be in force.   Assume that $(\ro_0-\overline{\ro})\in \BH{\frac{d}{p_1}+1}{p_1}{1}$, $ (u_0,\theta_0-\overline{\theta})\in \BH{\frac{d}{p_1}}{p_1}{1} $ for some $p_1\ge 2$ with $\ro_0,u_0,\theta_0\in \U$. Let $p_2\ge p_1$. Then there exists an integer $N_0$ 
    depending only on the coefficients of the system and on $\bar\rho,$ such that if we make the following additional  (lower order) assumptions 
    \[  (u_0,\theta_0-\overline{\theta}), (\ro_0-\overline{\ro})^{l,N_0}\in \BH{s}{2}{r} ;\;\; 
    (\rho_0^{h,N_0}-\bar\rho)\in   \BH{s+1}{2}{r} \]
    for some $r\in [1,\infty]$ and $s$ such that
    \begin{align}
        \label{def:s:r:p2:CFNS}
         s<\frac{d}{2} \;\; \text{and}\;\; r>1 \esp{or} s=\frac{d}{2}-1 \;\; \text{and}\;\; r=1,
    \end{align}
     then there exists a  positive constant $c_0$ such that if  
   \begin{align*}
       \NBH{s}{2}{r}{(\ro_0-\overline{\ro})}^{l,N_0}+ \NBH{s}{2}{r}{(u_0,\theta_0-\overline{\theta})}+  \NBH{s+1}{2}{r}{(\ro_0-\overline{\ro})}^{h,N_0} +\NBH{\frac{d}{p_2}}{p_2}{1}{(u_0,\theta_0-\overline{\theta})}^{h,N_0}+ \NBH{\frac{d}{p_2}+1}{p_2}{1}{(\ro_0-\overline{\ro})}^{h,N_0}\le  c_0,
   \end{align*}
    then \eqref{CFNS} has a global-in-time solution $(\ro,u,\theta)\in \U$. Moreover  $V\defn(\ro-\overline{\ro},u,\theta-\overline{\theta})$ in the space $\mathfrak{E}_{p_1}^s\cap \mathfrak{E}_{p_2}^s$ with 
    $\mathfrak{E}_{p_i}^s$ defined by
     \begin{align*}
          &   (u,\theta-\overline{\theta}), (\ro-\overline{\ro})^{l,N_0}\in \widetilde{\mathcal{C}}_b(\mathbb{R}^+,\BH{s}{2}{r})\cap L^1(\mathbb{R}^+,\BH{s+2}{2}{r}); \;\; (\ro-\overline{\ro})^{h,N_0}\in \widetilde{\mathcal{C}}_b(\mathbb{R}^+,\BH{s+1}{2}{r})\cap \widetilde L^1(\mathbb{R}^+,\BH{s+1}{2}{r})\\
            &  (\ro-\overline{\ro})^{H,N_0}\in \mathcal{C}_b(\mathbb{R}^+,\BH{\frac{d}{p_i}+1}{p_i}{1})\cap L^1(\mathbb{R}^+,\BH{\frac{d}{p_i}+1}{p_i}{1}) ;\;\; (\pt\ro,\pt u,\pt \theta)\in L^1(\mathbb{R}^+,\BH{\frac{d}{p_i}}{p_i}{1})  \\
            & (u,\theta-\overline{\theta})^{H,N_0}, W^{H,N_0}\in \mathcal{C}_b(\mathbb{R}^+,\BH{\frac{d}{p_i}}{p_i}{1})\cap L^1(\mathbb{R}^+,\BH{\frac{d}{p_i}+2}{p_i}{1}),
    \end{align*}
with $W$ defined in Theorem \ref{thm:glo:CFNS:p} .
Next, we have the following bound for all $t\ge 0$:
\begin{multline*}
  X_{p_i}(t)\lesssim  
     \NBH{s}{2}{r}{\ro_0-\overline{\ro}}^{l,N_0}+ \NBH{s}{2}{r}{(u_0,\theta_0-\overline{\theta})}+  \NBH{s+1}{2}{r}{\ro_0-\overline{\ro}}^{h,N_0} +\NBH{\frac{d}{p_2}}{p_2}{1}{(u_0,\theta_0-\overline{\theta})}^{h,N_0}+ \NBH{\frac{d}{p_2}+1}{p_2}{1}{\ro_0-\overline{\ro}}^{h,N_0}
\end{multline*}
with 
\begin{multline*}
     X_{p_i}(T) \defn \TLpNBH{s}{2}{r}{\ro-\overline{\ro}}{\infty}^{l,N_0}+ \TLpNBH{s}{2}{r}{(u,\theta-\overline{\theta})}{\infty}+\TLpNBH{s+2}{2}{r}{(u,\theta-\overline{\theta}) }{1}+\TLpNBH{s+2}{2}{r}{\ro-\overline{\ro}}{1}^{l,N_0}\\
    +\TLpNBH{s+1}{2}{r}{\ro-\overline{\ro}}{\infty}^{h,N_0}+\TLpNBH{s+1}{2}{r}{\ro-\overline{\ro}}{1}^{h,N_0}+ \Vert \ro-\overline{\ro}\Vert_{L^\infty\cap L^1(\BH{\frac{d}{p_i}+1}{p_i}{1})}\\
    + 
    \LpNBH{\frac{d}{p_i}}{p_i}{1}{(W,u,\theta-\overline{\theta})}{\infty}^{h,N_0} + \LpNBH{\frac{d}{p_i}+2}{p_i}{1}{(W,u,\theta-\overline{\theta})}{1}^{h,N_0}.
\end{multline*}

In addition, the uniqueness holds for $p_2<2d$.

\medbreak
\paragraph{\textbf{Acknowledgments}}\textit{The first author has received funding from the European Union's Horizon 2020 research and innovation programme under the Marie Sk\l{}odowska-Curie grant agreement \textnumero~945332.}  I would like to acknowledge my PhD advisor Rapha\"{e}l Danchin  for the fruitful
discussions and careful reading of this work.
    
\end{thm}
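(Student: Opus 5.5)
The plan is to deduce Theorem \ref{thm:glo:CFNS:p:s} directly from Theorem \ref{thm:p:s}. To do so, I will check that System \eqref{CFNS}, rewritten as \eqref{SHPDS:NS}, satisfies (SK) and (\textbf{H1})--(\textbf{H7}), and then translate the functional framework. Most of the structural verification is already done in the discussion preceding Theorem \ref{thm:glo:CFNS:p}, so I will reuse it. First, $S^0(U)$ in \eqref{def:S0:H:xi} is diagonal positive definite on $\U$ by \eqref{positive:Pr:eth}, and block diagonal with $n_1=1$; this gives (\textbf{H3}). The matrices $S^\alpha$ in \eqref{def:Salpha:} are symmetric, which gives (\textbf{H1}). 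The viscous part \eqref{Yalbe:xi}--\eqref{def:Zal:be:xi} has the form \eqref{form:Y}, and \eqref{sca:Yal:be:V} together with \eqref{LAme:cnd} gives the strong ellipticity \eqref{strong_elli}; this is (\textbf{H2}). Since $n_1=1$, the scalar $A^\alpha_{11}=u_\alpha$ trivially gives (\textbf{H4}) with $P\equiv1$, hence (\textbf{H5}). Next I compute $B(\omega)=\overline{\ro}\,p_\ro(\overline{\ro},\overline{\theta})^2\,\overline{\nu}^{-1}$ using the explicit inverse of $N(\omega)$. This is a positive constant under $\nu>0$, giving (\textbf{H6}). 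Finally, $H(U)$ has first component zero, and its last component $\Psi-k\n\theta\cdot\n(1/\theta)$ is quadratic in $\n U$ with smooth coefficients, giving (\textbf{H7}). The (SK) condition reduces to $p_\ro(\overline{\ro},\overline{\theta})\neq0$, which holds by (G1).

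Second, I match the unknowns: $V^1=\ro-\overline{\ro}$ and $V^2=(u,\theta-\overline{\theta})$. Under this identification, the hypotheses and smallness condition of Theorem \ref{thm:glo:CFNS:p:s} coincide with those of Theorem \ref{thm:p:s}. The space $\mathfrak{E}^s_{p_i}$ and the quantity $X_{p_i}$ are literally those of Theorem \ref{thm:p:s} written in these variables. The formula for $W$ follows from \eqref{W:def:2} with the symbols $\Ss_{21},\Ss_{22},\mathcal{Z}$ read off from $\overline{S}^\alpha$ and $\overline{Z}^{\alpha\beta}$. The constant $N_0$ depends only on the linearized matrices, hence on the coefficients and on $(\overline{\ro},\overline{\theta})$. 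Applying Theorem \ref{thm:p:s} then yields the global solution, the bound on $X_{p_i}$, and uniqueness for $p_2<2d$.

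Two remaining points need care. The first is that $(\ro,u,\theta)$ stays in $\U$. Since $\|V\|_{L^\infty}\lesssim X_{p_2}\lesssim c_0$ by \eqref{es:inf:X}, choosing $c_0$ small keeps $\ro$ and $\theta$ close to $\overline{\ro}>0$ and $\overline{\theta}>0$. The second, and the main obstacle, is the range of $s$. Theorem \ref{thm:p:s} requires $-\min(1,\frac{d}{p_2})<s<\frac d2-1$ (with $r>1$), whereas \eqref{def:s:r:p2:CFNS} is stated more loosely, with $s<\frac d2$. I would interpret the statement as being subject to the restriction of \eqref{def:s:r:p2}, since the low-frequency product laws of Lemma \ref{lem:prod} and the embedding $\TLpNBH{\frac d2-1+\eta}{2}{1}{V}{\ro}^{l}\lesssim\TLpNBH{s+\eta}{2}{r}{V}{\ro}^{l}$ need exactly that range. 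I would then simply invoke Theorem \ref{thm:p:s} under this restriction. Also, the quadratic source $H$ must be handled even though Section \ref{sec:proof:thm:p:s} assumed $f\equiv0$. I would check via Lemma \ref{lem:prod} and Proposition \ref{propo:comp} that $\upsilon(U)\n V\otimes\n V$ is bounded in $\widetilde L^1_T(\BH{s}{2}{r})$ by $X_{p_2}^2$, in the same way as the terms of \eqref{est:ptV:hf:s:fin}.
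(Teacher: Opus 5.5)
Your plan is the paper's own argument. The paper checks (SK) and (\textbf{H1})--(\textbf{H7}) in the discussion preceding Theorem~\ref{thm:glo:CFNS:p}, then reads the result off Theorem~\ref{thm:p:s} with $V^1=\ro-\overline{\ro}$ and $V^2=(u,\theta-\overline{\theta})$. Your caveats are sound and apply equally to the paper's argument: its direct application of Theorem~\ref{thm:p:s} only covers $s$ in the range \eqref{def:s:r:p2}, so the condition $s<\frac d2$ in \eqref{def:s:r:p2:CFNS} has to be read that way. The quadratic source term, dropped for simplicity in Section~\ref{sec:proof:thm:p:s}, can indeed be handled with Lemma~\ref{lem:prod} and Proposition~\ref{propo:comp}, giving a contribution at least quadratic in $X_{p_2}$.
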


\appendix
\label{app}

\section{Some inequalities }
\label{tool:box:glo:Lp}
Here we gather a few technical results that have been used repeatedly in this thesis. The first one is the following well known result which consists in integrating suitably a differential  inequality.
\begin{lem}
\label{lem_der_int} 
\label{lem_der_int:loc}
\label{lem_der_int:glo}
    Let $X : [0, T]\longrightarrow \mathbb{R}_+$ be a continuous function such that $X$ is differentiable, and let $1\le p<\infty$ be a Lebesgue index.
    
Assume that there exist a constant $B\ge 0$ and a measurable function $A :\mathbb{R}_+\longrightarrow [0, T] $  such that
\[ \frac{1}{p}\frac{d}{dt} X +BX\le AX^{1-\frac{1}{p}} \esp{a.e.} \text{   on   }  [0, T].\]
Then, for all $t\in [0,T]$, we have
\[  X^{\frac{1}{p}}(t)+B\int^t_0 X^{\frac{1}{p}} \le X^{\frac{1}{p}}(0) + \int^t_0 A.\]

\end{lem}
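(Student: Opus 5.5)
The plan is a direct ODE comparison argument that bypasses the lack of differentiability of $X^{1/p}$ at zero. Fix $\varepsilon>0$ and set $X_\varepsilon\defn X+\varepsilon$. Since $X\ge0$ and $B\ge0$, we have $BX\ge B X_\varepsilon - B\varepsilon$ and $X^{1-\frac1p}\le X_\varepsilon^{1-\frac1p}$. The hypothesis therefore yields, a.e. on $[0,T]$,
\[
\frac1p\frac{d}{dt}X_\varepsilon + BX_\varepsilon\le A X_\varepsilon^{1-\frac1p}+B\varepsilon .
\]
Now $X_\varepsilon\ge\varepsilon>0$, so $Y_\varepsilon\defn X_\varepsilon^{1/p}$ is differentiable wherever $X$ is. Moreover
\[
\frac{d}{dt}Y_\varepsilon=\frac1p X_\varepsilon^{\frac1p-1}\frac{d}{dt}X_\varepsilon .
\]
Multiplying the previous inequality by $X_\varepsilon^{\frac1p-1}>0$ gives
\[
\frac{d}{dt}Y_\varepsilon+BY_\varepsilon\le A+B\varepsilon X_\varepsilon^{\frac1p-1}\le A+B\varepsilon^{\frac1p},
\]
where the last step uses $\frac1p-1\le0$ and $X_\varepsilon\ge\varepsilon$.

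Next I integrate on $[0,t]$. The function $Y_\varepsilon$ is continuous, and it is differentiable on $[0,T]$ because $X$ is. Its derivative is bounded above by an integrable function (I assume, as implicitly in the statement, that $A\in L^1(0,T)$; otherwise the right-hand side is infinite and there is nothing to prove). The monotonicity/mean-value argument for differentiable functions whose derivative is dominated by an integrable function then gives
\[
Y_\varepsilon(t)+B\int_0^tY_\varepsilon\le Y_\varepsilon(0)+\int_0^tA+B\varepsilon^{\frac1p}t .
\]
If only a.e. differentiability were available, I would instead use absolute continuity of $X$, which is the natural setting in the applications.

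Finally I let $\varepsilon\to0$. We have $Y_\varepsilon\to X^{1/p}$ pointwise and monotonically. Monotone convergence (or plain dominated convergence, since $0\le Y_\varepsilon\le (X+1)^{1/p}$, which is bounded on $[0,T]$) then gives
\[
X^{\frac1p}(t)+B\int_0^tX^{\frac1p}\le X^{\frac1p}(0)+\int_0^tA .
\]
The case $p=1$ is the same computation with no singularity, since then $Y_\varepsilon=X_\varepsilon$.

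The only delicate point is the regularization near zeros of $X$, where $X^{1/p}$ fails to be differentiable. The $\varepsilon$-shift handles it, at the price of the error term $B\varepsilon^{1/p}t$, which vanishes in the limit.
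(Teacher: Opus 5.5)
Your proof is correct and follows essentially the same route as the paper. You regularize by a positive constant (the paper uses $X+\varepsilon^p$, which gives the error term $B\varepsilon t$ where yours gives $B\varepsilon^{1/p}t$), divide by the $(1-\frac1p)$-th power to get a linear differential inequality for the $\frac1p$-th power, integrate on $[0,t]$, and let $\varepsilon\to0$; your version is slightly more careful than the paper's, which integrates without justifying the a.e.\ step and drops the initial term $(X(0)+\varepsilon^p)^{1/p}$ from its displayed integrated inequality.
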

\begin{proof}Le $\varepsilon>0$.
It is easy to get the following inequality verified by $X$
\[ \frac{1}{p}\frac{d}{dt} (X+\ee^p) +B (X+\varepsilon^p)\le A(X+\varepsilon^p)^{1-\frac{1}{p}}+B\varepsilon(X+\varepsilon^p)^{1-\frac{1}{p}}.\]

As $X(t)+\varepsilon^p>0$ for all $t\in [0,T]$, we have 
\begin{align*}
  \frac{d}{dt}\left(X+\ee^p\right)^\frac{1}{p}+B \left(X+\ee^p\right)^\frac{1}{p}\le A+B\ee. 
\end{align*}
Then, integrating the previous inequality over $[0,t]$ yields for all $t\in[0,T]$:

\begin{align*}
    \left(X(t)+\ee^p\right)^\frac{1}{p} +B\int^t_0 \left(X(\tau)+\ee^p\right)^\frac{1}{p} d\tau\le \int^t_0 A(\tau)d\tau+t\ee B.
\end{align*}

To obtain the desired inequality, it suffices to have $\varepsilon$ going to zero. 
\end{proof}
\smallbreak

We used maximal regularity estimates in Besov spaces with last index $1$ for parabolic system: we consider functions $\mathcal{A}_\gamma\in \mathcal C^\infty(\Rd\backslash \{0\};\mathcal{M}_n(\mathbb{C}))$ hermitian,  homogeneous of degree $\gamma$ such that matrix $\mathcal{A}_\gamma(\xi)$ 
    satisfies for some constant $c>0$
    \begin{align}
        \label{posi:cond:A}
        (\mathcal{A}_\gamma(\xi)z\cdot z)\ge c|\xi|^\gamma|z|^2\;\; \xi\in \Rd\backslash \{0\}\; z\in \mathbb{C}^n.
    \end{align}
   Let $u\in \mathcal C([0,T]; \mathcal{S}^{\prime})$ satisfy 
   \begin{align*}
       \begin{cases}
         & \pt u+\mathcal{A}_\gamma u=f\\
          & u_{|t=0}=u_0.
       \end{cases}
   \end{align*}
    Then, according to  Lemma A.2 of \cite{Danc22}), there exist two constants $c_0, C$ such that for all $p\in [1,\infty]$ and $s\in \mathbb{R}$, the following inequality holds true for all $0<t\le T$ and $j\in \mathbb{Z}$:
    \begin{align}
    \label{maxi:regu:z}
    \normepp{\Dj u(t)}\le C\biggl(e^{-c_02^{j\gamma }t} \normepp{\Dj u_{0}}+\int^t_0 e^{-c_02^{j\gamma }(t-\tau)} \normepp{\Dj f(\tau)}d\tau \biggr)\cdotp
    \end{align}
Arguing as in the proof of \cite[Lem. A.2]{Danc22}),
one can extend the above inequality to the case where $\mathcal{A}_\gamma$ is not Hermitian
as follows:
    \begin{lem}
        \label{lem:A:posi:maxi}
         Let us consider $\mathcal{A}_\gamma\in \mathcal C^\infty(\Rd\backslash \{0\};\mathcal{M}_n(\mathbb{C}))$,  homogeneous of degree $\gamma$, such that for all $\xi\neq 0$, $\mathcal{A}_\gamma(\xi)$ is diagonalizable, with positive real parts eigenvalues. Assume furthermore that  the matrices $\mathcal{A}_\gamma(\xi)$ is diagonalized by a matrix $\mathcal{P}_\gamma(\xi)$,
         with the maps $\xi\mapsto \mathcal{P}_\gamma(\xi)$ and $\xi\mapsto \mathcal{P}^{-1}_\gamma(\xi)$  bounded on any compact subset of $\Rd\backslash\{0\}$.

          Then, there exist two constants $c_0, C$ such that for all $p\in [1,\infty]$, $s\in \mathbb{R}$, $0<t\le T$ and $j\in \mathbb{Z},$ Inequality \eqref{maxi:regu:z} holds true.
    \end{lem}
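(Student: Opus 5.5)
The natural route is to diagonalize and reduce to scalar equations, following the proof of \cite[Lem.~A.2]{Danc22}. Set $\mathcal{A}_\gamma(\xi)=\mathcal{P}_\gamma(\xi)\,\Lambda(\xi)\,\mathcal{P}_\gamma^{-1}(\xi)$ with $\Lambda(\xi)={\rm diag}(\lambda_1(\xi),\dots,\lambda_n(\xi))$. Since $\mathcal{A}_\gamma(\xi)=|\xi|^\gamma\mathcal{A}_\gamma(\xi/|\xi|)$, we can take $\mathcal{P}_\gamma$ homogeneous of degree $0$ (replace it by $\mathcal{P}_\gamma(\xi/|\xi|)$). The eigenvalues are then $|\xi|^\gamma$ times those at $\xi/|\xi|$. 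They depend continuously on $\xi$, so by compactness of the sphere there is $c_1>0$ with ${\rm Re}\,\lambda_k(\xi)\ge 2c_1|\xi|^\gamma$.

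Applying $\Dj$ to the equation gives the Duhamel formula
\[\Dj u(t)=e^{-t\mathcal{A}_\gamma(D)}\Dj u_0+\int_0^t e^{-(t-\tau)\mathcal{A}_\gamma(D)}\Dj f(\tau)\,d\tau.\]
Everything therefore reduces to the semigroup bound
\begin{equation*}
\normepp{e^{-t\mathcal{A}_\gamma(D)}\Dj g}\le C e^{-c_0 2^{j\gamma}t}\normepp{\Dj g},\qquad p\in[1,\infty],
\end{equation*}
with $C,c_0$ independent of $j,t,p$. By scaling ($\xi\mapsto 2^j\xi$, $t\mapsto 2^{j\gamma}t$) it suffices to treat $j=0$. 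Let $\widetilde\varphi$ be a smooth cutoff equal to $1$ on ${\rm supp}\,\varphi$ and supported in a slightly larger annulus $\mathcal{C}'$. Then
\[e^{-t\mathcal{A}_\gamma(D)}\DDj_0 g=K_t*\DDj_0g,\qquad \widehat{K_t}(\xi)=\widetilde\varphi(\xi)\,\mathcal{P}_\gamma(\xi)e^{-t\Lambda(\xi)}\mathcal{P}^{-1}_\gamma(\xi).\]
By Young's inequality it is enough to show $\|K_t\|_{L^1}\le Ce^{-c_1t}$. For that we control $(1+|x|^2)^{d}K_t$ in $L^\infty$, which reduces to bounding $\partial_\xi^\alpha\widehat{K_t}$ in $L^1(\mathcal{C}')$ for $|\alpha|\le 2d$.

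This step is the main obstacle, because it needs smoothness of $\mathcal{P}_\gamma$ and of the $\lambda_k$, while the hypothesis only gives boundedness of $\mathcal{P}_\gamma^{\pm1}$ on compacts. If eigenvalues cross, the individual projectors may not be smooth. I would try to avoid them by working with the full exponential instead, using the contour representation
\[e^{-t\mathcal{A}_\gamma(\xi)}=\frac1{2i\pi}\oint_\Gamma e^{-tz}(z-\mathcal{A}_\gamma(\xi))^{-1}dz.\]
Here $\Gamma$ is a fixed contour in $\{{\rm Re}\,z\ge c_1\}$ enclosing the spectra of $\mathcal{A}_\gamma(\xi)$ for all $\xi\in\mathcal{C}'$; it exists by compactness of $\mathcal{C}'$ and the spectral gap ${\rm Re}\,\lambda\ge 2c_1$ there. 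The resolvent is smooth in $\xi$ on $\mathcal{C}'\times\Gamma$, with all $\xi$-derivatives uniformly bounded, since $\mathcal{A}_\gamma$ is $\mathcal C^\infty$ and $\Gamma\times\mathcal{C}'$ is compact. Differentiating under the integral then gives $|\partial^\alpha_\xi\widehat{K_t}(\xi)|\le C_\alpha e^{-c_1t}$ with $C_\alpha$ independent of $t$, because $|e^{-tz}|\le e^{-c_1 t}$ on $\Gamma$. (A polynomial factor in $t$ would arise if one instead differentiated $e^{-t\Lambda}$ directly; it could be absorbed by lowering $c_1$, and the contour argument avoids it.) The diagonalizability assumption is then mainly what guarantees that the spectral bound translates into a uniform, non-defective estimate, and the boundedness of $\mathcal{P}^{\pm1}_\gamma$ is consistent with it.

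Combining, $\|K_t\|_{L^1}\le Ce^{-c_1t}$. Returning to general $j$ by scaling gives the semigroup bound with $c_0=c_1$. Inserting it into Duhamel's formula and applying Minkowski's inequality in the time integral yields \eqref{maxi:regu:z}, with constants independent of $p\in[1,\infty]$, $s$ and $j$.
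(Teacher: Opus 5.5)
Your argument is correct. Its skeleton is the same as the paper's: Duhamel formula, reduction to $j=0$ by scaling, the cut-off kernel with $\widehat{K_t}=\widetilde\varphi\,e^{-t\mathcal{A}_\gamma}$, and the $L^1$ bound obtained from $\xi$-derivatives up to order $2d$. The two proofs differ only in how $\partial_\xi^\alpha e^{-t\mathcal{A}_\gamma(\xi)}$ is controlled on the annulus.

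The paper differentiates the exponential through $De^X\cdot G=\int_0^1 e^{(1-\tau)X}Ge^{\tau X}\,d\tau$ and an induction, so only $\mathcal{A}_\gamma$ itself is ever differentiated. The lack of smoothness of $\mathcal{P}_\gamma$ and of the eigenvalues that you anticipated as the main obstacle therefore never arises there. The paper uses the diagonalization solely to get the uniform bound $|e^{-s\mathcal{A}_\gamma(\xi)}|\le|\mathcal{P}_\gamma(\xi)|\,|\mathcal{P}_\gamma^{-1}(\xi)|\,e^{-c_0s}$ on the support of the cut-off. Each differentiation brings out a factor $t$, which really has to be absorbed by slightly decreasing $c_0$ (the paper speaks of modifying $C$).

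Your Riesz--Dunford representation instead puts all derivatives on the resolvent, which is smooth and uniformly bounded on the compact set $\Gamma\times\mathcal{C}'$. It yields $|\partial_\xi^\alpha\widehat{K_t}|\le C_\alpha e^{-c_1t}$ directly, with no polynomial loss. It is also more general: it uses neither diagonalizability nor the bounds on $\mathcal{P}_\gamma^{\pm1}$, only smoothness, homogeneity and positivity of the real parts of the spectrum. Possible Jordan blocks are absorbed by placing $\Gamma$ strictly inside the spectral gap.

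Your closing remark giving diagonalizability a role is therefore inaccurate: in your route that hypothesis is simply never used. As a minor point, on $\mathcal{C}'$ the gap is $2c_1\min_{\mathcal{C}'}|\xi|^\gamma$ rather than $2c_1$, which only changes constants.
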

    \smallbreak
\begin{proof}
   The Lemma \ref{lem:A:posi:maxi} is similar to \cite[Lem. A.2]{Danc22}). For the sake of completeness, we shall provide the proof. From Duhamel formula, 
   \begin{align}
       \label{sol:u}
     \Dot{\Delta}_j  u(t)= {\rm e}^{-t\mathcal{A}_\gamma(D)} \Dot{\Delta}_j u_0 +\int^t_0  {\rm e}^{-(t-\tau)\mathcal{A}_\gamma(D)} \Dot{\Delta}_j f(\tau)\cdotp
   \end{align}
   Hence it suffices to prove that there exists two positive constants $c_0$, C such that, for all $j \in \mathbb{Z}$
   \begin{align}
       \label{est:exp:p}
       \normep{ \Dot{\Delta}_j  {\rm e}^{-t\mathcal{A}_\gamma(D)}} \dot{\Delta}_j z  \le {\rm e}^{-c_02^{\gamma j}t} \normep{ \Dot{\Delta}_j z} .
   \end{align}

   Let us first prove inequality \eqref{est:exp:p} for $j=0$. Consider $\phi\in C^\infty(\Rd\backslash \{0\})$ with compact support and value $1$ on a neighborhood of the support of $\varphi$ defined in \eqref{def:varphi:chap3}. We can write 
   \begin{align*}
       {\rm e^{-\lambda\mathcal{A}_\gamma(D)}} \dot{\Delta}_0 z &=\mathcal{F}^{-1}\bigl( \phi {\rm e}^{-\lambda \mathcal{A}_\gamma(\cdot)} \varphi \widehat{z}\bigr),\;\; \lambda\ge 0\\
       &= H_\lambda \star \Dot{\Delta}_0 z \esp{with } H_\lambda\defn (2\pi)^{-d}\intd {\rm e}^{i x\cdot \xi }\phi(\xi) {\rm e}^{-\lambda\mathcal{A}(\xi)}d\xi.
   \end{align*}
   From Young inequality, it suffices to prove that 
   \begin{align}
       \label{est:conv:L1}
       \normes{H_\lambda}{1}\le C {\rm e}^{-c_0 t}
   \end{align}
   to conclude that
   \begin{align*}
       \normep{ \Dot{\Delta}_0  {\rm e^{-t\mathcal{A}_\gamma(D)}} \dot{\Delta}_j z } \le  \normes{H_\lambda}{1} \normep{ \Dot{\Delta}_0 z } \le C {\rm e}^{-c_0 t}\normep{ \Dot{\Delta}_0 z },
   \end{align*}
  which corresponds to \eqref{est:exp:p} with $j=0$.

  The proof of \eqref{est:conv:L1} passes through the validity of the following inequality 
  \begin{align*}
      |H_\lambda(x)|\le C(1+|x|^2)^{-d} {\rm e}^{-c_0 \lambda}, \;\;\; x\in \Rd,\; \lambda>0.
  \end{align*}
   From integration by parts, one has
   \begin{align*}
       H_\lambda(x)=\frac{1}{(2\pi)^d} (1+|x|^2)^{-d} h_\lambda(x), \esp{with} h_\lambda\defn \intd {\rm e}^{i x\cdot \xi}\phi(\xi) ({\rm Id}-\Delta_\xi)^d \bigl( {\rm e}^{-\lambda \mathcal{A}_\lambda(\xi)} \bigr) d\xi 
   \end{align*}
   Recall that the differential of the exponential map may be written as:
   \begin{align*}
       D {\rm e}^X\cdot G =\int^1_0 {\rm e}^{(1-\tau)X}G   {\rm e}^{\tau X}, \esp{ } G\in \mathcal{M}_{n}(\mathbb{R}).
   \end{align*}
   Hence, from the chain rule one may write
   \begin{align*}
       D_\xi\bigl( {\rm e}^{-\lambda \mathcal{A}_\lambda(\xi)}\bigr)\cdot G &=-\lambda\int^1_0 {\rm e}^{-\lambda(1-\tau)\mathcal{A}(\xi)}\bigl( D_\xi \mathcal{A}_\gamma(\xi)\cdot G\bigr) {\rm e}^{-\lambda \tau \mathcal{A}_\gamma(\xi)}d\tau\\
       & =-\lambda\int^1_0 \mathcal{P}_\gamma(\xi) {\rm e}^{-\lambda(1-\tau)\mathcal{D}_\gamma(\xi)} \mathcal{P}_\gamma(\xi)^{-1}\bigl( D_\xi \mathcal{A}_\gamma(\xi)\cdot G\bigr)\mathcal{P}_\gamma(\xi) {\rm e}^{-\lambda \tau \mathcal{D}_\gamma(\xi)}\mathcal{P}_\gamma(\xi)^{-1}d\tau,
   \end{align*}
   with  $\mathcal{A}_\gamma(\xi) =\mathcal{P}_\gamma(\xi) \mathcal{D}_\gamma(\xi) \mathcal{P}_\gamma(\xi)^{-1} $ and $  \mathcal{D}_\gamma(\xi) $ is diagonal matrix with positive real part entries.
Remember that $\xi\in {\rm Supp}\phi$ which is a compact subset of $\Rd\backslash \{0\}$. Since $ \mathcal{P}_\gamma $ and $\mathcal{P}_\gamma^{-1}$ are bounded on any compact subset of $\Rd\backslash\{0\}$ and $\mathcal{A}_\gamma $ is smooth on $\Rd\backslash\{0\}$, there exists a constant $C>0$ such that
$$ |\mathcal{P}_\gamma^{-1}(\xi)|\le C,\;\;  |\mathcal{P}_\gamma(\xi)|\le C,\;\; |D_\xi \mathcal{A}_\gamma(\xi)|\le C,\;\;\;\; \xi\in {\rm Supp}\; \phi. $$
Moreover, from \eqref{posi:cond:A} we deduce that there exists a positive constant $c_0$ such that the real parts of each entry of the matrix $ \mathcal{D}_\gamma(\xi) $ is bounded from below by $c_0$ on ${\rm Supp}\; \phi$. We deduce that (up to slight modification of $C$)
$$ |  D_\xi\bigl( {\rm e}^{-\lambda \mathcal{A}_\lambda(\xi)}\bigr)|\le C {\rm e}^{-\lambda c_0}. $$
By induction, one can get similar estimates for higher order derivatives of $\xi\mapsto \mathcal{A}_\gamma(\xi)$, which eventually yields
$$|h_\lambda(x)|\le C {\rm e}^{-\lambda c_0},$$
    and complete the proof of \eqref{est:exp:p} for $j=0$.
    
    The proof of \eqref{est:exp:p}  for any $j\in \mathbb{Z}$ can be deduced thanks to the homogeneity of $\mathcal{A}_\gamma$, and a suitable change of variables. In fact, setting $\zeta(x)=z(2^{-j}x)$, we have $\Dot{\Delta}_0 \zeta(2^j x)=\Dot{\Delta}_j z(x)$ and 
    $$ {\rm e}^{-2^{\gamma j}\lambda \mathcal{A}_\gamma(D)} \Dot{\Delta}_0 \zeta(2^j x)= {\rm e}^{-\lambda \mathcal{A}_\gamma(D)}\Dot{\Delta}_j z(x), \;\;\; \lambda\ge 0, $$
    which  reduces the proof of \eqref{est:exp:p} to the case $j=0$.
\end{proof}

    \smallbreak
Based on the Lemma \ref{lem:A:posi:maxi}, one can prove the following result:
    
\begin{lem}
    \label{lem:maxi:regula}
    Let us consider $S^0$ a positive symmetric matrix, $\mathcal{A}_\gamma\in C^\infty(\Rd\backslash \{0\};\mathcal{M}_n(\mathbb{C}))$ Hermitian,  homogeneous of degree $\gamma$ such that the matrix $\mathcal{A}_\gamma(\xi)$ 
    satisfies \eqref{posi:cond:A}.
   Let $u\in C([0,T]; \mathcal{S}^{\prime})$ satisfy 
   \begin{align*}
       \begin{cases}
         & S^0 \pt u+\mathcal{A}_\gamma u=f\\
          & u_{|t=0}=u_0.
       \end{cases}
   \end{align*}
    Then, there exists two constants $c_0, C$ such that for all $p\in [1,\infty]$ and $s\in \mathbb{R}$, $u$ satisfies Inequality \eqref{maxi:regu:z}, for all $0<t\le T$ and $j\in \mathbb{Z}$.
\end{lem}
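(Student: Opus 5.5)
The plan is to reduce Lemma \ref{lem:maxi:regula} to Lemma \ref{lem:A:posi:maxi} by multiplying the equation by $(S^0)^{-1}$. Indeed, $u$ solves
\[
\pt u+\mathcal{B}_\gamma(D)u=(S^0)^{-1}f,\qquad \mathcal{B}_\gamma(\xi)\defn (S^0)^{-1}\mathcal{A}_\gamma(\xi).
\]
The symbol $\mathcal{B}_\gamma$ is smooth on $\Rd\backslash\{0\}$ and homogeneous of degree $\gamma$. It therefore suffices to check the spectral hypotheses of Lemma \ref{lem:A:posi:maxi} for $\mathcal{B}_\gamma$. Once Inequality \eqref{maxi:regu:z} holds for $\mathcal{B}_\gamma$ and the source $(S^0)^{-1}f$, we use $\normepp{\Dj (S^0)^{-1}f}\le |(S^0)^{-1}|\,\normepp{\Dj f}$, since $(S^0)^{-1}$ is a constant matrix commuting with $\Dj$. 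This gives the estimate with a modified $C$.

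The key step is diagonalizability with eigenvalues of positive real part, together with uniform control of the diagonalizing matrices. Write $S^0=R^2$ with $R=(S^0)^{1/2}$ symmetric positive definite. Then
\[
\mathcal{B}_\gamma(\xi)=R^{-1}\bigl(R^{-1}\mathcal{A}_\gamma(\xi)R^{-1}\bigr)R,
\]
so $\mathcal{B}_\gamma$ is similar, through the constant matrix $R$, to the Hermitian matrix $\widetilde{\mathcal{A}}_\gamma(\xi)\defn R^{-1}\mathcal{A}_\gamma(\xi)R^{-1}$. By \eqref{posi:cond:A},
\[
\widetilde{\mathcal{A}}_\gamma(\xi)z\cdot z=\mathcal{A}_\gamma(\xi)R^{-1}z\cdot R^{-1}z\ge c|\xi|^\gamma|R^{-1}z|^2\ge c'|\xi|^\gamma|z|^2.
\]
Hence $\widetilde{\mathcal{A}}_\gamma(\xi)=Q(\xi)D(\xi)Q(\xi)^*$ with $Q(\xi)$ unitary and $D(\xi)$ diagonal with real entries at least $c'|\xi|^\gamma>0$. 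We take $\mathcal{P}_\gamma(\xi)\defn R^{-1}Q(\xi)$. Both $\mathcal{P}_\gamma$ and $\mathcal{P}_\gamma^{-1}=Q^*R$ are bounded by $|R^{\pm1}|$ uniformly on all of $\Rd\backslash\{0\}$, not only on compact sets.

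The main point to watch is regularity of $\xi\mapsto Q(\xi)$: eigenvector choices can be discontinuous where eigenvalues cross. The proof of Lemma \ref{lem:A:posi:maxi} only uses pointwise boundedness of $\mathcal{P}_\gamma^{\pm1}$ together with smoothness of $\mathcal{A}_\gamma$ itself, because the derivative of the exponential is computed through $D_\xi\mathcal{A}_\gamma$, so measurable choices of $Q$ suffice. The lower bound on the real parts of the eigenvalues on $\mathrm{Supp}\,\phi$ follows from $c'|\xi|^\gamma$.

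A cleaner alternative avoids diagonalization altogether. Set $v\defn Ru$. Then $\pt v+\widetilde{\mathcal{A}}_\gamma(D)v=R^{-1}f$, where $\widetilde{\mathcal{A}}_\gamma$ is Hermitian, homogeneous of degree $\gamma$, and satisfies \eqref{posi:cond:A}. The Hermitian version \eqref{maxi:regu:z} from \cite[Lem.~A.2]{Danc22} then applies directly to $v$. Transferring back via $u=R^{-1}v$ and the $L^p$-boundedness of the constant matrices $R^{\pm1}$ gives \eqref{maxi:regu:z} for $u$, with $C$ multiplied by $|R|\,|R^{-1}|^2$. I would present this second route as the proof and mention the first as the link to Lemma \ref{lem:A:posi:maxi}.
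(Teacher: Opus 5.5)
Your proposal is correct. Your first route is essentially the paper's proof. The paper takes $R$ to be the square root of $(S^0)^{-1}$ rather than of $S^0$. It unitarily diagonalizes the Hermitian matrix $R\mathcal{A}_0(\xi)R$ and notes that $(S^0)^{-1}\mathcal{A}_\gamma(\xi)$ is then diagonalized by $O(\xi)R^{-1}$, which is bounded by $|R^{-1}|$. It then invokes Lemma \ref{lem:A:posi:maxi}.

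The route you actually propose to present is different. The change of unknown $v=(S^0)^{1/2}u$ turns the system into $\pt v+\widetilde{\mathcal A}_\gamma(D)v=(S^0)^{-1/2}f$. The new symbol is Hermitian, homogeneous of degree $\gamma$ and uniformly positive, so the Hermitian estimate \eqref{maxi:regu:z} quoted from \cite{Danc22} applies verbatim. Lemma \ref{lem:A:posi:maxi} is then not needed at all.

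What this buys is that you never diagonalize a non-Hermitian symbol. The question of how the diagonalizer depends on $\xi$, which you rightly flag for the first route, therefore does not arise. For a Hermitian symbol one has directly $|e^{-\lambda\widetilde{\mathcal A}_\gamma(\xi)}|\le e^{-c'\lambda|\xi|^\gamma}$. The paper's detour through Lemma \ref{lem:A:posi:maxi} is more general: it covers symbols that are merely diagonalizable with locally bounded diagonalizers, not only those symmetrizable by a constant matrix. For the present lemma, though, your symmetrization is the more economical argument.

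One small bookkeeping slip in the transfer back to $u$. You get the factor $|R^{-1}|\,|R|$ on the $u_0$ term and $|R^{-1}|^2$ on the source term, so the new constant is $C|R^{-1}|\max(|R|,|R^{-1}|)$. The single factor $|R|\,|R^{-1}|^2$ you state need not dominate both: for $S^0=4\,\mathrm{Id}$ it equals $1/2$, while the $u_0$ factor equals $1$. This does not affect the lemma.
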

\begin{proof}
According to Lemma \ref{lem:A:posi:maxi}, it suffices to prove that $(S^0)^{-1}\mathcal{A}_\gamma(\xi)$ is diagonalizable with positive eigenvalues.  Denote by $R$  the positive
symmetric square root of $(S^0)^{-1}$. Let  $\mathcal{A}_0$ be the
homogeneous of degree $0$ matrix such that 
\begin{align*}
    \mathcal{A}_\gamma(\xi) = |\xi|^\gamma \mathcal{A}_0(\xi).
\end{align*}

As $\mathcal{A}_\gamma(\xi)$
 is Hermitian (so does $\mathcal{A}_0(\xi) $), then $ R\mathcal{A}_0(\xi)R$ is diagonalizable in an orthonormal basis, that is to say there exists a matrix $O(\xi)\in \mathcal{U}_n(\mathbb{R})$ such that $ H(\xi)\defn   O(\xi) R\mathcal{A}_0(\xi)R ^*O(\xi)  $ is diagonal, with ${}^* O\defn {}^T\overline{O}$. Moreover, taking advantage of \eqref{posi:cond:A} we discover that the entries of $H(\xi)$ are positive. Since  $ \mathcal{A}_0(\xi)= R^{-1} {}^*\!O(\xi)H(\xi) O(\xi)R^{-1},$ this implies that $(S^0)^{-1}\mathcal{A}_\gamma(\xi)$ is diagonalizablez by $O(\xi)R^{-1} $ with $|O(\xi)R^{-1}|\le |R^{-1}|$ for all $\xi \neq 0$.
 Then, using Lemma \ref{lem:A:posi:maxi} completes the proof.

\end{proof}
The following lemma (see also \cite{BaratDan23}) establishes the equivalence between the (SK) condition and the positivity of certain matrices, while also highlighting the connection to the rank of the matrices involved in the (SK) condition
\begin{lem}
    \label{lem:equi:SK:positi:V1}
    Consider two $(n_1+ n_2)\times (n_1+ n_2)$ matrices A and B such that
    \begin{align}
        \label{form:A:B:appen}
        A=\begin{pmatrix}
            & 0 & A_{12}\\
            &A_{21}& A_{22}
        \end{pmatrix}
        \esp{and} 
         B=\begin{pmatrix}
            & 0 & 0\\
            &0& B_{22}
        \end{pmatrix}
    \end{align}
    with  $A_{21}\in \mathcal{M}_{n_2\times n_1}$,  $ A_{12}\in \mathcal{M}_{n_1\times n_2}$ and $B_{22}, A_{22}\in \mathcal{M}_{n_2\times n_2}$. Suppose that
    \begin{itemize}
        \item $B_{22}$ is positive,
        \item $A$ is a Hermitian matrix.
    \end{itemize}
    Then, the following two properties are equivalent:
    \begin{enumerate}
        \item The matrix $A_{12}B^{-1}_{22} A_{21}$ is a $n_1\times n_1$ positive matrix.
        \item Condition (SK) holds true (see Lemma \ref{lem:SK_diss}).
    \end{enumerate}
    Moreover if we have  one of these two results, then $n_1\le n_2$. 
\end{lem}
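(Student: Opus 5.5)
The plan is to reduce both properties to the single condition $\ker A_{21}=\{0\}$. Since $A$ is Hermitian, its block structure forces $A_{21}=A_{12}^*$, and (with "positive" read as positive definite) $B_{22}$ is invertible with $B_{22}^{-1}$ Hermitian positive definite. Everything then follows from elementary linear algebra on the block forms \eqref{form:A:B:appen}; no earlier result of the paper is needed beyond the formulation of (SK) in Definition \ref{dfn:SK} and its equivalent form in Lemma \ref{lem:SK_diss}.

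\emph{Step 1: positivity of $A_{12}B_{22}^{-1}A_{21}$.} For $x\in\mathbb{C}^{n_1}$ we have
\[
A_{12}B_{22}^{-1}A_{21}x\cdot x=B_{22}^{-1}A_{21}x\cdot A_{21}x\ \ge\ c\,|A_{21}x|^2 ,
\]
where $c>0$ is the smallest eigenvalue of $B_{22}^{-1}$. Hence this $n_1\times n_1$ matrix is always Hermitian nonnegative. It is positive definite if and only if $A_{21}x=0$ forces $x=0$, that is, if and only if $\ker A_{21}=\{0\}$.

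\emph{Step 2: (SK) for the pair $(A,B)$.} Let $\phi=(\phi_1,\phi_2)$ satisfy $B\phi=0$ and $A\phi=\lambda\phi$ for some $\lambda\in\mathbb{R}$; the sign convention $\lambda\phi+A\phi=0$ only replaces $\lambda$ by $-\lambda$. Since $B_{22}$ is invertible, $B\phi=0$ gives $\phi_2=0$. The eigen-equation then splits into two block rows:
\begin{itemize}
\item the first row reads $A_{12}\cdot 0=\lambda\phi_1$, so $\lambda\phi_1=0$;
\item the second row reads $A_{21}\phi_1+A_{22}\cdot0=\lambda\cdot 0$, so $A_{21}\phi_1=0$.
\end{itemize}
If $\ker A_{21}=\{0\}$, the second row gives $\phi_1=0$, hence $\phi=0$ and (SK) holds. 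Conversely, if some $\phi_1\neq0$ satisfies $A_{21}\phi_1=0$, then $\phi=(\phi_1,0)$ is a nonzero vector with $B\phi=0$ and $A\phi=0$. It is an eigenvector of $A$ for $\lambda=0$ lying in $\ker B$, so (SK) fails. Thus (SK) holds if and only if $\ker A_{21}=\{0\}$.

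\emph{Conclusion.} Steps 1 and 2 together give the equivalence of (1) and (2). Moreover, under either property $A_{21}:\mathbb{C}^{n_1}\to\mathbb{C}^{n_2}$ is injective, so rank–nullity yields $n_1\le n_2$. There is no real obstacle here; the only points needing care are using the Hermitian structure to write $A_{21}=A_{12}^*$, and noticing that the $\lambda=0$ eigenvalue is the one that detects the failure of (SK).
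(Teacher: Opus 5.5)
Your proof is correct. For (2)$\Rightarrow$(1) it matches the paper's argument: a nonzero $z$ with $A_{12}B_{22}^{-1}A_{21}z\cdot z=0$ forces $A_{21}z=0$, so $(z,0)$ is an eigenvector of $A$ for the eigenvalue $0$ lying in $\ker B$, which contradicts (SK).

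The two routes differ in (1)$\Rightarrow$(2). The paper first notes that $A_{21}$ has rank $n_1$. It then computes $BA$ and observes that the stacked matrix $\begin{pmatrix} B\\ BA\end{pmatrix}$, whose nonzero block rows are $(0\;\;B_{22})$ and $(B_{22}A_{21}\;\;B_{22}A_{22})$, already has rank $n_1+n_2$. It concludes through the Kalman-rank/(SK) equivalence of Lemma \ref{lem:SK_diss}. You instead check the eigenvector formulation of Definition \ref{dfn:SK} directly: $B\phi=0$ gives $\phi_2=0$, and the second block row of $A\phi=\lambda\phi$ then gives $A_{21}\phi_1=0$.

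Your route is more elementary and self-contained, since it never uses Lemma \ref{lem:SK_diss}, which the paper only recalls from \cite{Beau11} without proof. Organizing both directions around the single criterion $\ker A_{21}=\{0\}$ also makes two facts transparent: $A_{12}B_{22}^{-1}A_{21}$ is always Hermitian nonnegative, and only the eigenvalue $\lambda=0$ can witness a failure of (SK). In exchange, the paper's route gives a slightly more quantitative statement: the Kalman matrix reaches full rank using only its first two blocks, $B$ and $BA$.
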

\begin{proof}
First, assume that $A_{12}B^{-1}_{22} A_{21}$ is positive. Then, the rank of $A_{21}$ must be equal to $n_1,$ which implies that $n_1\le n_2$, and so does the rank of $B_{22}A_{21}$. Now, we observe
\begin{align*}
BA=
    \begin{pmatrix}
        & 0 &0\\
       & B_{22}A_{21} & B_{22}A_{22}
    \end{pmatrix}.
\end{align*}
Hence, the rank of 
$ \begin{pmatrix}
        & B\\
       & BA
    \end{pmatrix} $ 
is equal to $n_1+n_2$, and Condition (SK) is thus satisfied.

Conversely, assume that the matrix $ A_{12}B^{-1}_{22} A_{21}  $ is not positive definite. Since $ ^t \overline{A}_{12}=A_{21} $ and $B$ is positive, there exists $z\in \mathbb{C}^{n_1}\backslash\{ 0\}$ such that
    \begin{align*}
        B^{-1}_{22} A_{21} z\cdot  A_{21} z = A_{12}B^{-1}_{22} A_{21}z\cdot z=0.
    \end{align*}
    Then, under the assumption of positivity of $B$, we get $A_{21} z=0$. As we assume that $ A_{11}=0 $, then the vector $ ^t(z,0_{n_2})\in \mathbb{C}^{n_1+n_2} $ is both an eigenvector of $A$ and belongs to the null space of matrix $B$. Appealing to the $(SK)$ condition gives us a contradiction. This proves that the matrix $A_{12}B^{-1}_{22} A_{21}$ is positive.
\end{proof}


We also have the following Lemma, of independent interest.
\begin{lem}\label{lem:comp:lf}
    Let $s>0$ and $2\le p\le 4$. Denote $\frac{1}{p^*}\overset{def}{=}\frac{1}{2}-\frac{1}{p}\cdotp$ Let $f$ be a smooth function on $\mathbb{R}$ with $f(0)=0$.
    Then there exists a constant C depending only on the threshold number $M$ between low and high frequencies and on $s, p, d$ and $f$, such that
    \begin{equation}
        \label{eq:comp:lf}
        \NBH{s}{2}{1}{f(u)}^{l} \le C\left( \NBH{s}{2}{1}{u}^{l}+ (1+ \Vert u\Vert_{L^{p^*}})(\NBH{s}{p}{1}{u}^h +\NBH{s}{2}{1}{u}^{l})\right)\cdotp
    \end{equation}  
\end{lem}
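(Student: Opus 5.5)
The plan is to peel off the linear part of $f$, treat the genuinely nonlinear remainder by splitting $u$ into low and high frequencies, and use the fact that only finitely many dyadic blocks ($j\le M+1$) enter the left-hand side, so that Bernstein inequalities cost only constants depending on $M$. Throughout, $u^{l}$ and $u^{h}$ denote the low and high frequency parts of $u$ relative to the threshold $M$, as in \eqref{def:z:lf:hf}.

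\emph{Linear part.} Write $f(u)=f'(0)u+f_2(u)$ with $f_2(0)=f_2'(0)=0$. The linear term is trivial: $\NBH{s}{2}{1}{f'(0)u}^{l}\le |f'(0)|\,\NBH{s}{2}{1}{u}^{l}$.

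\emph{Splitting the nonlinear part.} For $f_2$ I would write $f_2(u)=f_2(u^{l})+u^{h}G$ with
\[
G\defn\int_0^1 f_2'(u^{l}+\tau u^{h})\,d\tau .
\]
Since $f_2'(0)=0$, one has pointwise $|G|\le C(|u^{l}|+|u^{h}|)\le C(2|u^{l}|+|u|)$. The constant here depends on bounds for $f_2''$ on the range of $u^{l}+\tau u^{h}$. I expect this to be the one delicate point. I would handle it either by noting that the estimate is only used for $u$ small in $L^\infty$, or by letting $C$ also depend on $\normeinf{u}$ and on $\normeinf{u^{l}}$, the latter being $\lesssim C_M\NBH{s}{2}{1}{u}^{l}$ by Bernstein.

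\emph{The term $f_2(u^{l})$.} Here $u^{l}$ is spectrally supported in a ball of radius $\sim 2^{M}$, so $\normeinf{u^{l}}\le C_M\NBH{s}{2}{1}{u}^{l}$. Since $s>0$, the composition estimate of Proposition~\ref{propo:comp} gives
\[
\NBH{s}{2}{1}{f_2(u^{l})}\le C\bigl(f_2,\normeinf{u^{l}}\bigr)\NBH{s}{2}{1}{u^{l}}\le C\,\NBH{s}{2}{1}{u}^{l}.
\]

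\emph{The term $u^{h}G$.} This is the main step. Only blocks $j\le M+1$ are involved, so
\[
\sum_{j\le M+1}2^{js}\normed{\Dj(u^{h}G)}\le C_M\normed{u^{h}G}\le C_M\normepp{u^{h}}\,\normess{p^*}{G}
\]
by Hölder, with $\frac12=\frac1p+\frac1{p^*}$. The two factors are then controlled as follows.
\begin{itemize}
\item For $G$: $\normess{p^*}{G}\le C\bigl(\normess{p^*}{u^{l}}+\normess{p^*}{u}\bigr)$. Since $2\le p\le4$ gives $p^*\in[4,\infty]$, in particular $p^*\ge2$, Bernstein yields $\normess{p^*}{u^{l}}\le C_M\NBH{s}{2}{1}{u}^{l}$. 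Because $s>0$, only the finitely many blocks near $M$ lose powers of $2^{M}$, which is where $C$ picks up its dependence on $M$.
\item For $u^{h}$: since $s>0$ and its blocks satisfy $j\ge M$, $\normepp{u^{h}}\le\sum_{j\ge M}\normepp{\Dj u}\le C_M\NBH{s}{p}{1}{u}^{h}$. The one or two overlap blocks at the threshold that fall outside the high-frequency seminorm are bounded by $C_M\NBH{s}{2}{1}{u}^{l}$ via the embedding $L^2\to L^p$ on a fixed annulus.
\end{itemize}
Combining the two bounds gives $\normed{u^{h}G}\le C\bigl(\NBH{s}{p}{1}{u}^{h}+\NBH{s}{2}{1}{u}^{l}\bigr)\bigl(1+\normess{p^*}{u}+\NBH{s}{2}{1}{u}^{l}\bigr)$. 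The cross term $\bigl(\NBH{s}{2}{1}{u}^{l}\bigr)^2$ can be absorbed into the first term of \eqref{eq:comp:lf}, using boundedness of $u$ (through the dependence of $C$), or under the implicit smallness.

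\emph{Conclusion.} Summing the three contributions, the linear part, $f_2(u^{l})$ and $u^{h}G$, yields \eqref{eq:comp:lf}.
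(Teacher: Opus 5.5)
The step that fails is your bound on $\normess{p^*}{G}$. First, the Bernstein estimate $\normess{p^*}{u^{l}}\le C_M\NBH{s}{2}{1}{u}^{l}$ is not justified for all $s>0$. Blockwise it reads $\normess{p^*}{\Dj u}\lesssim 2^{j(\frac dp-s)}\,2^{js}\normed{\Dj u}$. The weight $2^{j(\frac dp-s)}$ must be bounded for \emph{all} $j\le M$, which is infinitely many blocks going to $-\infty$, not just those near $M$. That requires $s\le d/p$.

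Second, and more seriously, even when it holds it leaves you with $\NBH{s}{2}{1}{u}^{l}\bigl(\NBH{s}{p}{1}{u}^{h}+\NBH{s}{2}{1}{u}^{l}\bigr)$. This includes the mixed product $\NBH{s}{2}{1}{u}^{l}\NBH{s}{p}{1}{u}^{h}$, not only the square you mention. It is not of the form of the right-hand side of \eqref{eq:comp:lf}, and it cannot be absorbed. Letting $C$ depend on $\normeinf{u}$ is legitimate: the paper's proof does the same, and for general $f$ it is unavoidable. But $\normeinf{u}$ does not control $\NBH{s}{2}{1}{u}^{l}$, and the lemma assumes no smallness. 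The same objection applies to bounding $\normeinf{u^{l}}$ by $C_M\NBH{s}{2}{1}{u}^{l}$ in the $f_2(u^{l})$ step: it needs $s\le d/2$ and puts a Besov norm into the constant.

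The missing observation is the one the paper uses for $m_k$. The operator $u^{l}=\dot S_{M+1}u$ is convolution with a kernel whose $L^1$ norm does not depend on $M$. Hence $\normess{p^*}{u^{l}}\le C\normess{p^*}{u}$ and $\normeinf{u^{l}}\le C\normeinf{u}$. Together with $|G|\le C(2|u^{l}|+|u|)$ this gives $\normess{p^*}{G}\le C\normess{p^*}{u}$ with $C=C(f,\normeinf{u})$.

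Your remaining steps are correct:
\begin{itemize}
\item $\sum_{j\le M+1}2^{js}\normed{\Dj w}\le C_{M,s}\normed{w}$ for $s>0$;
\item H\"older with $\frac12=\frac1p+\frac1{p^*}$;
\item $\normepp{u^{h}}\le C_M\bigl(\NBH{s}{p}{1}{u}^{h}+\NBH{s}{2}{1}{u}^{l}\bigr)$.
\end{itemize}
With the repaired $G$ bound they give exactly $C_M\normess{p^*}{u}\bigl(\NBH{s}{p}{1}{u}^{h}+\NBH{s}{2}{1}{u}^{l}\bigr)$ for every $s>0$.

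So repaired, your argument is a valid alternative to the paper's. The paper telescopes $f(u)-f(\dot S_M u)=\sum_{k>M}m_k\dot\Delta_k u$ and sums the pieces with the weight $2^{-s(k-j)}$ via Young's inequality for series. You instead use a single Taylor step $f_2(u)=f_2(u^{l})+u^{h}G$ and the summability of $2^{js}$ over $j\le M+1$. Your route is shorter. Splitting off $f'(0)u$ first also makes $G\in L^{p^*}$ legitimately. The paper's bound $\normess{q}{m_k}\lesssim 1+\normess{q}{u}$ glosses over the constant $f'(0)$, which is not in $L^{p^*}$ when $p>2$.
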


\begin{proof}
Set $  u_{M} \overset{def}{=} \Dot{S}_{M} u$. Next, decompose $ f(u)$ as follows:
\begin{equation}
    \label{decom:f(u)}
    f(u)=f(u_{M}) +\left(  f(u)-f(u_{M})\right)\cdotp
\end{equation}
   One the one hand,  according to Proposition \ref{propo:comp}, we have
   \begin{equation*}
       \NBH{s}{2}{1}{f(u_{M})} \lesssim \NBH{s}{2}{1}{u_{M}}
   \end{equation*}
   from which we deduce that
    \begin{equation}
    \label{ine:comp:low}
       \NBH{s}{2}{1}{f(u_{M})} \lesssim \NBH{s}{2}{1}{u}^{l}
   \end{equation}
On the other hand, according to \cite{HajDanChe11}[Lemma 262, p.94]  the following equality holds true in the tempered distributions
meaning

\begin{align}
    \label{decomp:fu:hf}
    f(u)-f(u_{M})= \sum_{k> M } f_k \esp{with} f_j=f(\Dot{S}_{k+1} u)-f(\Dot{S}_{k} u).
\end{align}
Furthermore it holds
\begin{align}
    \label{def:mk}
    f_k= m_k\Dot{\Delta}_k u  \esp{with} m_k =\int_0^1 f^{\prime}(\Dot{S}_{k+1}u+t\Dot{\Delta}_{k}u  )dt.
\end{align}
 Using the mean value theorem and the fact that for all $0\le t\le 1$,  $ \normeinf{\Dot{S}_{k+1}u+t\Dot{\Delta}_{k}u}\lesssim \normeinf{u}$; $f$ and all its derivatives are bounded on $B(0,C\normeinf{u})$, we get   for all $q\in [1,\infty],t\in [0,1]$ 
\begin{align*}
    \normess{f^{\prime}(\Dot{S}_{k+1}u+t\Dot{\Delta}_{k}u)-f^{\prime}(0)}{q} &\lesssim C(f^{\prime\prime}, \normeinf{u})\normess{\Dot{S}_{k+1}u+t\Dot{\Delta}_{k}u}{q}\\
    &\lesssim C(f^{\prime\prime}, \normeinf{u}) \normess{u}{q}.
\end{align*}
 As the operator $ \Dot{S}_{M+1}$ is continuous on $L^q$, we deduce 
\begin{align}
\label{est:f'(...):q}
    \normess{m_k}{q} 
    &\lesssim C(f^{\prime\prime}, \normeinf{u})(1+ \normess{u}{q}).
\end{align}

 For all $j\le M+1 $, we have from \eqref{decomp:fu:hf} and the continuity of $ \Dot{\Delta}_j $
 \begin{align}
    2^{js} \normed{\Dot{\Delta}_j\left(  f(u)-f(u_{M}) \right)} &\le \sum_{k> M}2^{-s(k-j)} 2^{ks}\normed{ f_k}\nonumber\\
    & \le 2^{js} \normed{ f_{M+1}}+\sum_{k> M+1}2^{-s(k-j)} 2^{ks}\normed{ f_k}\nonumber\\
    &\le 2^{js} \normed{ f_{M+1}}+ \sum_{k\ge j; \; k> M+1}2^{-s(k-j)} 2^{ks}\normed{ f_k}
 \end{align}
where the last inequality comes from the fact that $ j\le M+1\le k$.\\
Observing that 
$$ \sum_{k\ge j; \; k> M}2^{-s(k-j)} 2^{ks}\normed{ f_k} =(\alpha\star \beta)_j \esp{with} \alpha_k= 1_{\mathbb{N}^*}(k) 2^{-k} \esp{and} \beta_l = 1_{l> M} 2^{ls} \normed{ f_l} $$ and using Young's inequality and then H\"{o}lder inequality, combined with \eqref{est:f'(...):q} with $q=p^*$, \eqref{def:mk} and \eqref{link:lf:hf}, we deduce that
\begin{equation}
    \label{est:lf:u:hf}
   \sum_{j\le M+1} 2^{js} \normed{\Dot{\Delta}_j\left(  f(u)-f(u_{M}) \right)}\le  C(f^{\prime\prime}, \normeinf{u})(1+ \Vert u\Vert_{L^{p^*}})(\NBH{s}{p}{1}{u}^h +\NBH{s}{2}{1}{u}^{l}).
\end{equation}
\end{proof}
To handle the nonlinear terms in low frequencies, we need the following Lemma.
\begin{lem}
\label{lem:produ:lf:hf}
Assume that $d\geq2.$
Let $s\in\mathbb R.$ It holds:
\begin{enumerate}
    \item  If  $2\le  p\le 4$ and  $\sigma\geq d(\frac{p-2}{2p})$ then
\begin{equation}
    \label{lem:T:lf:hf}
    \NBH{s+\frac{d}{2}-\frac{d}{p}}{2}{1}{\Dot{T}_a b}\lesssim \NBH{\frac{d}{p}-\sigma}{p}{1}{a}\NBH{s+\sigma}{p}{1}{b}.
\end{equation}
\item  If   $2\le  p\le 4$ and  $s>-d/p$ then
\begin{equation}
    \label{lem:R:lf:hf}
     \NBH{s+\frac{d}{2}-\frac{d}{p}}{2}{1}{\Dot{R}(a ,b)}\lesssim \NBH{\frac{d}{p}-\sigma}{p}{1}{a}\NBH{s+\sigma}{p}{1}{b}. 
\end{equation}
\end{enumerate}
In particular if $ s> -\frac{d}{p}$,  $ 2\le  p\le 4 $ and  $\sigma\geq d(\frac{p-2}{2p})$
then we have
\begin{equation}
    \label{lem:pro:lf:hf}
    \NBH{s+\frac{d}{2}-\frac{d}{p}}{2}{1}{ab}\lesssim \NBH{\frac{d}{p}-\sigma}{p}{1}{a}\NBH{s+\sigma}{p}{1}{b}{+\NBH{\frac{d}{p}-\sigma}{p}{1}{b}\NBH{s+\sigma}{p}{1}{a}}.
\end{equation}
\end{lem}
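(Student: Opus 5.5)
The plan is to work block by block with the homogeneous Bony decomposition. For \eqref{lem:T:lf:hf}, write $\Dj\Dot{T}_a b=\sum_{|k-j|\le 4}\Dj(\Dot S_{k-1}a\,\Dot\Delta_k b)$ and use H\"older with $\frac12=\frac1p+\frac1{p^*}$, where $\frac1{p^*}=\frac12-\frac1p$. This bounds $\normed{\Dot S_{k-1}a\,\Dot\Delta_kb}$ by $\normess{\Dot S_{k-1}a}{p^*}\normepp{\Dot\Delta_k b}$. Since $p\le 4$ we have $p^*\ge p$, so Bernstein gives
\[
\normess{\Dot\Delta_{k'}a}{p^*}\lesssim 2^{k'(\frac dp-\frac d{p^*})}\normepp{\Dot\Delta_{k'}a}=2^{k'(\frac{2d}{p}-\frac d2)}\normepp{\Dot\Delta_{k'}a}.
\]
Summing over $k'\le k-2$ with weight $2^{k'(\frac dp-\sigma)}$ yields
\[
\normess{\Dot S_{k-1}a}{p^*}\lesssim 2^{k(\sigma-\frac d2+\frac dp)}\NBH{\frac dp-\sigma}{p}{1}{a}.
\]
The geometric series converges exactly when $\sigma-\frac d2+\frac dp\ge0$, that is $\sigma\ge d\frac{p-2}{2p}$; in the endpoint case we use $\ell^1$ summability instead of a geometric series. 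Multiplying by $2^{j(s+\frac d2-\frac dp)}$ then gives the factor $2^{k(s+\sigma)}\normepp{\Dot\Delta_kb}$, and summing in $j$ closes \eqref{lem:T:lf:hf}.

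For \eqref{lem:R:lf:hf}, bound the remainder $\Dot R(a,b)=\sum_k\Dot\Delta_k a\,\widetilde{\Dot\Delta}_k b$ in $\BH{s+\frac d2-\frac dp}{2}{1}$ through the embedding $\BH{s+\frac dp}{p/2}{1}\hookrightarrow\BH{s+\frac d2-\frac dp}{2}{1}$. This embedding is valid because $p/2\le2$ (this is where $p\le4$ is used) and the regularity drop $\frac{2d}{p}-\frac d2$ matches. Each term $\Dot\Delta_ka\,\widetilde{\Dot\Delta}_kb$ lies in $L^{p/2}$ with norm $\le\normepp{\Dot\Delta_ka}\normepp{\widetilde{\Dot\Delta}_kb}$, and is spectrally supported in a ball of radius $\sim2^k$. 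So
\[
2^{j(s+\frac dp)}\normess{\Dj\Dot R}{p/2}\lesssim\sum_{k\ge j-3}2^{(j-k)(s+\frac dp)}\,2^{k(\frac dp-\sigma)}\normepp{\Dot\Delta_ka}\,2^{k(s+\sigma)}\normepp{\widetilde{\Dot\Delta}_kb}.
\]
This sum is summable in $j$ precisely because $s+\frac dp>0$. The bound actually needs only $\ell^\infty$ control on one factor.

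Finally, \eqref{lem:pro:lf:hf} follows from $ab=\Dot T_ab+\Dot T_ba+\Dot R(a,b)$. Apply \eqref{lem:T:lf:hf} to the first two terms, with the roles of $a$ and $b$ swapped in the second, which produces the symmetric term. Apply \eqref{lem:R:lf:hf} to the remainder.

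I expect the main obstacle to be the paraproduct step, specifically the low-frequency summation of $\Dot S_{k-1}a$ in $L^{p^*}$. This is where the sharp threshold $\sigma\ge d(\frac{p-2}{2p})$ and the restriction $p\le4$ enter. The endpoint case must be handled through $\ell^1$ summability rather than a geometric series.
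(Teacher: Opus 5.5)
Your proposal is correct and follows essentially the same route as the paper. For the paraproduct you use H\"{o}lder with $\frac1p+\frac1{p^*}=\frac12$ and Bernstein on $\Dot{S}_{k-1}a$, relying on $p^*\ge p$ and $\sigma\ge \frac{d}{p^*}$; for $\Dot{R}(a,b)$ you use the embedding $\BH{s+\frac dp}{p/2}{1}\hookrightarrow\BH{s+\frac d2-\frac dp}{2}{1}$ together with the remainder bound under $s+\frac dp>0$; and you conclude with Bony's decomposition. The only difference is that you write out blockwise the remainder estimate that the paper simply cites from \cite[Lemma 2.100 and Remark 2.102]{HajDanChe11}.
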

\begin{proof}
    Let us define the quantity  $p^*$ as follows: 
    \begin{equation}
        \label{def:p^*:T:R}
        p^*\defn \frac{2p}{p-2}
    \end{equation}
    so that 
    \begin{align}
        \label{ine:p:p*:}
         \frac{1}{p}+\frac{1}{p^*}=\frac12\cdotp
    \end{align}
We recall  the definition of the  paraproduct $\Dot{T}_a b$ of $b$ by $a$:
\begin{align*}
    \Dot{T}_a b =\sum_{k\in \mathbb{Z}} \Dot{S}_{k-1} a\Dot{\Delta}_k b 
\end{align*}
For all $k\in \mathbb{Z}$, as $\mathcal F(\Dot{S}_{k-1} a\Dot{\Delta}_k b)$ is  localized in 
the annulus $\bigl\{\frac1{12}\cdot 2^k\leq|\xi|\leq\frac{10}3\cdot2^k\bigr\},$  it holds
\begin{align*}
   \Dj \Dot{T}_a b =\sum_{|k-j|\le 4} \Dj(\Dot{S}_{k-1} a\Dot{\Delta}_k b ) \esp{for all} j\in \mathbb{Z}.
\end{align*}
    Using H\"{o}lder's inequalities and   \eqref{ine:p:p*:}, we have
    \begin{align*}
        \normed{ \Dj \Dot{T}_a b}
        \lesssim \sum_{|k-j|\le 4 }\normess{\Dot{S}_{k-1} a}{p^*}\normess{\Dot{\Delta}_k b}{p}  .
    \end{align*}
Now, using  Bernstein's inequality, the definition of the low-frequency  cut-off operator $ \Dot{S}_{k-1}$  and the fact that $p^*\ge p,$ we can bound  $\Dot{S}_{k-1} a $ for all $k$ as follows:
\begin{align*}
    \Vert \Dot{S}_{k-1} a \Vert_{L^{p^{*}}}& \lesssim \sum_{i\le k-2}  2^{id(\frac{1}{p}-\frac{1}{p^*})}\normepp{\Dot{\Delta}_i a}\\
    & \lesssim 2^{k(\sigma-\frac{d}{p^*})} \NBH{\frac{d}{p}-\sigma}{p}{1}{ a}\esp{if}\sigma\ge \frac{d}{p^*}\cdotp
\end{align*} 
Then, using again \eqref{ine:p:p*:}, it holds
\begin{align*}
     \normed{ \Dj \Dot{T}_a b}
     &\lesssim \sum_{|k-j|\le 4 } 2^{k(\frac{d}{p}+\sigma-\frac{d}{2})}\normess{\Dot{\Delta}_k b}{p}  \NBH{\frac{d}{p}-\sigma}{p}{1}{ a}.
\end{align*}
We get \eqref{lem:T:lf:hf} by multiplying the last inequality by $2^{j(s-\frac{d}{p}+\frac{d}{2})}$ and summing over $j\in \mathbb{Z}$.
\smallbreak
Next,  as $1\le \frac{p}{2}\le 2$ and $s>-{d}/{p},$ we get \eqref{lem:R:lf:hf} as a consequence of  Bernstein's inequality,  Lemma 2.100 and Remark 2.102 of \cite{HajDanChe11}: 
\begin{equation*}
     \NBH{s+\frac{d}{2}-\frac{d}{p}}{2}{1}{\Dot{R}(a,b)} \lesssim \NBH{s+\frac{d}{p}}{\frac{p}{2}}{1}{\Dot{R}(a,b)}\lesssim  \NBH{-\sigma+\frac{d}{p}}{p}{1}{a} \NBH{s+\sigma}{p}{1}{b}.\end{equation*}
   Finally, putting \eqref{lem:T:lf:hf} (for the pairs $(a,b)$ and $(b,a)$)
   and   \eqref{lem:R:lf:hf} together yields \eqref{lem:pro:lf:hf}.
\end{proof}

\section{Littlewood-Paley decomposition and Besov spaces}
\label{appendix:LP:loc}
In this section we present some results on the Littlewood-Paley decomposition and Besov spaces, the source of which can be found in \cite[Chap. 2]{HajDanChe11}. 
\subsection{Definition of Littlewood-Paley decomposition and Besov spaces}
To define the Littlewood-Paley decomposition, we  fix some smooth radial non increasing function $\chi$ with $ \mathrm{Supp}\chi \subset B(0,\frac{4}{3})$  and $\chi \equiv 1$  on $ B(0,\frac{3}{4})$ , then set $\varphi(\xi)=\chi(\frac{\xi}{2})-\chi(\xi)$ so that
\begin{align*}
    \chi+\sum_{j\ge 0} \varphi(2^{-j}\cdot)=1 \ \text{on} \ \Rd \esp{and} \sum_{j\in \mathbb{Z}} \varphi(2^{-j}\cdot) =1  \ \text{on}\ \Rd\backslash\{0\}. 
\end{align*}
In order to pursue our definition, we agree that for $S$ a function, $z$ a distribution and $a$ a real constant,  
the operator $S(aD)$ is defined by
\begin{align}
    \label{def:S(aD):loc}
    S(aD) z\defn \mathcal{F}^{-1} S(a\cdot) \mathcal{F}z,
\end{align}
when the right-hand side  makes sense. Let us now state 
\begin{align}
    \label{def:dot:S:j:loc}
    \begin{split}
   & \Dot{S}_j =\chi(2^{-j}D) \;\; \text{for all}\; j\in \mathbb{Z}\esp{and} S_j\defn\Dot{S}_j \;\; \text{for all}\; j\ge 0,\;\; S_{j}=0 \;\; \text{for all}\; j\le  -1.
     \end{split}
\end{align}
We define the homogeneous dyadic block $\Dj$ as 
\begin{align}
    \label{def:DDj:Dj:loc}
    \begin{split}
    &\Dj \defn \varphi(2^{-j}D) \;\; \text{for all}\; j\in \mathbb{Z} \\
    & \DDj=\Dj  \;\; \text{for all}\; j\ge 0,\;\; \Delta_{-1}= \Dot{S}_0 \esp{and} \DDj=0  \;\; \text{for all}\; j< -1
    \end{split}
\end{align}
and we define $\mathcal{S}^{\prime}_h$ to be the set of tempered distributions $z$ such that 
\begin{align}
    \label{cnd:on S'h:loc}
    \lim_{j \rightarrow -\infty } \Dot{S}_j z=0.
\end{align}
Following \cite{HajDanChe11}, we introduce the homogeneous Besov semi-norms~:
\begin{align}
    \label{def:NBH:NB:loc}
    \NBH{s}{p}{r}{z} \defn \lVert (2^{js} \Vert\DDj  z\Vert_{L^p})_{j\in \mathbb{Z}}\rVert_{l^r} 
\end{align}
Then, for any $s\in \mathbb{R}$ and $(p,r)\in [1,\infty]$ we define the homogeneous Besov spaces  $\BH{s}{p}{r}$  to be the subset of those $z$ in $\mathcal{S}^{\prime}_h$  such that $\NBH{s}{p}{r}{z} $  is finite.
\medbreak

Any $z\in \mathcal{S}^{\prime}$  can be decomposed in terms of high and low frequencies parts, 
as follows:
\begin{align}
    \label{def:LF:HF:}
    z^l\defn \sum_{j\le 0} \Dj z \esp{and} z^h \defn \sum_{j> 0} \Dj z.
\end{align}
Throughout this thesis, we constantly used the following Besov semi-norms for low and high frequencies:
\begin{align}
    \label{def:NBH:LF:HF}
    \NBH{s}{2}{1}{z}^l\defn   \sum_{j\le 0} 2^{js}\normede{\Dj z} \esp{and}  \NBH{s}{2}{1}{z}^h\defn   \sum_{j> 0} 2^{js}\normede{\Dj z}.
\end{align}

The study of non-stationary PDEs requires spaces of type $L^{a}(0,T;X)$ (endowed with the norm $\Vert z\Vert_{L^{a}_T(X)}= \Vert \Vert z(t,\cdot)\Vert_{X} \Vert_{L^{a}_T}$) for appropriate
Banach spaces $X$.  Here  we expect $X$ to be a Besov space.  
However, for technical reasons, it is sometimes more suitable 
to use the Chemin-Lerner spaces that are defined below: 
\begin{dfn}
    \label{def:Chemin lener}
    Let  ${a}$ in $[1,\infty]$ and time $T \in [0, \infty]$. We set
    \begin{align*}
        \TLpNBH{s}{p}{r}{z}{{a}}\defn \Vert 2^{js}\Vert \Dj z\Vert_{L^{a}_T(L^p)}\Vert_{l^r}\esp{with}  \Vert z \Vert_{L^{a}_T(L^p)} \defn \Vert z\Vert_{L^{a}(0,T;L^p)}.
    \end{align*}
    We then define the space $\widetilde{L}^{a}_T(\BH{s}{p}{r})$
 to be the set of tempered distributions $z$ over
$[0,T]\times \Rd$ such that $z \in \mathcal{S}^{\prime} $ and $ \TLpNBH{s}{p}{r}{z}{{a}}<\infty.$

    \end{dfn}
    \medbreak
 We set $\widetilde{C}([0,T];\BH{s}{p}{r})= \widetilde{L}^{\infty}_T(\BH{s}{p}{r})\cap C([0,T];\BH{s}{p}{r}) $.
 Let us emphasize that, according
to the Minkowski inequality, we have:
\begin{align}
    \label{link:CL:Besov:space:chap2}
    \TLpNBH{s}{p}{r}{z}{{a}}\le  \LpNBH{s}{p}{r}{z}{{a}},\; \text{if}\; r\ge {a} \esp{and}
     \LpNBH{s}{p}{r}{z}{{a}} \le  \TLpNBH{s}{p}{r}{z}{{a}}, \; \text{if}\; r\le {a}.
\end{align}
Even though most of the functions we shall consider here will have range in the set of
vectors or  matrices, we shall keep the same notation for Besov spaces pertaining to this case. 

One of the main motivations for using  Littlewood-Paley decomposition when dealing with PDEs is that the derivatives act almost as dilations on distributions with Fourier transform supported in a ball or an annulus, as regards $L^p$ norms. This is exactly what is stated in the following proposition:
\begin{propo}[Bernstein inequality] Let $0<r<R$.
    \label{propo:Berstein:loc}
    \begin{itemize}
        \item   There exists a constant $C$ so that, for any $k\in \mathbb{N}$, pair $(q_1,q_2)$ in $[1,\infty]^2$ with $q_2\ge q_1\ge 1$ and function $u$ of $L^{q_1}$ with $\widehat{u}$ supported in the ball $B(0,\lambda R)$ of $\Rd$  for some $\lambda>0$, we have $D^k u\in L^{q_2}$ and
    \begin{align}
    \label{bern:ine}
        \normes{D^k u}{{q_2}}\le C^{k+1} \lambda^{k+d(\frac{1}{q_1}-\frac{1}{q_2})}\normes{u}{{q_1}}.
    \end{align}
    \item 
    For any  smooth function $M$ on $\mathbb{R}^d\backslash\{0\}$ with homogeneity $\gamma$, there exists a constant $C$ such that for any $\lambda > 0$ and any function $u$  with Fourier transform $\widehat{u}$ supported in annulus  $ \{ \xi \in \Rd \;\backslash \;r\lambda\le |\xi|\le R \lambda \}$ of $\Rd$, we have
     \begin{align}
     \label{bern:ine:M}
        \normes{M(D) u}{{q_2}}\le C^{k+1} \lambda^{\gamma+d(\frac{1}{q_1}-\frac{1}{q_2})}\normes{u} {{q_1}}.
    \end{align}
    \item There exists a constant $C$ so that for any $k\in \mathbb{N}$, $q\in [1,\infty]$ and function $u$ of $L^q$ with  ${\rm Supp }\;  \widehat{u}\subset \{ \xi \in \Rd \;\backslash \;r\lambda\le |\xi|\le R \lambda \}$ for some $\lambda>0$, W
we have 
    \begin{align*}
        \lambda^k \normes{u}{q}\le C^{k+1} \normes{D^k u}{q}.
    \end{align*}
    \end{itemize}
   \end{propo}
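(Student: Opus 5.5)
The statement to prove is the Bernstein inequality (Proposition~\ref{propo:Berstein:loc}). The plan is the standard scaling argument of \cite[Chap.~2]{HajDanChe11}: reduce everything to $\lambda=1$ by dilation, and at $\lambda=1$ write each operator as convolution with a fixed Schwartz (or integrable) kernel, then apply Young's inequality.

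\textbf{Reduction to $\lambda=1$.} Given $u$ with $\widehat{u}$ supported in $B(0,\lambda R)$ (or in the annulus), set $u_\lambda(x)\defn u(\lambda^{-1}x)$. Then $\widehat{u_\lambda}$ is supported in $B(0,R)$ (respectively $\{r\le|\xi|\le R\}$). We have
$$D^k u_\lambda=\lambda^{-k}(D^ku)(\lambda^{-1}\cdot),\qquad \normes{u_\lambda}{q}=\lambda^{d/q}\normes{u}{q},$$
and, when $M$ is homogeneous of degree $\gamma$, $M(D)u_\lambda=\lambda^{-\gamma}(M(D)u)(\lambda^{-1}\cdot)$. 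Substituting these identities produces exactly the factors $\lambda^{k+d(\frac1{q_1}-\frac1{q_2})}$ and $\lambda^{\gamma+d(\frac1{q_1}-\frac1{q_2})}$ in the three statements.

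\textbf{Case $\lambda=1$, first two items.} For the first item, fix $\phi\in\mathcal S$ with $\phi\equiv1$ on $B(0,R)$. Then $u=\check\phi\star u$, so $\partial^\alpha u=(\partial^\alpha\check\phi)\star u$. Young's inequality with $1+\frac1{q_2}=\frac1{q_1}+\frac1m$ gives
$$\normes{\partial^\alpha u}{q_2}\le \|\partial^\alpha\check\phi\|_{L^m}\normes{u}{q_1}.$$
The constant of the form $C^{k+1}$ follows by bounding $\|\partial^\alpha\check\phi\|_{L^1\cap L^\infty}$ through $\|(1+|x|^2)^d x^\alpha\check\phi\|_{L^\infty}$, i.e. by the $L^1$ norms of finitely many derivatives of $\xi^\alpha\phi(\xi)$, which grow at most like $C^{|\alpha|}$ since $\phi$ is supported in a fixed ball. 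The number of multi-indices with $|\alpha|=k$ is absorbed into the same geometric factor.

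For the second item, take $\psi\in C_c^\infty(\Rd\backslash\{0\})$ equal to $1$ on the annulus $\{r\le|\xi|\le R\}$. Then $M\psi$ is smooth and compactly supported, so $K\defn\mathcal F^{-1}(M\psi)\in\mathcal S$, $M(D)u=K\star u$, and Young's inequality concludes as before.

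\textbf{Case $\lambda=1$, reverse inequality.} On the annulus, $|\xi|^{2k}=\sum_{|\alpha|=k}\frac{k!}{\alpha!}\,\xi^{2\alpha}$. Define $\theta_\alpha(\xi)\defn \frac{k!}{\alpha!}\,\frac{(-i\xi)^\alpha}{|\xi|^{2k}}\,\psi(\xi)$, with the sign chosen so that $\sum_{|\alpha|=k}\theta_\alpha(\xi)\,(i\xi)^\alpha=\psi(\xi)$. Then
$$u=\sum_{|\alpha|=k}\mathcal F^{-1}\theta_\alpha\star\partial^\alpha u,$$
and Young's inequality with the $L^1$ kernel $\mathcal F^{-1}\theta_\alpha$ gives $\normes{u}{q}\le C^{k+1}\normes{D^ku}{q}$.

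\textbf{Main obstacle.} The only delicate point is tracking the constant as $C^{k+1}$ uniformly in $k$, rather than some unspecified $C_k$. This requires bounding $\|\mathcal F^{-1}\theta_\alpha\|_{L^1}$ geometrically in $k$. The approach is to use $\|\mathcal F^{-1}g\|_{L^1}\lesssim\sum_{|\beta|\le d+1}\|\partial^\beta g\|_{L^1}$ and to estimate derivatives of $\xi^\alpha|\xi|^{-2k}\psi$ on the annulus via Leibniz' rule, where each derivative costs at most a factor $Ck$ that is compensated by $r^{-2k}R^{k}$-type bounds. In any case the paper only ever uses fixed small $k$, so a $k$-dependent constant would suffice there.
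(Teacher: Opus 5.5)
Your proposal is correct and follows essentially the same route the paper relies on: the paper does not prove this proposition itself but cites \cite[Chap.~2]{HajDanChe11}, whose Bernstein lemma and homogeneous-multiplier lemma are proved exactly as you do — dilation to $\lambda=1$, convolution with a fixed kernel plus Young's inequality, and the multinomial identity $|\xi|^{2k}=\sum_{|\alpha|=k}\frac{k!}{\alpha!}\xi^{2\alpha}$ for the reverse bound. Two cosmetic fixes: the quantity to bound is $(1+|x|^2)^d\,\partial^\alpha\check\phi$, not $(1+|x|^2)^d x^\alpha\check\phi$; and the cut-offs $\phi,\psi$ should equal $1$ on a neighborhood of the closed ball or annulus, so that $\phi\,\widehat u=\widehat u$ holds in $\mathcal S'$.
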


\subsection{Paradifferential calculus and applications}
 In this paragraph, we present some results related to paradifferential calculus. We invited the reader to see \cite{HajDanChe11} for more information.
For simplicity, we present the paradifferential calculus theory in nonhomogeneous case.  The paraproduct between $u$ and
$v$, two tempered distributions on $\mathbb{R}^d$ is defined by
\begin{align}
    \label{def:parapr}
    \Dot{T}_u v =\sum_{q\in \mathbb{Z}} \Dot{S}_{q-1}u \Dot{\Delta}_q v \esp{with} \Dot{S}_{q}=\sum_{l\le q-1}\Delta_q.
\end{align}
We thus have the following formal decomposition
\begin{align*}
    u v= \Dot{T}_u v+ \Dot{T}_v u+ \Dot{R}(u, v) \esp{with}  \Dot{R}(u, v)=\sum_{|q-q^{\prime}|\le 1} \Delta_q u \Delta_{q ^{\prime}} v.
\end{align*}
In the proposition below, we give estimates for the paraproduct and
remainder in $L^{a}(\Dot{B}^s_{p,1})$.
\begin{propo}
    \label{prop:parpro:reminder}
    Let $1\le p\le \infty$ and $1\le {a},{a}_1,{a}_2\le \infty$ such that $1/{a}=1/{a}_1+1/{a}_2$.\\ For all $t\in \mathbb{R}$ and $s\le d/p$, we have
    \begin{align}
        \label{est:paraproduct}
        \TLpNBH{s+t-d/p}{p}{1}{  \Dot{T}_u v }{{a}}\le C \TLpNBH{s}{p}{1}{u}{{a}_1}\TLpNBH{t}{p}{1}{v}{{a}_2}.
    \end{align}
    In general for all $s\in \mathbb{R}$, $t\ge 0$, we have
     \begin{align}
        \label{est:paraproduct:gen}
        \begin{split}
       & \TLpNBH{s}{p}{1}{  \Dot{T}_u v }{{a}}\le C \|u \|_{L^{{a}_1}_T(L^\infty)}  \TLpNBH{s}{p}{1}{v}{{a}_2},\\  &\TLpNBH{s-t}{p}{1}{  \Dot{T}_u v }{{a}}\le C \TLpNBH{-t}{\infty}{1}{u}{{a}_1}\TLpNBH{s}{p}{1}{v}{{a}_2}.
       \end{split}
    \end{align}
     If $p_1\in [1,\infty]$, $\ee\ge 0$ and $s>-\min(d/p_1,d/p^{\prime})$, then
    \begin{align}
        \label{est:Reminder:gen}
        \TLpNBH{s}{p}{1}{  \Dot{R}(u ,v) }{{a}}\le C \TLpNBH{\frac{d}{p_1}-\ee}{p_1}{\infty}{u}{{a}_1}\TLpNBH{s+\ee}{p}{1}{v}{{a}_2}.
    \end{align}
    In particular for $s,t\in \mathbb{R}$ satisfying $s+t>d/p-\min(d/p,d/p^{\prime})$, we have
   \begin{align}
        \label{est:Reminder}
        \TLpNBH{s+t-d/p}{p}{1}{  \Dot{R}(u ,v) }{{a}}\le C \TLpNBH{s}{p}{1}{u}{{a}_1}\TLpNBH{t}{p}{1}{v}{{a}_2}.
    \end{align}
\end{propo}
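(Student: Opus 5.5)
The plan is the standard dyadic argument of \cite[Chap.~2]{HajDanChe11}, carried out frequency by frequency in $x$ and only then in time. The Chemin--Lerner norms of Definition~\ref{def:Chemin lener} are built from $\Vert \Dj z\Vert_{L^a_T(L^p)}$, so every step is: estimate $\Dj$ of the product pointwise in $t$ in $L^p$, apply H\"older in time with $1/a=1/a_1+1/a_2$, multiply by the weight, and sum in $j$. No smallness or structure from the rest of the paper is needed; Bernstein's inequality (Proposition~\ref{propo:Berstein:loc}) and Young's inequality for series are the only tools.

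\textbf{Paraproduct.} By the spectral localization of $\Dot{S}_{k-1}u\,\Dot{\Delta}_k v$ in an annulus of size $2^k$,
\[
\Dj \Dot{T}_u v=\sum_{|k-j|\le 4}\Dj\bigl(\Dot{S}_{k-1}u\,\Dot{\Delta}_k v\bigr),
\]
so it suffices to bound $\Vert \Dot{S}_{k-1}u\Vert_{L^{a_1}_T(L^\infty)}$.

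\begin{itemize}
\item For \eqref{est:paraproduct}, Bernstein gives
\[
\Vert \Dot{S}_{k-1}u\Vert_{L^{a_1}_T(L^\infty)}\lesssim \sum_{i\le k-2}2^{i\frac dp}\Vert \Dot{\Delta}_i u\Vert_{L^{a_1}_T(L^p)}= 2^{k(\frac dp-s)}\sum_{i\le k-2}2^{(i-k)(\frac dp-s)}\,2^{is}\Vert \Dot{\Delta}_i u\Vert_{L^{a_1}_T(L^p)} .
\]
Since $s\le d/p$, the weights $2^{(i-k)(\frac dp -s)}$ are bounded by $1$, so the sum is at most $2^{k(\frac dp-s)}\TLpNBH{s}{p}{1}{u}{{a_1}}$. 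The borderline case $s=d/p$ is precisely where last index $1$ is used.
\item Multiplying by $2^{j(s+t-d/p)}$ and summing over $|k-j|\le4$ then gives \eqref{est:paraproduct}.
\item The first inequality of \eqref{est:paraproduct:gen} follows from $\Vert \Dot{S}_{k-1}u\Vert_{L^\infty}\lesssim\Vert u\Vert_{L^\infty}$.
\item For the second, write $\Vert \Dot{S}_{k-1}u\Vert_{L^\infty}\le\sum_{i\le k-2}2^{it}\,2^{-it}\Vert\Dot{\Delta}_iu\Vert_{L^\infty}\le 2^{kt}\Vert u\Vert_{\dot B^{-t}_{\infty,1}}$, using $t\ge0$. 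Time integration is done before summing in $i$, which is why Chemin--Lerner norms appear on the right.
\end{itemize}

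\textbf{Remainder.} This is the step I expect to be the main obstacle, because of the index restriction $s>-\min(d/p_1,d/p')$. Here
\[
\Dj\Dot{R}(u,v)=\sum_{k\ge j-3}\Dj\bigl(\Dot{\Delta}_k u\,\widetilde{\Dot{\Delta}}_k v\bigr),\qquad \widetilde{\Dot{\Delta}}_k=\sum_{|\nu|\le1}\Dot{\Delta}_{k+\nu}.
\]

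\begin{itemize}
\item \emph{Case $1/p_1+1/p\le 1$.} Set $1/q=1/p_1+1/p$. H\"older puts the product in $L^q$, and Bernstein on $\Dj$ brings it back to $L^p$ at the cost $2^{jd/p_1}$. This yields
\[
2^{js}\Vert\Dj \Dot{R}\Vert_{L^a_T(L^p)}\lesssim \sum_{k\ge j-3}2^{(j-k)(s+\frac d{p_1})}\,\Bigl(2^{k(\frac d{p_1}-\ee)}\Vert\Dot{\Delta}_ku\Vert_{L^{a_1}_T(L^{p_1})}\Bigr)\Bigl(2^{k(s+\ee)}\Vert\widetilde{\Dot{\Delta}}_kv\Vert_{L^{a_2}_T(L^p)}\Bigr).
\]
The first bracket is bounded by the $\ell^\infty$ norm of $u$. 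The kernel $2^{(j-k)(s+d/p_1)}\mathbf 1_{k\ge j-3}$ is summable exactly when $s+d/p_1>0$, and Young's inequality on $\mathbb Z$ then gives \eqref{est:Reminder:gen}.
\item \emph{Case $1/p_1+1/p>1$.} First embed $\dot B^{\frac d{p_1}-\ee}_{p_1,\infty}\hookrightarrow \dot B^{\frac d{p_1'''}-\ee}_{p_1''',\infty}$ with $1/p_1'''=1-1/p=1/p'$, and rerun the same argument. The convergence condition becomes $s+d/p'>0$, which explains the minimum in the hypothesis.
\end{itemize}

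\textbf{Deduction of \eqref{est:Reminder}.} Use $\dot B^{s}_{p,1}\hookrightarrow \dot B^{\frac dp-\ee}_{p,\infty}$, taking $p_1=p$ and $\ee=\frac dp-s$, and rename the indices. The condition $s+t>d/p-\min(d/p,d/p')$ is then exactly the summability requirement of the remainder step.
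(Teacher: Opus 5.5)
Your proof is correct. The paper states this proposition without proof and simply defers to \cite[Chap.~2]{HajDanChe11}, and your argument is the standard dyadic one from that source: H\"older in time on each block, Bernstein, and Young's inequality on $\mathbb{Z}$.

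One small point should be made explicit. In deducing \eqref{est:Reminder} you take $\varepsilon=\frac dp-s$, which is negative when $s>\frac dp$, and that lies outside the stated range $\varepsilon\ge0$ of \eqref{est:Reminder:gen}. You should therefore note that your proof of \eqref{est:Reminder:gen} never uses $\varepsilon\ge0$, so it holds for every real $\varepsilon$.
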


 In order to bound the commutator  terms, we use the following results:

\begin{propo}
	 \label{propo_commutator-BH:loc}
	 The following inequalities hold true:
   If $\sigma>0$
     \begin{align}
       \label{comm:est:a:b:sig>0}
        \normede{[a,\Dj] b} \le 2^{-q\sigma}Cc_j (\normeinf{\n a} \lVert b \rVert_{\Dot{B}_{2,1}^{\sigma-1} }+ \normeinf{b} \lVert \n a \rVert_{\Dot{B}_{2,1}^{\sigma-1} }  ) \esp{with} \sum_j c_j =1.
   \end{align}
   
   In particular, if $\sigma\ge \cd+1$, then it holds
    \begin{align}
       \label{comm:est:a:b:sig>cd}
        \normede{[a,\Dj] b} \le 2^{-q\sigma}Cc_j  \NBH{\sigma-1}{2}{1}{\n  a}\NBH{\sigma-1}{2}{1}{b} \esp{with} \sum_{\substack{j\ge -1}} c_j =1.
   \end{align}
	 Moreover if $-\frac d2 < \sigma \le \cd+1 $, 
   then
   \begin{align}
       \label{comm:est:a:b:loc}
        \normede{[a,\Dj] b} \le 2^{-q\sigma}Cc_j \lVert \n a \rVert_{\Dot{B}_{2,\infty}^{\cd} \cap L^\infty}  \lVert b \rVert_{\Dot{B}_{2,1}^{\sigma-1} }  \esp{with} \sum_j c_j =1.
   \end{align}
   We have also
   \begin{align}
       \label{comm:est:a:b:inf:loc}
        \normede{[a,\Dj] b} \le 2^{-q\sigma}C \Vert\n a\Vert_{\Dot{B}^{\cd}_{2,\infty}\cap L^\infty} \Vert b\Vert_{\Dot{B}^{\sigma-1}_{2,\infty}}\quad\hbox{for all }
        \ -d/2\le \sigma< d/2+1.
   \end{align}
 Finally If $\sigma>0$ and $k=1,\cdots, d$,
     \begin{align}
       \label{comm:est:less:gene:sig>0}
        \normede{[a,\Dj]\partial_k b} \le 2^{-q\sigma}Cc_j (\normeinf{\n a} \lVert  b \rVert_{\Dot{B}_{2,1}^{\sigma} }+ \normeinf{b} \lVert \n a \rVert_{\Dot{B}_{2,1}^{\sigma} }  ) \esp{with} \sum_j c_j =1.
   \end{align}
  
   Similar results hold  true if we replace $L^2$ by $L^{a}_T(L^2)$ in the l.h.s and  the spaces $L^{a}_T(\Dot{B}^{s}_{2,r})$ (or $\widetilde{L}^{a}_T(\Dot{B}^{s}_{2,r})$) in the r.h.s. 
	 \end{propo}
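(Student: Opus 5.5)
We follow the classical route of \cite[Lemma 2.100 and Remark 2.102]{HajDanChe11} and start from Bony's decomposition of both $ab$ and $a\,\Dj b$. This gives
\[
[a,\Dj]b=\sum_{|k-j|\le 4}[\Dot{S}_{k-1}a,\Dj]\Dot{\Delta}_k b+\Dot{T}^{\prime}_{\Dj b}a-\Dj\Dot{T}_b a-\Dj\Dot{R}(a,b),
\qquad \Dot{T}^{\prime}_u v\defn \Dot{T}_u v+\Dot{R}(u,v).
\]
Here we used that $\Dj\Dot{T}_a b$ only involves the blocks $|k-j|\le4$. The case $b\mapsto\partial_k b$ in \eqref{comm:est:less:gene:sig>0} will be treated by the same decomposition, after the remark that $a\,\partial_k b$ can be rewritten through $\partial_k(ab)-(\partial_k a)b$, so that one derivative is moved at the cost of the terms $\normeinf{\n a}$ and $\normeinf{b}$.

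The first, genuinely commutator, term is handled by the first-order Taylor formula for the kernel $h_j=2^{jd}h(2^j\cdot)$ of $\Dj$ (\cite[Lemma 2.97]{HajDanChe11}):
\[
\normede{[\Dot{S}_{k-1}a,\Dj]\Dot{\Delta}_k b}\lesssim 2^{-j}\normeinf{\n\Dot{S}_{k-1}a}\,\normede{\Dot{\Delta}_k b}.
\]
Since $|k-j|\le4$, multiplying by $2^{j\sigma}$ yields $c_j\normeinf{\n a}\NBH{\sigma-1}{2}{1}{b}$ for every real $\sigma$. This is the only place where an actual commutation occurs. The three remaining terms are plain products, and we bound them by Proposition \ref{prop:parpro:reminder}, applied with $\n a$ instead of $a$ via Bernstein's inequality (the low block $\Dot{S}_0 a$ is harmless, because only $\n a$ enters after localisation).

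For $\sigma>0$ we proceed as follows. The term $\Dj\Dot{T}_b a$ is bounded by $\normeinf{b}\,2^{-j\sigma}c_j\NBH{\sigma-1}{2}{1}{\n a}$. The terms $\Dot{T}^{\prime}_{\Dj b}a$ and $\Dj\Dot{R}(a,b)$ only see $\Dot{\Delta}_{k}a$ with $k\ge j-3$, and the convergence of $\sum_{k\ge j-3}2^{-(k-j)\sigma}$ uses $\sigma>0$. This gives \eqref{comm:est:a:b:sig>0}. When $\sigma\ge\cd+1$, we pass to \eqref{comm:est:a:b:sig>cd} using the embedding $\BH{\cd}{2}{1}\hookrightarrow L^\infty$ for both $\n a$ and $b$.

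The main obstacle is the range $-\cd<\sigma\le\cd+1$ in \eqref{comm:est:a:b:loc}, where $b$ is no longer bounded, so $\normeinf{b}$ must be avoided. There we put all the weight on $\n a\in\BH{\cd}{2}{\infty}\cap L^\infty$. For $\Dj\Dot{T}_b a$ we use the second inequality of \eqref{est:paraproduct:gen}: $b\in\BH{\sigma-1}{2}{1}\hookrightarrow\BH{\sigma-1-\cd}{\infty}{1}$, with negative index when $\sigma<\cd+1$, paired with $\n a\in\BH{\cd}{2}{\infty}$. The endpoint $\sigma=\cd+1$ is treated with $L^\infty$ instead. The remainders $\Dot{R}(a,b)$ and $\Dot{R}(\Dj b,a)$ are estimated through \eqref{est:Reminder:gen} with $p_1=2$ and $\ee=0$, which requires exactly $\sigma>-\cd$. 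The term $\Dot{T}_{\Dj b}a$ is bounded by $\normede{\Dj b}$ times $\normeinf{\n a}$ after Bernstein. The version \eqref{comm:est:a:b:inf:loc} with last index $\infty$ follows from the same computation; there the sum is no longer $\ell^1$, and this is why the endpoints change to $-\cd\le\sigma<\cd+1$. Finally, the time-dependent versions follow by applying H\"older's inequality in time to each dyadic estimate before the summation in $j$.
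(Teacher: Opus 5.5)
Your route is the one the paper relies on. The paper gives no separate proof of this proposition. For its $L^p$ analogue, Proposition~\ref{propo_commutator-BH}, it uses the same Bony splitting of $[a,\Dj]b$, bounds the true commutator $\sum_{|k-j|\le4}[\dot S_{k-1}a,\Dj]\dot\Delta_kb$ by Lemma~2.97 of \cite{HajDanChe11}, and refers to Lemma~2.100 and Remark~2.102 there for the other terms. Your estimates are correct for \eqref{comm:est:a:b:sig>0}, and for \eqref{comm:est:a:b:loc} and \eqref{comm:est:a:b:inf:loc} in the open range $-\cd<\sigma<\cd+1$.

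The gap is at the two endpoints. There your one-line justifications fail, and no argument can succeed with the norm $\Vert\n a\Vert_{\BH{\cd}{2}{\infty}\cap L^\infty}$.

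\textbf{Endpoint $\sigma=\cd+1$ in \eqref{comm:est:a:b:loc}.} Treating $\Dj\dot T_ba$ ``with $L^\infty$'' only gives
\[
2^{j(\cd+1)}\normede{\Dj\dot T_ba}\lesssim\normeinf{b}\sum_{|k-j|\le4}2^{k\cd}\normede{\dot\Delta_k\n a}\lesssim\normeinf{b}\NBH{\cd}{2}{\infty}{\n a}.
\]
This is bounded in $j$ but not summable, because $\normeinf{\dot S_{k-1}b}$ does not decay in $k$. The estimate is in fact false there. Take $\widehat b$ supported in a fixed annulus. Then $\Dj b=0$, so $[a,\Dj]b=-\Dj(ab)$ for all large $j$. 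The claimed bound would then put $ab$ in $\BH{\cd+1}{2}{1}$ at high frequencies, hence $\chi a\in B^{\cd+1}_{2,1}$ for any cutoff $\chi$ supported where $b\neq0$. Lacunary wave packets accumulating in that region give an $a$ with $\n a\in\BH{\cd}{2}{\infty}\cap L^\infty$ for which this fails.

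\textbf{Endpoint $\sigma=-\cd$ in \eqref{comm:est:a:b:inf:loc}.} Your explanation runs the wrong way. The remainder gives
\[
2^{j\sigma}\normede{\Dj\dot R(a,b)}\lesssim\sum_{k\ge j-3}2^{(j-k)(\sigma+\cd)}\alpha_k\beta_k,
\qquad
\alpha_k=2^{k\cd}\normede{\dot\Delta_k\n a},\quad \beta_k=2^{k(\sigma-1)}\normede{\dot\Delta_kb}.
\]
Both sequences are merely bounded, so at $\sigma=-\cd$ the series has no decay. Weakening $\ell^1$ to $\ell^\infty$ on $b$ explains why $\sigma=\cd+1$ is lost, not why $\sigma=-\cd$ would be gained; you note yourself that \eqref{est:Reminder:gen} needs $\sigma>-\cd$. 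Concretely, let $\widehat\psi$ be supported in $B(0,\frac1{10})$ and set
\[
a=\sum_{k=1}^K2^{-k(\cd+1)}\cos(2^kx_1)\,\psi,\qquad b=\sum_{k=1}^K2^{k(\cd+1)}\cos(2^kx_1)\,\psi.
\]
All right-hand norms stay bounded in $K$. For $j=-3$, however, $\Dj b=0$ and $\Dj(ab)=\frac K2\Dj(\psi^2)$, so the left-hand side grows like $K$. At both endpoints the right hypothesis is $\n a\in\BH{\cd}{2}{1}$.

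\textbf{Two smaller points.}
\begin{itemize}
\item For \eqref{comm:est:a:b:sig>cd} with $\sigma>\cd+1$, the embedding $\BH{\cd}{2}{1}\hookrightarrow L^\infty$ does not bound $\normeinf{b}$ or $\normeinf{\n a}$ by homogeneous $\BH{\sigma-1}{2}{1}$ norms. The homogeneous inequality is not scaling invariant, hence false. You need the nonhomogeneous reading $B^{\sigma-1}_{2,1}\hookrightarrow L^\infty$, which the range $j\ge-1$ suggests.
\item For \eqref{comm:est:less:gene:sig>0}, no rewriting of $a\,\partial_kb$ is needed. The global identity $[a,\Dj]\partial_kb=\partial_k([a,\Dj]b)-[\partial_ka,\Dj]b$ would not help, since $[a,\Dj]b$ is not spectrally localized. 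Just apply Bernstein to $\dot S_{k-1}\partial_kb$, $\partial_k\Dj b$ and the blocks of $\partial_kb$ in the remainder, at a cost of $2^k\normeinf{b}$ or $2^k\normede{\dot\Delta_kb}$.
\end{itemize}
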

	 \medbreak
The following inequalities generalised those of Proposition \ref{propo_commutator-BH:loc}.  
	  \begin{propo}
	 \label{propo_commutator-BH}
	 Let $i \in [1,d]$ and $1\le p,\widetilde{p},q,r\le\infty $  . The following inequalities hold true for some nonnegative sequence $(c_j)_{j\in \mathbb{Z}}$ such that $\Vert (c_j)_{j\in \mathbb{Z}}\Vert_{l^r} \le 1$:
  \begin{enumerate}
      \item For all $\theta$ a $C^1$ function with $(1+|\cdot|)\widehat{\theta}\in L^1$
     \begin{align}
      \label{comm:est:1}
      \normepp{[\theta(\lambda^{-1}D),a](b)}\le C\lambda^{-1}\normess{\nabla a}{r}\normess{b}{q} \esp{with} \frac{1}{p}=\frac{1}{q}+\frac{1}{r}\cdotp
  \end{align} 
      \item  If $-\min(d/\widetilde{p}, d/p^{\prime}) < \sigma $, then
      \begin{align}
          \label{comm:est:p}
          \normepp{[a,\Dj]\partial_i  b} \le 2^{-j\sigma}Cc_j \left( \Vert \n a\Vert_{\BH{\frac{d}{\widetilde{p}}}{\widetilde{p}}{\infty}\cap L^\infty}\NBH{\sigma}{p}{r}{b}+ \normeinf{\n b}\NBH{\sigma}{p}{r}{a} \right),
      \end{align}
      and the second term is not needed if in addition $\sigma<\cdp{\widetilde{p}}+1$ or ($\sigma=\cdp{\widetilde{p}}+1$ and $r=1$).
    \smallbreak
    \item If $-\min(d/\widetilde{p}, d/p^{\prime}) \le  \sigma< \sigma<\cdp{\widetilde{p}}+1$,  then
    \begin{align}
    \label{comm:est:a:b:inf:glo}
        \normede{[a,\Dj] b} \le C2^{-j\sigma} \Vert\n a\Vert_{\Dot{B}^{\cd}_{2,\infty}\cap L^\infty} \Vert b\Vert_{\Dot{B}^{\sigma-1}_{2,\infty}}\quad\hbox{if}
        \ -d/2\le \sigma< 1+d/2.
          \end{align}
        \smallbreak
\item     Finally, if $\frac{1}{p}=\frac{1}{q}+\frac{1}{\widetilde{p}}$ then
      \begin{equation}
          \label{comm:est:0}
          \normepp{[a,\Dj]\partial_i  b} \le 2^{-j\sigma}Cc_j \left(\Vert \n a\Vert_{\BH{\frac{d}{\widetilde{p}}}{\widetilde{p}}{\infty}\cap L^\infty}\NBH{\sigma}{p}{r}{b}+ \NBH{\sigma-1}{{\widetilde p}}{r}{\n b}\NBH{\frac{d}{q}+1}{q}{\infty}{a} \right)
          \end{equation}
          provided that $-\min(\frac{d}{\widetilde{p}},\frac{d}{p^{\prime}})<\sigma<\frac{d}{q}+1$ and  $r\ge 1,$ or $\sigma=\frac{d}{q}+1$ and $r=1.$
 \end{enumerate}
  \end{propo}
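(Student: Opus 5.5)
The plan is to treat item (1) as a pure kernel estimate, and then derive items (2)--(4) from it via Bony's decomposition.

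\emph{Item (1).} Set $h\defn\mathcal F^{-1}\theta$, so that
\[
[\theta(\lambda^{-1}D),a]b(x)=\lambda^d\intd h(\lambda(x-y))\bigl(a(y)-a(x)\bigr)b(y)\,dy .
\]
I would write $a(y)-a(x)=\int_0^1\n a(x+\tau(y-x))\cdot(y-x)\,d\tau$ and change variables $z=\lambda(x-y)$. This bounds the commutator pointwise by
\[
\lambda^{-1}\int_0^1\!\!\intd |z|\,|h(z)|\,\bigl|\n a(x-\tau z/\lambda)\bigr|\,\bigl|b(x-z/\lambda)\bigr|\,dz\,d\tau .
\]
Minkowski's inequality in $(z,\tau)$ followed by H\"older with $\frac1p=\frac1q+\frac1r$ (using translation invariance of $L^r$ and $L^q$) gives \eqref{comm:est:1}, with constant $\Vert |\cdot|\,h\Vert_{L^1}$. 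The hypothesis on $\theta$ is exactly what makes this constant finite.

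\emph{Items (2) and (4).} I would use the homogeneous Bony decomposition
\[
[a,\Dj]\partial_i b=[\Dot T_a,\Dj]\partial_i b+\Dot T_{\partial_i\Dj b}a-\Dj \Dot T_{\partial_i b}a+\Dot R(a,\partial_i\Dj b)-\Dj\Dot R(a,\partial_i b).
\]
The five terms are handled as follows.
\begin{enumerate}
\item The first term equals $\sum_{|k-j|\le4}[\Dot S_{k-1}a,\Dj]\partial_i\Dot\Delta_k b$. Applying (1) with $\lambda=2^j$ and $r=\infty$, plus Bernstein on $\partial_i\Dot\Delta_k b$, gives $C\Vert\n a\Vert_{L^\infty}2^{-j\sigma}c_j\NBH{\sigma}{p}{r}{b}$.
\item The two paraproduct terms $\Dot T_{\partial_i b}a$ are bounded by \eqref{est:paraproduct:gen}, which produces $\Vert\n b\Vert_{L^\infty}\NBH{\sigma}{p}{r}{a}$.
\begin{itemize}
\item When $\sigma<\frac d{\widetilde p}+1$ (or equality with $r=1$), I would instead move the derivative onto $a$. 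Writing $\Dot S_{k-1}\partial_i b\,\Dot\Delta_k a$ with $\Dot\Delta_k a\simeq2^{-k}\Dot\Delta_k\n a$, one uses $\n a\in\BH{\frac d{\widetilde p}}{\widetilde p}{\infty}$. This leaves a low-frequency sum of $b$ that is geometric precisely under that upper bound on $\sigma$, which is why the second term can be dropped.
\item For item (4) I would keep this paraproduct but apply H\"older with $\frac1p=\frac1q+\frac1{\widetilde p}$, putting $\n b$ in $\BH{\sigma-1}{\widetilde p}{r}$ and $a$ in $\BH{\frac dq+1}{q}{\infty}$. The condition $\sigma<\frac dq+1$ then plays the same summability role.
\end{itemize}
\item For the two remainders, I would write $\partial_i(\Dot\Delta_{k'}a\,\Dot\Delta_k b)$ and use $\n a\in\BH{\frac d{\widetilde p}}{\widetilde p}{\infty}$ via \eqref{est:Reminder:gen} (with $p_1=\widetilde p$, $\ee=0$) applied at regularity $\sigma$.
\end{enumerate}

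\emph{Item (3).} This is the same computation without the derivative on $b$, in $L^2$ and with index $\infty$. The constraint $\sigma<1+\frac d2$ plays the role of the upper bound above, and $\sigma\ge-\frac d2$ that of the lower bound for the remainder.

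The step I expect to be the main obstacle is the remainder $\Dj\Dot R(a,\partial_i b)$. It receives contributions from all $k\ge j-2$, so summing $\sum_{k\ge j}2^{(j-k)(\sigma+\frac d{\widetilde p})}$ (after Bernstein from $L^{p/\ldots}$ back to $L^p$) demands $\sigma>-\frac d{\widetilde p}$. When $\frac1p+\frac1{\widetilde p}>1$, one must first go through $L^1$, and that costs $\sigma>-\frac d{p'}$. This is where the threshold $-\min(\frac d{\widetilde p},\frac d{p'})$ originates, and I would first check that the derivative $\partial_i$ can be absorbed by writing $\partial_i\Dot R=\Dot R(\partial_i a,b)+\Dot R(a,\partial_i b)$ without losing this range.
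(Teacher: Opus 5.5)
Your route is the paper's. You use the same five-term Bony splitting of $[a,\Dj]\partial_i b$, and you prove item (1) by the kernel and Taylor argument, which is exactly the proof of Lemma 2.97 of \cite{HajDanChe11} that the paper cites. Commutator and remainder terms are controlled through $\n a$, and the extra terms come from the paraproduct $\Dj\Dot{T}_{\partial_i b}a$.

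\textbf{The step that fails is item (4).} With $\n b\in\BH{\sigma-1}{\widetilde p}{r}$ and $a\in\BH{\frac dq+1}{q}{\infty}$, H\"older gives
\[
\Vert \Dot{S}_{k-1}\partial_i b\,\Dot{\Delta}_k a\Vert_{L^p}\le \Vert \Dot{S}_{k-1}\partial_i b\Vert_{L^{\widetilde p}}\Vert \Dot{\Delta}_k a\Vert_{L^q}\lesssim 2^{-k(\sigma-1)}\,2^{-k(\frac dq+1)}=2^{-k(\sigma+\frac dq)}.
\]
Moreover, the low-frequency sum is geometric only for $\sigma<1$. The regularities do not add up to $\sigma$, and the hypothesis $\sigma<\frac dq+1$ never enters. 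In fact, under $x\mapsto\lambda x$ the printed second term scales like $\lambda^{1-d/\widetilde p}$, while the left-hand side scales like $\lambda^{1-d/p}$. So for $q<\infty$ that term cannot bound $\Dj\Dot{T}_{\partial_i b}a$ at all. The paper instead puts $\partial_i b$ in $\BH{\sigma-\frac dq-1}{\widetilde p}{r}$ and controls it by $\NBH{\sigma-1}{p}{r}{\n b}$ through the embedding $\BH{\sigma-1}{p}{r}\hookrightarrow\BH{\sigma-\frac dq-1}{\widetilde p}{r}$ (note $\widetilde p\ge p$). Then $\Vert\Dot{S}_{k-1}\partial_i b\Vert_{L^{\widetilde p}}\lesssim c_k2^{-k(\sigma-\frac dq-1)}\NBH{\sigma-1}{p}{r}{\n b}$ exactly when $\sigma<\frac dq+1$, and the powers now combine to $2^{-k\sigma}$.

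\textbf{Smaller points.}

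\emph{Misplaced paraproduct term.} $\Dot{T}_{\partial_i\Dj b}a$ only involves frequencies of $a$ above $2^j$, so bounding it by $\normeinf{\n b}\NBH{\sigma}{p}{r}{a}$ needs $\sigma>0$. It belongs with the first-type terms, since $\Vert \Dot{S}_{k-1}\partial_i\Dj b\,\Dot{\Delta}_k a\Vert_{L^p}\lesssim 2^{j-k}\normeinf{\n a}\Vert \Dj b\Vert_{L^p}$ holds for every $\sigma$.

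\emph{Hidden assumption.} When you drop the second term, your H\"older pairing for $\Dj\Dot{T}_{\partial_i b}a$ requires $\widetilde p\ge p$, as in Lemma 2.100 of \cite{HajDanChe11}.

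\emph{The remainder is not an obstacle.} The derivative in $\Dj\Dot{R}(a,\partial_i b)$ causes no trouble. H\"older, Bernstein and $\Vert\Dot{\Delta}_k a\Vert_{L^{\widetilde p}}\lesssim 2^{-k(1+\frac d{\widetilde p})}\Vert\n a\Vert_{\BH{\cdp{\widetilde p}}{\widetilde p}{\infty}}$ give $2^{-j\sigma}\sum_{k\ge j-3}2^{(j-k)(\sigma+\frac d{\widetilde p})}c_k$ directly.

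\emph{Endpoints.} Your geometric sums only give open ranges, so the argument does not reach:
\begin{itemize}
\item $\sigma=-\frac d2$ in (3);
\item $\sigma=\frac d{\widetilde p}+1$ with $r=1$ in (2), where the low-frequency sum of $b$ is merely bounded and yields an $\ell^\infty$ rather than an $\ell^1$ sequence.
\end{itemize}
With $\n a$ measured in a Besov space of last index $\infty$, these endpoints really fail. For (3), take $\phi_k=\Phi\cos(2^kx_1)$, with $\widehat\Phi$ a nonnegative radial bump supported in $B(0,1)$ and $K$ large, and set
\[
a=\sum_{k=K}^{K+N}2^{-k(\frac d2+1)}\phi_k,\qquad b=\sum_{k=K}^{K+N}2^{k(\frac d2+1)}\phi_k.
\]
Then $[a,\Dot{\Delta}_0]b=-\frac{N+1}2\Dot{\Delta}_0(\Phi^2)$, while the right-hand side stays bounded.
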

\begin{proof}
We invite the reader to consult Lemma 2.97 of \cite{HajDanChe11} for the proof of \eqref{comm:est:1}. The other two inequalities may be proved by taking advantage of Bony's decomposition. Indeed we have (with
the summation convention over repeated indices):
\begin{align*}
    [a,\Dj]\partial_i b=[\Dot{T}_{a},\Dj]\partial_i  b +\Dot{T}_{\partial_i \Dj b}a+\Dot{R}(a,\partial_i \Dj b)-\Dj\Dot{R}(a,\partial_i b)-\Dj \Dot{T}_{\partial_i b} a.
\end{align*}
According to Lemma 2.100 and Remark 2.102 of \cite{HajDanChe11}, the $L^p$ norm of each term of left-hand side (except the last one) is bounded by 
\begin{align}
    \label{ter:a:uses}
     2^{-j\sigma}Cc_j  \Vert \n a\Vert_{\BH{\frac{d}{\widetilde{p}}}{\widetilde{p}}{\infty}\cap L^\infty}\NBH{\sigma}{p}{r}{b}.
\end{align}

It remains to treat the term $ \Dj \Dot{T}_{\partial_i b} a$. To do this, one can simply use the fact that 
\[  \Dot{T}:L^\infty\times \BH{\sigma}{p}{r} \longrightarrow \BH{\sigma}{p}{r},\]  which gives 
the second term of \eqref{comm:est:p}.
This term is not needed if $-\min(d/\Tilde{p}, d/p^{\prime})<\sigma<\cdp{\widetilde{p}}+1$ or ($\sigma=\cdp{\widetilde{p}}+1$ and $r=1$),
since one can argue as in the proof of \cite[Lemma 2.100 and Remark 2.102]{HajDanChe11}.

Finally, in the case $\sigma-\frac{d}{q}<1$ (or $ \sigma-\frac{d}{q}\le 1 $ and $r=1$), combining 

\[  \Dot{T}:\BH{\sigma-\frac{d}{q}-1}{\widetilde{p}}{r}\times \BH{\frac{d}{q}+1}{q}{\infty} \longrightarrow \BH{\sigma}{p}{r} \esp{and} \BH{\sigma-1}{p}{r} \hookrightarrow  \BH{\sigma-\frac{d}{q}-1}{\widetilde{p}}{r} \]
with $\frac{1}{p}=\frac{1}{q}+\frac{1}{\widetilde{p}}$ for bounding 
$\Dj \Dot{T}_{\partial_i b} a$ yields \eqref{comm:est:0}. 
\\
\end{proof}

	  The following product laws in Besov spaces have been used several times.
	 
	 	 \begin{propo}
	 \label{propo_produc_BH}
     Let $(s,p,r)\in ]0,\infty[\times[1,\infty]^2$. Then $\BH{s}{p}{r}\cap L^\infty$ is an algebra and we have 
	     \begin{equation}
	         \label{product_propo1}
	         \lVert ab \rVert_{\Dot{B}_{p,r}^{s}}
	         \le C(\normeinf{a} \lVert b \rVert_{\Dot{B}_{p,r}^{s}} + \normeinf{b} \lVert a \rVert_{\Dot{B}_{p,r}^{s}} ).
	     \end{equation}
	      Furthermore, if $-\min(d/p,d/p^{\prime}) < s \le d/p$, then the following inequality holds:
	      \begin{equation}
	         \label{product_propo2}
	          \lVert ab \rVert_{\Dot{B}_{p,1}^{s}}
	         \le C \lVert a \rVert_{\Dot{B}_{p,1}^{\frac{d}{p}}} \lVert b \rVert_{\Dot{B}_{p,r}^{s}},
	     \end{equation}
      and, if $-\min(d/p,d/p^{\prime}) \le s < d/p $ then, we have
       \begin{equation}
	         \label{product_propo:inf:p}
	          \lVert ab \rVert_{\Dot{B}_{p,\infty}^{s}}
	         \le C \lVert a \rVert_{\Dot{B}_{p,1}^{\frac{d}{p}}} \lVert b \rVert_{\Dot{B}_{p,\infty}^{s}}.
	     \end{equation}
	In the case $ s \ge d/p $,
    the following inequality also holds true:
      \begin{equation}
       \label{product_propo3}
        \lVert ab \rVert_{\Dot{B}_{p,r}^{s}}
	         \le C( \lVert a \rVert_{\Dot{B}_{p,1}^{\frac{d}{p}}} \lVert b \rVert_{\Dot{B}_{p,r}^{s}}+ \lVert a \rVert_{\Dot{B}_{p,r}^{s}} \lVert b \rVert_{\Dot{B}_{p,1}^{\frac{d}{p}}} ).
	     \end{equation}
	  In the nonhomogeneous case, if $ s > d/p $ (or $s= d/p $ and $r=1$), 
	     \begin{equation}
	         \label{product_propo3:chap2}
	         \NB{s}{p}{r}{ab} \le C\NB{s}{p}{r}{a}\NB{s}{p}{r}{b}.
	     \end{equation}
       Finally, if $(p_1,p_2)\in [1,\infty]^2$ with  $-\frac{d}{p_2}<s\le \frac{d}{p_1} \esp{and} 2\le p_1\le p_2$, then, for $i=1,2$
      \begin{equation}
	          \label{product_propo:p1:p2}
	           \lVert ab \rVert_{\Dot{B}_{p_i,r}^{s}}
	         \le C( \lVert a \rVert_{\Dot{B}_{p_2,1}^{\frac{d}{p_2}}}\ \lVert b \rVert_{\Dot{B}_{p_i,r}^{s}}+  \lVert a \rVert_{\Dot{B}_{p_i,r}^{\frac{d}{p_i}}}\lVert a \rVert_{\Dot{B}_{p_2,1}^{s-\frac{d}{p_i}+\frac{d}{p_2}}}.
	     \end{equation}
	 \end{propo}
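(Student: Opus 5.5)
We prove all the inequalities with Bony's decomposition $ab=\Dot{T}_a b+\Dot{T}_b a+\Dot{R}(a,b)$. Each of the three pieces is bounded by the paraproduct and remainder estimates of Proposition~\ref{prop:parpro:reminder}, taken with $a=a_1=a_2=\infty$ (i.e.\ without time), or by their versions with last index $r$ from \cite[Chap.~2]{HajDanChe11}. The inequalities differ only in how the roles of $a$ and $b$ are split among the three pieces, and we treat them in the order stated.

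For \eqref{product_propo1} with $s>0$, the first inequality of \eqref{est:paraproduct:gen} gives $\NBH{s}{p}{r}{\Dot{T}_a b}\lesssim\normeinf{a}\NBH{s}{p}{r}{b}$, and symmetrically for $\Dot{T}_b a$. For the remainder we use $L^\infty\hookrightarrow\BH{0}{\infty}{\infty}$ and apply \eqref{est:Reminder:gen} with $p_1=\infty$, $\ee=0$. This is legitimate because $s>0=-\min(d/\infty,d/p')$ is exactly what makes $\sum_{k\ge j-2}$ converge.

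For \eqref{product_propo2} we use the embedding $\BH{d/p}{p}{1}\hookrightarrow L^\infty$ for $\Dot{T}_a b$. The term $\Dot{T}_b a$ is bounded via the second inequality of \eqref{est:paraproduct:gen}, placing $b$ in $\BH{s-d/p}{\infty}{r}$ (Bernstein embedding), which requires $s-d/p\le0$. The remainder falls under \eqref{est:Reminder} with $s+d/p>d/p-\min(d/p,d/p')$, which is the hypothesis $s>-\min(d/p,d/p')$. The endpoint case $s=d/p$ needs last index $1$ on $a$; this is why $\BH{d/p}{p}{1}$ appears. For \eqref{product_propo:inf:p}, where $r=\infty$, the same argument works, but the strict inequality $s<d/p$ is needed in $\Dot{T}_b a$ to sum the low-frequency series $\sum_{k\le j}2^{k(s-d/p)}$ geometrically. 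The closed endpoint $s=-\min(d/p,d/p')$ is allowed since the remainder is then only estimated in $\BH{s}{p}{\infty}$.

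For \eqref{product_propo3} with $s\ge d/p$, we bound both paraproducts through $L^\infty$, using $\BH{d/p}{p}{1}\hookrightarrow L^\infty$ on the low-frequency factor, and the remainder through \eqref{est:Reminder}. The nonhomogeneous algebra property \eqref{product_propo3:chap2} follows identically, with the extra block $\Delta_{-1}$ handled by Hölder and Bernstein.

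The main obstacle is the mixed estimate \eqref{product_propo:p1:p2}. Here we split again by Bony. The term $\Dot{T}_a b$ is bounded by $\normeinf{a}\NBH{s}{p_i}{r}{b}$, with $\normeinf{a}\lesssim\NBH{d/p_2}{p_2}{1}{a}$. The remainder $\Dot{R}(a,b)$ is estimated in $L^{p_i}$ by Hölder between $L^{p_2}$ and $L^{p_i}$, followed by Bernstein from $L^{m}$, $1/m=1/p_2+1/p_i$, back to $L^{p_i}$. This uses $s+d/p_2>0$, which is the assumption $s>-d/p_2$, together with $p_i\le p_2$ so that $m\ge1$.

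The delicate term is $\Dot{T}_b a$, for which we aim at $\NBH{s}{p_i}{r}{\Dot{T}_b a}\lesssim\NBH{d/p_i}{p_i}{r}{a}\,\NBH{s-d/p_i+d/p_2}{p_2}{1}{b}$. (The displayed right-hand side of the statement has a typo, and should read with $b$ in the last factor.) To get this we use the second inequality of \eqref{est:paraproduct:gen} with $t=d/p_i-s\ge0$, which is the hypothesis $s\le d/p_1\le d/p_i$. The low-frequency factor $b$ is placed in $\BH{s-d/p_i}{\infty}{1}$ through the embedding $\BH{s-d/p_i+d/p_2}{p_2}{1}\hookrightarrow\BH{s-d/p_i}{\infty}{1}$. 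The point to check carefully is the endpoint $t=0$, where last index $1$ is needed; this is why $b$ is taken in a space with third index $1$.
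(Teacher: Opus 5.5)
Your Bony-decomposition argument, and the three bounds you use for \eqref{product_propo:p1:p2}, are exactly the paper's. These are the remainder via $L^{p_2}\times L^{p_1}\to L^{\lambda}$ with $1/\lambda=1/p_1+1/p_2$ plus Bernstein, $\dot T_a b$ via $\dot B^{d/p_2}_{p_2,1}\hookrightarrow L^\infty$, and $\dot T_b a$ via $\dot B^{s-d/p_1+d/p_2}_{p_2,1}\hookrightarrow \dot B^{s-d/p_1}_{\infty,1}$. You are right that the last factor must involve $b$, and for $i=1$ your argument is complete.

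The step that fails is $i=2$. You justify $t=d/p_i-s\ge 0$ by ``$s\le d/p_1\le d/p_i$'', but $p_1\le p_2$ gives $d/p_2\le d/p_1$. So for $d/p_2<s\le d/p_1$ you get $t<0$, and the low-frequency factor would have to sit in $\dot B^{s-d/p_2}_{\infty,1}$ with positive index. Then $\|\dot S_{k-1}b\|_{L^\infty}$, which tends to $\|b\|_{L^\infty}$, cannot decay like $2^{-k(s-d/p_2)}$, so the paraproduct bound breaks down.

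This cannot be repaired, because the corrected inequality is false in that range. Take $b$ Schwartz with $\widehat b$ supported in $\{1/2\le|\xi|\le 2\}$, and $a_N=\phi(x)\cos(Nx_1)$ with $\widehat\phi$ supported in $B(0,1/4)$. Both $a_N$ and $a_Nb$ are spectrally supported where $|\xi|\approx N$, and $\|a_Nb\|_{L^{p_2}}\to c\,\|\phi b\|_{L^{p_2}}>0$. Hence the left-hand side is $\approx N^{s}$ while the right-hand side is $O(N^{d/p_2})$.

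For $i=2$ one must therefore restrict to $-d/p_2<s\le d/p_2$, where your argument works verbatim. The paper's one-line reduction of $i=2$ to \eqref{product_propo3} does not prove the stated inequality either, so this is a flaw in the statement. Still, your proof asserts the false range instead of flagging it.

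Smaller slips:
\begin{itemize}
\item In the remainder, $m\ge 1$ follows from $p_1\ge 2$ (so that $1/p_2+1/p_i\le 1$), not from $p_i\le p_2$.
\item In \eqref{product_propo:inf:p} the geometric series is $\sum_{k\le j}2^{k(d/p-s)}$; with your exponent it diverges.
\item In \eqref{product_propo2}, whose left-hand side in $\dot B^{s}_{p,1}$ only makes sense for $r=1$, the endpoint $s=d/p$ needs last index $1$ on $b$, not on $a$. The reason is that $\dot T_b a$ requires a uniform bound on $\|\dot S_{k-1}b\|_{L^\infty}$, and $\dot B^{0}_{\infty,r}$ with $r>1$ does not provide one.
\end{itemize}
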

     \begin{proof}
       We provide the proof of these inequalities only in the homogeneous case. The proof of inequalities \eqref{product_propo1}, \eqref{product_propo2}, \eqref{product_propo:inf:p} and \eqref{product_propo3} is a direct consequence of paradifferential Calculus and Bony's decomposition 
         \begin{align}
             \label{bony:decom:app}
           ab=   \Dot{R}(a, b)+\Dot{T}_{b} a+\Dot{T}_{a} b\cdot
         \end{align}
         Hence, we will concentrate on the proof of Inequality \eqref{product_propo:p1:p2}. 
         Obviously, for  $i=2,$ it is just \eqref{product_propo3}
         with $p=p_2$ and $r=1.$
          Next, since $s+\frac{d}{p_2}>0,$ setting $1/\lambda=1/p_1+1/p_2$ then using 
          $$\Dot{R}:\BH{s}{p_1}{1}\times \BH{\frac{d}{p_2}}{p_2}{1} \longrightarrow \BH{s+\cdp{p_2}}{\lambda}{1} \esp{and} \BH{s+\frac d{p_2}}{\lambda}{1} \hookrightarrow  \BH{s}{p_1}{1}$$ allows us to get the desired bound  for the first term of \eqref{bony:decom:app}.
          
          To bound the second term, since $s\le \frac{d}{p_1}$, it suffices to combine 
          $$\Dot{T}:\BH{s-\frac{d}{p_1}}{\infty}{1}\times \BH{\frac{d}{p_1}}{p_1}{1} \longrightarrow \BH{s}{p_1}{1} \esp{and} \BH{s-\cdp{p_1}+\cdp{p_2}}{p_2}{1} \hookrightarrow  \BH{s-\cdp{p_1}}{\infty}{1}.$$ The last term of \eqref{bony:decom:app}  can be bounded in $\BH{s}{p_1}{1}$ with the help of 
          \begin{equation*}
          \Dot{T}:L^\infty\times  \BH{s}{p_1}{1}\longrightarrow \BH{s}{p_1}{1} \esp{and} \BH{\frac{d}{p_2}}{p_2}{1}\hookrightarrow L^\infty.\qedhere
          \end{equation*}
     \end{proof}
     
     \begin{rmq}
      We have similar results for the spaces  $L^{a}_T(\BH{s}{p}{r})$ and $\widetilde{L}^{a}_T(\BH{s}{p}{r})$ see \cite{Dan01loc,Haspot11}.
     \end{rmq}
	 
  Among the results necessary to prove our Theorems, we have the following  one.
  \begin{propo}
\label{propo:comp}
     Let $f$ be a smooth function on $\mathbb{R}$ with $f(0)=0$. Let $(p,r)\in \times[1,\infty)\times [1,\infty) $  and
     $s\in\mathbb R$ satisfy:
    \begin{equation}
        \label{cond:on:p:r:s}
        s<\frac{d}{p}, \esp{or } s=\frac{d}{p}\esp{and} r=1.
    \end{equation}
    If in addition $s>0$ then, for any real function $u$ in $L^\infty \cap \Dot{B}^{s}_{p,r} $ we have $f\circ u$ in $L^\infty\cap \Dot{B}^{s}_{p,r}$ and 
    \begin{align}
          \label{comp:u:propo:p}
        \lVert f\circ u \rVert_{\Dot{B}^{s}_{p,r}}   \le C(f^{\prime},\normeinf{u})  \lVert u \rVert_{\Dot{B}^{s}_{p,r}}.
      \end{align}
       More generally, 
      \begin{align}
          \label{comp:uv:propo:11}
           \lVert f\circ u- f\circ v \rVert_{\Dot{B}^{s}_{p,r}}
           \le C(f^{\prime}, \normeinf{(u,v)}) (1+  \lVert( u,v) \rVert_{\Dot{B}^{\frac{d}{p}}_{p,1}} )  \lVert u-v \rVert_{\Dot{B}^{s}_{p,r}} .
      \end{align}
   Next, under conditions $s>-\min(d/p,d/p^{\prime}) $ and \eqref{cond:on:p:r:s}, we have
      \begin{align}
          \label{comp:uv:propo:1}
         \lVert f\circ u- f\circ v \rVert_{\Dot{B}^{s}_{p,r}}    \le C(f^{\prime}, \normeinf{(u,v)}) (1+  \lVert( u,v) \rVert_{\Dot{B}^{\frac{d}{p}}_{p,1}} )  \lVert u-v \rVert_{\Dot{B}^{s}_{p,r}}  .
      \end{align}
      Furthermore, if $-\min(d/p,d/p^{\prime}) \le s < \frac{d}{p}$ then the last  inequality remains valid for $r=\infty$:
         \begin{align}
          \label{comp:uv:propo:inf}
         \lVert f\circ u- f\circ v \rVert_{\Dot{B}^{s}_{p,\infty}}    \le C(f^{\prime}, \normeinf{(u,v)}) (1+  \lVert( u,v) \rVert_{\Dot{B}^{\frac{d}{p}}_{p,1}} )  \lVert u-v \rVert_{\Dot{B}^{s}_{p,\infty}}.
      \end{align}
       Similar results holds for  Chemin-Lerner spaces.
\end{propo}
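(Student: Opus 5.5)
The plan is to prove \eqref{comp:u:propo:p} by Meyer's first linearization method, and then to deduce the difference estimates \eqref{comp:uv:propo:11}--\eqref{comp:uv:propo:inf} from \eqref{comp:u:propo:p} combined with the product laws of Proposition \ref{propo_produc_BH}.

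\textbf{Step 1: decomposition of $f\circ u$.} Assume first $u$ smooth with $\widehat u$ compactly supported away from the origin; the general case follows by density and Fatou's property of Besov norms. Since $f(0)=0$ and $\Dot S_k u\to 0$ as $k\to-\infty$, I write
\[
f(u)=\sum_{k\in\mathbb Z}\bigl(f(\Dot S_{k+1}u)-f(\Dot S_k u)\bigr)=\sum_{k\in\mathbb Z} m_k\,\Dot\Delta_k u,\qquad
m_k\defn\int_0^1 f'(\Dot S_k u+\tau\Dot\Delta_k u)\,d\tau .
\]
Condition \eqref{cond:on:p:r:s} (that is, $s<d/p$, or $s=d/p$ and $r=1$) is what guarantees that this series converges in $\mathcal S'_h$, so that the limit is a genuine element of the homogeneous space. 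I would check this by showing that the low-frequency part $\sum_{k\le K}m_k\Dot\Delta_k u$ tends to $0$ in $L^\infty$ as $K\to-\infty$, using $\|\Dot\Delta_k u\|_{L^\infty}\lesssim 2^{k(d/p-s)}c_k\|u\|_{\Dot B^s_{p,r}}$.

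\textbf{Step 2: bounds on the blocks.} Since $\Dot S_k u$ and $\Dot\Delta_k u$ are spectrally supported in the ball of radius $C2^k$, Bernstein's inequality and the Fa\`a di Bruno formula give, for every multi-index $\alpha$,
\[
\|\partial^\alpha m_k\|_{L^\infty}\le C_\alpha\bigl(f,\|u\|_{L^\infty}\bigr)\,2^{k|\alpha|}.
\]
For $j\in\mathbb Z$ I split $\Dot\Delta_j f(u)$ into the sums over $k\ge j$ and over $k<j$.
\begin{itemize}
\item For $k\ge j$, I use $\|\Dot\Delta_j(m_k\Dot\Delta_k u)\|_{L^p}\lesssim\|m_k\|_{L^\infty}\|\Dot\Delta_k u\|_{L^p}$, which after weighting by $2^{js}$ produces the factor $2^{(j-k)s}$.
\item For $k<j$, I fix an integer $M>s$ and use Bernstein in the form $\|\Dot\Delta_j F\|_{L^p}\lesssim 2^{-jM}\|D^M F\|_{L^p}$ together with the Leibniz rule, which gives $\|\Dot\Delta_j(m_k\Dot\Delta_k u)\|_{L^p}\lesssim 2^{(k-j)M}\|\Dot\Delta_k u\|_{L^p}$.
\end{itemize}
The resulting kernels are $2^{(j-k)s}\mathbf 1_{k\ge j}$ and $2^{(j-k)(s-M)}\mathbf 1_{k<j}$. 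Both are summable precisely because $s>0$ and $M>s$, so Young's inequality for series in $\ell^r(\mathbb Z)$ gives \eqref{comp:u:propo:p}. The Chemin--Lerner version is obtained the same way, taking the time norm before the $\ell^r$ summation.

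\textbf{Step 3: difference estimates.} I write
\[
f(u)-f(v)=(u-v)\Bigl(f'(0)+g(u,v)\Bigr),\qquad g(u,v)\defn\int_0^1\bigl(f'(v+\tau(u-v))-f'(0)\bigr)\,d\tau .
\]
Step 1 applies verbatim to smooth functions of several variables that vanish at $0$, so \eqref{comp:u:propo:p} with $s=d/p$ and $r=1$ gives $\|g(u,v)\|_{\Dot B^{d/p}_{p,1}}\le C\|(u,v)\|_{\Dot B^{d/p}_{p,1}}$, and also $\|g\|_{L^\infty}\le C$.
\begin{itemize}
\item For $-\min(d/p,d/p')<s\le d/p$, the product law \eqref{product_propo2} applied to $(u-v)\,g$ yields \eqref{comp:uv:propo:1}. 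This covers \eqref{comp:uv:propo:11}, since under \eqref{cond:on:p:r:s} with $s>0$ that range is included.
\item For $r=\infty$ and $-\min(d/p,d/p')\le s<d/p$, I use \eqref{product_propo:inf:p} instead, which gives \eqref{comp:uv:propo:inf}.
\end{itemize}

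\textbf{Main obstacle.} I expect the delicate points to be the convergence of $\sum_k m_k\Dot\Delta_k u$ in $\mathcal S'_h$ in the endpoint case $s=d/p$, $r=1$, and the uniformity of the constants in the multivariable composition used for $g$. I would handle the latter by the same Fa\`a di Bruno bounds as in Step 2, with the constants depending only on $f$ and $\|(u,v)\|_{L^\infty}$.
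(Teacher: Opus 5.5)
Your proof is correct and is the standard argument: Meyer's first linearization with the $k\ge j$ / $k<j$ splitting for \eqref{comp:u:propo:p}, then $f(u)-f(v)=(u-v)\bigl(f'(0)+g(u,v)\bigr)$ combined with the composition bound at $(s,r)=(d/p,1)$ and the product laws \eqref{product_propo2} and \eqref{product_propo:inf:p}. The paper gives no proof of its own and simply refers to \cite{DanADOlocal} (see also \cite[Chap.~2]{HajDanChe11}), which is the argument you reproduce.

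One small imprecision: for $\widehat u$ compactly supported away from the origin the telescoping sum is finite, so condition \eqref{cond:on:p:r:s} is not needed there. It is really used when passing to general $u$, to get $\|\dot S_k u\|_{L^\infty}\to0$ as $k\to-\infty$, and hence $f(u_n)\to f(u)$ with $f(u)\in\mathcal S'_h$.
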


\begin{proof}
    See \cite{DanADOlocal}.
\end{proof}
The following composition estimates
    enable us to handle some $0-$th order terms.
  \begin{propo}
	 \label{propo_Compo_u_v_BH:loc}
  Let $0\le n_1\le n$ and $m$ be three integers. 	  Let $f: (X,Y)\in \mathbb{R}^{n_1}\times\mathbb{R}^{n-n_1} \mapsto f(X,Y)\in\mathbb{R}^m $ be a smooth function on $\mathbb{R}^n$. Assume that $f$ is at most linear with respect to $ Y $ (that is, the second derivative $D^2_Y f$ of $f$ with respect to the component $ Y$ vanishes).\\
  If $f$  vanishes at $0_{\mathbb{R}^n}$, then for any $0<s\le \cd $  the following inequality holds true
  \begin{align}
      \NBH{s}{2}{1}{f(u,v)}\le   C(f^{\prime}, \normeinf{u})( \NBH{s}{2}{1}{ v}(1+\NBH{\cd}{2}{1}{u})+\NBH{s}{2}{1}{ u} ) . 
  \end{align}
  Furthermore if $-\cd< s\le \cd,$ then we have
    for some  $C= C(f^{\prime}, \normeinf{u_1,u_2})$: 
    \begin{multline}
      \label{comp:u_v_1:2:BH:ine:prop:1:loc}
      \NBH{s}{2}{1}{f(u_1,v_1)-f(u_2, v_2)}\le  C  \NBH{s}{2}{1}{ v_2-v_1}(1+\NBH{\cd}{2}{1}{u_2})\\
      +C(1+\NBH{\cd}{2}{1}{u_1}+\NBH{\cd}{2}{1}{u_2})\left( \NBH{\cd}{2}{1}{u_2-u_1}\NBH{s}{2}{1}{ v_1}+ \NBH{s}{2}{1}{ u_1-u_2}\right)\cdotp
  \end{multline}
    Finally  if   $ -\cd\le  s< \frac{d}{2}$ then we have
  \begin{multline}
      \label{comp:u_v_1:2:BH:ine:prop:inf:loc}
      \NBH{s}{2}{\infty}{f(u_1,v_1)-f(u_2, v_2)}\le  C  \NBH{s}{2}{\infty}{ v_2-v_1}(1+\NBH{\cd}{2}{1}{u_2})\\
      +C(1+\NBH{\cd}{2}{1}{u_1}+\NBH{\cd}{2}{1}{u_2}) \left(\NBH{s}{2}{\infty}{u_2-u_1}\NBH{\cd}{2}{1}{ v_1}+ \NBH{s}{2}{\infty}{ u_1-u_2}\right),
      \end{multline}
      where $C= C(f^{\prime}, \normeinf{u_1,u_2}) $.
	 \end{propo}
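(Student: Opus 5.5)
The plan is to exploit the structural assumption that $f$ is at most linear in $Y$. Since $D^2_Y f\equiv 0$, Taylor's formula in the $Y$ variable is exact:
\[
f(X,Y)=f(X,0)+A(X)\,Y,\qquad A(X)\defn D_Yf(X,0)\in\mathcal{M}_{m\times(n-n_1)}(\mathbb{R}),
\]
with $A$ smooth. Moreover $f(0,0)=0$, so $X\mapsto f(X,0)$ vanishes at the origin. This reduces every estimate to two ingredients applied componentwise (the vector-valued case adds only harmless constants):
\begin{itemize}
\item the composition estimates of Proposition \ref{propo:comp} with $p=2$;
\item the product laws of Proposition \ref{propo_produc_BH} with $p=2$.
\end{itemize}

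\textbf{First inequality ($0<s\le\frac d2$).} I write
\[
f(u,v)=f(u,0)+A(0)v+\bigl(A(u)-A(0)\bigr)v .
\]
The first term is bounded by \eqref{comp:u:propo:p}: $\NBH{s}{2}{1}{f(u,0)}\le C(f',\normeinf{u})\NBH{s}{2}{1}{u}$. The second term is trivially bounded by $C\NBH{s}{2}{1}{v}$. For the third, I use \eqref{product_propo2}, which is valid since $-\frac d2<s\le\frac d2$, with $a=A(u)-A(0)$. I then bound $\NBH{\cd}{2}{1}{A(u)-A(0)}\le C(f',\normeinf u)\NBH{\cd}{2}{1}{u}$ by \eqref{comp:u:propo:p} at $s=\frac d2$, $r=1$. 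This yields exactly $C\bigl(\NBH{s}{2}{1}{v}(1+\NBH{\cd}{2}{1}{u})+\NBH{s}{2}{1}{u}\bigr)$.

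\textbf{Difference estimate for $-\frac d2<s\le\frac d2$.} I split
\[
f(u_1,v_1)-f(u_2,v_2)=\bigl[f(u_1,0)-f(u_2,0)\bigr]+A(u_2)(v_1-v_2)+\bigl(A(u_1)-A(u_2)\bigr)v_1 .
\]
\begin{itemize}
\item The bracket is handled by \eqref{comp:uv:propo:1} (admissible since $s>-\frac d2$), giving $C(1+\NBH{\cd}{2}{1}{(u_1,u_2)})\NBH{s}{2}{1}{u_1-u_2}$.
\item The middle term is treated exactly as in the first inequality, writing $A(u_2)=A(0)+(A(u_2)-A(0))$. It gives $C(1+\NBH{\cd}{2}{1}{u_2})\NBH{s}{2}{1}{v_1-v_2}$.
\item The last term uses \eqref{product_propo2} with the factor $A(u_1)-A(u_2)$ placed in $\BH{\cd}{2}{1}$. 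Then \eqref{comp:uv:propo:11} with $s=\frac d2$ gives $\NBH{\cd}{2}{1}{A(u_1)-A(u_2)}\le C(1+\NBH{\cd}{2}{1}{(u_1,u_2)})\NBH{\cd}{2}{1}{u_1-u_2}$. This produces the term $\NBH{\cd}{2}{1}{u_2-u_1}\NBH{s}{2}{1}{v_1}$.
\end{itemize}

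\textbf{Endpoint case $r=\infty$, $-\frac d2\le s<\frac d2$.} I keep the same three-term splitting but swap which factor carries the critical norm in the last product.
\begin{itemize}
\item The bracket is bounded by \eqref{comp:uv:propo:inf}.
\item The middle term is bounded by \eqref{product_propo:inf:p} with $a=A(u_2)-A(0)\in\BH{\cd}{2}{1}$.
\item For $(A(u_1)-A(u_2))v_1$, I apply \eqref{product_propo:inf:p} with $a=v_1\in\BH{\cd}{2}{1}$ and $b=A(u_1)-A(u_2)\in\BH{s}{2}{\infty}$. The latter is bounded via \eqref{comp:uv:propo:inf} by $C(1+\NBH{\cd}{2}{1}{(u_1,u_2)})\NBH{s}{2}{\infty}{u_1-u_2}$, which gives the stated term $\NBH{s}{2}{\infty}{u_2-u_1}\NBH{\cd}{2}{1}{v_1}$.
\end{itemize}

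\textbf{Expected obstacle.} I expect the main (mild) difficulty to be the bookkeeping of the index ranges. For negative $s$ we need $s>-\frac d2$ (or $\ge$ when $r=\infty$) in both the product and the composition-difference laws; this is precisely why $A(u)$ must never be estimated directly in $\BH{s}{2}{1}$. That is also the reason for subtracting the constant $A(0)$ (respectively, for using a difference $A(u_1)-A(u_2)$), so that all composition estimates are applied only to functions vanishing at the origin.
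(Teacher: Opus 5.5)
Your proposal is correct and follows essentially the same route as the paper. Both use the exact splitting $f(X,Y)=\Gamma(X)+\Lambda(X)Y$ (your $f(X,0)+A(X)Y$), the same three-term decomposition of $f(u_1,v_1)-f(u_2,v_2)$, and the same product and composition laws with $p=2$. Subtracting the constant $A(0)$, and placing $v_1$ in $\dot B^{d/2}_{2,1}$ for the term $(A(u_1)-A(u_2))v_1$ when $r=\infty$, only make explicit what the paper's short argument leaves implicit.
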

	\begin{proof}
	    The Taylor formula and the fact  $  D^2_Y f=0   $ guarantee the existence of two smooth functions $\Lambda$ and $\Gamma$ defined on $\mathbb{R}^{n_1}$ such that
     \[    f(u,v) = \Lambda(u) v+ \Gamma(u) \esp{(with} \Gamma(0_{\mathbb{R}^{n_1}})=0_{\mathbb{R}^m}\;\; \text{if}\;\; f(0_{\mathbb{R}^{n_1}},0_{\mathbb{R}^{n-n_1}})=0_{\mathbb{R}^{m}} .   \]


If  $ 0<s\le \frac{d}{2}$ then, applying the inequalities \eqref{product_propo2} and \eqref{comp:u:propo:p} 
to the term $ \Lambda(u) v $ yields the first term of the right-hand side of  inequality \eqref{comp:u:propo:p}.
Next, using Proposition \ref{propo:comp} (recall that $s>0$) for the term $  \Lambda(u) $ gives the second term of \eqref{comp:u:propo:p}.   

 To prove \eqref{comp:u_v_1:2:BH:ine:prop:1:loc}  (resp.\eqref{comp:u_v_1:2:BH:ine:prop:inf:loc}), we use  the above decomposition to get
     \[  f(u_2,v_2)- f(u_1,v_1)= \Lambda(u_2)(v_2-v_1) +(\Lambda(u_2)-\Lambda(u_1))v_1 +(\Gamma(u_2)-\Gamma(u_1)).   \]
  Having this decomposition in hand, the first two terms of the last equality may be handled by  the  inequalities \eqref{product_propo2}  (resp. \eqref{product_propo:inf:p})  and  \eqref{comp:u:propo:p} (resp. \eqref{comp:uv:propo:inf}). Concerning the last one, we use Inequality \eqref{comp:uv:propo:1} (resp. \eqref{comp:uv:propo:inf}) .
	\end{proof}

	  In homogeneous case, the Proposition \ref{propo_Compo_u_v_BH:loc} states.
  \begin{propo}
	 \label{propo_Compo_u_v_BH:glo}
  Let $0\le n_1\le n$ be two integers. 	     Let $f: (X,Y)\in \mathbb{R}^{n_1}\times\mathbb{R}^{n-n_1} \mapsto f(X,Y) $ be a smooth function on $\mathbb{R}^n$ vanishing at $0$. Assume that $f$ is at most linear with respect to $ Y $ (that is, the second derivative $D^2_Y f$ of $f$ with respect to the component $ Y$ vanishes).
 
  Then, for any $0<s\le \cd $  the following inequalities hold true:
  \begin{align}
      \label{comp:u_v:BH:ine:prop:1:glo}
      \NB{s}{2}{1}{f(u,v)}\le   C(f^{\prime}, \normeinf{u})( \NB{s}{2}{1}{ v}(1+\NB{\cd}{2}{1}{u})+\NB{s}{2}{1}{ u} ) . 
  \end{align}
  Furthermore if $-\cd< s\le \cd$ we have
     \begin{multline}
      \label{comp:u_v_1:2:BH:ine:prop:1:glo}
    \NB{s}{2}{1}{f(u_1,v_1)-f(u_2, v_2)}\le  C  \NB{s}{2}{1}{ v_2-v_1}(1+\NB{\cd}{2}{1}{u_2})\\
      +C(1+\NB{\cd}{2}{1}{u_1}+\NB{\cd}{2}{1}{u_2})\Bigl( \NB{\cd}{2}{1}{u_2-u_1}\NB{s}{2}{1}{ v_1}+ \NB{s}{2}{1}{ u_1-u_2}\Bigr)\cdotp
  \end{multline}
    Finally,  if   $ -\cd\le  s< \frac{d}{2}$ then we have
  \begin{multline}
       \label{comp:u_v_1:2:BH:ine:prop:inf:glo}
      \NB{s}{2}{\infty}{f(u_1,v_1)-f(u_2, v_2)}\le  C  \NB{s}{2}{\infty}{ v_2-v_1}(1+\NB{\cd}{2}{1}{u_2})\\
      +C(1+\NB{\cd}{2}{1}{u_1}+\NB{\cd}{2}{1}{u_2}) \Bigl(\NB{s}{2}{\infty}{u_2-u_1}\NB{\cd}{2}{1}{ v_1}+ \NB{s}{2}{\infty}{ u_1-u_2}\Bigr)\cdotp
         \end{multline}
  In the above two inequalities, $C= C(f^{\prime}, \normeinf{u_1,u_2}) $.
	 \end{propo}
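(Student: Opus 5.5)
The statement to prove is Proposition \ref{propo_Compo_u_v_BH:glo}, the nonhomogeneous analogue of Proposition \ref{propo_Compo_u_v_BH:loc}. I will reduce it to product and composition estimates in nonhomogeneous Besov spaces, using a structural decomposition of $f$.

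\textbf{Step 1: structure of $f$.} Since $D^2_Y f\equiv 0$, Taylor's formula in $Y$ gives
\[ f(X,Y)=\Gamma(X)+\Lambda(X)Y, \qquad \Gamma(X)\defn f(X,0),\quad \Lambda(X)\defn D_Yf(X,0), \]
with $\Gamma,\Lambda$ smooth. Because $f(0)=0$ we have $\Gamma(0)=0$. Write $\Lambda(u)=\Lambda(0)+\widetilde\Lambda(u)$ with $\widetilde\Lambda(0)=0$.

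\textbf{Step 2: the bound \eqref{comp:u_v:BH:ine:prop:1:glo} for $0<s\le\cd$.}
\begin{itemize}
\item The term $\Gamma(u)$ is controlled by the nonhomogeneous version of \eqref{comp:u:propo:p}, giving $C(f',\normeinf{u})\NB{s}{2}{1}{u}$. Here $s>0$ and $s\le d/2$ with $r=1$, so the low block is handled through $L^2$ via the mean value theorem.
\item In $\Lambda(u)v=\Lambda(0)v+\widetilde\Lambda(u)v$, the first part is bounded trivially by $|\Lambda(0)|\,\NB{s}{2}{1}{v}$.
\item For the second part, use the nonhomogeneous product law (Bony decomposition with $\B{\cd}{2}{1}$ an algebra, as in \eqref{product_propo2}) to get $\NB{s}{2}{1}{\widetilde\Lambda(u)v}\lesssim \NB{\cd}{2}{1}{\widetilde\Lambda(u)}\NB{s}{2}{1}{v}$. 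Then bound $\NB{\cd}{2}{1}{\widetilde\Lambda(u)}\le C\NB{\cd}{2}{1}{u}$ by composition.
\end{itemize}
Summing these pieces gives \eqref{comp:u_v:BH:ine:prop:1:glo}.

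\textbf{Step 3: the difference bounds \eqref{comp:u_v_1:2:BH:ine:prop:1:glo} and \eqref{comp:u_v_1:2:BH:ine:prop:inf:glo}.} Start from the identity
\[ f(u_1,v_1)-f(u_2,v_2)=\Lambda(u_2)(v_1-v_2)+(\Lambda(u_1)-\Lambda(u_2))v_1+(\Gamma(u_1)-\Gamma(u_2)). \]
\begin{itemize}
\item For $-\cd<s\le\cd$ with $r=1$, the first term follows from the product law $\B{\cd}{2}{1}\times\B{s}{2}{1}\to\B{s}{2}{1}$ combined with Step 2 applied to $\Lambda(u_2)$.
\item For the second term, apply the same product law with the roles exchanged: put $\Lambda(u_1)-\Lambda(u_2)$ in $\B{\cd}{2}{1}$ and bound it with \eqref{comp:uv:propo:11} at $s=d/2$, which yields the factor $(1+\|(u_1,u_2)\|_{\B{\cd}{2}{1}})\NB{\cd}{2}{1}{u_1-u_2}$.
\item The third term is handled by \eqref{comp:uv:propo:1}.
\item For $r=\infty$ and $-\cd\le s<\cd$, replace the product and composition laws by \eqref{product_propo:inf:p} and \eqref{comp:uv:propo:inf}. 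In the second term, put $v_1$ in $\B{\cd}{2}{1}$ and the difference $\Lambda(u_1)-\Lambda(u_2)$ in $\B{s}{2}{\infty}$.
\end{itemize}

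\textbf{Main obstacle.} The difficulty is in transferring the homogeneous laws to the inhomogeneous setting when $s\le 0$. There the low-frequency block $\Delta_{-1}$ is controlled in $L^2$ only through negative-regularity remainder estimates. I would handle it by noting that for every term the inhomogeneous norm is bounded by the homogeneous norm of the high part plus the $L^2$ norm of $\Delta_{-1}$. The latter is bounded by Bernstein and Hölder, using $L^\infty$ control of $u_i$ and $\Lambda(u_i)$ via $\B{\cd}{2}{1}\hookrightarrow L^\infty$.
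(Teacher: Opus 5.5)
Your proposal is correct and is essentially the paper's own argument. You use the same Taylor splitting $f(X,Y)=\Gamma(X)+\Lambda(X)Y$ and the same three-term decomposition of $f(u_1,v_1)-f(u_2,v_2)$. You then apply the product laws \eqref{product_propo2}, \eqref{product_propo:inf:p} together with the composition estimates \eqref{comp:u:propo:p}, \eqref{comp:uv:propo:11}, \eqref{comp:uv:propo:1}, \eqref{comp:uv:propo:inf}. Your extra split $\Lambda(u)=\Lambda(0)+\widetilde\Lambda(u)$ is exactly what produces the factor $1+\|u\|_{B^{d/2}_{2,1}}$, which the paper leaves implicit.

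The one inaccuracy is in your ``main obstacle'' remark. For $s\le 0$, the block $\Delta_{-1}$ of the high--high remainder is not controlled by $L^\infty$ bounds alone. It is controlled by Bernstein ($L^1\to L^2$) through the $B^{d/2}_{2,1}$ norm of one factor (e.g.\ $\widetilde\Lambda(u_2)$), using $s>-d/2$ (or $s\ge -d/2$ when $r=\infty$). This is already encoded in the nonhomogeneous product laws, so you can simply invoke them directly.
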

\bibliographystyle{plain} 
\bibliography{biblio}

\begin{thebibliography}{10}

\bibitem{Adogbo25}
J.-P. Adogbo.
\newblock {\em Around the problem of global existence for compressible and viscous fluid mechanics systems with or without capillarity}.
\newblock PhD thesis, Université Paris Dauphine-PSL, 2025.

\bibitem{DanADOglob}
J.-P. Adogbo and R.~Danchin.
\newblock Global existence for multi-dimensional partially diffusive systems.
\newblock {\em Journal of Differential Equations}, 445:113596, 2025.

\bibitem{DanADOlocal}
J.-P. Adogbo and R.~Danchin.
\newblock Local well-posedness in the critical regularity setting for hyperbolic systems with partial diffusion.
\newblock {\em Adv. Differential Equations}, 31((3/4)):161 -- 218, March/April 2026.

\bibitem{HajDanChe11}
H.~Bahouri, J.-Y. Chemin, and R.~Danchin.
\newblock {\em Fourier Analysis and Nonlinear Partial Differential Equations}, volume 343.
\newblock Springer, 2011.

\bibitem{Beau11}
K.~Beauchard and E.~Zuazua.
\newblock Large time asymptotics for partially dissipative hyperbolic system.
\newblock {\em Arch. Rational Mech. Anal}, 199:177--227, 2011.

\bibitem{Bren66}
Ph. Brenner.
\newblock The {C}auchy problem for symmetric hyperbolic systems in {$L\sb{p}$}.
\newblock {\em Math. Scand.}, 19:27--37, 1966.

\bibitem{BurCrinTan23}
C.~Burtea, T.~Crin-Barat, and J.~Tan.
\newblock Pressure-relaxation limit for a one-velocity {B}aer-{N}unziato model to a {K}apila model.
\newblock {\em Math. Models Methods Appl. Sci.}, 33(4):687--753, 2023.

\bibitem{DanCha10}
F.~Charve and R.~Danchin.
\newblock A global existence result for the compressible navier-stokes equations in the critical ${L}^p$ framework.
\newblock {\em Arch. Rational Mech. Anal.}, 198:233--271, 2010.

\bibitem{BaratDan22M}
T.~Crin-Barat and R.~Danchin.
\newblock Partially dissipative hyperbolic systems in the critical regularity setting: the multi-dimensional case.
\newblock {\em J. Math. Pures Appl. (9)}, 165:1--41, 2022.

\bibitem{BaratDan22D1}
T.~Crin-Barat and R.~Danchin.
\newblock Partially dissipative one-dimensional hyperbolic systems in the critical regularity setting, and applications.
\newblock {\em Pure and Applied Analysis}, 4(1):85--125, 2022.

\bibitem{BaratDan23}
T.~Crin-Barat and R.~Danchin.
\newblock Global existence for partially dissipative hyperbolic systems in the {${L}^p$} framework, and relaxation limit.
\newblock {\em Math. Ann.}, 386(3-4):2159--2206, 2023.

\bibitem{Dan01loc}
R.~Danchin.
\newblock Local theory in critical spaces for compressible viscous and heat conductive gases.
\newblock {\em Communications in Partial Differential Equations}, 26:1183--1233, 2001.

\bibitem{Danc22}
R.~Danchin.
\newblock Partially dissipative systems in the critical regularity setting, and strong relaxation limit.
\newblock {\em EMS Surv. Math. Sci.}, 9(1):135--192, 2022.

\bibitem{DanMucha24}
R.~Danchin and P.~B. Mucha.
\newblock The compressible {E}uler system with nonlocal pressure: global existence and relaxation.
\newblock {\em Calculus of Variations and Partial Differential Equations}, 63(6):148, 2024.

\bibitem{Gues06}
O.~Gues and J.~Rauch.
\newblock Hyperbolic {${L}^p$} multipliers are translations.
\newblock {\em Comm. Partial Differential Equations}, 31(1-3):431--443, 2006.

\bibitem{Haspot11}
B.~Haspot.
\newblock Existence of global strong solutions in critical spaces for barotropic viscous fluids.
\newblock {\em Archive for Rational Mechanics and Analysis}, 202(2):427--460, 2011.

\bibitem{hoff2006uniqueness}
D.~Hoff.
\newblock Uniqueness of weak solutions of the {N}avier--{S}tokes equations of multidimensional, compressible flow.
\newblock {\em SIAM journal on mathematical analysis}, 37(6):1742--1760, 2006.

\bibitem{Kawashima83}
S.~Kawashima.
\newblock {\em Systems of a hyperbolic parabolic type with applications to the equations of magnetohydrodynamics.}
\newblock PhD thesis, Kyoto University, 1983.

\bibitem{lemarie2023parabolic}
V.~Lemari{\'e}.
\newblock Parabolic-elliptic {K}eller--{S}egel system.
\newblock {\em Tunisian Journal of Mathematics}, 6(3):503--542, 2024.

\bibitem{MatsmuraNishida80}
A.~Matsumura and T.~Nishida.
\newblock The initial value problem for the equations of motion of viscous and heat-conductive gases.
\newblock {\em J. Math. Kyoto Univ.}, 20:67--104, 1980.

\bibitem{ChenMia10}
C.~Miao Q.~Chen and Z.~Zhang.
\newblock Global well-posedness for compressible navier-stokes equations with highly oscillating initial velocity.
\newblock {\em Comm. Pure Appl. Math.}, 63(9):1173--1224, 2010.

\bibitem{Serre97}
D.~Serre.
\newblock {\em Syst\`emes de lois de conservation. {I}}.
\newblock Fondations. Diderot Editeur, Paris, 1996.
\newblock Hyperbolicit\'{e}, entropies, ondes de choc.

\bibitem{Serr10}
D.~Serre.
\newblock Local existence for viscous system of conservation laws: {$H^s$}-data with {$s>1+d/2$}.
\newblock In {\em Nonlinear partial differential equations and hyperbolic wave phenomena}, volume 526 of {\em Contemp. Math.}, pages 339--358. Amer. Math. Soc., Providence, RI, 2010.

\bibitem{Serre09}
D.~Serre.
\newblock The structure of dissipative viscous system of conservation laws.
\newblock {\em Phys. D}, 239(15):1381--1386, 2010.

\bibitem{SK85}
S.~Shizuta and S.~Kawashima.
\newblock Systems of equations of hyperbolic-parabolic type with applications to the discrete boltzmann equation.
\newblock {\em Hokkaido Math J.}, 14:249--275, 1985.

\bibitem{Yong04}
W-A Yong.
\newblock Entropy and global existence for hyperbolic balance laws.
\newblock {\em Arch. Rational Mech. Anal}, 172:47--266, 2004.

\end{thebibliography}

\end{document}